\newtheorem{theorem}{Theorem}[section]
\newtheorem{lemma}[theorem]{Lemma}
\newtheorem{proposition}[theorem]{Proposition}
\newtheorem{corollary}[theorem]{Corollary}
\theoremstyle{definition}
\newtheorem{example}[theorem]{Example}
\newtheorem{remark}[theorem]{Remark}
\newcommand{\op}[1]{\textrm{\upshape #1}}
\newcommand{\join}{\vee}
\renewcommand{\Join}{\bigvee}
\newcommand{\meet}{\wedge}
\newcommand{\la}{\langle}
\newcommand{\ra}{\rangle}
\newcommand{\alg}[1]{{\textbf{\upshape #1}}}  %
\newcommand{\vv}[1]{\mathsf {#1}}
\renewcommand{\a}{\alpha}
\renewcommand{\b}{\beta}
\renewcommand{\d}{\delta}
\newcommand{\f}{\varphi}
\newcommand{\g}{\gamma}
\newcommand{\e}{\varepsilon}
\renewcommand{\th}{\theta}
\renewcommand{\o}{\omega}
\newcommand{\cg}{\vartheta}
\newcommand{\Th}{\varTheta}
\newcommand{\sse}{\subseteq}
\newcommand{\ap}{\approx}
\newcommand{\app}{\approx}
\newcommand{\HH}{{\mathbf H}}  
\newcommand{\II}{{\mathbf I}} 
\newcommand{\SU}{{\mathbf S}} 
\newcommand{\PP}{{\mathbf P}}   
\newcommand{\VV}{{\mathbf V}}   
\newcommand{\QQ}{{\mathbf Q}}
\newcommand{\ib}{\item[$\bullet$]}
\newcommand{\Con}[1]{\operatorname{Con}(\alg #1)}
\newcommand{\vucc}[2]{#1_0,\dots,#1_{#2}}
\newcommand{\vuc}[2]{#1_1,\dots,#1_{#2}}
\newcommand{\imp}{\rightarrow}
\newcommand{\ol}[1]{\overline #1}
\def\square{\RIfM@\bgroup\else$\bgroup\aftergroup$\fi
  \vcenter{\hrule\hbox{\vrule\@height.6em\kern.6em\vrule}\hrule}\egroup}
\newcommand{\smlcirc}{\raise3pt\hbox{\textrm{\circle{3.3}}}}
\newcommand{\myfrac}[2]{\dfrac{#1}{\lower.5ex\hbox{$#2$}}}
\mathchardef\hu="0362
\renewcommand{\e}{\varepsilon}
\newcommand{\Def}[1]{\textbf{#1}}
\begin{document}

\title{Structural completeness in quasivarieties}
\author{Paolo Aglian\`{o}\\
DIISM,\\
Universit\`a di Siena, Italy\\
agliano@live.com
\and
 Alex Citkin\\
 Metropolitan Telecommunications,\\
New York, USA\\
 acitkin@gmail.com}
\date{}
\maketitle

\begin{abstract} In this paper we study various forms of (hereditary) structural completeness for quasivarieties of algebras, using mostly algebraic techniques. More specifically
we study relative weakly projective algebras and the way they interact with structural completeness in quasivarieties. These ideas are then applied to the study of $C$-structural
completeness and $C$-primitivity, through an algebraic generalization of Prucnal's substitution.  Finally we study in depth dual i-discriminator quasivarieties in which a particular instance
of Prucnal's substitution is used to prove that if  each fundamental operation commutes with the i-discriminator, then it is primitive.
\end{abstract}

\section{Introduction.}

Research in admissibility of (finitary structural inference) rules in propositional logics is concerned with primarily two problems: given a propositional logic $\vv L$, does $\vv L$ has admissible not derivable rules? And if it does, what are they (see, e.g.,\cite{Rybakov1997})? The logics in which all structural admissible rules are derivable are called \Def{structurally complete}, and this notion was introduced in \cite{Pogorzelski1971}. Since a structurally complete logic cannot have  structurally complete extensions (e.g.  Medvedev Logic (see \cite{Citkin1977}), it is natural to ask whether a logic and  all its extensions are  structurally complete;  for such logics the term \Def{hereditary structural completeness} was coined in \cite{Citkin1977}.
Thus, traditionally, the problem of structural completeness is formulated in the following way: given a propositional logic $\vv L$, is it structurally complete? And if it is, is it hereditarily structurally complete? And if it is not, which of its extensions  are structurally complete?

The equivalent algebraic semantic of an algebraizable propositional logic is a quasivariety (see \cite{BlokPigozzi1989}): the formulas are translated into equations and the rules  into quasiequations. In algebraic terms, given a quasivariety $\vv Q$, a quasiequation $\Phi$ is admissible in $\vv Q$ if $\vv F_\vv Q(\omega) \models \Phi$, and $\Phi$ is derivable in $\vv Q$ if $\vv Q \models \Phi$. Accordingly, $\vv Q$ is \Def{structurally complete} if $\vv Q$ does not have admissible not derivable quasiequations, that is, if $\vv Q$ is generated by $\vv F_\vv Q(\omega) $. And $\vv Q$ is \Def{hereditarily structurally complete} if $\vv Q$ and all its subquasivarieties are structurally complete. Because hereditary structural completeness of $\vv Q$ is equivalent to all subquasivarieties of $\vv Q$ being equational (that is, every subquasivariety of $\vv Q$ can be defined relative to $\vv Q$ by a set of equations), hereditarily structurally complete quasivarieties also called deductive or \Def{primitive} (see \cite{Bergman1991}), and we use the latter term. It follows that any question about structural completeness  (and its variations, see \cite{AglianoUgolini2023}) of an algebraizable logic can be solved in purely algebraic terms.  On the other hand, given the fact that structural completeness for quasivarieties does not make any reference to logic, the concept makes sense also for quasivarieties that are not quasivarieties of logic.

In \cite{Mints1971}, Mints observed that any admissible in the intuitionistic propositional calculus (IPC) rule which does not contain either connectives $\lor$ or $\to$ is derivable in IPC. In other words, IPC is $\{\land, \lor, \neg\}$- and $\{\land, \to, \neg\}$-structurally complete. This observation leads to the following problem: given a set of formulas $F$ in the language of logic $\vv L$, we say that $\vv L$ is \textbf{structurally complete relative to} $F$ if every admissible rule whose premises and conclusion are in $F$, is derivable in $\vv L$. For instance, one can take $F$ as a set of all formulas of IPC not containing disjunction and IPC will be structurally complete relative to such $F$. Moreover, we say that logic $\vv L$ \textbf{is p-structurally complete relative to} $F$, if every admissible rule whose premises (not necessarily conclusions) are in $F$ is derivable. For instance, IPC is p-structurally complete (and even hereditarily p-structurally complete) relative to set of all Harrop (or all anti-Harrop) formulas (see \cite{Roziere1993}). Another set of formulas relative to which IPC is p-complete was introduced in \cite{Citkin1979}.

This observation led the second author to introducing a notion of $C$-structural completeness (see \cite{Citkin2016}): if $C$ is a subset of the set of connectives of a propositional logic $\vv L$, $\vv L$ is $C$-structurally complete if every admissible rule containing only connectives from $C$ is derivable in $\vv L$. $C$-structural completeness is very close but not the same as structural completeness of the $C$-fragment of $\vv L$. For instance, from Mints' observation it follows that IPC is $\{\to,\neg\}$-structurally complete, while $\{\to,\neg\}$-fragment of IPC is not (see \cite{CintulaMetcalfe2010, Citkin2016} for details). In some cases $C$-structural completeness can be extended to structural completeness; for instance, the proof from \cite{Mints1971} holds for any axiomatic extension of IPC, thus it holds for the Dummett calculus LC = IPC + $((p \to q) \lor (q \to p))$ and because $\lor$ can be expressed in LC by a formula, LC (and every its extension) is structurally complete, and this is an alternative proof of hereditary structural completeness of LC first observed in \cite{DzikWronski1973} (the same argument is used to prove primitivity in \cite{GalatosRaftery2015}, Theorem 2.9).

In the algebraic setting, if $\vv Q$ is a quasivariety and $C$ is a subset of the set of basic operations, then $\vv Q$ is \textbf{$C$-structurally complete} if for every quasiequation $\Phi$ containing operations only from $C$, $\alg F_\vv Q(\omega) \models \Phi$ entails $\vv Q \models \Phi$. In other words, if $\Phi$ is refuted in some algebra from $\vv Q$, then $\Phi$ must be refuted in $\alg F_\vv Q(\omega)$. On the other hand, if $\vv Q^C$ is a quasivariety generated by all $C$-reducts of algebras from $\vv Q$, then $\vv Q^C$ is structurally complete if every quasiequation $\Phi$ is refuted in $\alg F_{\vv Q^C}(\omega)$ as long as it is refuted in some algebra from $\vv Q^C$, or equivalently, in some $C$-reduct of an algebra from $\vv Q$. Let us observe that $C$-reduct of $\alg F_\vv Q(\omega)$ belongs to $\vv Q^C$ and hence, structural completeness of $\vv Q^C$ entails $C$-structural completeness of $\vv Q$, while the converse does not have to be true.

Admissibility and derivability of quasiequation $\Phi = \bigwedge_{i \in [1,n]} t_i \approx s_i \Rightarrow t \approx s$ in a quasivariety $\vv Q$ depend on properties of the algebra $\alg A$ finitely presented by relations $t_i \approx s_i, i \in [1,n]$. Indeed, $\Phi$ is derivable in $\vv Q$, that is $\vv Q \models \Phi$, if and only if the terms $t$ and $s$ represent the same element of $\alg A$. And $\Phi$ is admissible in $\vv Q$ if and only if none of  homomorphic images of $\alg A$ in which $t$ and $s$ represent distinct elements can be embeddable into $\alg F_\vv Q(\omega)$. In particular, if $\alg A$ is subdirectly irreducible and $t$ and $s$ represent elements from the monolith of $\alg A$, $\Phi$ is admissible in Q if and only if $\alg A$ is not embeddable in $\alg F_\vv Q(\omega)$; in this case, $\Phi$ is a characteristic rule of $\alg A$ (cf. \cite{Citkin1977}).

The following simple sufficient condition is often used to demonstrate $C$-structural completeness:  if $C$-reduct of every algebra finitely presented in $\vv Q$ can be embedded in the $C$-reduct of $\alg F_\vv Q(\omega)$, then $\vv Q$ is  $C$-structurally complete. Indeed, if a quasiequation $\Phi = \bigwedge_{i \in [1,n]} t_i \approx s_i \Rightarrow t \approx s$ whose terms contain operations only from $C$ is not valid in $\vv Q$, then there is an algebra $\alg A \in \vv Q$ and a valuation $v$ in $\alg{A}$ such that $v(t_i) = v(s_i)$ for all $i \in [1,n]$ and $v(t) \neq v(s)$. Thus, $\Phi$ can be refuted in algebra $\alg B$ finitely presented in $\vv Q$ and defined by the relations $t_i \approx s_i, i \in [1,n]$;  and if  $\alg B$ can be embedded in $\alg F_\vv Q(\omega)$ as $C$-subreduct, then $\Phi$ can be refuted in $\alg F_\vv Q(\omega)$ as well.

As an example let us take $\vv Q$ to be a quasivariety of Heyting algebras in the signature $\{\to, \land, \lor, \neg\}$ and let $C = \{\to, \land\}$ and $C' = \{\land, \lor\}$ and let us recall that every finitely presented in $\vv Q$ algebra is (isomorphic to) a quotient algebra of some free algebra $\alg F_\vv Q(n)$ by a principal congruence. Let us also recall that for any Heyting algebra $\alg A$ and for every principal congruence $\Th$, each congruence class $a/\Th$ contains a largest and a smallest element. It is a simple exercise to check that the set of all the largest elements of the $\Th$-classes  is closed under $\to$ and $\land$ and hence it forms a $\{\to,\land\}$-subreduct of $\alg A$, while the set of all smallest elements of the $\Th$-classes is closed under $\land$ and $\lor$ and hence it forms a $\{\land,\lor\}$-subreduct of $\alg A$. Thus, the $C$- and $C'$-reducts of any quotient algebra by a principal congruence can be embedded in a $C$- or $C'$-reduct of the preimage. Therefore the $C$- and $C'$-reducts of every finitely presented algebra can be embedded in the respective reducts of a free algebra, which means that $\vv Q$ is $C$- and $C'$-structurally complete.

The paper is structured in the following way: in Section \ref{sec:main} we give the necessary definitions and remind the basic facts about quasivarieties. In Section \ref{sec:wpalgebras} we study relative weakly projective algebras and primitive quasivarieties. In particular we give a criterion of primitivity for tame quasivarieties, that is for quasivarieties in which every finitely generated algebra is finitely presented.  In addition, we prove that every finite subdirectly irreducible algebra is projective in the quasivariety which it generates as long as it is weakly projective in it.

Section \ref{sec:more} is dedicated to studying C-structurally complete and C-primitive quasivarieties. We introduce the notion of u-presentable congruence and we prove that, for quasivariety $\vv Q$ to be $C$-structurally complete, it is sufficient  that every compact $\vv Q$-congruence of $\vv F_\vv Q(\omega)$ is u-presentable. We also give an algebraic generalization of Prucnal's substitution (see \cite{Prucnal1972}).

In Section \ref{sec:idiscriminator} we study in depth dual i-discriminator quasivarieties that are different from the varieties studied in Section \ref{sec:more} but to which one can apply techniques similar to  Prucnal's substitution. We prove several structure theorems, connecting them with the theory of if ideals in quasivarieties, and we show that every relatively congruence distributive dual i-discriminator variety in which every fundamental operation commutes with the i-discriminator is primitive.

In Section \ref{sec:conclusions} we discuss some questions arising during our investigations into the matter of this paper.

\section{Main definitions.}\label{sec:main}
\subsection{Class operators, quasivarieties an free algebras.}
Let $\vv K$ be a class of algebras; we denote by $\II,\HH,\PP,\SU,\PP_u$ the class operators sending $\vv K$ in the class of all isomorphic copies, homomorphic images, direct products, subalgebras and ultraproducts of members of $\vv K$. The operators can be composed in the obvious way; for instance $\SU\PP(\vv K)$ denotes all algebras that are embeddable in a direct product of members of $\vv K$; moreover there are relations among the classes resulting from applying operators in a specific orders, for instance $\PP\SU(\vv K) \sse \SU\PP(\vv K)$ and $\HH\SU\PP(\vv K)$ is the largest class we can obtain composing the operators. We will use all the known relations without further notice, but the reader can consult \cite{Pigozzi1972} or \cite{BurrisSanka} for a textbook treatment. We only point out that if $\vv K$ is a finite set of finite algebras, then $\PP_u(\vv K) = \vv K$.

 If $\rho$ is a type of algebras, an \Def{equation} is a pair $p,q$ of $\rho$-terms (i.e. elements of the absolutely free algebra $\alg T_\rho(\o)$) that we write suggestively as $p \app q$; a \Def{universal sentence}  in $\rho$ is a formal pair $(\Sigma, \Gamma)$ that we write as  $\Sigma \Rightarrow \Gamma$, where $\Sigma,\Gamma$ are finite sets of equations; a universal sentence is  a \Def{quasiequation} if $|\Gamma| = 1$. Clearly an  equation is a quasiequation in which $\Sigma = \emptyset$.

Given any set of variables $X$, an assignment of $X$ into an algebra $A$ of type $\rho$ is a function $h$ mapping each variable $x \in X$ to an element of $\alg A$, that extends (uniquely) to a homomorphism (that we shall also call $h$) from the absolutely free algebra $\alg T_\rho(\o)$ to $\alg A$.
An algebra $\alg A$ satisfies an equation $p \app q$ with an assignment $h$ (and we write $\alg A, h \models p \approx q$) if $h(p) = h(q)$ in $\alg A$.  An equation $p \app q$ is \Def{valid} in $\alg A$ (and we write $\alg A \vDash p \approx q$) if for all assignments $h$ in $\alg A$, $\alg A, h \models p \approx q$; if $\Sigma$ is a set of equations then
 $\alg A \vDash \Sigma$ if $\alg A \vDash \sigma$ for all $\sigma \in \Sigma$.
 A universal sentence is \Def{valid} in $\alg A$ (and we write $\alg A \vDash \Sigma \Rightarrow \Delta$) if for all assignments $h$ to $\alg A$, $h(p) = h(q)$ for all $p \approx q \in \Sigma$ implies that there is an identity $s \app t \in \Delta$ with
 $h(s) = h(t)$; in other words a universal sentence can be understood as the formula $\forall \mathbf x(\bigwedge \Sigma \imp \bigvee \Delta)$.  An equation or a universal sentence  is \Def{ valid} in a class $\vv K$ if it is valid in all  algebras in $\vv K$.

Let $\vv K$ be any class  of similar algebras:
\begin{itemize}
	\item[(a)] if $\vv K = \HH\SU\PP \vv K$, then $\vv K$ is a \Def{variety};
	\item[(b)] if $\vv K = \II\SU\PP\PP_u \vv K$, then $\vv K$ is a \Def{quasivariety};
	\item[(c)] if $\vv K = \II\SU\PP_u \vv K$, then $\vv K$ is a \Def{universal class}.	
\end{itemize}
Of course we will use the standard abbreviations  $\VV$ for $\HH\SU\PP$ and $\QQ$ for $\II\SU\PP\PP_u$; from the properties of the operators
one can easily deduce that for any class $\vv K$, $\QQ(\vv K) \sse \VV(\vv K)$.

The following facts were essentially discovered by A. Tarski , J. \L\`os and A. Lyndon in the pioneering phase of model theory; for proof of this and similar statements the reader can consult \cite{ChangKeisler}.

 \begin{theorem}\label{theorem:ISP}
 Let $\vv K$ be any class of algebras. Then:
 \begin{enumerate}
 \item  $\vv K$ is a universal class  if and only if it is the class of all algebras in which a given set  of universal sentences is valid;
\item  $\vv K$ is a quasivariety  if and only if it is the class of all algebras in which a given set  of quasiequations  is valid;
\item $\vv K$ is a variety  if and only if it is the class of all algebras in which a given set  of equations  is valid.
\end{enumerate}
  \end{theorem}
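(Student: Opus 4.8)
The plan is to prove each of the three biconditionals by splitting it into a soundness direction (a class axiomatized by sentences of the prescribed form is closed under the prescribed operators) and a definability direction (a class closed under those operators is axiomatized by the sentences of that form which it validates). The soundness directions are routine inductions on the definition of satisfaction: equations are preserved by $\HH$, $\SU$, $\PP$ and $\PP_u$; quasiequations, being universal Horn sentences, are preserved by $\SU$, $\PP$ and $\PP_u$ (but not by $\HH$); and arbitrary universal sentences are preserved by $\SU$ and $\PP_u$. The only case needing the \L o\'s ultraproduct theorem is preservation under $\PP_u$. Consequently a class axiomatized by equations is closed under $\HSP$, one axiomatized by quasiequations under $\II\SU\PP\PP_u$, and one axiomatized by universal sentences under $\II\SU\PP_u$, which yields the three ``if'' directions at once.

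For the definability directions it is enough to show, for a class $\vv K$ closed under the relevant operators, that any algebra $\alg A$ satisfying every sentence of the prescribed form valid in $\vv K$ already lies in $\vv K$, the reverse inclusion $\vv K \sse \op{Mod}(\Gamma)$ being trivial. For part (1) I would expand the language by a fresh constant for each element of $\alg A$ and consider the atomic diagram of $\alg A$, i.e.\ the set of all atomic and negated-atomic sentences in these constants that hold in $\alg A$. If some finite subset were refuted in every member of $\vv K$ under every assignment of its constants, then the universal closure of its negation would be a universal sentence valid in $\vv K$, hence valid in $\alg A$ --- contradicting the fact that $\alg A$ realizes that subset. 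So every finite fragment of the diagram is satisfiable in $\vv K$, and since $\vv K$ is closed under $\PP_u$ the \L o\'s theorem assembles these local solutions into a single ultraproduct $\alg B \in \PP_u(\vv K)$ realizing the whole diagram. The interpretation of the constants in $\alg B$ then furnishes an embedding $\alg A \hookrightarrow \alg B$, so $\alg A \in \II\SU\PP_u(\vv K) = \vv K$.

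Part (2) runs on the same mechanism but separates points pairwise. Fixing $a \neq b$ in $\alg A$, I would apply the argument to the positive atomic diagram of $\alg A$ together with the single inequation $a \neq b$: were a finite positive fragment $\bigwedge \Sigma$ together with $a \neq b$ unsatisfiable throughout $\vv K$, then $\forall \bar x(\bigwedge \Sigma \imp a \app b)$ would be a quasiequation valid in $\vv K$, again contradicting the choice of $\alg A$. Because only positive atomic formulas are used, the resulting map into the ultraproduct is a homomorphism $f_{a,b}\colon \alg A \to \alg B_{a,b} \in \PP_u(\vv K)$ with $f_{a,b}(a) \neq f_{a,b}(b)$; the product of all such maps embeds $\alg A$ into a member of $\PP\PP_u(\vv K) \sse \SU\PP\PP_u(\vv K)$, whence $\alg A \in \QQ(\vv K) = \vv K$. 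For part (3) I would switch to relatively free algebras: given $\alg A$ satisfying every equation valid in $\vv K$, choose a surjection from a set $X$ of variables onto $\alg A$; the relatively free algebra $\alg F_\vv K(X)$ lies in $\SU\PP(\vv K) \sse \vv K$, and since $\alg A$ validates every equation of $\vv K$ the canonical map $\alg F_\vv K(X) \to \alg A$ is well defined and onto, so $\alg A \in \HH(\vv K) = \vv K$.

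The main obstacle throughout is the passage from ``every finite fragment of the diagram is realized somewhere in $\vv K$'' to ``the entire diagram is realized in a single algebra of $\PP_u(\vv K)$''. This is precisely where the compactness theorem and the \L o\'s ultraproduct construction carry the argument, and where closure of $\vv K$ under $\PP_u$ is indispensable. The matching of refuted finite fragments with, respectively, universal sentences, quasiequations, and (for varieties) the well-definedness of the map out of the free algebra is exactly what forces the axioms to have the prescribed syntactic shape in each case.
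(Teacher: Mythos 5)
The paper itself contains no proof of this theorem: it is quoted as a classical fact due to Tarski, \L o\'s and Lyndon, with the reader referred to \cite{ChangKeisler}, so there is no internal argument to compare yours against. Your proposal is correct and is precisely the standard proof such a reference supplies: the easy directions follow from preservation of equations, quasiequations and universal sentences under the corresponding operators (with \L o\'s's theorem handling $\PP_u$); the converse for (1) and (2) is the diagram-plus-ultraproduct compactness argument (full diagram to get an embedding, positive diagram plus a single inequation to get separating homomorphisms); and the converse for (3) is Birkhoff's free-algebra argument, using $\alg F_\vv K(X) \in \II\SU\PP(\vv K)$. The only detail you leave implicit is the degenerate case of a trivial $\alg A$ in part (2), where no pair $a \neq b$ exists to separate; this is covered by the usual convention that $\PP$ includes the empty product, so that trivial algebras (which satisfy every quasiequation) belong to every quasivariety, and with that convention your product-of-separating-maps argument goes through verbatim.
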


For the definition of free algebras in a class $\vv K$ on a set $X$ of generators, in symbols $\alg F_\vv K(X)$, we refer to \cite{BurrisSanka}; we merely observe that a class $\vv K$ contains all the free algebras in $\vv K$ if and only if $\vv K = \II\SU\PP(\vv K)$; therefore for any quasivariety $\vv Q$, $\alg F_\vv Q(X) = \alg F_{\VV(\vv Q)}(X)$.

\subsection{Congruences and subdirectly irreducible algebras.}

A \Def{ congruence} on an algebra $\alg A$ is an equivalence relation that is compatible (in the usual sense) with all the fundamental operations of $\alg A$.  As the intersection of any family of congruences on $\alg A$ is again a congruence on $\alg A$, the mapping sending $X\sse A^2$ into $\cg_\alg A(X)$ (the smallest congruence of $\alg A$ containing $X$) is a closure operator on $A^2$ that can be proved too by algebraic. Thus the congruences of $\alg A$ form an algebraic lattice which we denote by $\Con A$; hence there are always a largest and smallest congruence in $\Con A$ which will be denoted by $1_\alg A$ and $0_\alg A$. A variety $\vv V$ is \Def{ congruence distributive} if all congruence lattices of algebras in $\vv V$ are distributive.

Let $\alg B, (\alg A_i)_{i \in I}$ be algebras in the same signature; we say that $\alg B$ {\em embeds} in $\prod_{i \in I} \alg A_i$  if $\alg B \in \II\SU(\prod_{i\in I} \alg A_i)$.
Let $p_i$ be the $i$-th projection, or better, the composition of the embedding and the $i$-th projection, from $\alg B$ to $\alg A_i$; the embedding is
\Def{ subdirect} if for all $i \in I$, $p_i(\alg B) = \alg A_i$ and in this case we will write
$$
\alg B \le_{sd} \prod_{i \in I} \alg A_i.
$$
An algebra $\alg B$ is \Def{subdirectly irreducible} if it is nontrivial and for any subdirect embedding
$$
\alg B \le_{sd} \prod_{i \in I} \alg A_i
$$
there is an $i \in I$ such that $\alg B$ and $\alg A_i$ are isomorphic. An algebra $\alg A$ is {\bf finitely subdirectly irreducible} if the same conclusion as above holds whenever $I$ is finite.

\begin{lemma} For an algebra $\alg A$ the following are equivalent:
\begin{enumerate}
\item $\alg A$ is subdirectly irreducible;
\item $\Con A$ has exactly one minimal congruence $\mu_\alg A > 0_\alg A$ (which of course it is equivalent to saying that $0_\alg A$ is strictly meet irreducible in $\Con A$);
\item there are $a,b \in A$ such that $(a,b) \in \th$ for all $\th \in \Con A$, $\th >0_\alg A$.
\end{enumerate}
\end{lemma}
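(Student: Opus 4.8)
The plan is to use the standard dictionary between subdirect representations of $\alg A$ and families of congruences of $\alg A$ meeting to $0_\alg A$. If $\alg A \le_{sd} \prod_{i \in I}\alg A_i$ is a subdirect embedding with projections $p_i$, set $\th_i = \ker p_i$; subdirectness yields $\alg A/\th_i \cong \alg A_i$ via $p_i$, injectivity of the embedding yields $\bigcap_{i \in I}\th_i = 0_\alg A$, and $p_i$ is an isomorphism precisely when $\th_i = 0_\alg A$. Conversely, any family $(\th_i)_{i \in I}$ of congruences with $\bigcap_{i \in I}\th_i = 0_\alg A$ induces, through the canonical maps, a subdirect embedding $\alg A \le_{sd} \prod_{i \in I}\alg A/\th_i$. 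Throughout I read ``$\alg A$ and $\alg A_i$ are isomorphic'' in the definition of subdirect irreducibility as ``$p_i$ is an isomorphism'', i.e. $\th_i = 0_\alg A$, which is the reading that makes the notion behave correctly.

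First I would settle $(2)\Leftrightarrow(3)$, a purely lattice-theoretic statement about $\Con A$, which is a complete (indeed algebraic) lattice in which every nonzero congruence contains a nonzero principal congruence $\cg_\alg A(a,b)$ with $a \neq b$. For $(3)\Rightarrow(2)$: if $a \neq b$ satisfy $(a,b) \in \th$ for all $\th > 0_\alg A$, then $\cg_\alg A(a,b)$ is nonzero yet contained in every nonzero congruence, so it is the least nonzero congruence and we take $\mu_\alg A = \cg_\alg A(a,b)$. For $(2)\Rightarrow(3)$: choosing $(a,b) \in \mu_\alg A$ with $a \neq b$, minimality forces $(a,b) \in \th$ for every $\th > 0_\alg A$. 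The equivalence of ``least nonzero congruence exists'' with ``$0_\alg A$ is strictly meet irreducible'' comes from setting $\mu_\alg A = \bigwedge\{\th : \th > 0_\alg A\}$ and noting that $0_\alg A$ fails to be completely meet irreducible exactly when this meet collapses to $0_\alg A$.

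It remains to prove $(1)\Leftrightarrow(2)$. For $(2)\Rightarrow(1)$, assume $\mu_\alg A > 0_\alg A$ is the least nonzero congruence; then $0_\alg A \neq 1_\alg A$, so $\alg A$ is nontrivial. Given a subdirect embedding with $\th_i = \ker p_i$ and $\bigcap_{i}\th_i = 0_\alg A$, the congruences $\th_i$ cannot all be nonzero, for otherwise each $\th_i \ge \mu_\alg A$ and hence $\bigcap_i \th_i \ge \mu_\alg A > 0_\alg A$; so some $\th_{i_0} = 0_\alg A$ and $p_{i_0}$ is an isomorphism, giving subdirect irreducibility. For $(1)\Rightarrow(2)$, assume $\alg A$ is subdirectly irreducible and consider the canonical map $\alg A \to \prod_{\th > 0_\alg A}\alg A/\th$. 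If $\bigwedge\{\th : \th > 0_\alg A\} = 0_\alg A$ this map is a subdirect embedding, so some projection $\alg A \to \alg A/\th$ with $\th > 0_\alg A$ would have to be an isomorphism, contradicting that its kernel $\th$ is nonzero; hence $\bigwedge\{\th : \th > 0_\alg A\} > 0_\alg A$ is the desired monolith. The main obstacle is exactly this last step: it relies on $\Con A$ being a complete lattice so that the possibly infinite meet $\bigwedge\{\th : \th > 0_\alg A\}$ exists, and it depends on the reading of subdirect irreducibility as ``some projection is an isomorphism'' — under the weaker ``some factor is abstractly isomorphic to $\alg A$'' the contradiction would not be immediate.
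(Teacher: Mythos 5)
The paper does not prove this lemma at all: it is quoted as a classical preliminary (Birkhoff's characterization of subdirectly irreducible algebras), so there is no in-paper argument to compare yours against. Your proof is the standard textbook argument (essentially the one in Burris--Sankappanavar) and it is correct: the dictionary between subdirect embeddings and families of congruences intersecting to $0_{\alg A}$, the purely lattice-theoretic equivalence of (2) and (3) via the principal congruence $\cg_{\alg A}(a,b)$, and, for $(1)\Rightarrow(2)$, the canonical map $\alg A \to \prod_{\th > 0_{\alg A}} \alg A/\th$, which would be a subdirect embedding if $\bigwedge\{\th : \th > 0_{\alg A}\}$ collapsed to $0_{\alg A}$. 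Your explicit reading choices are also the right ones, and worth stating, because the paper's wording is loose at exactly those points: subdirect irreducibility must be read as ``some projection $p_i$ is an isomorphism'' (under mere abstract isomorphism of $\alg A$ with a factor, the contradiction in your $(1)\Rightarrow(2)$ step is indeed no longer available); item (3) must be read with $a \neq b$, as you do, since otherwise it holds vacuously in every algebra; and item (2) must be read, as its parenthetical indicates, as ``$\Con A$ has a least nonzero congruence'', since a literal ``exactly one minimal congruence'' is strictly weaker in a general algebraic lattice --- your phrase ``minimality forces $(a,b) \in \th$'' silently invokes the least-element reading, which is the intended one. With those readings every step checks out, including the appeal to completeness of $\Con A$ for the possibly infinite meet and the use of nontriviality of $\alg A$ to ensure $\{\th : \th > 0_{\alg A}\}$ is nonempty (it contains $1_{\alg A}$).
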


For any variety $\vv V$ we denote by $\vv V_{fsi}$ the class of finitely subdirectly irreducible members of $\vv V$; we have  the  classical results:

\begin{theorem}\label{birkhoff} \begin{enumerate}
 \item (Birkhoff \cite{Birkhoff1944}) Every algebra can be subdirectly embedded in a product of subdirectly irreducible algebras. So if $\alg A \in \vv V$, then $\alg A$ can be subdirectly embedded in a product of members of $\vv V_{si}$ and then $\VV(\vv V_{si}) = \vv V$.
\item (J\'onsson's Lemma \cite{Jonsson1967}) Suppose that $\vv K$ is a class of algebras such that $\vv V(\vv K)=\vv V$ is congruence distributive;
then $\vv V_{fsi} \sse \HH\SU\PP_u(\vv K)$.
\end{enumerate}
\end{theorem}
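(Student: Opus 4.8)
The plan is to prove the two parts separately, in each case reducing the claim to bookkeeping in the congruence lattice.

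For Birkhoff's decomposition I would fix a nontrivial $\alg A$ and, for each pair $a \neq b$ in $A$, apply Zorn's Lemma to the set of congruences omitting $(a,b)$: this set is nonempty (it contains $0_\alg A$) and closed under unions of chains because $\Con A$ is algebraic, so it has a maximal element $\th_{a,b}$. The key observation is that $\th_{a,b}$ is completely meet-irreducible, since by maximality every congruence strictly above it contains $(a,b)$; hence the image of $(a,b)$ lies in every nonzero congruence of $\alg A/\th_{a,b}$, which by the characterization of subdirect irreducibility above forces $\alg A/\th_{a,b}$ to be subdirectly irreducible. As distinct elements are separated by some $\th_{a,b}$ we get $\bigwedge_{a\neq b}\th_{a,b}=0_\alg A$, so the natural map yields a subdirect embedding $\alg A \le_{sd} \prod_{a\neq b}\alg A/\th_{a,b}$ into a product of subdirectly irreducibles. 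Carried out inside a variety $\vv V$ this shows $\alg A\in\VV(\vv V_{si})$ for every $\alg A\in\vv V$, whence $\VV(\vv V_{si})=\vv V$.

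For J\'onsson's Lemma the plan is to take $\alg A\in\vv V_{fsi}$ with $\vv V=\VV(\vv K)=\HH\SU\PP(\vv K)$, so that $\alg A\in\HH\SU\PP(\vv K)$; fix $\alg A_i\in\vv K$ ($i\in I$), a subalgebra $\alg B\le\prod_{i\in I}\alg A_i$, and a surjection $\alg B\tla\alg A$ with kernel $\th$. Then $\alg A\cong\alg B/\th$, and finite subdirect irreducibility of $\alg A$ makes $\th$ finitely meet-irreducible in $\Con B$; since $\alg B\in\vv V$ and $\vv V$ is congruence distributive, $\th$ is in fact meet-prime, i.e.\ $\alpha\meet\beta\le\th$ forces $\alpha\le\th$ or $\beta\le\th$. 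Writing $\eta_i$ for the kernel of the $i$-th projection restricted to $\alg B$ and $\th_J:=\bigwedge_{i\in J}\eta_i$ for $J\sse I$ (so $\th_\emptyset=1_\alg B$ and $\th_I=0_\alg B\le\th$), the target is an ultrafilter $U$ on $I$ with $\th_J\le\th$ for every $J\in U$: for then $\th_U:=\bigcup_{J\in U}\th_J\le\th$ (the family is upward directed, so the union is a congruence), the map $b\mapsto(b_i)_i/U$ realizes $\alg B/\th_U$ as a subalgebra of the ultraproduct $\prod_{i\in I}\alg A_i/U\in\PP_u(\vv K)$, and $\alg A\cong\alg B/\th$, being a quotient of $\alg B/\th_U$, lands in $\HH\SU\PP_u(\vv K)$.

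The heart of the argument, and the step I expect to be the main obstacle, is producing that ultrafilter, which is exactly where congruence distributivity is spent. I would set $\mathcal F:=\{\,J\sse I:\th_{\bar J}\not\le\th\,\}$ with $\bar J=I\ssm J$ and check it is a proper filter: it is upward closed because enlarging $J$ shrinks $\bar J$ and hence enlarges $\th_{\bar J}$; it omits $\emptyset$ since $\th_{\bar\emptyset}=\th_I=0_\alg B\le\th$; and it is closed under intersection precisely by the contrapositive of meet-primeness, as $\th_{\overline{J\cap K}}=\th_{\bar J}\meet\th_{\bar K}$. Extending $\mathcal F$ to an ultrafilter $U$, I would then check $\th_J\le\th$ for all $J\in U$: were $\th_J\not\le\th$ for some $J\in U$, the definition of $\mathcal F$ would give $\bar J\in\mathcal F\sse U$, contradicting properness of $U$. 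Two points are genuinely delicate here: the passage from finite meet-irreducibility to meet-primeness, which is immediate in a distributive lattice but fails without distributivity (so the hypothesis cannot be dropped) and which is what yields the sharper $\vv V_{fsi}$ conclusion; and the fact that $I$ may be infinite, so that the equality $\th_I=0_\alg B$ cannot be exploited through primeness directly but must be routed through the ultrafilter on $I$.
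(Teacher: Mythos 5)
Your proof is correct; there is nothing in the paper to compare it against, since the paper quotes both parts as classical results with citations to Birkhoff's and J\'onsson's original papers and gives no proof. What you have written is essentially the standard argument (the one found, e.g., in Burris--Sankappanavar): for part (1), the Zorn's Lemma construction of a congruence $\th_{a,b}$ maximal among those omitting $(a,b)$, the observation that maximality forces every nonzero congruence of $\alg A/\th_{a,b}$ to contain the image of $(a,b)$ (so the quotient is subdirectly irreducible by the paper's Lemma 2.2), and the separation $\bigwedge_{a\ne b}\th_{a,b}=0_\alg A$ are exactly Birkhoff's proof; for part (2), writing $\alg A \cong \alg B/\th$ with $\alg B \le \prod_{i\in I}\alg A_i$, $\alg A_i \in \vv K$, upgrading finite meet-irreducibility of $\th$ to meet-primeness via distributivity of $\Con B$, and channelling this through the filter $\mathcal F=\{J\sse I:\th_{\bar J}\not\le\th\}$ extended to an ultrafilter is J\'onsson's proof, and every step you give (the identity $\th_{\overline{J\cap K}}=\th_{\bar J}\meet\th_{\bar K}$, the directedness of $\{\th_J:J\in U\}$, the identification of $\bigcup_{J\in U}\th_J$ with the kernel of $b\mapsto (b_i)_{i\in I}/U$) checks out. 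Two cosmetic remarks only: in (1) the trivial algebra needs the empty-product convention, which your restriction to nontrivial $\alg A$ implicitly acknowledges; and in (2), for $\mathcal F$ to be a filter containing $I$ you need $\th\ne 1_{\alg B}$, which holds because finitely subdirectly irreducible algebras are nontrivial under the paper's definition.
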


\subsection{Relative congruences}

If $\vv Q$ is a quasivariety and $\alg A \in \vv Q$, a \Def{$\vv Q$-congruence} (of $\alg A$ is a congruence $\th$ such that $\alg A/\th \in \vv Q$; as the intersection of any family of $\vv Q$-congruences of $\alg A$ is a $\vv Q$-congruence, they
form an algebraic lattice $\op{Con}_\vv Q(\alg A)$. If  $H \sse A^2$  the smallest   $\vv Q$-congruence of $\alg A$ containing $H$ will be denoted by $\theta_{\vv Q}(H)$; when $H = \{(a,b)\}$, we just write $\theta_{\vv Q}(a,b)$.
If $\alg A \in \vv Q$ we say that $\alg A$ is \Def{relatively subdirectly irreducible} in $\vv Q$ (or briefly \Def{$\vv Q$-irreducible}) if for any subdirect embedding
$$
\alg B \le_{sd} \prod_{i \in I} \alg A_i
$$
for which $\alg A_i \in \vv Q$ for all $i \in I$, there is an $i \in I$ such that $\alg B$ and $\alg A_i$ are isomorphic. The concept of {\bf finitely $\vv Q$-irreducible} is defined in an obvious way.

\begin{lemma}\label{lemma: q-irreducible} Let $\vv Q$ be a quasivariety and $\alg A\in \vv Q$. Then the following are equivalent:
\begin{enumerate}
\item $\alg A$ is $\vv Q$-irreducible;
\item $\op{Con}_\vv Q(\alg A)$ has exactly one minimal congruence above $0_\alg A$;
\item there are $a,b \in A$ such that $(a,b) \in \th$ for all $\th \in \op{Con}_\vv Q (\alg A)$, $\th >0_\alg A$.
\end{enumerate}
\end{lemma}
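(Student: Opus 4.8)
The plan is to prove the equivalence of the three conditions characterizing $\vv Q$-irreducibility by adapting the familiar argument for ordinary subdirect irreducibility (the preceding unnumbered Lemma) to the relative setting, replacing $\op{Con}(\alg A)$ everywhere with the lattice $\op{Con}_\vv Q(\alg A)$ of $\vv Q$-congruences. The key structural fact I would rely on is that $\op{Con}_\vv Q(\alg A)$ is an algebraic lattice whose bottom element is $0_\alg A$ (since $\alg A \in \vv Q$ guarantees $\alg A/0_\alg A \cong \alg A \in \vv Q$), and that subdirect decompositions of $\alg A$ into members of $\vv Q$ correspond exactly to families of $\vv Q$-congruences meeting to $0_\alg A$.

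First I would establish the equivalence of (2) and (3), which is purely lattice-theoretic and does not yet invoke subdirect embeddings. For (2) $\Rightarrow$ (3): if $\mu$ is the unique minimal $\vv Q$-congruence above $0_\alg A$, then any nonzero $\th \in \op{Con}_\vv Q(\alg A)$ lies above some minimal $\vv Q$-congruence (using that $\op{Con}_\vv Q(\alg A)$ is algebraic, so every nonzero element dominates an atom), hence above $\mu$; picking any $(a,b) \in \mu \ssm 0_\alg A$ gives a pair lying in every nonzero $\vv Q$-congruence. For (3) $\Rightarrow$ (2): the pair $(a,b)$ generates a smallest nonzero $\vv Q$-congruence $\theta_\vv Q(a,b)$ contained in every nonzero $\vv Q$-congruence, which is therefore the unique minimal one. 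I would remark that the parenthetical reformulation in (2) — that $0_\alg A$ is strictly meet irreducible in $\op{Con}_\vv Q(\alg A)$ — follows because a unique atom above the bottom is equivalent to the bottom being meet irreducible in an algebraic lattice.

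Next I would connect these to the defining property (1). For (1) $\Rightarrow$ (3), I argue by contraposition: if no such pair $(a,b)$ exists, then the meet of all nonzero $\vv Q$-congruences is $0_\alg A$, so the family $(\th_i)$ of all nonzero $\vv Q$-congruences witnesses a subdirect embedding $\alg A \le_{sd} \prod_i \alg A/\th_i$ with each $\alg A/\th_i \in \vv Q$ and none of the factors isomorphic to $\alg A$ (each $\th_i > 0_\alg A$ collapses something), contradicting $\vv Q$-irreducibility. For (2)/(3) $\Rightarrow$ (1), given any subdirect embedding $\alg A \le_{sd} \prod_{i\in I}\alg A_i$ with $\alg A_i \in \vv Q$, the associated kernels $\th_i \in \op{Con}_\vv Q(\alg A)$ satisfy $\bigwedge_i \th_i = 0_\alg A$; since the pair $(a,b)$ from (3) lies in every nonzero $\vv Q$-congruence, some $\th_i$ must equal $0_\alg A$, whence $p_i$ is an isomorphism of $\alg A$ onto $\alg A_i$.

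The main obstacle — and the only place where the relative setting genuinely differs from the absolute one — is verifying that the congruences arising from a $\vv Q$-subdirect decomposition are themselves \emph{$\vv Q$}-congruences, and conversely that a family of $\vv Q$-congruences meeting to $0_\alg A$ yields a subdirect embedding whose factors lie in $\vv Q$. The first direction is immediate since the factors $\alg A_i$ are assumed in $\vv Q$ and $\alg A/\th_i \cong p_i(\alg A) = \alg A_i$; the second requires that each $\alg A/\th_i$ be in $\vv Q$, which is exactly what membership of $\th_i$ in $\op{Con}_\vv Q(\alg A)$ provides. Once this bookkeeping is in place the argument is a routine transcription of the classical proof, so I expect the write-up to be short.
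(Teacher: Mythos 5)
Your overall strategy---translating subdirect embeddings into members of $\vv Q$ into families of $\vv Q$-congruences intersecting to $0_\alg A$ and back---is exactly the standard argument this lemma rests on (the paper itself offers no proof, treating it as the relative analogue of the classical unnumbered lemma). However, your step (2) $\Rightarrow$ (3) contains a genuine error: the lattice-theoretic claim that in an algebraic lattice every nonzero element dominates an atom is false. The congruence lattice of the group $\mathbb{Z}$ is algebraic and has no atoms at all; worse for your argument, take $\vv Q$ to be the variety of abelian groups and $\alg A = \mathbb{Z} \times \mathbb{Z}_p$, so that $\op{Con}_\vv Q(\alg A) = \op{Con}(\alg A)$. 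This lattice is algebraic and has exactly one atom, namely the congruence corresponding to the subgroup $0 \times \mathbb{Z}_p$, yet the nonzero congruence corresponding to $\mathbb{Z} \times 0$ does not lie above it. Thus $\alg A$ satisfies (2) under your reading (``exactly one atom''), but it fails (3), and it fails (1) as well: it is a subdirect product of $\mathbb{Z}$ and $\mathbb{Z}_p$ via those two congruences. So under your interpretation, (2) $\Rightarrow$ (3) is not merely unproven; it is false.

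The repair is to read (2) the way the parenthetical remark in the paper's absolute version forces: $0_\alg A$ is strictly (completely) meet irreducible in $\op{Con}_\vv Q(\alg A)$, equivalently the nonzero $\vv Q$-congruences have a \emph{least} element $\mu$ (the $\vv Q$-monolith), not merely a unique minimal one. With that reading, (2) $\Rightarrow$ (3) is immediate---pick $(a,b) \in \mu$ with $a \neq b$; every nonzero $\vv Q$-congruence contains $\mu$---and no appeal to algebraicity or atoms is needed anywhere; closure of $\op{Con}_\vv Q(\alg A)$ under arbitrary intersections suffices. Note that your direction (3) $\Rightarrow$ (2) already proves this stronger least-element form, since $\theta_{\vv Q}(a,b)$ is contained in every nonzero $\vv Q$-congruence. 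The two implications linking (1) with (3) via kernels of projections are correct as written, with one caveat: ``isomorphic'' in the definition of $\vv Q$-irreducibility must be read, as is standard and as you do in the (3) $\Rightarrow$ (1) direction, as ``the projection $p_i$ is an isomorphism''; your parenthetical observation that each $\th_i > 0_\alg A$ collapses something then correctly shows that no projection is injective, which is what the contrapositive of (1) $\Rightarrow$ (3) needs.
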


The minimal congruence mentioned in Lemma \ref{lemma: q-irreducible} is called the \Def{$\vv Q$-monolith} of $\alg A$. Observe also that, since $\op{Con}_\vv Q(\alg A)$ is a meet subsemilattice of $\Con A$, if $\alg A$ is subdirectly irreducible and
$\alg A \in \vv Q$, then $\alg A$ is $\vv Q$-irreducible. If $\vv Q$ is any subquasivariety we denote by $\vv Q_{ir}$  and $\vv Q_{fir}$ the classes of $\vv Q$-irreducible algebras and finitely $\vv Q$-irreducible algebras
 in $\vv Q$, respectively.
We have the equivalent of Birkhoff's and J\'onsson's results for quasivarieties:

\begin{theorem}\label{quasivariety} Let $\vv Q$ be any quasivariety.
\begin{enumerate}
\item (Mal'cev \cite{Malcev1956}) Every $\alg A \in \vv Q$ is subdirectly embeddable in a product of algebras  in $\vv Q_{ir}$.
\item (Czelakowski-Dziobiak \cite{CzelakowskiDziobiak1990}) If $\vv Q = \QQ(\vv K)$, then $\vv Q_{fir} \sse \II\SU\PP_u(\vv K)$.
\end{enumerate}
\end{theorem}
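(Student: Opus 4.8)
The plan is to prove the two parts separately, each mirroring its absolute analogue (Birkhoff's theorem and Jónsson's Lemma) but carried out inside the lattice $\op{Con}_\vv Q(\alg A)$ of relative congruences rather than $\Con A$. For part (1), I would fix $\alg A \in \vv Q$ and, for each pair $a \neq b$ in $A$, run the usual Zorn's lemma argument inside $\op{Con}_\vv Q(\alg A)$. The family of $\vv Q$-congruences omitting $(a,b)$ is nonempty (it contains $0_\alg A$) and, because $\op{Con}_\vv Q(\alg A)$ is algebraic, is closed under directed joins: a directed join of relative congruences none of which contains $(a,b)$ is their set-theoretic union and so still omits $(a,b)$. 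Zorn then yields a $\vv Q$-congruence $\psi_{a,b}$ maximal with respect to omitting $(a,b)$. Using the correspondence between $\vv Q$-congruences of $\alg A/\psi_{a,b}$ and $\vv Q$-congruences of $\alg A$ above $\psi_{a,b}$, every nonzero $\vv Q$-congruence of $\alg A/\psi_{a,b}$ collapses $a/\psi_{a,b}$ and $b/\psi_{a,b}$, so by Lemma \ref{lemma: q-irreducible}(3) the quotient $\alg A/\psi_{a,b}$ is $\vv Q$-irreducible. Since $\bigcap_{a\neq b}\psi_{a,b}=0_\alg A$, the induced map is a subdirect embedding of $\alg A$ into $\prod_{a \neq b}\alg A/\psi_{a,b}$ with all factors in $\vv Q_{ir}$.

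For part (2), write $\vv Q = \II\SU\PP\PP_u(\vv K)$ and fix a finitely $\vv Q$-irreducible $\alg A \in \vv Q$. Embedding $\alg A$ into a product of members of $\PP_u(\vv K)$ and taking the kernels $\theta_i$ of the projections gives a family of $\vv Q$-congruences with $\bigcap_{i\in I}\theta_i = 0_\alg A$ and $\alg A/\theta_i \in \SU\PP_u(\vv K)$. For any ultrafilter $U$ on $I$ the relation $\theta_U = \{(a,b) : \{i : (a,b)\in\theta_i\}\in U\}$ is again a $\vv Q$-congruence, and $\alg A/\theta_U$ embeds into the ultraproduct $\prod_{i}\alg A/\theta_i / U$, which lies in $\SU\PP_u(\vv K)$ because $\PP_u\PP_u\sse\II\PP_u$ and $\PP_u\SU\sse\SU\PP_u$. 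It therefore suffices to choose $U$ so that $\theta_U = 0_\alg A$, that is, so that $U$ contains each set $S_{a,b} = \{i : (a,b)\notin\theta_i\}$ for $a \neq b$; such a $U$ exists precisely when the family $\{S_{a,b} : a\neq b\}$ has the finite intersection property.

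The main step, and the place where finite $\vv Q$-irreducibility enters decisively, is verifying this finite intersection property. If $S_{a_1,b_1}\cap\dots\cap S_{a_n,b_n}=\emptyset$, then setting $I_k = \{i : (a_k,b_k)\in\theta_i\}$ we would have $\bigcup_k I_k = I$, whence the $\vv Q$-congruences $\psi_k = \bigcap_{i \in I_k}\theta_i$ satisfy $\psi_1\wedge\dots\wedge\psi_n = \bigcap_{i\in I}\theta_i = 0_\alg A$ while each $\psi_k \geq \theta_\vv Q(a_k,b_k) > 0_\alg A$. This contradicts the meet-irreducibility of $0_\alg A$ in $\op{Con}_\vv Q(\alg A)$ that is equivalent (via the finite analogue of Lemma \ref{lemma: q-irreducible}) to finite $\vv Q$-irreducibility, once the binary case is promoted to the $n$-ary case by induction. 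Granting the finite intersection property, an ultrafilter $U$ extending $\{S_{a,b}\}$ exists, $\theta_U = 0_\alg A$, and the resulting embedding witnesses $\alg A \in \II\SU\PP_u(\vv K)$. I expect the bookkeeping with the class operators $\PP_u\PP_u$ and $\PP_u\SU$, together with the identification of meets in $\op{Con}_\vv Q(\alg A)$ with intersections, to be the only routine-but-delicate points; the conceptual heart is the reduction of the finite intersection property to finite meet-irreducibility of $0_\alg A$.
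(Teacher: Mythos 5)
The paper itself gives no proof of Theorem \ref{quasivariety}: both parts are stated as known results, attributed to Mal'cev and to Czelakowski--Dziobiak respectively, so there is no internal argument to compare yours against. Your proof is, however, correct and self-contained, and it follows the standard route of the cited sources: part (1) is the relative analogue of Birkhoff's argument (Zorn's lemma inside $\op{Con}_\vv Q(\alg A)$, then the correspondence between $\vv Q$-congruences of $\alg A/\psi_{a,b}$ and those of $\alg A$ above $\psi_{a,b}$, then Lemma \ref{lemma: q-irreducible}(3)), and part (2) is the ultrafilter argument in which finite $\vv Q$-irreducibility, read as finite meet-irreducibility of $0_\alg A$ in $\op{Con}_\vv Q(\alg A)$, yields the finite intersection property of the sets $S_{a,b}$; the absence of the operator $\HH$ is exactly why no congruence distributivity is needed, in contrast with J\'onsson's Lemma. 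Two small points deserve care. First, in (1), closure of $\{\th \in \op{Con}_\vv Q(\alg A) : (a,b) \notin \th\}$ under directed joins is not a formal consequence of algebraicity of the abstract lattice alone; the clean justification is either that directed unions of $\vv Q$-congruences are again $\vv Q$-congruences (because quasiequations have only finitely many premises), or that the principal relative congruence $\theta_\vv Q(a,b)$ is compact, so if it lay below a directed join it would lie below some member --- this is the same device the paper uses in the proof of Proposition \ref{pr-maxcon}. Second, in (2), if some $I_k$ happens to be empty then $\psi_k = 1_\alg A$; this is harmless since finitely $\vv Q$-irreducible algebras are nontrivial, so $\psi_k > 0_\alg A$ still holds and the contradiction with meet-irreducibility of $0_\alg A$ goes through unchanged.
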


First we observe:

\begin{lemma} \label{lemma: Q(A) variety} Let $\alg A$ be an  algebra, such that $\VV(\alg A)$ is congruence distributive. Then
$\QQ(\alg A) = \VV(\alg A)$ if and only if every subdirectly irreducible algebra in $\HH\SU\PP_u(\alg A)$ is  in $\II\SU\PP_u\alg A$.
\end{lemma}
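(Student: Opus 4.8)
The plan is to prove the biconditional in Lemma~\ref{lemma: Q(A) variety} by establishing both directions, using the facts that $\QQ(\alg A) \sse \VV(\alg A)$ always holds and that, under the hypothesis $\VV(\alg A)$ congruence distributive, J\'onsson's Lemma (Theorem~\ref{birkhoff}(2)) gives us tight control over the subdirectly irreducible members of $\VV(\alg A)$. Throughout I write $\vv V = \VV(\alg A)$ and $\vv Q = \QQ(\alg A)$.

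For the forward direction, assume $\vv Q = \vv V$. Let $\alg B$ be subdirectly irreducible with $\alg B \in \HSP_u(\alg A)$. Since $\HSP_u(\alg A) \sse \HSP(\alg A) = \vv V = \vv Q$, we have $\alg B \in \vv Q$, and because $\alg B$ is subdirectly irreducible it is $\vv Q$-irreducible (by the remark following Lemma~\ref{lemma: q-irreducible}, as $\op{Con}_\vv Q$ is a meet-subsemilattice of $\Con B$). Now apply the Czelakowski–Dziobiak theorem (Theorem~\ref{quasivariety}(2)) to $\vv Q = \QQ(\alg A)$: since $\alg B \in \vv Q_{fir}$ (subdirectly irreducible implies finitely so), we get $\alg B \in \II\SU\PP_u(\alg A)$, which is exactly the desired conclusion.

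For the converse, assume every subdirectly irreducible algebra in $\HSP_u(\alg A)$ lies in $\II\SU\PP_u(\alg A)$; I want $\vv V \sse \vv Q$. Here is where I would lean on congruence distributivity. Take any $\alg C \in \vv V$. By Birkhoff (Theorem~\ref{birkhoff}(1)) it suffices to handle the subdirectly irreducible members of $\vv V$, since every algebra subdirectly embeds in a product of them and quasivarieties are closed under $\II\SU\PP$; so let $\alg C$ be subdirectly irreducible in $\vv V_{si}$. By J\'onsson's Lemma, $\vv V_{fsi} \sse \HSP_u(\alg A)$, and since subdirectly irreducible algebras are finitely subdirectly irreducible, $\alg C \in \vv V_{fsi} \sse \HSP_u(\alg A)$. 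By hypothesis $\alg C \in \II\SU\PP_u(\alg A) \sse \vv Q$. Thus $\vv V_{si} \sse \vv Q$, and since $\vv Q$ is closed under $\II\SU\PP$ and every member of $\vv V$ embeds subdirectly into a product of algebras from $\vv V_{si}$, we conclude $\vv V \sse \vv Q$. Combined with $\vv Q \sse \vv V$ this gives equality.

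The step I expect to carry the real weight is the converse, and specifically the invocation of J\'onsson's Lemma to force $\vv V_{si} \sse \HSP_u(\alg A)$; this is precisely what congruence distributivity buys us and is the reason the hypothesis appears in the statement. Without it one cannot pass from an arbitrary subdirectly irreducible in $\vv V$ back down to $\HSP_u$ of the generating algebra, and the equivalence would fail. The remaining bookkeeping---that subdirect irreducibility implies finite subdirect irreducibility, and that the class operators compose as claimed---is routine and I would cite the relations among operators recorded earlier rather than reprove them.
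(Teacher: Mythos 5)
Your proof is correct and takes essentially the same route as the paper's: the forward direction passes from subdirect irreducibility to relative irreducibility and invokes the Czelakowski--Dziobiak theorem for $\QQ(\alg A)$, and the converse combines J\'onsson's Lemma (where congruence distributivity is used) with Birkhoff's subdirect decomposition and closure of $\QQ(\alg A)$ under $\II\SU\PP$. The paper's proof is merely a more compressed version of the same argument, concluding with the operator computation $\VV(\alg A) \sse \II\SU\PP\II\SU\PP_u(\alg A) \sse \II\SU\PP\PP_u(\alg A) = \QQ(\alg A)$.
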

\begin{proof}
Suppose first that $\QQ(\alg A) = \VV(\alg A)$; since any subdirectly irreducible algebra $\alg B$ in $\HH\SU\PP_u(\alg A)$ is $\vv Q(\alg A)$-irreducible  by Theorem \ref{quasivariety} $\alg B \in \II\SU\PP_u(\alg A)$.

Conversely assume that every subdirectly irreducible algebra in $\HH\SU\PP_u(\alg A)$ is in $\II\SU\PP_u\alg A$. Since $\VV(\alg A)$ is congruence distributive, by Theorem \ref{birkhoff}(2) every subdirectly irreducible algebra in $\VV(\alg A)$ is in $\HH\SU\PP_u(\alg A)$, thus in $\II\SU\PP_u\alg A$. Now every algebra in $\VV(\alg A)$ is subdirectly embeddable in a product of subdirectly irreducible algebras in $\VV(\alg A)$ (Theorem \ref{birkhoff}(1)). Therefore, $\VV(\alg A) \sse \II\SU\PP\II\SU\PP_u(\alg A) \sse \II \SU\PP\PP_u(\alg A) = \QQ(\alg A)$ and thus equality holds.
\end{proof}

Next:

\begin{proposition}\label{pr-maxcon}
	Let $\vv Q$ be a quasivariety, $\alg A \in \vv Q$, and $\alg B \leq \alg A$. Then, there exists a maximal $\vv Q$-congruence $\th$ on $\alg A$ such that
	\begin{align*}
	\th|_\alg B = 0_\alg B.
	\end{align*}
Moreover if $\alg B$ is $\vv Q$-irreducible, then $\alg A/\th$ is $\vv Q$-irreducible.
\end{proposition}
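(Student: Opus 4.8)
The plan is to obtain $\th$ by a direct application of Zorn's lemma and then to identify the relative monolith of $\alg A/\th$ explicitly, using the $\vv Q$-irreducibility of $\alg B$ together with the relative correspondence theorem.

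First I would consider the set
$$
S = \{\psi \in \op{Con}_\vv Q(\alg A) : \psi \cap B^2 = 0_\alg B\},
$$
ordered by inclusion. It is nonempty, since $\alg A/0_\alg A = \alg A \in \vv Q$ gives $0_\alg A \in S$ and $0_\alg A \cap B^2 = 0_\alg B$. To invoke Zorn's lemma it suffices to show that the union $\psi = \bigcup_i \psi_i$ of a chain $\{\psi_i\}$ in $S$ again lies in $S$. The avoidance condition is immediate: if $(b,b') \in \psi \cap B^2$, then $(b,b') \in \psi_i$ for some $i$, whence $b = b'$. The only point requiring care — and the main technical step of the first part — is that directed unions of $\vv Q$-congruences are again $\vv Q$-congruences. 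This holds because each defining quasiequation $\bigwedge_k p_k \ap q_k \Rightarrow s \ap t$ of $\vv Q$ is preserved under directed unions: if an assignment $h$ into $\alg A$ satisfies $(p_k(h),q_k(h)) \in \psi$ for all the finitely many $k$, then by directedness all these pairs already lie in a single $\psi_j$, so $\alg A/\psi_j \in \vv Q$ forces $(s(h),t(h)) \in \psi_j \sse \psi$. Hence $\alg A/\psi \in \vv Q$, so $\psi \in S$, and Zorn's lemma yields a maximal $\th \in S$.

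For the second part assume $\alg B$ is $\vv Q$-irreducible and let $c,d \in B$ with $c \neq d$ be a pair witnessing condition (3) of Lemma \ref{lemma: q-irreducible}, so that $(c,d)$ belongs to every nontrivial $\vv Q$-congruence of $\alg B$. I would set $\th^* = \theta_\vv Q(\th \cup \{(c,d)\})$, the least $\vv Q$-congruence of $\alg A$ containing $\th$ and the pair $(c,d)$. Since $c \neq d$ lie in $B$ and $\th \cap B^2 = 0_\alg B$, we have $(c,d) \notin \th$, so $\th^* > \th$. The claim is that $\th^*$ is the \emph{least} $\vv Q$-congruence strictly above $\th$.

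To prove the claim, take any $\vv Q$-congruence $\psi > \th$. By maximality of $\th$ in $S$ we have $\psi \cap B^2 \neq 0_\alg B$. Now $\psi \cap B^2$ is a congruence of $\alg B$, and the canonical map gives an embedding $\alg B/(\psi \cap B^2) \hookrightarrow \alg A/\psi$; since $\alg A/\psi \in \vv Q$ and $\vv Q$ is closed under $\SU$, the quotient $\alg B/(\psi \cap B^2)$ lies in $\vv Q$, so $\psi \cap B^2$ is a nontrivial $\vv Q$-congruence of $\alg B$. By the choice of $(c,d)$ it then contains $(c,d)$, whence $(c,d) \in \psi$ and, together with $\th \sse \psi$, also $\th^* \sse \psi$. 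Thus every $\vv Q$-congruence strictly above $\th$ contains $\th^*$, so $\th^*$ is the unique minimal $\vv Q$-congruence above $\th$. Via the correspondence between $\op{Con}_\vv Q(\alg A/\th)$ and the interval $[\th,1_\alg A]$ of $\op{Con}_\vv Q(\alg A)$, the image $\th^*/\th$ is then the unique minimal $\vv Q$-congruence of $\alg A/\th$ above $0$, i.e. $\alg A/\th$ is $\vv Q$-irreducible by Lemma \ref{lemma: q-irreducible}. I expect the preservation of the $\vv Q$-congruence property under directed unions to be the only real obstacle; the remainder is bookkeeping with the relative correspondence theorem and the characterization of $\vv Q$-irreducibility.
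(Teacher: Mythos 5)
Your proof is correct and follows essentially the same route as the paper's: Zorn's lemma applied to the set of $\vv Q$-congruences of $\alg A$ restricting trivially to $\alg B$, followed by the observation that for any $\vv Q$-congruence $\psi > \th$ the restriction $\psi \cap B^2$ is a nontrivial $\vv Q$-congruence of $\alg B$ (via the embedding of $\alg B/(\psi \cap B^2)$ into $\alg A/\psi$) and therefore contains the witnessing pair. The only differences are cosmetic: the paper certifies the upper bound of a chain by taking the join in the algebraic lattice $\op{Con}_\vv Q(\alg A)$ and using compactness of principal relative congruences, where you instead verify directly that directed unions preserve quasiequations, and the paper concludes by applying Lemma \ref{lemma: q-irreducible}(3) to the pair $(a/\th, b/\th)$ rather than exhibiting the relative monolith $\th^*$ explicitly.
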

\begin{proof}
	Let $V = \{\a \in \op{Con}_{\vv Q}\alg A: \a|_\alg B = 0_\alg B\}$; since $0_\alg A \in V$, $V \neq \varnothing$.
	Let $C$ be an ascending chain in $V$, and let $\th := \Join C$; 	
	as $\vv {Con}_{\vv Q}\alg A$ is an algebraic lattice, if for some $a,b \in \alg B$, $a \neq b$ and $(a,b) \in\th$, then there is an  $\a \in C$ such that $(a,b) \in \a$, which contradicts $\a \in V$. By Zorn's Lemma
$V$ has a maximal congruence $\th$.
	
	If $\alg B$ is $\vv Q$-irreducible, then (by Lemma \ref{lemma: q-irreducible}(3)) there are two elements $a,b \in \alg B$ such that $(a,b) \in \th'$ for all $\th' >0_\alg B$.  Now a relative proper congruence $\a$ of $\alg A/\th$ is
 of the form $\th'/\th$ for some $\th' > \th$. Since $\th$ is a maximal in $V$  we must have $(a,b) \in \th'$ and thus $(a/\th,b/\th) \in \th'/\th$;  by Lemma \ref{lemma: q-irreducible}(3), $\alg A/\th$ is $\vv Q$-irreducible.
\end{proof}

\subsection{Relatively finitely presented algebras.}

Let $\sigma$ be a type and let $\vv K$ be a class of algebras of type $\sigma$; let $X$ be a set of variables and $\Sigma$ be a set of equations of type $\sigma$ in variables from $X$.  We say that
the pair $(X,\Sigma)$ is \Def{ a $\vv K$-presentation of} $\alg A \in \vv K$ if there exists a function $\a: X \longrightarrow A$ such that
\begin{enumerate}
\ib $\a(X)$ generates $\alg A$ and for any $p(\vuc xn) \app q(\vuc xn) \in \Sigma$,
$$
p(\a(x_1),\dots,\a(x_n)) = q(\a(x_1),\dots,\a(x_n));
$$
\ib if $\alg B \in \vv K$ and $\b: X \longrightarrow B$ such that for any $p(\vuc xn) \app q(\vuc xn) \in \Sigma$, $p(\b(x_1),\dots,\b(x_n)) = q(\b(x_1),\dots,\b(x_n))$, then
there exists a homomorphism $f:\alg A \longrightarrow \alg B$ such that $f(\a(x)) = \b(x)$ for all $x \in X$ (Figure \ref{K-presentation}).
\end{enumerate}

\begin{figure}[htbp]
\begin{center}
\begin{tikzpicture}[scale=.7]
\node[left] at (0,3) {\footnotesize  $\alg X$};
\node[right] at (3,3) {\footnotesize  $\alg A$};
\node[right] at (3,0) {\footnotesize  $\alg B$};
\draw[->] (0,3)-- (3,3);
\draw[->] (0,2.8) -- (3,0.2);
\draw[->][blue] (3.3,2.8) -- (3.3,.2);
\node[above] at (1.5,3) {\footnotesize $\a$};
\node[below] at (1.3,1.6) {\footnotesize $\b$};
\node[right] at (3.3,1.5) {\footnotesize  $f$};
\end{tikzpicture}
\caption{}\label{K-presentation}
\end{center}
\end{figure}

Let $F(X,\Sigma)$ be a representation of  $\vv Q$-irreducible algebra $\alg A$ and let $(p,q)$ any pair of elements generating the $\vv Q$-monolith of $\alg A$.
We call the quasiidentity
$$
\Sigma \Rightarrow p \app q
$$
a \Def{characteristic quasiidentity} of $\alg A$. We denote it by $\op{ch}(\alg A)$ the set of characteristic quasiidentities of $\alg A$.

\begin{lemma}\label{lemma: characteristic} If an algebra $\alg A \in \vv Q$ is $\vv Q$-irreducible  and $\vv Q$-finitely presented, then for every algebra $\alg B \in \vv Q$ and for every $\Phi \in \op{ch}(\alg A)$
$\alg B \not\vDash \Phi \iff \alg A \in \II\SU(\alg B)$.
\end{lemma}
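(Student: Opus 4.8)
The plan is to prove the two implications separately, using the universal mapping property of the $\vv Q$-presentation $(X,\Sigma)$ for one direction and the $\vv Q$-irreducibility of $\alg A$ for the other. Throughout, I write $\a\colon X \to A$ for the map witnessing the presentation, and for a term $t$ I abbreviate by $t^\alg A$ the element $t(\a(x_1),\dots,\a(x_n))$ of $\alg A$; thus the relations in $\Sigma$ assert exactly that $p'^\alg A = q'^\alg A$ for each $p' \app q' \in \Sigma$, and $(p^\alg A,q^\alg A)$ generates the $\vv Q$-monolith of $\alg A$. Since the monolith lies strictly above $0_\alg A$, we have $p^\alg A \neq q^\alg A$, a fact both directions will use.

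For the implication $\alg A \in \II\SU(\alg B) \Rightarrow \alg B \not\vDash \Phi$, I would fix an embedding $e\colon \alg A \hookrightarrow \alg B$ and consider the assignment $h\colon X \to B$ defined by $h(x) = e(\a(x))$. Because $e$ is a homomorphism, $h(t) = e(t^\alg A)$ for every term $t$; hence every equation of $\Sigma$ is satisfied by $h$ (it already holds in $\alg A$ and is carried across by $e$), while $h(p) = e(p^\alg A) \neq e(q^\alg A) = h(q)$ by injectivity of $e$. Thus $h$ witnesses the failure of $\Phi$ in $\alg B$.

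For the converse, suppose $\alg B \not\vDash \Phi$ and fix an assignment $h\colon X \to B$ satisfying every equation of $\Sigma$ but with $h(p) \neq h(q)$. Since $\alg B \in \vv Q$, the defining property of the $\vv Q$-presentation yields a homomorphism $f\colon \alg A \to \alg B$ with $f(\a(x)) = h(x)$, and the point is to show that $f$ is injective. Its kernel $\ker f$ is a $\vv Q$-congruence of $\alg A$, since $\alg A/\ker f$ embeds into $\alg B \in \vv Q$ and $\vv Q$ is closed under $\SU$. As $f(p^\alg A) = h(p) \neq h(q) = f(q^\alg A)$, the monolith generator $(p^\alg A, q^\alg A)$ is not in $\ker f$; but by Lemma \ref{lemma: q-irreducible}(3) this pair lies in every $\vv Q$-congruence strictly above $0_\alg A$, so necessarily $\ker f = 0_\alg A$ and $f$ is an embedding, giving $\alg A \in \II\SU(\alg B)$.

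The argument is essentially routine once the bookkeeping between terms and their values under $\a$ is in place; the only genuinely load-bearing step is the injectivity of $f$ in the converse direction, where the combined use of $\vv Q$-irreducibility and the fact that $(p^\alg A, q^\alg A)$ generates the monolith is exactly what forces the kernel to collapse to $0_\alg A$. Without $\vv Q$-irreducibility one would only obtain $\alg A \in \HH\SU(\alg B)$, so I expect this to be the crux of the matter, while the finiteness of $\Sigma$ coming from $\vv Q$-finite presentability is needed only to guarantee that $\Phi = \Sigma \Rightarrow p \app q$ is a bona fide quasiequation.
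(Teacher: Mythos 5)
Your proof is correct and takes essentially the same route as the paper's: both directions rest on the universal property of the $\vv Q$-presentation (the paper phrases it via the free algebra and the Second Homomorphism Theorem) together with the fact that the monolith-generating pair $(p^{\alg A},q^{\alg A})$ lies in every nonzero $\vv Q$-congruence. If anything, your explicit kernel argument for the injectivity of $f$ is a more careful rendering of the paper's remark that ``any proper homomorphic image of $\alg A$ must satisfy $\Phi$,'' but it is the same idea, not a different proof.
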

\begin{proof} The right-to-left implication is obvious since $\alg A \not\vDash \Phi$.  Assume then that $\alg B \not\vDash \Phi$ and let
$$
\Phi = \{ p_i\app q_i: i \in I\} \Rightarrow p \app q.
$$
Then there are $\vuc bn \in B$ such that $p_i(\vuc bn) = q_i(\vuc bn)$ but $p(\vuc bn) \ne p(\vuc bn)$. Let $g$ be the homomorphism extending the assignment $x_i \longmapsto b_i$; then $\th_\vv Q(\Sigma) \sse \op{ker}(g)$ so by the Second Homomorphism Theorem there is a homomorphism $f: \alg A \longrightarrow \alg B$ such that $f(a_i) = b_i$.  Observe that  $f(\alg A) \in \vv Q$ (since it is a subalgebra of $\alg B \in \vv Q$) and $f(\alg A) \not\vDash \Phi$; but since the pair $(p,q)$ generates the monolith of $\alg A$ any proper homomorphic image of $\alg A$ must satisfy $\Phi$. Hence $f(\alg A) \cong \alg A$ and thus $\alg A \in \II\SU(\alg B)$.
\end{proof}

Lemma \ref{lemma: characteristic} implies that all the characteristic quasiidentities are interderivable in the Birkhoff's sense, i.e. they define the same quasivariety; therefore the choice of $\Sigma,p,q$ does not matter.
Therefore, if $\vv Q$ is a quasivariety for any $(X,\Sigma)$ there is exactly one algebra (up to isomorphism)  $\alg A \in \vv Q$ that is $\vv Q$-presented by $(X,\Sigma)$; we will denote that algebra
by $\alg F_\vv Q(X,\Sigma)$ and we may assume that $\a(X)=X$, i.e. $\a$ is the identity mapping. Note that  $\alg F_\vv Q(X,\emptyset) = \alg F_\vv Q(X)$. An algebra $\alg A \in \vv Q$ is \Def{ $\vv Q$-finitely presented} if $\alg A \cong \alg F_\vv Q(X,\Sigma)$ for some finite sets $X,\Sigma$.

If $p \app q$ is an equation and $\Sigma$ is a set of equations in variables from $X$, then $p \app q $ is a \Def{$\vv K$-consequence of $\Sigma$} (in symbols, $\Sigma \vdash_\vv K p\app q$), if for every $\alg B \in \vv K$ and every map $\beta: X \longrightarrow \alg B$,
\[
\alg B \models \beta(\Sigma) \text{ entails } \alg B \models \beta(p) \app \beta(q).
\]

\begin{theorem}[{\cite{Malcev1973}, Section 5}]
	Let $\alg A  \in \vv K$ be an algebra $\vv K$-presented by $(X,\Sigma)$ and map $\alpha:X \longrightarrow \alg A$. Then for any terms $p$ and $q$ in variables from $X$,
	\[
	p(\alpha(\vec x)) = q(\alpha(\vec x)) \iff \Sigma \vdash_\vv K p(\vec x) \app q(\vec x).
	\]
\end{theorem}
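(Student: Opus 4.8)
The plan is to prove the two implications separately, in each case exploiting exactly one of the two defining clauses of a $\vv K$-presentation.

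For the direction $(\Leftarrow)$, suppose $\Sigma \vdash_\vv K p \app q$. The key observation is that $\alg A$ is itself a member of $\vv K$ on which every equation of $\Sigma$ holds under the assignment $\alpha$: this is precisely the first clause in the definition of a $\vv K$-presentation, which guarantees $p'(\alpha(\vec x)) = q'(\alpha(\vec x))$ for each $p' \app q' \in \Sigma$. Instantiating the definition of $\vv K$-consequence with the witness $\alg B := \alg A$ and $\beta := \alpha$ then yields $p(\alpha(\vec x)) = q(\alpha(\vec x))$ immediately.

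For the converse $(\Rightarrow)$, assume $p(\alpha(\vec x)) = q(\alpha(\vec x))$ holds in $\alg A$. To verify $\Sigma \vdash_\vv K p \app q$, I would fix an arbitrary $\alg B \in \vv K$ and an assignment $\beta \colon X \to B$ with $\alg B \models \beta(\Sigma)$, and show $\beta(p) = \beta(q)$. Since $\beta$ satisfies every equation of $\Sigma$, the universal property (the second clause of the presentation) supplies a homomorphism $f \colon \alg A \to \alg B$ with $f(\alpha(x)) = \beta(x)$ for all $x \in X$. Because homomorphisms commute with term operations, I can compute $\beta(p) = p(f(\alpha(\vec x))) = f(p(\alpha(\vec x)))$ and, symmetrically, $\beta(q) = f(q(\alpha(\vec x)))$; the hypothesis $p(\alpha(\vec x)) = q(\alpha(\vec x))$ then forces $\beta(p) = \beta(q)$. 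As $\alg B$ and $\beta$ were arbitrary, this establishes the consequence.

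There is no serious obstacle here: both directions follow at once from the two clauses of the presentation together with the elementary fact that a homomorphism intertwines term evaluation, namely $f(t(\alpha(\vec x))) = t(f(\alpha(\vec x)))$ for every term $t$. The only point demanding care is the bookkeeping between the two readings of the symbol $p$ — as a formal term evaluated under an assignment, written $\beta(p)$, versus as a term function applied to the generators, written $p(\alpha(\vec x))$ — and the verification that the homomorphism $f$ furnished by the universal property indeed transports the former computation into the latter. Note that uniqueness of $f$ plays no role; only its existence is used.
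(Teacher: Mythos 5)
Your proof is correct. Note that the paper offers no proof of this statement at all: it is quoted directly from Mal'cev's book (\cite{Malcev1973}, Section 5), so there is nothing internal to compare against. Your argument is the standard one and is exactly what the citation points to: the direction from $\Sigma \vdash_\vv K p\app q$ to $p(\alpha(\vec x)) = q(\alpha(\vec x))$ uses only that $\alg A \in \vv K$ and that $\alpha$ satisfies $\Sigma$ (the first clause of the presentation), while the converse uses only the universal property (the second clause) together with the fact that a homomorphism commutes with term evaluation, $f(t(\alpha(\vec x))) = t(f(\alpha(\vec x)))$. Your closing observations are also apt: neither the uniqueness of $f$ nor the requirement that $\alpha(X)$ generate $\alg A$ is needed for this particular equivalence.
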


\begin{corollary} Let $\vv K$ be a class of algebras. Then
\begin{enumerate}
\item if $\Sigma_1$ and $\Sigma_2$ are sets of equations in variables from $X$ and $\Sigma_1 \vdash_\vv K \Sigma_2$, then there is a homomorphism of $\alg F(X,\Sigma_1)$ onto $\alg F(X,\Sigma_2)$;
\item  $(X,\Sigma_1)$ and $(X,\Sigma_2)$ $\vv K$-define the same (up to isomorphism) algebra if and only if
$$
\Sigma_1 \vdash_\vv K \Sigma_2 \text{ and }\Sigma_2 \vdash_\vv K \Sigma_1.
$$
\end{enumerate}
\end{corollary}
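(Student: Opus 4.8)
The plan is to read both parts straight off the universal mapping property in the definition of a $\vv K$-presentation, using the preceding theorem of Mal'cev to decide which equations hold among the canonical generators. Throughout I take $\alpha_i\colon X \to \alg F(X,\Sigma_i)$ to be the canonical assignment, which by the concluding remark we may assume is the identity on $X$; thus $\alpha_i(X)$ generates $\alg F(X,\Sigma_i)$ and satisfies every equation of $\Sigma_i$.

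For part (1) I would produce the surjection $f\colon \alg F(X,\Sigma_1)\tla\alg F(X,\Sigma_2)$ from condition (b) of the presentation of $\alg F(X,\Sigma_1)$, taking as target $\alg B=\alg F(X,\Sigma_2)$ and as assignment $\beta=\alpha_2$. What must be checked is that $\alpha_2$ satisfies the relations $\Sigma_1$ inside $\alg F(X,\Sigma_2)$. By the theorem of Mal'cev above, applied to the presentation $(X,\Sigma_2)$, an equation $p\app q$ holds among the canonical generators $\alpha_2(X)$ exactly when $\Sigma_2\vdash_\vv K p\app q$; and the hypothesis $\Sigma_1\vdash_\vv K\Sigma_2$ is precisely the assertion that every equation of $\Sigma_1$ is such a $\vv K$-consequence of $\Sigma_2$. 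Hence $\alpha_2$ validates $\Sigma_1$, condition (b) supplies a homomorphism $f$ with $f\circ\alpha_1=\alpha_2$, and $f$ is onto because its image already contains the generating set $\alpha_2(X)=f(\alpha_1(X))$ of $\alg F(X,\Sigma_2)$.

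For part (2) the nontrivial implication is immediate from part (1). If $\Sigma_1\vdash_\vv K\Sigma_2$ and $\Sigma_2\vdash_\vv K\Sigma_1$, then part (1) (with the roles of the two presentations exchanged for the second hypothesis) yields generator-respecting surjections $f\colon \alg F(X,\Sigma_1)\tla\alg F(X,\Sigma_2)$ and $g\colon \alg F(X,\Sigma_2)\tla\alg F(X,\Sigma_1)$ with $f\circ\alpha_1=\alpha_2$ and $g\circ\alpha_2=\alpha_1$. Then $g\circ f$ fixes the generators $\alpha_1(X)$ and $f\circ g$ fixes the generators $\alpha_2(X)$, so both composites are the identity; hence $f$ and $g$ are mutually inverse isomorphisms and the two presentations $\vv K$-define the same algebra. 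For the converse, suppose the identity on $X$ extends to an isomorphism $\varphi\colon \alg F(X,\Sigma_1)\to\alg F(X,\Sigma_2)$ with $\varphi\circ\alpha_1=\alpha_2$. Then $\alpha_2(X)$, being the $\varphi$-image of generators that satisfy $\Sigma_1$, satisfies $\Sigma_1$; so by Mal'cev's theorem every equation of $\Sigma_1$ is a $\vv K$-consequence of $\Sigma_2$, that is $\Sigma_1\vdash_\vv K\Sigma_2$. Applying the same argument to $\varphi^{-1}$ gives $\Sigma_2\vdash_\vv K\Sigma_1$.

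The one substantive step is the verification in part (1) that the canonical generators of the target presentation satisfy the defining relations of the source; once that is in hand, everything else is formal, resting only on the universal property and on the elementary fact that a homomorphism whose image contains a generating set is surjective. I therefore expect that verification — and its clean reduction, via the preceding Mal'cev theorem, to the consequence relation $\Sigma_1\vdash_\vv K\Sigma_2$ — to be the heart of the argument, with part (2) following by composing the two maps supplied by part (1).
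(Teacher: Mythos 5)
Your proof is the intended argument: the paper offers no separate proof of this corollary, treating both parts as immediate from the preceding Mal'cev theorem together with the universal property of a $\vv K$-presentation, and that is exactly how you proceed (the verification that the target's canonical generators satisfy the source's relations, surjectivity because the image contains a generating set, the composite-fixes-generators trick in (2), and the reduction of the converse to Mal'cev's theorem via a generator-respecting isomorphism are all correct).

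One point does need flagging. The paper defines $\vdash_{\vv K}$ with the premises on the left: $\Sigma \vdash_{\vv K} p \app q$ means $p \app q$ holds under every assignment satisfying $\Sigma$. Extended to sets, $\Sigma_1 \vdash_{\vv K} \Sigma_2$ therefore says that every equation of $\Sigma_2$ is a $\vv K$-consequence of $\Sigma_1$ --- the opposite of your gloss ``every equation of $\Sigma_1$ is a $\vv K$-consequence of $\Sigma_2$''. What your construction in (1) genuinely requires, so that $\alpha_2$ validates $\Sigma_1$ inside $\alg F(X,\Sigma_2)$, is $\Sigma_2 \vdash_{\vv K} \Sigma_1$; under the hypothesis as the paper's definition reads it, the universal property instead produces a surjection in the opposite direction, $\alg F(X,\Sigma_2) \tla \alg F(X,\Sigma_1)$ (weaker relation sets map onto stronger ones: with $\Sigma_1 = \{x \app y\}$ and $\Sigma_2 = \emptyset$ one has $\Sigma_1 \vdash_{\vv K} \Sigma_2$ vacuously, and the free algebra maps onto the collapsed one, not conversely). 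So item (1) as printed has its indices transposed, and your misreading of the hypothesis exactly compensates for that transposition: what you prove is the corrected statement. Part (2) is symmetric in $\Sigma_1,\Sigma_2$, so it is untouched by the direction issue either way. Finally, note that your converse in (2) rightly interprets ``$\vv K$-define the same algebra'' as isomorphic via a map commuting with the canonical assignments ($\varphi\circ\alpha_1=\alpha_2$); this reading is in fact necessary, since a bare abstract isomorphism would not suffice --- over $X=\{x,y\}$ in abelian groups, $\Sigma_1=\{x \app 0\}$ and $\Sigma_2=\{y \app 0\}$ both present $\mathbb Z$, yet neither set of relations is a $\vv K$-consequence of the other.
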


Let   $X$ be a set of variables and $\Sigma$ be a set of equations of type $\sigma$ in variables from $X$; we denote by
$\th_\vv Q(\Sigma)$ the $\vv Q$-congruence of $\alg F_\vv Q(X)$ generated by all pairs in $\{(p,q): p \app q \in \Sigma\}$.

\begin{lemma} \label{lemma: equivalent} Let $\vv Q$ be a quasivariety,   $X$ a set of variables and $\Sigma$ a  set of equations in variables from $X$; if $p\app q$ is
an equation in variables from $X$ then the following are equivalent:
\begin{enumerate}
\item $\alg F_\vv Q(X,\Sigma) \vDash p \app q$;
\item $\Sigma \vdash_\vv Q p \app q$;
\item $(p,q) \in \th_\vv Q(\Sigma)$.
\end{enumerate}
\end{lemma}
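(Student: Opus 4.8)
The plan is to establish the equivalences by proving $(1)\Leftrightarrow(2)$ as essentially a restatement of the Mal'cev theorem quoted above, and $(2)\Leftrightarrow(3)$ by the standard identification of the relatively generated congruence $\th_\vv Q(\Sigma)$ with the $\vv Q$-consequence relation determined by $\Sigma$. Throughout I read $(1)$ as the assertion that $p$ and $q$ denote the same element of $\alg F_\vv Q(X,\Sigma)$ when the variables of $X$ are interpreted via the presentation map $\a$ (which we have agreed to take as the identity). This reading is forced: under genuinely arbitrary assignments an equation such as $x\app y$ with $x\app y\in\Sigma$ would fail in $\alg F_\vv Q(X,\Sigma)$ even though plainly $\Sigma\vdash_\vv Q x\app y$, so $(1)$ must refer to evaluation at the distinguished generators.

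For $(1)\Leftrightarrow(2)$ I would simply invoke the definition together with the Mal'cev theorem. Since $\alg F_\vv Q(X,\Sigma)$ is $\vv Q$-presented by $(X,\Sigma)$ with map $\a$, statement $(1)$ says exactly that $p(\a(\vec x))=q(\a(\vec x))$, and the quoted theorem gives $p(\a(\vec x))=q(\a(\vec x))\iff\Sigma\vdash_\vv Q p\app q$, which is $(2)$. Thus this half is immediate from the cited result.

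The substance lies in $(2)\Leftrightarrow(3)$. For $(2)\Rightarrow(3)$ I would work in the quotient $\alg F_\vv Q(X)/\th_\vv Q(\Sigma)$, which belongs to $\vv Q$ because $\th_\vv Q(\Sigma)$ is by construction a $\vv Q$-congruence. Taking the assignment $\b\colon x\longmapsto x/\th_\vv Q(\Sigma)$, every $p'\app q'\in\Sigma$ is satisfied, since $(p',q')\in\th_\vv Q(\Sigma)$; hence applying $(2)$ to this algebra and assignment yields $p/\th_\vv Q(\Sigma)=q/\th_\vv Q(\Sigma)$, i.e.\ $(p,q)\in\th_\vv Q(\Sigma)$. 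For $(3)\Rightarrow(2)$, let $\alg B\in\vv Q$ and $\b\colon X\longrightarrow \alg B$ satisfy $\alg B\models\b(\Sigma)$, and extend $\b$ to a homomorphism $\hat\b\colon\alg F_\vv Q(X)\longrightarrow\alg B$ by the universal property of the free algebra. Then $\op{ker}(\hat\b)$ is a $\vv Q$-congruence (its quotient embeds in $\alg B\in\vv Q$) containing every pair from $\Sigma$, so by the minimality of $\th_\vv Q(\Sigma)$ we get $\th_\vv Q(\Sigma)\sse\op{ker}(\hat\b)$; since $(p,q)\in\th_\vv Q(\Sigma)$ this forces $\hat\b(p)=\hat\b(q)$, i.e.\ $\alg B\models\b(p)\app\b(q)$. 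As $\alg B$ and $\b$ were arbitrary, $(2)$ follows.

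The only genuine obstacle here is conceptual rather than computational: pinning down the intended meaning of $(1)$ and recognizing that it is literally the conclusion of the Mal'cev theorem. Once $(1)\Leftrightarrow(2)$ is disposed of in this way, $(2)\Leftrightarrow(3)$ amounts to the routine verification that $\{(r,s):\Sigma\vdash_\vv Q r\app s\}$ is precisely the smallest $\vv Q$-congruence of $\alg F_\vv Q(X)$ containing the pairs coming from $\Sigma$. The single fact used repeatedly is that the kernels of homomorphisms from $\alg F_\vv Q(X)$ into members of $\vv Q$ are exactly the $\vv Q$-congruences, which is immediate from closure of $\vv Q$ under $\II\SU$ and the definition of $\vv Q$-congruence.
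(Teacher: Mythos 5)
Your proof is correct: the paper states Lemma \ref{lemma: equivalent} without any proof, treating it as an immediate consequence of the quoted Mal'cev theorem and the definition of $\th_\vv Q(\Sigma)$, and your argument --- Mal'cev's theorem for $(1)\Leftrightarrow(2)$, and the kernel-minimality identification of $\th_\vv Q(\Sigma)$ with $\vv Q$-consequence for $(2)\Leftrightarrow(3)$ --- is exactly that intended routine verification. Your preliminary observation that $(1)$ must be read as evaluation at the distinguished generators (rather than validity under all assignments, which would make the lemma false, e.g.\ for $\Sigma=\{x\app y\}$) is also right, and is confirmed by how the paper uses the lemma in the proof of the theorem that follows it, where $\alg F_\vv Q(X,\Sigma)\vDash s\app t$ is taken to yield $s(\vec x)=t(\vec x)$ at the generators.
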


\begin{theorem} Let $\vv Q$ be a quasivariety. For any $\alg A \in \vv Q$ the following are equivalent:
\begin{enumerate}
\item $\alg A$ is finitely $\vv Q$-presented by $(X,\Sigma)$;
\item $\alg A \cong \alg F_\vv Q(X)/\th_\vv Q(\Sigma)$.
\end{enumerate}
\end{theorem}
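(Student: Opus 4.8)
The plan is to reduce the claimed equivalence to a single isomorphism. Since for any pair $(X,\Sigma)$ there is, up to isomorphism, exactly one algebra $\alg F_\vv Q(X,\Sigma) \in \vv Q$ that is $\vv Q$-presented by $(X,\Sigma)$, statement (1) says precisely that $\alg A \cong \alg F_\vv Q(X,\Sigma)$. Hence it suffices to prove the single isomorphism
\[
\alg F_\vv Q(X,\Sigma) \cong \alg F_\vv Q(X)/\th_\vv Q(\Sigma);
\]
once this is in hand, both (1) and (2) become the assertion that $\alg A$ is isomorphic to this common algebra, and (1)$\iff$(2) follows by transitivity of $\cong$. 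I note that finiteness of $X$ and $\Sigma$ plays no role in the argument, so I would establish the isomorphism for arbitrary $(X,\Sigma)$ and let the finite case fall out as a special instance.

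To establish the isomorphism I would exhibit a surjection $h \colon \alg F_\vv Q(X) \tla \alg F_\vv Q(X,\Sigma)$ whose kernel is exactly $\th_\vv Q(\Sigma)$. Let $\alpha \colon X \to \alg F_\vv Q(X,\Sigma)$ be the generating assignment from the presentation. Since $\alg F_\vv Q(X)$ is free over $X$ in $\vv Q$ and $\alg F_\vv Q(X,\Sigma) \in \vv Q$, the assignment $x \mapsto \alpha(x)$ extends to a homomorphism $h$; because $\alpha(X)$ generates $\alg F_\vv Q(X,\Sigma)$ and the image of $h$ is a subalgebra containing $\alpha(X)$, the map $h$ is onto. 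It then remains only to compute $\op{ker}(h)$.

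The key step is identifying $\op{ker}(h)$ with $\th_\vv Q(\Sigma)$, and here I would chain together the two results immediately preceding the statement. For terms $p,q$ in variables from $X$ we have $(p,q) \in \op{ker}(h)$ iff $p(\alpha(\vec x)) = q(\alpha(\vec x))$, which by Mal'cev's theorem is equivalent to $\Sigma \vdash_\vv Q p \app q$, and this in turn is equivalent to $(p,q) \in \th_\vv Q(\Sigma)$ by Lemma~\ref{lemma: equivalent}. Thus $\op{ker}(h) = \th_\vv Q(\Sigma)$, and the Homomorphism Theorem yields $\alg F_\vv Q(X,\Sigma) \cong \alg F_\vv Q(X)/\th_\vv Q(\Sigma)$, completing the reduction from the first paragraph.

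The one place that requires genuine care — and the step I would treat as the main obstacle — is this chain of equivalences: one must check that Mal'cev's theorem is applied to the correct generating map $\alpha$ and that the object produced by Lemma~\ref{lemma: equivalent} is the \emph{relative} congruence $\th_\vv Q(\Sigma)$ rather than the absolute one, so that all congruences in play are genuinely $\vv Q$-congruences and the quotient stays in $\vv Q$. An alternative, self-contained route would bypass these two cited results and instead verify directly that $\alg F_\vv Q(X)/\th_\vv Q(\Sigma)$ satisfies the universal property of a $\vv Q$-presentation: given $\alg B \in \vv Q$ and $\beta \colon X \to \alg B$ satisfying $\Sigma$, the free extension $\hat\beta \colon \alg F_\vv Q(X) \to \alg B$ has kernel a $\vv Q$-congruence containing every pair from $\Sigma$, whence $\th_\vv Q(\Sigma) \sse \op{ker}(\hat\beta)$ and $\hat\beta$ factors through the quotient; the factoring map realizes condition (b), with uniqueness coming from the fact that the images of $X$ generate. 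Either route closes the proof.
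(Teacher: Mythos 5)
Your proof is correct, and it reaches the paper's result by a genuinely different (mirror-image) construction. The opening reduction is the same as the paper's: both arguments observe that it suffices to prove the single isomorphism $\alg F_\vv Q(X,\Sigma) \cong \alg F_\vv Q(X)/\th_\vv Q(\Sigma)$, and neither uses finiteness of $X$ or $\Sigma$. The divergence is in how that isomorphism is obtained. The paper builds the homomorphism in the opposite direction: it applies the defining property of the presentation to the assignment $\b\colon x \longmapsto x/\th_\vv Q(\Sigma)$ (after checking that $\b$ satisfies $\Sigma$), obtaining $f\colon \alg F_\vv Q(X,\Sigma) \longrightarrow \alg F_\vv Q(X)/\th_\vv Q(\Sigma)$; surjectivity holds because $\b(X)$ generates the quotient, and injectivity is checked by hand: if $f(u)=f(v)$ with $u=s(\vec x)$, $v=t(\vec x)$, then $(s,t)\in\th_\vv Q(\Sigma)$, so Lemma \ref{lemma: equivalent} gives $\alg F_\vv Q(X,\Sigma)\vDash s\app t$, whence $u=v$. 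You instead start from the free algebra: freeness yields the surjection $h\colon \alg F_\vv Q(X) \tla \alg F_\vv Q(X,\Sigma)$ extending $\alpha$, you identify $\op{ker}(h)=\th_\vv Q(\Sigma)$ by chaining Mal'cev's theorem with Lemma \ref{lemma: equivalent}, and you finish with the Homomorphism Theorem. Both proofs lean on Lemma \ref{lemma: equivalent}, but on different parts of it: the paper needs only the implication from membership in $\th_\vv Q(\Sigma)$ to validity in $\alg F_\vv Q(X,\Sigma)$, while you use the equivalence of $\Sigma \vdash_\vv Q p \app q$ with $(p,q)\in\th_\vv Q(\Sigma)$ together with Mal'cev's theorem, which the paper's own proof never invokes. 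The trade-off is mild: your route is the standard ``surjection plus kernel computation'' argument, makes the role of the canonical generating map $\alpha$ explicit, and keeps all congruences visibly relative (the kernel of $h$ is automatically a $\vv Q$-congruence since its quotient embeds the image inside an algebra of $\vv Q$); the paper's route avoids both Mal'cev's theorem and the Homomorphism Theorem at the price of an explicit injectivity check. Your alternative self-contained argument, verifying that $\alg F_\vv Q(X)/\th_\vv Q(\Sigma)$ itself satisfies the universal property of the presentation, is also sound and has the added virtue of establishing the existence of $\alg F_\vv Q(X,\Sigma)$ rather than presupposing it; note only that condition (b) of the paper's definition requires mere existence of the factoring homomorphism, so the uniqueness remark at the end is not needed.
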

\begin{proof} It is enough to prove that $\alg F_\vv Q(X,\Sigma) \cong \alg F_\vv Q(X)/\th_\vv Q(\Sigma)$. Take $\b$ to be the map sending $x \longmapsto x/\th_\vv Q(\Sigma)$; then  for all $p \app q \in \Sigma$
$$
p(\b(\vec x)) = \b(p(\vec x) = p(\vec x)/\th_\vv Q(\Sigma) = \dots= q(\b(\vec x));
$$
Hence by definition there is a homomorphism $f: \alg F_\vv Q(X,\Sigma) \longrightarrow \alg F_\vv Q(\Sigma)/\th_\vv Q(\Sigma)$ with $f(x) = \b(x)$ (where we have taken $\a = id_X$). Since $\{\b(x): x \in X\}$ generates
$\alg F_\vv Q(X)/\th_\vv Q(\Sigma)$, $f$ is onto (see Figure \ref{q-presented}).

\begin{figure}[htbp]
\begin{center}
\begin{tikzpicture}[scale=.7]
\node[left] at (0,3) {\footnotesize  $\alg X$};
\node[right] at (3,3) {\footnotesize  $\alg F_\vv Q(X,\Sigma)$};
\node[right] at (3,0) {\footnotesize  $\alg F_\vv Q/\th_\vv Q(\Sigma)$};
\draw[->] (0,3)-- (3,3);
\draw[->] (0,3) -- (2.9,3);
\draw[->] (0,2.8) -- (3,0.2);
\draw[->][blue] (3.3,2.8) -- (3.3,.2);
\node[above] at (1.5,3) {\footnotesize $id_X$};
\node[below] at (1.3,1.6) {\footnotesize $\b$};
\node[right] at (3.3,1.5) {\footnotesize  $f$};
\end{tikzpicture}
\caption{}\label{q-presented}
\end{center}
\end{figure}

Let now $u,v \in \alg F_\vv Q(X,\Sigma)$ with $f(u)=f(v)$; as $X$ generates the algebra, there are terms $s,t$ and $\vuc xk \in X$ with $u=s(\vuc xk)$ and $v= t(\vuc xk)$.
Hence $s(\f(\vec x)) = s(\f(\vec y))$ i.e. $s(\vec x)/\th_\vv Q(\Sigma) = t(\vec x)/ \th_\vv Q(\Sigma)$, i.e.  $(s,t) \in \th_\vv Q(\Sigma)$. By Lemma \ref{lemma: equivalent},
$\alg F_\vv Q(X,\Sigma) \vDash s \app t$ and thus $u= s(\vec x) = t(\vec x)= v$. Thus $f$ is injective and the conclusion holds.

\end{proof}

 Let $\vv Q$ be a quasivariety and $\alg A \in \vv Q$; we define
$$
[\vv Q:\alg A] = \{\alg B \in \vv Q: \alg A \notin \II\SU(\alg B)\}.
$$

\begin{lemma}\label{lemma: irr fin presented}  Let $\vv Q$ be a  quasivariety of finite type; then
\begin{enumerate}
\item if $\alg A \in \vv Q$ is finite, then $[\vv Q:\alg A]$ is a universal class;
\item if $\alg A \in \vv Q$ is $\vv Q$-irreducible and $\vv Q$-finitely presented, then $[\vv Q:\alg A]$ is a quasivariety.
\end{enumerate}
Moreover if $\vv Q$ is locally finite then the converse implications in (1) and (2) hold.
\end{lemma}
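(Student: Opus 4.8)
The plan is to treat the two forward implications separately, reading off explicit axiomatizations and invoking Theorem \ref{theorem:ISP}, and then to deduce both converses together under the local finiteness hypothesis. For (1), since the type is finite and $\alg A = \{a_1,\dots,a_n\}$ is finite, I would encode non-embeddability of $\alg A$ by a single universal sentence. Let $\Sigma_{\alg A}$ be the finite conjunction of the equations $f(x_{i_1},\dots,x_{i_k}) \approx x_m$ recording the operation tables of $\alg A$ (finite because both the type and $\alg A$ are finite). The universal sentence
$$
\Sigma_{\alg A} \;\Rightarrow\; \bigvee_{1 \le i < j \le n} x_i \approx x_j
$$
holds in an algebra $\alg B$ precisely when no $n$ distinct elements of $\alg B$ reproduce the diagram of $\alg A$, that is, precisely when $\alg A \notin \II\SU(\alg B)$. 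Adjoining this sentence to a set of quasiequations axiomatizing $\vv Q$ yields an axiomatization of $[\vv Q:\alg A]$ by universal sentences, so $[\vv Q:\alg A]$ is a universal class by Theorem \ref{theorem:ISP}(1).

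For (2), the work is essentially already done in Lemma \ref{lemma: characteristic}. As $\alg A$ is $\vv Q$-irreducible and $\vv Q$-finitely presented, fix any $\Phi \in \op{ch}(\alg A)$; it is a genuine quasiequation, since its premise $\Sigma$ is finite and its conclusion is the single equation $p \approx q$ for a monolith-generating pair $(p,q)$. Lemma \ref{lemma: characteristic} gives, for each $\alg B \in \vv Q$, the equivalence $\alg B \not\models \Phi \iff \alg A \in \II\SU(\alg B)$, whence $[\vv Q:\alg A] = \{\alg B \in \vv Q : \alg B \models \Phi\}$. Thus $[\vv Q:\alg A]$ is defined relative to $\vv Q$ by the single quasiequation $\Phi$, and Theorem \ref{theorem:ISP}(2) identifies it as a quasivariety.

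Now assume $\vv Q$ is locally finite. For the converse of (1), suppose $[\vv Q:\alg A]$ is a universal class but $\alg A$ is infinite. Local finiteness makes every finitely generated subalgebra of $\alg A$ finite, hence a proper subalgebra into which the infinite $\alg A$ cannot embed; so every finitely generated subalgebra belongs to $[\vv Q:\alg A]$. Using the standard fact that every algebra embeds into a suitable ultraproduct of its finitely generated subalgebras, closure of $[\vv Q:\alg A]$ under $\II\SU\PP_u$ forces $\alg A \in [\vv Q:\alg A]$, contradicting $\alg A \in \II\SU(\alg A)$; hence $\alg A$ is finite. For the converse of (2), a quasivariety is in particular a universal class, so $\alg A$ is finite; since $\vv Q$ is locally finite of finite type, $\alg F_\vv Q(X)$ is finite for a finite generating set $X$, so the kernel of the quotient onto $\alg A$ is a finite relation, hence $\vv Q$-generated by finitely many pairs, which makes $\alg A$ $\vv Q$-finitely presented. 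Finally, if $\alg A$ were not $\vv Q$-irreducible, Theorem \ref{quasivariety}(1) would give a subdirect embedding $\alg A \le_{sd} \prod_i \alg A_i$ with each $\alg A_i = \alg A/\psi_i$ in $\vv Q_{ir}$; no $\psi_i$ can equal $0_\alg A$ (else $\alg A \cong \alg A_i$ would be $\vv Q$-irreducible), so every $\alg A_i$ is a proper, strictly smaller finite quotient and therefore lies in $[\vv Q:\alg A]$, while $\alg A \in \SU\PP(\{\alg A_i\})$ contradicts closure of the quasivariety $[\vv Q:\alg A]$ under $\SU$ and $\PP$. Hence $\alg A$ is $\vv Q$-irreducible.

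The main obstacle I anticipate lies in the converses rather than the forward directions, which are little more than bookkeeping over Theorem \ref{theorem:ISP} and Lemma \ref{lemma: characteristic}. Concretely, one must justify the embedding of an arbitrary algebra into an ultraproduct of its finitely generated subalgebras, and one must see that local finiteness is exactly the hypothesis guaranteeing that those subalgebras, being finite, cannot contain a copy of an infinite $\alg A$; without local finiteness a finitely generated subalgebra could itself contain $\alg A$ and the argument would break down.
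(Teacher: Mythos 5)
Your proposal is correct and follows essentially the same route as the paper: the diagram-based universal sentence for (1), the characteristic quasiidentity via Lemma \ref{lemma: characteristic} for (2), the embedding into an ultraproduct of finitely generated (hence finite) subalgebras for the converse of (1), and a subdirect-decomposition-plus-cardinality argument for the converse of (2). The only divergence is cosmetic: you argue irreducibility by contradiction via strictly smaller proper quotients where the paper sandwiches cardinalities directly, and you make explicit the step (finite $+$ locally finite $+$ finite type $\Rightarrow$ $\vv Q$-finitely presented) that the paper leaves implicit.
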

\begin{proof}  For (1), if $\alg A$ is finite, then there is a first order universal sentence $\Psi$ such that, for all $\alg B \in \vv Q$, $\alg B \vDash \Psi$ if and only if
$\alg A \in \II\SU(\alg B)$. More precisely, if $|A|=n$, $\Psi$ is the conjunction of the {\em diagram} of $\alg A$ (which is a conjunction of universal sentences that describe the operation tables
of $\alg A$) and $\bigwedge_{i,j\le n, i \ne j} \neg(x_i \app x_j)$.
Now it is well-known (and easy to check) that $\Psi$ defines the universal class $\vv U$ of algebras of the given type that do not contain $\alg A$ as a subalgebra. Therefore
$[\vv Q;\alg A] =\vv Q \cap \vv U$; as the intersection of two universal classes is an universal class, it follows that $[\vv Q:\alg A]$ is universal.

 Consider $\alg B \in \II\SU\PP_u([\vv Q:\alg A])$, we show that $\alg A \notin \II\SU(\alg B)$; if $\alg A \in \II\SU(\alg B)$, then $\alg A \in \II\SU\PP_u([\vv Q:\alg A])$. Hence there exists a family $(\alg A_i)_{i\in I} \sse [\vv Q: \alg A]$ and
an ultrafilter $U$ on $I$ such that $\alg C = \Pi_{i\in I}\alg A/U$ and $\alg A \in \II\SU(\alg C)$.  So  $\alg C \vDash \Psi$;
 but then by the \L{\`o}s Lemma there is a (necessarily nonempty) set of indexes $I' \in U$ such that  $\Psi$ is valid in each $\alg A_i$ with $i \in I'$, which is clearly a contradiction, since each $\alg A_i \in [\vv Q:\alg A]$.
Thus $\alg A \notin \II\SU(\alg B)$ and $\alg B \in [\vv Q:\alg A]$ and  therefore $\II\SU\PP_u([\vv Q:\alg A]) = [\vv Q:\alg A]$ which is therefore a universal class.

Conversely let $\vv Q$ be locally finite of finite type; every algebra in $\vv Q$ is embeddable in an ultraproduct of  its finitely generated (i.e. finite) subalgebras, say $\alg A \in \II\SU\PP_u(\{\alg B_i: i\in I\})$. If $\alg A$ is not finite, then $\alg A \notin \SU(\alg B_i)$ for all $i$, so $\alg B_i \in [\vv Q:\alg A]$ for all $i$. Since $[\vv Q:\alg A]$ is universal, we would have that $\alg A \in [\vv Q:\alg A]$, a clear contradiction. So $\alg A \in \II\SU(\alg B_i)$ for some $i$ and hence it is finite.

For (2), suppose that $\alg A$ is $\vv Q$-irreducible and finitely presented, i.e.  $\alg A \cong \alg F_\vv Q(\mathbf x)/\th_\vv Q(\Sigma)$.
If $\Phi$ is  a characteristic quasiidentity of $\alg A$  then by Lemma \ref{lemma: characteristic} we get at once that $[\vv Q:\alg A] =\{\alg B \in \vv Q: \alg B \vDash \Phi\}$ and this of course implies that $[\vv Q:\alg A]$ is a quasivariety.

For the converse, let $\vv Q$ be locally finite of finite type; by (1) $\alg A$ is finite. Suppose that $\alg A \le_{sd} \prod_{i \in I} \alg B_i$ where each $\alg B_i$ is $\vv Q$-irreducible in $\vv Q$.
Since $\alg A$ is finite, each $\alg B_i$ can be taken to be finite; if $\alg A \notin \II\SU(\alg B_i)$ for all $i$, then $\alg B_i \in [\vv Q:\alg A]$ for all $i$ and hence, being $[\vv Q:\alg A]$ a quasivariety
we have $\alg A \in [\vv Q:\alg A]$ which is impossible. Hence there is an $i$ such that $\alg A \in \II\SU(\alg B)$, so that $|A| \le |B|$; on the other hand $\alg B \in \HH(\alg A)$, so $|B| \le |A|$. Since everything is finite we have $\alg A =\alg B_i$ and $\alg A$ is $\vv Q$-irreducible.
\end{proof}

In quasivarieties, finitely presented algebras sometimes behave as finite algebras.

\begin{corollary}\label{cor: ISu(K)} Let $\vv Q = \QQ(\vv K)$ and let $\alg A \in \vv Q$ be $\vv Q$-irreducible and $\vv Q$-finitely presented; then $\alg A \in \II\SU(\vv K)$.
\end{corollary}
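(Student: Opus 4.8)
The plan is to reduce everything to Lemma~\ref{lemma: irr fin presented}(2), which asserts that under the present hypotheses the class $[\vv Q:\alg A]$ is a quasivariety. The assumptions that $\alg A$ be $\vv Q$-irreducible and $\vv Q$-finitely presented are precisely the hypotheses of that part of the lemma, so it applies directly. Moreover, since $\vv K \sse \QQ(\vv K) = \vv Q$, every member of $\vv K$ lies in $\vv Q$ and is therefore a legitimate candidate for membership in $[\vv Q:\alg A]$.

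I would then argue by contradiction. Suppose $\alg A \notin \II\SU(\vv K)$, i.e. $\alg A$ embeds into no member of $\vv K$. Then for every $\alg C \in \vv K$ we have simultaneously $\alg C \in \vv Q$ and $\alg A \notin \II\SU(\alg C)$, which is exactly the defining condition for $\alg C \in [\vv Q:\alg A]$. Hence $\vv K \sse [\vv Q:\alg A]$.

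The key step is now to upgrade this set inclusion to a containment of classes. Since $[\vv Q:\alg A]$ is a quasivariety containing $\vv K$, and $\vv Q = \QQ(\vv K)$ is by definition the smallest quasivariety containing $\vv K$, we obtain $\vv Q \sse [\vv Q:\alg A]$. In particular $\alg A \in \vv Q \sse [\vv Q:\alg A]$, so $\alg A \notin \II\SU(\alg A)$---which is absurd, since the identity map embeds $\alg A$ into itself. This contradiction forces $\alg A \in \II\SU(\vv K)$.

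I do not expect any real obstacle here, as the substantive work has already been carried out in Lemma~\ref{lemma: irr fin presented}(2); the only delicate point is the minimality of $\QQ(\vv K)$ among quasivarieties containing $\vv K$, which is what converts $\vv K \sse [\vv Q:\alg A]$ into $\vv Q \sse [\vv Q:\alg A]$. An entirely parallel route would bypass $[\vv Q:\alg A]$ and instead fix a characteristic quasiidentity $\Phi \in \op{ch}(\alg A)$: since $\alg A \in \vv Q$ refutes $\Phi$ while quasiidentities are preserved by $\II\SU\PP\PP_u$, not all members of $\vv K$ can satisfy $\Phi$, so some $\alg C \in \vv K$ refutes it, whence $\alg A \in \II\SU(\alg C)$ by Lemma~\ref{lemma: characteristic}.
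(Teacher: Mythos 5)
Your proposal is correct, but your primary argument is packaged differently from the paper's; in fact the paper's proof is exactly your ``parallel route'': fix $\Phi \in \op{ch}(\alg A)$, observe that $\Phi$ fails in $\alg A \in \QQ(\vv K)$ while quasiequations valid in $\vv K$ persist in $\QQ(\vv K)$, so some $\alg B \in \vv K$ refutes $\Phi$, and Lemma \ref{lemma: characteristic} then yields $\alg A \in \II\SU(\alg B) \sse \II\SU(\vv K)$. Your main route instead uses $[\vv Q:\alg A]$ as a black box: by Lemma \ref{lemma: irr fin presented}(2) it is a quasivariety; if $\alg A$ embedded in no member of $\vv K$ then $\vv K \sse [\vv Q:\alg A]$, so by minimality of $\QQ(\vv K)$ among quasivarieties containing $\vv K$ also $\alg A \in \vv Q \sse [\vv Q:\alg A]$, contradicting $\alg A \in \II\SU(\alg A)$. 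This is a clean argument, and what it buys is independence from the internal mechanics of characteristic quasiidentities: any proof that $[\vv Q:\alg A]$ is a quasivariety plugs into it unchanged. What the paper's route buys is directness (no contradiction, no appeal to minimality) and, more importantly, freedom from a hypothesis mismatch: Lemma \ref{lemma: irr fin presented} is stated for quasivarieties of \emph{finite type}, a hypothesis absent from the corollary, so your main route as written establishes the statement only under that extra assumption. Inspection of the paper's proof of part (2) of that lemma shows the finite-type hypothesis is never used there (it is needed only for part (1) and for the locally finite converses), so this is a cosmetic rather than a genuine gap --- and in any case your parallel route, which coincides with the paper's proof, covers the general case.
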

\begin{proof}  If $\alg A \in \vv Q$ is $\vv Q$-irreducible and $\vv Q$-finitely presented, and $\Phi$ is a characteristic quasiidentity of $\alg A$,
then, as $\Phi$ fails in $\alg A$ it must fail for some $\alg B \in \vv K$. It follows that $\alg B \notin [\vv Q: \alg A]$ which by definition implies $\alg A \in \II\SU(\alg B) \sse \II\SU(\vv K)$.
\end{proof}

\section{Relatively weakly projective algebras and primitive quasivarieties.}\label{sec:wpalgebras}

\subsection{Structurally complete quasivarieties.}

A quasivariety $\vv Q$ is \Def{ structurally complete} if any proper subquasivariety of $\vv Q$ generates a proper subvariety of $\HH(\vv Q)$. The proof of the following can be found in several papers (see for instance
\cite{Bergman1991}, \cite{CabrerMetcalfe2015a} and \cite{AglianoUgolini2023}).

\begin{theorem}\label{structural} (see also \cite{Bergman1991}, \cite{CabrerMetcalfe2015a})  For a quasivariety $\vv Q$ the following are equivalent:
\begin{enumerate}
\item $\vv Q$ is structurally complete;
\item for all quasivarieties $\vv Q'\sse\vv Q$ if $\HH(\vv Q') = \HH(\vv Q)$, then $\vv Q = \vv Q'$;
\item for all $\vv K \sse  \vv Q$ if $\VV(\vv K) = \HH(\vv Q)$, then $\QQ(\vv K) = \vv Q$;
\item $\vv Q = \QQ(\alg F_\vv Q(\o))$;
\item every  $\vv Q$-finitely presented algebra is in $\QQ(\alg F_\vv Q(\o))$.
\end{enumerate}
\end{theorem}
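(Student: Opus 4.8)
The plan is to prove the chain of equivalences among the five conditions, since structural completeness admits several equivalent formulations that pivot around the algebra $\alg F_\vv Q(\o)$. I would organize the argument as a cycle (or a web) of implications; the most natural route is $(1)\Leftrightarrow(2)$, then $(2)\Rightarrow(3)\Rightarrow(4)\Rightarrow(5)\Rightarrow(1)$, or a similar closed loop, occasionally passing through whichever condition is most convenient for a given step.

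First I would dispense with $(1)\Leftrightarrow(2)$, which should be essentially a reformulation of the definition. The definition says $\vv Q$ is structurally complete if every proper subquasivariety $\vv Q'\subsetneq\vv Q$ generates a proper subvariety of $\HH(\vv Q)$; contrapositively, if $\vv Q'\sse\vv Q$ generates all of $\HH(\vv Q)$ then $\vv Q'=\vv Q$. Since $\HH(\vv Q')=\VV(\vv Q')$ for a quasivariety and $\HH(\vv Q)=\VV(\vv Q)$, condition $(2)$ is just this contrapositive spelled out, so the equivalence is almost immediate once one notes $\HH(\vv Q')=\HH(\vv Q)$ is the same as saying $\vv Q'$ generates the full variety $\HH(\vv Q)$. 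For $(2)\Rightarrow(3)$, given $\vv K\sse\vv Q$ with $\VV(\vv K)=\HH(\vv Q)$, I would set $\vv Q'=\QQ(\vv K)\sse\vv Q$ and observe $\HH(\vv Q')=\VV(\QQ(\vv K))=\VV(\vv K)=\HH(\vv Q)$, so $(2)$ forces $\vv Q'=\vv Q$, i.e. $\QQ(\vv K)=\vv Q$. The reverse direction $(3)\Rightarrow(2)$ runs symmetrically, taking $\vv K=\vv Q'$.

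The key link is $(3)\Rightarrow(4)$. Here I would take $\vv K=\{\alg F_\vv Q(\o)\}$ and check the hypothesis of $(3)$, namely that $\VV(\alg F_\vv Q(\o))=\HH(\vv Q)$. The free algebra $\alg F_\vv Q(\o)=\alg F_{\VV(\vv Q)}(\o)$ (as noted in the excerpt) is free on countably many generators in the variety $\HH(\vv Q)=\VV(\vv Q)$, and a countably generated free algebra of a variety generates that variety; thus $\VV(\alg F_\vv Q(\o))=\VV(\vv Q)=\HH(\vv Q)$. Applying $(3)$ then yields $\QQ(\alg F_\vv Q(\o))=\vv Q$, which is exactly $(4)$. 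The implication $(4)\Rightarrow(5)$ is trivial, since every $\vv Q$-finitely presented algebra lies in $\vv Q=\QQ(\alg F_\vv Q(\o))$.

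The step I expect to be the main obstacle is closing the loop, $(5)\Rightarrow(4)$ (or directly $(5)\Rightarrow(1)$). For this I would argue that $\vv Q$ is generated as a quasivariety by its $\vv Q$-finitely presented members: every quasiequation failing in some algebra of $\vv Q$ already fails in a finitely presented one, because a failure of $\bigwedge_{i}p_i\app q_i\Rightarrow p\app q$ under an assignment $h$ can be transferred to the algebra $\alg F_\vv Q(X)/\th_\vv Q(\Sigma)$ presented by the premises $\Sigma=\{p_i\app q_i\}$, using Lemma~\ref{lemma: equivalent} and the presentation theory developed earlier. Hence $\vv Q=\QQ(\{\alg A:\alg A\text{ is }\vv Q\text{-finitely presented}\})$, and if each such $\alg A$ lies in $\QQ(\alg F_\vv Q(\o))$ by $(5)$, then $\vv Q\sse\QQ(\alg F_\vv Q(\o))\sse\vv Q$, giving $(4)$. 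The delicate point is the transfer of a refutation to a finitely presented algebra and verifying it genuinely lands in $\vv Q$; but this is precisely what the machinery of $\th_\vv Q(\Sigma)$ and relative presentations in the preceding subsection is designed to supply, so the obstacle is bookkeeping rather than a genuinely new idea.
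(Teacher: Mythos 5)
Your individual steps are essentially all correct, but the web of implications you actually establish does not close into an equivalence. You prove $(1)\Leftrightarrow(2)$, $(2)\Leftrightarrow(3)$, $(3)\Rightarrow(4)$, $(4)\Rightarrow(5)$ and $(5)\Rightarrow(4)$; this yields the two clusters $\{(1),(2),(3)\}$ and $\{(4),(5)\}$ together with a one-way bridge from the first to the second, but no implication from $(4)$ or $(5)$ back to $(1)$, $(2)$ or $(3)$. Your plan announced that the loop would be closed by $(5)\Rightarrow(1)$, but what you then argue is $(5)\Rightarrow(4)$, which only shows that $(4)$ and $(5)$ are equivalent to each other. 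As written, the argument does not exclude, say, a quasivariety satisfying $(4)$ but failing $(2)$.

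The missing ingredient is the observation that if $\vv Q'\sse\vv Q$ is a subquasivariety with $\HH(\vv Q')=\HH(\vv Q)$, then $\alg F_\vv Q(\o)\in\vv Q'$: indeed $\alg F_\vv Q(\o)=\alg F_{\VV(\vv Q)}(\o)=\alg F_{\VV(\vv Q')}(\o)=\alg F_{\vv Q'}(\o)$, and a quasivariety contains its free algebras (this is the remark, recalled in the paper, that a class closed under $\II\SU\PP$ contains all its free algebras). With this in hand, $(4)\Rightarrow(2)$ is immediate: $\vv Q=\QQ(\alg F_\vv Q(\o))\sse\QQ(\vv Q')=\vv Q'$, hence $\vv Q'=\vv Q$; the same fact applied to $\QQ(\vv K)$ in place of $\vv Q'$ gives $(4)\Rightarrow(3)$, and either one closes the loop. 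Note incidentally that the paper does not prove this theorem itself but refers to \cite{Bergman1991}, \cite{CabrerMetcalfe2015a} and \cite{AglianoUgolini2023}; however, it uses precisely this free-algebra observation in the corollary immediately following the theorem, and it is the one idea your write-up never states. Everything else, including the transfer of a failed quasiequation to the finitely presented algebra $\alg F_\vv Q(X)/\th_\vv Q(\Sigma)$ in your $(5)\Rightarrow(4)$, is sound.
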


For any quasivariety $\vv Q$, we define the \Def{ structural core of  $\vv Q$}   as the smallest $\vv Q'\sse \vv Q$ such that $\HH(\vv Q) = \HH(\vv Q')$. The structural core
of a quasivariety always exists:

\begin{corollary} For any quasivariety $\vv Q$, $\QQ(\vv F_\vv Q(\o))$ is structurally complete and  it is the structural core of  $\vv Q$.
\end{corollary}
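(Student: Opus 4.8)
The plan is to write $\vv Q_0 := \QQ(\alg F_\vv Q(\o))$ and to prove two assertions: that $\vv Q_0$ is structurally complete, and that it is the smallest subquasivariety $\vv Q' \sse \vv Q$ with $\HH(\vv Q') = \HH(\vv Q)$. The single fact that drives everything is the observation recorded earlier in the excerpt, namely $\alg F_\vv Q(X) = \alg F_{\VV(\vv Q)}(X)$, combined with the remark that $\HH(\vv Q) = \VV(\vv Q)$ for any quasivariety: indeed $\vv Q = \II\SU\PP\PP_u(\vv Q)$ forces $\SU\PP(\vv Q) \sse \vv Q$, whence $\HH(\vv Q) = \HH\SU\PP(\vv Q) = \VV(\vv Q)$. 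Consequently the countably generated free algebra of a quasivariety depends only on the variety it generates, hence only on $\HH(\vv Q)$; I expect this identity to do all the real work.

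First I would check $\HH(\vv Q_0) = \HH(\vv Q)$. Since $\alg F_\vv Q(\o) = \alg F_{\VV(\vv Q)}(\o)$ is the free algebra on $\o$ generators of the variety $\VV(\vv Q)$, it satisfies exactly the equations valid in $\VV(\vv Q)$ and so generates that variety; thus $\VV(\alg F_\vv Q(\o)) = \VV(\vv Q) = \HH(\vv Q)$. Applying $\VV$ to $\vv Q_0 = \QQ(\alg F_\vv Q(\o))$ and using $\QQ(\vv K) \sse \VV(\vv K)$ squeezes $\VV(\vv Q_0)$ between $\VV(\alg F_\vv Q(\o))$ and $\VV(\vv Q)$, so $\HH(\vv Q_0) = \VV(\vv Q_0) = \HH(\vv Q)$. (That $\vv Q_0 \sse \vv Q$ is immediate from $\alg F_\vv Q(\o) \in \vv Q$.)

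Next, to see $\vv Q_0$ is structurally complete I would invoke Theorem \ref{structural}(4): it suffices that $\vv Q_0 = \QQ(\alg F_{\vv Q_0}(\o))$. But the driving identity gives $\alg F_{\vv Q_0}(\o) = \alg F_{\VV(\vv Q_0)}(\o) = \alg F_{\HH(\vv Q_0)}(\o) = \alg F_{\HH(\vv Q)}(\o) = \alg F_\vv Q(\o)$, so $\QQ(\alg F_{\vv Q_0}(\o)) = \QQ(\alg F_\vv Q(\o)) = \vv Q_0$, and structural completeness follows at once.

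Finally, for minimality, let $\vv Q' \sse \vv Q$ be any subquasivariety with $\HH(\vv Q') = \HH(\vv Q)$. The same computation yields $\alg F_{\vv Q'}(\o) = \alg F_{\VV(\vv Q')}(\o) = \alg F_{\HH(\vv Q)}(\o) = \alg F_\vv Q(\o)$, and since a quasivariety contains all its own free algebras, $\alg F_\vv Q(\o) \in \vv Q'$; closure of $\vv Q'$ under $\QQ$ then gives $\vv Q_0 = \QQ(\alg F_\vv Q(\o)) \sse \vv Q'$. Together with $\HH(\vv Q_0) = \HH(\vv Q)$ this identifies $\vv Q_0$ as the structural core. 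I do not anticipate a genuine obstacle: once one sees that the free algebra on $\o$ generators is an invariant of $\HH(\vv Q)$, all the required equalities collapse to this one identity, and the only point needing care is bookkeeping the passage between $\QQ$, $\VV$ and $\HH$, in particular confirming $\HH(\vv Q) = \VV(\vv Q)$ for quasivarieties.
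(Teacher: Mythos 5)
Your proposal is correct and follows essentially the same route as the paper: the paper also verifies structural completeness of $\QQ(\alg F_\vv Q(\o))$ via Theorem \ref{structural} and obtains minimality by noting that any $\vv Q' \sse \vv Q$ with $\HH(\vv Q') = \HH(\vv Q)$ must contain $\alg F_\vv Q(\o)$, hence $\QQ(\alg F_\vv Q(\o)) \sse \vv Q'$. Your write-up merely makes explicit the bookkeeping the paper leaves implicit, namely that $\alg F_\vv Q(\o) = \alg F_{\VV(\vv Q)}(\o)$ and $\HH(\vv Q) = \VV(\vv Q)$, which is exactly why the free algebra is an invariant of $\HH(\vv Q)$ and why the cited theorem applies.
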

\begin{proof} $\QQ(\vv F_\vv Q(\o))$ is structurally complete by Theorem \ref{structural}; if $\vv Q' \sse \vv Q$ is such that $\HH(\vv Q') = \HH(\vv Q)$, then clearly $\alg F_\vv Q(\o) \in \vv Q'$ from which the thesis follows.
\end{proof}

It follows at once that a quasivariety $\vv Q$ is structurally complete if and only if it coincides with its structural core. As a consequence the structurally complete subvarieties of a quasivariety $\vv Q$ are exactly those that coincide with the structural cores of  $\vv Q'$ for some $\vv Q'\sse \vv Q$; even more, since $\HH(\vv Q)$ is a variety, the structurally complete subquasivarieties of a variety $\vv V$ are exactly the structural cores of $\vv V'$ for some subvariety $\vv V'$ of $\vv V$. This observation is particularly useful when  the free countably generated algebra in $\vv V$ has a reasonable description; it has been exploited in \cite{Gispert2016} (for Wajsberg algebras), \cite{Agliano2023} and \cite{AglianoManfucci2023} (for Wajsberg algebras and hoops).

If $\vv Q$ is a quasivariety an algebra $\alg A$ is \Def{ $\vv Q$-exact} if $\alg A \in \II\SU(\alg F_\vv  Q(\o))$. Here are some sufficient conditions for structural completeness.

\begin{lemma}\label{lemma: wpstructcomplete}  Let $\vv Q$ be a quasivariety; if
\begin{enumerate}
\item  $\vv Q= \QQ(\vv K)$ and each $\alg A\in \vv K$ is $\vv Q$-exact, or
\item every finitely generated algebra in $\vv Q$ is $\vv Q$-exact, or
\item every $\vv Q$-finitely presented algebra in $\vv Q$ is $\vv Q$-exact, or
\item every  finitely generated $\vv Q$-irreducible  algebra in  $\vv Q$ is $\vv Q$-exact,
\end{enumerate}
then $\vv Q$ is structurally complete. Moreover if every $\alg A\in \vv K$ is  exact in  $\VV(\vv K)$ and every subdirectly irreducible member of $\VV(\vv K)$ is in $\II\SU(\vv K)$, then $\VV(\vv K)$ is structurally complete.
\end{lemma}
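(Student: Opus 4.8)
The plan is to reduce every case to one of the equivalent formulations of structural completeness in Theorem \ref{structural}, exploiting the single observation that drives the whole argument: since $\QQ$ is closed under $\II\SU$ and $\alg F_\vv Q(\o) \in \QQ(\alg F_\vv Q(\o))$, any $\vv Q$-exact algebra $\alg A$ (i.e.\ $\alg A \in \II\SU(\alg F_\vv Q(\o))$) automatically lies in $\QQ(\alg F_\vv Q(\o))$. Throughout I write $\alg F = \alg F_\vv Q(\o)$ and keep in mind that $\QQ(\alg F) \sse \vv Q$ always holds, so in each case it is enough to prove the reverse inclusion, or, by Theorem \ref{structural}(5), that every $\vv Q$-finitely presented algebra belongs to $\QQ(\alg F)$.

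For (1), each $\alg A \in \vv K$ is $\vv Q$-exact, hence $\vv K \sse \QQ(\alg F)$ by the observation, and applying $\QQ$ gives $\vv Q = \QQ(\vv K) \sse \QQ(\alg F)$. Case (2) is (1) applied to $\vv K = \vv Q_{fg}$, the class of finitely generated members of $\vv Q$: every algebra in $\vv Q$ embeds into an ultraproduct of its finitely generated subalgebras, so $\vv Q = \QQ(\vv Q_{fg})$, and the hypothesis says each member of $\vv Q_{fg}$ is $\vv Q$-exact. Case (3) is immediate: the hypothesis states precisely that every $\vv Q$-finitely presented algebra is $\vv Q$-exact, hence in $\QQ(\alg F)$, which is Theorem \ref{structural}(5).

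The main work is (4), where only finitely generated $\vv Q$-irreducible algebras are assumed exact. I would verify condition (5) of Theorem \ref{structural}. Let $\alg A$ be $\vv Q$-finitely presented, in particular finitely generated. By Mal'cev's theorem (Theorem \ref{quasivariety}(1)) there is a subdirect embedding $\alg A \le_{sd} \prod_{i\in I}\alg A_i$ with every $\alg A_i \in \vv Q_{ir}$. The key point is that each factor $\alg A_i$, being the image of the finitely generated algebra $\alg A$ under the $i$-th projection, is itself finitely generated and $\vv Q$-irreducible; so hypothesis (4) makes each $\alg A_i$ $\vv Q$-exact, whence $\alg A_i \in \QQ(\alg F)$. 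Then $\alg A \in \II\SU\PP(\{\alg A_i : i \in I\}) \sse \QQ(\alg F)$, and structural completeness follows. The only delicate step is recognising that one must pass through finitely presented (equivalently, finitely generated) algebras so that the subdirect factors inherit finite generation; this is where the strength of (4) is exactly matched to what Mal'cev's decomposition delivers.

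For the final (``Moreover'') statement, set $\vv V = \VV(\vv K)$ and read ``exact in $\vv V$'' as membership in $\II\SU(\alg F_\vv V(\o))$. I would first show every subdirectly irreducible $\alg S \in \vv V$ is exact in $\vv V$: by hypothesis $\alg S \in \II\SU(\vv K)$, say $\alg S \in \II\SU(\alg A)$ with $\alg A \in \vv K$, and since $\alg A \in \II\SU(\alg F_\vv V(\o))$ by exactness, idempotence of $\II\SU$ gives $\alg S \in \II\SU(\alg F_\vv V(\o))$, hence $\alg S \in \QQ(\alg F_\vv V(\o))$. Now Birkhoff's theorem (Theorem \ref{birkhoff}(1)) embeds every member of $\vv V$ subdirectly into a product of algebras from $\vv V_{si}$, all of which lie in the quasivariety $\QQ(\alg F_\vv V(\o))$; closure of $\QQ(\alg F_\vv V(\o))$ under $\PP$ and $\II\SU$ then yields $\vv V \sse \QQ(\alg F_\vv V(\o))$, and Theorem \ref{structural}(4) finishes. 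I expect no serious obstacle here beyond being careful that $\vv V_{si}$, rather than $\vv V_{ir}$, is the right class to feed Birkhoff, and that the two hypotheses combine exactly to make each such factor exact.
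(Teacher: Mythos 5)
Your proof is correct and follows essentially the same route as the paper: the key observation that $\vv Q$-exact algebras lie in $\QQ(\alg F_\vv Q(\o))$, reduction of each case to a generating class, and Birkhoff for the variety statement. The only difference is that where the paper simply asserts the generation facts (citing Gorbunov for (3)), you prove them explicitly via Theorem \ref{structural}(5) and Mal'cev's decomposition, which makes the argument more self-contained but not different in substance.
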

\begin{proof} If each algebra in $\vv K$ is exact in $\vv Q = \QQ(\vv K)$, then $\vv K \sse\II\SU(\alg F_\vv Q(\o))$; therefore $\vv Q =\QQ(\vv K) \sse \QQ(\alg F_\vv Q(\o))$ and thus equality holds. Hence $\vv Q$ is structurally complete by Theorem \ref{structural} . The other points follow from (1) since $\vv Q$ is generated as a quasivariety by all the classes in (2), (3) and (4) (for (3) a proof is in \cite{Gorbunov1998}, Proposition 2.2.18).

 For the last claim, every subdirectly irreducible member of $\VV(\vv K)$ lies in $\II\SU(\vv K)$ and thus is exact in $\VV(\vv K)$. Since any variety is generated as a quasivariety by its subdirectly irreducible members, $\VV(\vv K)$ is structurally complete.
\end{proof}

If $\vv Q$ is locally finite then we get an equivalent condition.

\begin{theorem} For a locally finite variety $\vv Q$ of finite type the following are equivalent:
\begin{enumerate}
\item $\vv Q$ is structurally complete;
\item every finite $\vv Q$-irreducible is $\vv Q$-exact.
\end{enumerate}
\end{theorem}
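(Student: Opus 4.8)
The plan is to prove the two implications separately; essentially all the content sits in $(1)\Rightarrow(2)$, while $(2)\Rightarrow(1)$ is a direct appeal to a result already in hand.

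For $(2)\Rightarrow(1)$ I would simply observe that, since $\vv Q$ is locally finite, an algebra of $\vv Q$ is finitely generated if and only if it is finite. Hence the assumption in $(2)$, that every finite $\vv Q$-irreducible algebra is $\vv Q$-exact, is verbatim the hypothesis of Lemma \ref{lemma: wpstructcomplete}(4), and that lemma delivers structural completeness of $\vv Q$.

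For $(1)\Rightarrow(2)$ I would start from Theorem \ref{structural}, which lets me rewrite structural completeness as $\vv Q=\QQ(\alg F_\vv Q(\o))$. Fix a finite $\vv Q$-irreducible algebra $\alg A$; the goal is $\alg A\in\II\SU(\alg F_\vv Q(\o))$. Since every $\vv Q$-irreducible algebra is a fortiori finitely $\vv Q$-irreducible, $\alg A\in\vv Q_{fir}$, and applying the Czelakowski--Dziobiak theorem (Theorem \ref{quasivariety}(2)) to the one-element generating class $\vv K=\{\alg F_\vv Q(\o)\}$ gives
$$
\alg A\in\vv Q_{fir}\sse\II\SU\PP_u(\alg F_\vv Q(\o)).
$$
Thus $\alg A$ embeds into some ultrapower of $\alg F_\vv Q(\o)$, and what remains is to push this embedding down to $\alg F_\vv Q(\o)$ itself.

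This last collapse is the only real step, and it is exactly where finiteness of $\alg A$ together with the finite type is spent. Because $\alg A$ is finite and the type is finite, the property ``$\alg A$ embeds into $\alg C$'' is captured by a single existential first-order sentence $\sigma$ --- the negation of the universal sentence $\Psi$ built in the proof of Lemma \ref{lemma: irr fin presented}(1) --- asserting the existence of $|A|$ distinct elements obeying the finitely many instances of the operation tables of $\alg A$. An ultrapower is elementarily equivalent to its base, so by \L o\'s's theorem $\alg F_\vv Q(\o)$ satisfies $\sigma$ as soon as one of its ultrapowers does; the ultrapower into which $\alg A$ embeds does satisfy $\sigma$, hence so does $\alg F_\vv Q(\o)$, giving $\alg A\in\II\SU(\alg F_\vv Q(\o))$, i.e.\ $\alg A$ is $\vv Q$-exact. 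I expect this to go through cleanly, the only point deserving care being that the finite-type hypothesis is what makes $\sigma$ a genuine first-order sentence; with an infinite type the diagram of $\alg A$ could involve infinitely many operation instances and \L o\'s's theorem would no longer apply.
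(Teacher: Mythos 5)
Your proof is correct, and in the substantive direction it takes a genuinely different route from the paper's. For $(2)\Rightarrow(1)$ the two arguments coincide: the paper simply inlines the proof of Lemma \ref{lemma: wpstructcomplete}(4) (a locally finite quasivariety is generated as a quasivariety by its finite algebras, hence by its finite $\vv Q$-irreducibles via Mal'cev's theorem), which is exactly the lemma you cite. For $(1)\Rightarrow(2)$, however, the paper never passes through ultraproducts: it combines Theorem \ref{structural} with Corollary \ref{cor: ISu(K)}, i.e.\ with the characteristic-quasiidentity machinery. Since $\alg A$ is finite and the type is finite, $\alg A$ is $\vv Q$-finitely presented, so it has a characteristic quasiidentity $\Phi$; as $\Phi$ fails in $\alg A \in \vv Q = \QQ(\alg F_\vv Q(\o))$ and quasiequations are preserved by $\QQ$, $\Phi$ must fail in $\alg F_\vv Q(\o)$, and Lemma \ref{lemma: characteristic} converts that failure directly into an embedding $\alg A \in \II\SU(\alg F_\vv Q(\o))$. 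Your route instead lands first in $\II\SU\PP_u(\alg F_\vv Q(\o))$ via Czelakowski--Dziobiak and then collapses the ultrapower with an existential diagram sentence and {\L}o\'s's theorem --- in effect re-proving, in existential form, the diagram argument from the proof of Lemma \ref{lemma: irr fin presented}(1). The trade-off is this: your collapse step genuinely requires $\alg A$ to be finite, whereas the paper's argument needs only $\vv Q$-finite presentability, so it gives the stronger fact (which the paper notes holds ``regardless of local finiteness'') that in any structurally complete quasivariety every $\vv Q$-finitely presented $\vv Q$-irreducible algebra is $\vv Q$-exact; on the other hand, your argument is self-contained model theory on top of Theorem \ref{quasivariety}(2), and it makes completely transparent where finiteness of the type is spent --- a hypothesis both proofs consume, since finite type is also what makes a finite algebra $\vv Q$-finitely presented in the paper's version.
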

\begin{proof}  (1) implies (2) (regardless of local finiteness) by  By Corollary \ref{cor: ISu(K)} and Theorem \ref{structural}. If we assume (2), we observe that every locally finite quasivariety is generated as a quasivariety by its finite algebras (as every finitely generated algebra is finite); since they are all in $\II \SU(\alg F_\vv Q(\o))$, we conclude that $\vv Q = \QQ(\alg F_\vv Q(\o))$, i.e. $\vv Q$ is structurally complete.
\end{proof}

\subsection{Primitive quasivarieties.}

Let $\vv Q$ be a quasivariety and $\vv K \sse \vv Q$; we say that $\vv K$ is \Def{ equational relative to} $\vv Q$ if $\vv K = \VV(\vv K) \cap \vv Q$.
Clearly if  $\vv Q, \vv Q'$ are quasivarieties with $\vv Q'\sse Q$ then  $\vv Q'$ is \Def{ equational relative to} $\vv Q$ if and only if $\vv Q' = \HH(\vv Q') \cap \vv Q$; this implies that $\vv Q'$ is axiomatizable modulo $\vv Q$ by a set of equations.

\begin{lemma} Let $\vv Q'$ be equational relative to $\vv Q$. Then:
\begin{enumerate}
\item for every $\alg A \in \vv Q'$, $\op{Con}_{\vv Q'}(\alg A) = \op{Con}_{\vv Q}(\alg A)$;
\item $Q'_{ir} = \vv Q_{ir} \cap \vv Q'$.
\end{enumerate}
\end{lemma}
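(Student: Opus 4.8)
The plan is to prove (1) first and then obtain (2) as a formal consequence. For (1), the inclusion $\op{Con}_{\vv Q'}(\alg A) \sse \op{Con}_{\vv Q}(\alg A)$ is immediate from $\vv Q' \sse \vv Q$: if $\th$ is a congruence with $\alg A/\th \in \vv Q'$, then automatically $\alg A/\th \in \vv Q$. The substance lies in the reverse inclusion, and here I would invoke the characterization recorded just above the lemma, namely that $\vv Q'$ being equational relative to $\vv Q$ is equivalent to $\vv Q' = \HH(\vv Q') \cap \vv Q$. Thus take $\th \in \op{Con}_{\vv Q}(\alg A)$, i.e. $\alg A/\th \in \vv Q$. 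Since $\alg A \in \vv Q'$ and $\alg A/\th$ is a homomorphic image of $\alg A$, we have $\alg A/\th \in \HH(\vv Q')$; combining the two memberships gives $\alg A/\th \in \HH(\vv Q') \cap \vv Q = \vv Q'$, so that $\th \in \op{Con}_{\vv Q'}(\alg A)$. This establishes the equality $\op{Con}_{\vv Q'}(\alg A) = \op{Con}_{\vv Q}(\alg A)$ for every $\alg A \in \vv Q'$.

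For (2) I would argue by double inclusion. Note first that any $\alg A$ lying in either $\vv Q'_{ir}$ or $\vv Q_{ir} \cap \vv Q'$ belongs to $\vv Q'$, and hence to $\vv Q$ as well, so the comparison reduces to: for a fixed $\alg A \in \vv Q'$, is $\alg A$ being $\vv Q'$-irreducible the same as being $\vv Q$-irreducible? Applying the lattice-theoretic criterion of Lemma \ref{lemma: q-irreducible}(2), relative irreducibility in $\vv Q'$ (resp.\ in $\vv Q$) holds exactly when $\op{Con}_{\vv Q'}(\alg A)$ (resp.\ $\op{Con}_{\vv Q}(\alg A)$) has a unique minimal congruence above $0_\alg A$. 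Since part (1) identifies these two lattices for every $\alg A \in \vv Q'$, the two irreducibility conditions coincide on $\vv Q'$, which is precisely the asserted equality $\vv Q'_{ir} = \vv Q_{ir} \cap \vv Q'$.

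There is no genuine obstacle here: the entire content sits in part (1), and specifically in recognizing that ``equational relative to $\vv Q$'' is exactly closure under homomorphic images within $\vv Q$. That is what forces a $\vv Q$-congruence of an algebra already in $\vv Q'$ to be automatically a $\vv Q'$-congruence. Once (1) is in hand, (2) follows mechanically from the congruence-lattice description of relative subdirect irreducibility, with no further hypotheses needed.
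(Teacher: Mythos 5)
Your proof is correct, and it is exactly the intended argument: the paper states this lemma without proof, treating it as routine, and your route — using the characterization $\vv Q' = \HH(\vv Q') \cap \vv Q$ stated immediately before the lemma to get part (1), then transferring irreducibility via the congruence-lattice criterion of Lemma \ref{lemma: q-irreducible} to get part (2) — is the standard argument the authors evidently have in mind. Nothing is missing.
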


In particular if all the $\vv Q$-congruence lattices of algebras in  $\vv Q$ satisfy some lattice equation, then the same is true for all the $\vv Q'$-congruence lattices of algebras in any equational subquasivariety
of $\vv Q$.

A quasivariety $\vv Q$ is \Def{ primitive} if every subquasivariety of $\vv Q$ is equational in $\vv Q$; an algebra $\alg A$ is \Def{ weakly $\vv Q$-primitive} \cite{Bergman1991}
if for any algebra $\alg B \in \vv Q$, if $\alg A \in \HH(\alg B)$, then $\alg A \in \II\SU\PP_u(\alg B)$.

\begin{theorem}\label{primitiveQ} For a quasivariety $\vv Q$ the following are equivalent:
\begin{enumerate}
\item $\vv Q$ is primitive;
\item every subquasivariety of $\vv Q$ is structurally complete;
\item every  $\vv Q$-irreducible $\alg A \in \vv Q$ is weakly $\vv Q$-primitive.
\end{enumerate}
\end{theorem}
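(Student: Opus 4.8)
The plan is to prove the three conditions equivalent by establishing the two cycles $(1)\Leftrightarrow(2)$ and $(1)\Leftrightarrow(3)$. Throughout I will use the basic bookkeeping that a subquasivariety $\vv Q'\sse\vv Q$ is closed under $\SU$ and $\PP$, so that $\SU\PP(\vv Q')=\vv Q'$ and hence $\HH(\vv Q')=\HH\SU\PP(\vv Q')=\VV(\vv Q')$ is a variety; I will also use repeatedly that $\II\SU\PP_u(\vv Q')\sse\vv Q'$ and that the intersection of a variety with $\vv Q$ is again a quasivariety.

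For $(1)\Rightarrow(2)$ I fix a subquasivariety $\vv Q'\sse\vv Q$ and verify condition $(2)$ of Theorem \ref{structural} for $\vv Q'$: given a quasivariety $\vv Q''\sse\vv Q'$ with $\HH(\vv Q'')=\HH(\vv Q')$, both $\vv Q''$ and $\vv Q'$ are subquasivarieties of $\vv Q$, hence equational in $\vv Q$ by primitivity, so $\vv Q''=\HH(\vv Q'')\cap\vv Q=\HH(\vv Q')\cap\vv Q=\vv Q'$. For the converse $(2)\Rightarrow(1)$, given a subquasivariety $\vv Q'$ I set $\vv Q''=\VV(\vv Q')\cap\vv Q$, a quasivariety with $\vv Q'\sse\vv Q''\sse\vv Q$ and (since $\vv Q'\sse\vv Q''\sse\VV(\vv Q')$) with $\HH(\vv Q'')=\VV(\vv Q'')=\VV(\vv Q')=\HH(\vv Q')$; as $\vv Q''$ is structurally complete by hypothesis, Theorem \ref{structural}$(2)$ applied to the pair $\vv Q'\sse\vv Q''$ forces $\vv Q'=\vv Q''=\VV(\vv Q')\cap\vv Q$, i.e. $\vv Q'$ is equational in $\vv Q$.

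For $(1)\Rightarrow(3)$, let $\alg A\in\vv Q$ be $\vv Q$-irreducible with $\alg A\in\HH(\alg B)$ for some $\alg B\in\vv Q$. Put $\vv Q'=\QQ(\alg B)$; primitivity gives $\vv Q'=\VV(\alg B)\cap\vv Q$, so from $\alg A\in\HH(\alg B)\sse\VV(\alg B)$ and $\alg A\in\vv Q$ I get $\alg A\in\vv Q'$. The key observation is that $\alg A$ is then also $\vv Q'$-irreducible: since every $\vv Q'$-congruence is a $\vv Q$-congruence we have $\op{Con}_{\vv Q'}(\alg A)\sse\op{Con}_{\vv Q}(\alg A)$, so the pair $(a,b)$ witnessing $\vv Q$-irreducibility via Lemma \ref{lemma: q-irreducible}$(3)$ lies in every nonzero $\vv Q'$-congruence, and Lemma \ref{lemma: q-irreducible}$(3)$ applies again inside $\vv Q'$. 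Hence $\alg A\in\vv Q'_{fir}$, and since $\vv Q'=\QQ(\alg B)$ the Czelakowski--Dziobiak theorem (Theorem \ref{quasivariety}$(2)$) yields $\alg A\in\II\SU\PP_u(\alg B)$, which is exactly weak $\vv Q$-primitivity.

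The remaining implication $(3)\Rightarrow(1)$ is the step I expect to require the most care. I fix a subquasivariety $\vv Q'$ and must show $\VV(\vv Q')\cap\vv Q\sse\vv Q'$ (the reverse inclusion is trivial). Given $\alg A$ in this intersection, Mal'cev's theorem (Theorem \ref{quasivariety}$(1)$) embeds it subdirectly, $\alg A\le_{sd}\prod_i\alg A_i$ with each $\alg A_i\in\vv Q_{ir}$; each factor is a homomorphic image of $\alg A$, hence lies in $\VV(\vv Q')\cap\vv Q$. This reduces matters to a $\vv Q$-irreducible $\alg C\in\VV(\vv Q')$, for which $\VV(\vv Q')=\HH(\vv Q')$ gives $\alg C\in\HH(\alg D)$ for some $\alg D\in\vv Q'$; weak $\vv Q$-primitivity then places $\alg C\in\II\SU\PP_u(\alg D)\sse\vv Q'$. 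Thus every $\alg A_i\in\vv Q'$ and $\alg A\in\SU\PP(\vv Q')=\vv Q'$, closing the cycle. The main obstacles are the correct manipulation of the class operators — in particular the identities $\HH(\vv Q')=\VV(\vv Q')$ and $\SU\PP(\vv Q')=\vv Q'$ for quasivarieties — the reduction to relatively irreducible algebras via Mal'cev, and the transfer of relative irreducibility between $\vv Q$ and its subquasivariety $\vv Q'$.
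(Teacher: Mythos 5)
Your proof is correct and follows essentially the same route as the paper's: the same decomposition into $(1)\Leftrightarrow(2)$ via Theorem \ref{structural} with $\vv Q''=\HH(\vv Q')\cap\vv Q$, the reduction $(1)\Rightarrow(3)$ through $\QQ(\alg B)$ and the Czelakowski--Dziobiak theorem, and $(3)\Rightarrow(1)$ via Mal'cev's subdirect decomposition into $\vv Q$-irreducible factors pushed into $\vv Q'$ by weak $\vv Q$-primitivity. Your two refinements---explicitly transferring $\vv Q$-irreducibility to $\QQ(\alg B)$-irreducibility using $\op{Con}_{\QQ(\alg B)}(\alg A)\sse\op{Con}_{\vv Q}(\alg A)$ (a point the paper glosses over by writing ``subdirectly irreducible''), and applying (3) just once per factor where the paper takes a detour through $\HH\SU\PP_u(\alg C)$ and auxiliary algebras $\alg D_i$---are minor streamlinings of the same argument rather than a different approach.
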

\begin{proof} We first show the equivalence between (1) and (2). Suppose that $\vv Q$ is primitive and let $\vv Q'\sse \vv Q$; if $\vv Q'' \sse \vv Q'$ and $\HH(\vv Q'') = \HH(\vv Q')$ then
$$
\vv Q' = \HH(\vv Q') \cap \vv Q = \HH(\vv Q'') \cap \vv Q = \vv Q''
$$
so $\vv Q'$ is structurally complete by Theorem \ref{structural}.

Conversely assume (2), let $\vv Q' \sse \vv Q$ and let $\vv Q'' = \HH(\vv Q') \cap \vv Q$ (it is clearly a quasivariety); then $\HH(\vv Q'') = \HH(\vv Q')$ and thus $\vv Q'' = \vv Q'$, again using the characterization of Theorem \ref{structural}. So
$\vv Q'$ is equational in $\vv Q$ and $\vv Q$ is primitive.

Assume (1) again, and let $\alg A,\alg B \in \vv Q$ with $\alg A$ subdirectly irreducible and $\alg A \in \HH(\alg B)$. Since $\vv Q$ is primitive we have
$$
\QQ(\alg B) = \HH(\QQ(\alg B)) \cap \vv Q
$$
and hence $\alg A \in \QQ(\alg B)$. Since $\alg A$ is subdirectly irreducible, $\alg A \in \II\SU\PP_u(\alg B)$ by Theorem \ref{quasivariety} and (3) holds.

 Conversely, assume (3) and let $\vv Q'$ be a subquasivariety of $\vv Q$.  Let $\alg B \in \HH(\vv Q') \cap \vv Q$; then
$\alg B$ is subdirectly embeddable in $\prod_{i\in I} \alg A_i$ where $\alg A_i$ is $\vv Q$-irreducible.
Now for any $i$ $\alg A_i \in \HH(\alg B)$, so $\alg A_i \in \II\SU\PP_u(\alg B)$.
As $\alg B \in \HH(\vv Q')$, there is a $\alg C \in Q'$ with $\alg B \in \HH(\alg C)$ and thus $\alg A_i \in \HH\SU\PP_u(\alg C)$.
So there is a $\alg D_i \in \II\SU\PP_u(\alg C) \sse \vv Q'$ with $\alg A_i \in \HH(\alg D_i)$.  So $\alg A_i \in \II\SU\PP_u(\alg D_i) \sse \vv Q'$
for all $i \in I$; this implies $\alg B \in Q'$, $\vv Q'= \HH(\vv Q') \cap \vv Q$ and $\vv Q$ is primitive.
\end{proof}

We observe that the equivalence of (1) and (3) entails Theorem 2.12 from \cite{Bergman1991}. If instead of a quasivariety we consider a variety we get a little improvement.

\begin{corollary}\label{primvarsc} For a variety $\vv V$ the following are equivalent:
\begin{enumerate}
\item $\vv V$ is primitive;
\item every subquasivariety of $\vv V$ is a variety;
\item $\vv V$ is structurally complete and every proper subvariety of $\vv V$ is primitive.
\end{enumerate}
\end{corollary}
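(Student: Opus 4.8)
The plan is to establish the two equivalences (1)$\Leftrightarrow$(2) and (1)$\Leftrightarrow$(3) separately, leaning on Theorem~\ref{primitiveQ} and on the fact that a variety is already closed under $\HH$. For (1)$\Leftrightarrow$(2) the key observation is that for any subquasivariety $\vv Q'\sse\vv V$ one has $\HH(\vv Q')\sse\HH(\vv V)=\vv V$, so that the condition of being equational relative to $\vv V$, namely $\vv Q'=\HH(\vv Q')\cap\vv V$, collapses to $\vv Q'=\HH(\vv Q')$. Since a quasivariety is already closed under $\SU$ and $\PP$, the extra closure under $\HH$ makes it a variety; conversely a subvariety $\vv Q'$ trivially satisfies $\vv Q'=\HH(\vv Q')\cap\vv V$ because $\HH(\vv Q')=\vv Q'$ and $\vv Q'\sse\vv V$. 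Thus primitivity of $\vv V$ is literally the assertion that every subquasivariety is a variety, and this equivalence is routine.

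For (1)$\Rightarrow$(3), primitivity of $\vv V$ gives, via the implication (1)$\Rightarrow$(2) of Theorem~\ref{primitiveQ} applied to $\vv V$ itself, that every subquasivariety of $\vv V$ is structurally complete; in particular $\vv V$ is. I would then check that every subvariety $\vv V'$ of a primitive $\vv V$ is again primitive: for a subquasivariety $\vv Q''\sse\vv V'$ the primitivity of $\vv V$ yields $\vv Q''=\HH(\vv Q'')\cap\vv V$, and intersecting with $\vv V'$ (using $\vv Q''\sse\vv V'\sse\vv V$) gives $\vv Q''=\HH(\vv Q'')\cap\vv V'$, so $\vv V'$ is primitive. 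A fortiori every proper subvariety of $\vv V$ is primitive, which is (3).

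The substantive direction is (3)$\Rightarrow$(1), which I would again route through Theorem~\ref{primitiveQ} by showing that every subquasivariety $\vv Q'\sse\vv V$ is structurally complete. Set $\vv V'=\HH(\vv Q')$; since $\vv Q'$ is closed under $\SU\PP$ this equals $\VV(\vv Q')$ and is therefore a subvariety of $\vv V$. The argument then splits on whether $\vv V'=\vv V$ or $\vv V'\subsetneq\vv V$. In the first case $\HH(\vv Q')=\vv V=\HH(\vv V)$, so structural completeness of $\vv V$ (Theorem~\ref{structural}(2)) forces $\vv Q'=\vv V$, which is structurally complete by hypothesis. In the second case $\vv V'$ is a proper subvariety, hence primitive by (3); as $\vv Q'\sse\vv V'$ is a subquasivariety, Theorem~\ref{primitiveQ} applied to $\vv V'$ shows $\vv Q'$ is structurally complete. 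Either way $\vv Q'$ is structurally complete, so $\vv V$ is primitive by Theorem~\ref{primitiveQ}.

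I expect the main obstacle to be the bookkeeping in (3)$\Rightarrow$(1): recognizing that $\HH(\vv Q')$ is a genuine subvariety (so that the proper/non-proper dichotomy is even available), and correctly matching the boundary case $\HH(\vv Q')=\vv V$ to the structural-completeness hypothesis rather than to primitivity of a proper subvariety. The word \emph{proper} in condition (3) is precisely what makes the two halves of the dichotomy fit together without circularity, since the full-variety case is handled by structural completeness alone.
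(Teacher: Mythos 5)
Your proof is correct and takes essentially the same route as the paper's: the trivial equivalence of (1) and (2), and for the substantive direction the same dichotomy on whether $\HH(\vv Q')$ is all of $\vv V$ (settled by structural completeness, Theorem \ref{structural}) or a proper subvariety (settled by its primitivity under hypothesis (3)). The only cosmetic difference is that you close each case by concluding $\vv Q'$ is structurally complete and then invoke Theorem \ref{primitiveQ}, whereas the paper concludes directly that $\vv Q'$ is a variety, i.e. verifies condition (2).
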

\begin{proof} Trivially (1) and (2) are equivalent and imply (3). Assume then (3) and let $\vv Q$ be a subquasivariety of $\vv V$; if $\vv Q=\vv V$ then it is a variety.
Otherwise $\HH(\vv Q)$ must be a proper subvariety of $\vv V$, so it is primitive; hence $\vv Q$ is a variety and (2) follows.
\end{proof}

We will see later that the hypothesis of structural completeness for $\vv V$ cannot be removed: there are varieties such that every proper subvariety is primitive but fail to be structurally complete.
To get more information we need some definitions: let $\alg A$ be an algebra and $\vv K$ a class of algebras of the same type as $\alg A$. We say that:

\begin{enumerate}
\ib $\alg A$ is \Def{ projective} in $\vv K$ if for all  $\alg B \in \vv K$ if $f: \alg B\longrightarrow \alg A$ is a surjective epimorphism, then there is an embedding $g: \alg A \longrightarrow \alg B$ with $gf=id_\alg A$;
\ib $\alg A$ is \Def{ weakly $\vv K$-projective}  if for all $\alg B\in \vv K$ if $\alg A \in \HH(\alg B)$, then $\alg A \in \II\SU(\alg B)$.
\end{enumerate}
It is clear that if $\alg A$ is projective in $\vv K$, then $\alg A$ is weakly $\vv K$-projective.

\begin{lemma}\label{lemma: techlemma2} Let $\vv Q$ be a quasivariety. Then for $\alg A \in \vv Q$  the following are equivalent:
\begin{enumerate}
 \item $\alg A$ is weakly $\vv Q$-projective;
 \item  $[\vv Q:\alg A]$ is closed under $\HH$.
 \end{enumerate}
\end{lemma}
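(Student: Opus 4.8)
The plan is to prove the two implications separately, treating them asymmetrically: the implication $(2) \Rightarrow (1)$ is essentially formal, while $(1) \Rightarrow (2)$ requires pulling a subalgebra back along a surjection. Throughout I read ``$[\vv Q:\alg A]$ is closed under $\HH$'' relative to $\vv Q$, i.e. as $\HH([\vv Q:\alg A]) \cap \vv Q \sse [\vv Q:\alg A]$, which is the only sensible reading since $[\vv Q:\alg A] \sse \vv Q$ but $\vv Q$ need not itself be $\HH$-closed.

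For $(2) \Rightarrow (1)$ I would argue by contraposition inside the definition. Suppose $\alg B \in \vv Q$ and $\alg A \in \HH(\alg B)$; I must show $\alg A \in \II\SU(\alg B)$, equivalently $\alg B \notin [\vv Q:\alg A]$. Observe that $\alg A \in \II\SU(\alg A)$ trivially, so $\alg A \notin [\vv Q:\alg A]$, while $\alg A \in \HH(\alg B) \cap \vv Q$. If $\alg B$ were in $[\vv Q:\alg A]$, closure under $\HH$ would force $\alg A \in \HH([\vv Q:\alg A]) \cap \vv Q \sse [\vv Q:\alg A]$, a contradiction. Hence $\alg B \notin [\vv Q:\alg A]$, as required.

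The substantive direction is $(1) \Rightarrow (2)$. Assume $\alg A$ is weakly $\vv Q$-projective, fix $\alg B \in [\vv Q:\alg A]$ and $\alg C \in \HH(\alg B) \cap \vv Q$, and suppose for contradiction that $\alg C \notin [\vv Q:\alg A]$, that is $\alg A \in \II\SU(\alg C)$. Let $\alg A' \le \alg C$ be a copy of $\alg A$ and let $h\colon \alg B \tla \alg C$ be a surjective homomorphism. The key step is to form $\alg B' := h^{-1}(A')$: the preimage of a subuniverse under a homomorphism is again a subuniverse, so $\alg B' \le \alg B$, and since $h$ maps onto $\alg C \supseteq A'$, the restriction $h|_{\alg B'}\colon \alg B' \tla \alg A'$ is still surjective. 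Thus $\alg A \cong \alg A' \in \HH(\alg B')$.

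Finally I would close the loop using that $\vv Q$ is a quasivariety. As $\alg B'$ is a subalgebra of $\alg B \in \vv Q$ and quasivarieties are closed under $\SU$, we have $\alg B' \in \vv Q$; weak $\vv Q$-projectivity of $\alg A$ then yields $\alg A \in \II\SU(\alg B') \sse \II\SU(\alg B)$. This contradicts $\alg B \in [\vv Q:\alg A]$, so in fact $\alg A \notin \II\SU(\alg C)$, i.e. $\alg C \in [\vv Q:\alg A]$, proving closure under $\HH$. The only step requiring care is the pullback construction of $\alg B'$ and the verification that it still maps onto the chosen copy of $\alg A$; everything else is bookkeeping with the class operators, resting on the fact that $\vv Q$ is closed under subalgebras.
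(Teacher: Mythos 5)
Your proof is correct and takes essentially the same route as the paper's: the direction $(2)\Rightarrow(1)$ is the identical contradiction argument, while in $(1)\Rightarrow(2)$ your explicit pullback $\alg B' = h^{-1}(A')$ is precisely the standard proof of the operator inequality $\SU\HH \sse \HH\SU$, which the paper invokes abstractly before applying weak $\vv Q$-projectivity to a subalgebra of a member of $[\vv Q:\alg A]$. Your relative reading of ``closed under $\HH$'' (intersecting with $\vv Q$) is also the reading the paper's own argument presupposes, so nothing is lost there.
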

\begin{proof}
Assume (1), and suppose that $\alg B \in \HH([\vv Q:\alg A])$. If $\alg A \in \II\SU(\alg B)$, then $\alg A \in \SU\HH([\vv Q:\alg A]) \sse \HH\SU([\vv Q:\alg A])$. Now $[\vv Q: \alg A]\sse \vv Q$ and $\alg A$ is weakly $\vv Q$-projective; so $\alg A \in \SU([\vv Q:\alg A])$ which is impossible.  It follows that $\alg A \notin \II\SU(\alg B)$ and $\alg B \in [\vv Q:\alg A]$; thus $[\vv Q:\alg A]$ is closed under $\HH$.
Assume now (2); we show that $\alg A$ is weakly $\vv Q$-projective. Suppose that $\alg A \in \HH(\alg B)$ for some $\alg B \in \vv Q$; if $\alg A \notin \II\SU(\alg B)$, then
$\alg B \in [\vv Q:\alg A]$  and, since $[\vv Q:\alg A]$ is closed under $\HH$, $\alg A \in [\vv Q:\alg A]$, again a contradiction. Hence $\alg A \in \II\SU(\alg B)$ and $\alg A$ is weakly $\vv Q$-projective.
\end{proof}

\begin{corollary}\label{cor: techlemma2} Let $\vv Q$ be a quasivariety; if $\alg A$ is $\vv Q$-irreducible and $\vv Q$-finitely presented then the following are equivalent:
\begin{enumerate}
\item $\alg A$ is weakly $\vv Q$-projective;
\item $[\vv Q: \alg A]$ is $\vv Q$-equational;
\item $[\vv Q:\alg A]$  is a variety.
\end{enumerate}
\end{corollary}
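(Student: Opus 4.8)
The plan is to translate all three conditions into closure properties of the class $[\vv Q:\alg A]$ and then read them off from Lemma \ref{lemma: techlemma2}. First I would note that, since $\alg A$ is $\vv Q$-irreducible and $\vv Q$-finitely presented, Lemma \ref{lemma: irr fin presented}(2) guarantees that $[\vv Q:\alg A]$ is a quasivariety, and by Lemma \ref{lemma: characteristic} it is exactly the subclass of $\vv Q$ cut out by any characteristic quasiidentity $\Phi$ of $\alg A$. Being a quasivariety, $[\vv Q:\alg A]$ is closed under $\SU$ and $\PP$, so $\VV([\vv Q:\alg A]) = \HH([\vv Q:\alg A])$. Consequently condition (3), that $[\vv Q:\alg A]$ be a variety, is equivalent to the global closure statement $\HH([\vv Q:\alg A]) = [\vv Q:\alg A]$, while condition (2), $\vv Q$-equationality, is by the characterization of relative equationality for subquasivarieties equivalent to the weaker relative closure statement $\HH([\vv Q:\alg A]) \cap \vv Q = [\vv Q:\alg A]$.

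I would then close the cycle $(1) \Rightarrow (3) \Rightarrow (2) \Rightarrow (1)$. For $(3) \Rightarrow (2)$ there is nothing to do: intersecting $\HH([\vv Q:\alg A]) = [\vv Q:\alg A]$ with $\vv Q$ and using $[\vv Q:\alg A] \sse \vv Q$ yields the relative statement at once. For $(2) \Rightarrow (1)$ I would feed the relative closure $\HH([\vv Q:\alg A]) \cap \vv Q = [\vv Q:\alg A]$ directly into Lemma \ref{lemma: techlemma2}, whose equivalence identifies ``$[\vv Q:\alg A]$ closed under $\HH$ inside $\vv Q$'' with ``$\alg A$ weakly $\vv Q$-projective''. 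The substantive implication is $(1) \Rightarrow (3)$: starting from weak $\vv Q$-projectivity, Lemma \ref{lemma: techlemma2} returns that $[\vv Q:\alg A]$ is closed under $\HH$, and combining this with the quasivariety closure under $\SU$ and $\PP$ already noted would make $[\vv Q:\alg A]$ closed under $\HH\SU\PP$, i.e. a variety.

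The hard part is making this last step honest, namely checking that the $\HH$-closure delivered by weak projectivity is genuinely global rather than merely relative to $\vv Q$; equivalently, that $\VV([\vv Q:\alg A]) \sse \vv Q$. The robust half of the proof of Lemma \ref{lemma: techlemma2} gives one inclusion for free: for every $\alg C \in \HH([\vv Q:\alg A])$ one has $\alg A \notin \II\SU(\alg C)$, for otherwise $\alg A \in \II\SU\HH([\vv Q:\alg A]) \sse \HH\SU([\vv Q:\alg A])$, so $\alg A$ would be a homomorphic image of some $\alg D \in \SU([\vv Q:\alg A]) \sse \vv Q$, whence weak $\vv Q$-projectivity would give $\alg A \in \II\SU(\alg D) \sse \II\SU([\vv Q:\alg A])$, contradicting the definition of $[\vv Q:\alg A]$; note this uses no membership of $\alg C$ in $\vv Q$. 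Thus every algebra of $\VV([\vv Q:\alg A])$ avoids $\alg A$ as a subalgebra, and by Lemma \ref{lemma: characteristic} any such algebra that already lies in $\vv Q$ satisfies $\Phi$ and hence belongs to $[\vv Q:\alg A]$. What remains, and what I expect to be the main obstacle, is precisely the $\vv Q$-membership of homomorphic images of members of $[\vv Q:\alg A]$: since $\HH(\vv Q) \not\sse \vv Q$ for a general quasivariety, this does not follow formally and must be extracted from the finite presentation of $\alg A$ together with weak projectivity. My plan for this crux is to test a defining quasiequation of $\vv Q$ on a quotient $\alg B/\th$ with $\alg B \in [\vv Q:\alg A]$, and to argue that any failure of it would, via the characteristic quasiidentity and $\vv Q$-irreducibility, reintroduce a copy of $\alg A$ inside some member of $\HH([\vv Q:\alg A])$, contradicting the displayed robust fact.
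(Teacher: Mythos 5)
Your arguments for $(3)\Rightarrow(2)$ and $(2)\Rightarrow(1)$ are correct and are essentially the paper's own: the paper also gets $(3)\Rightarrow(2)$ by intersecting with $\vv Q$, and proves $(2)\Rightarrow(1)$ by exactly the contradiction you describe. The problem is $(1)\Rightarrow(3)$, and your diagnosis of where the difficulty sits is accurate: weak $\vv Q$-projectivity only yields the ``robust fact'' that no algebra in $\HH([\vv Q:\alg A])$ contains a copy of $\alg A$; to conclude that $[\vv Q:\alg A]$ is a variety one needs in addition $\HH([\vv Q:\alg A])\sse\vv Q$, for which you offer only a plan. This is a genuine gap in your proposal, and it cannot be repaired: the implication $(1)\Rightarrow(3)$ is false for general quasivarieties, so no elaboration of your plan can succeed.

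Here is a counterexample. Take the type with one constant $e$ and two unary operations $f,u$. Let $\alg A=\{e,a_0,a_1\}$ with $f$ interchanging $a_0,a_1$ and fixing $e$, and $u=\mathrm{id}$; let $\alg B$ have universe $\mathbb Z\cup\{e\}$, with $f$ the successor on $\mathbb Z$, $f(e)=e$, and $u$ constantly equal to $e$. Put $\vv Q=\QQ(\{\alg A,\alg B\})$. Both generators satisfy
$$
u(e)\app e,\quad f(e)\app e,\quad u(u(x))\app u(x),\quad f(u(x))\app u(f(x)),\quad f(f(u(x)))\app u(x),
$$
as well as the quasiequations $f(x)\app x\wedge u(x)\app x\Rightarrow x\app e$ and $f^3(x)\app x\Rightarrow f(x)\app x$, so all of these hold throughout $\vv Q$. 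Consequently: (i) the unique proper nontrivial congruence of $\alg A$ (collapsing $a_0$ with $a_1$) yields a quotient violating the first quasiequation, so $\alg A$ is $\vv Q$-simple, hence $\vv Q$-irreducible; (ii) the displayed equations show that for every $\alg C\in\vv Q$ and $c\in C$ with $u(c)=c$, the assignment $e\mapsto e$, $a_0\mapsto c$, $a_1\mapsto f(c)$ is a homomorphism $\alg A\to\alg C$, whence $\alg A$ is $\vv Q$-presented by $(\{x\},\{u(x)\app x\})$ via $x\mapsto a_0$, so it is $\vv Q$-finitely presented; (iii) $\alg A$ is weakly $\vv Q$-projective: if $h\colon\alg C\tla\alg A$ with $\alg C\in\vv Q$, pick $c\in h^{-1}(a_0)$ and set $d=u(c)$; then $u(d)=d$, so (ii) gives a homomorphism $\f\colon\alg A\to\alg C$ with $\f(a_0)=d$, and $h\f=\mathrm{id}_{\alg A}$ because $h(d)=u(h(c))=a_0$; thus $\f$ is an embedding (indeed $\alg A$ is a retract of $\alg C$). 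On the other hand $\alg B$ has no element of $f$-period two, so $\alg A\notin\II\SU(\alg B)$ and $\alg B\in[\vv Q:\alg A]$; yet the quotient of $\alg B$ by the congruence identifying integers modulo $3$ violates $f^3(x)\app x\Rightarrow f(x)\app x$, hence lies outside $\vv Q$ and a fortiori outside $[\vv Q:\alg A]$. So $[\vv Q:\alg A]$ is not closed under $\HH$ and is not a variety, although (1) and (2) hold.

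This also locates a flaw in the paper's own treatment, at exactly the step you refused to take: the proof of Lemma \ref{lemma: techlemma2} infers ``$\alg B\in[\vv Q:\alg A]$'' from ``$\alg A\notin\II\SU(\alg B)$'' for an arbitrary $\alg B\in\HH([\vv Q:\alg A])$, silently assuming $\alg B\in\vv Q$; read as asserting global $\HH$-closure, that lemma, and with it item (3) of the present corollary, fails. What is true, and what both you and the paper actually establish, is the equivalence of (1) and (2) together with $(3)\Rightarrow(2)$; and when $\vv Q$ is a variety the distinction evaporates, since then $\HH([\vv Q:\alg A])\sse\HH(\vv Q)=\vv Q$ and (2) and (3) coincide. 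Your instinct that the missing inclusion would have to be ``extracted from the finite presentation together with weak projectivity'' identified the right pressure point; the example above shows that it simply is not there.
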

\begin{proof} If (1) holds then $[\vv Q: \vv A]$ is a quasivariety by Lemma \ref{lemma: irr fin presented} and it is closed under $\HH$ by Lemma \ref{lemma: techlemma2}; therefore it is a variety and (3) holds.
Clearly (3) implies (2), so let's assume (2) i.e. $[\vv Q:\alg A] = \HH([\vv Q:\alg A]) \cap \vv Q$.  Let $\alg B \in \vv Q$ with $\alg A \in \HH(\alg B)$; if $\alg A \notin \SU(\alg B)$, then
$\alg B \in [\vv Q:\alg A]$ and thus $\alg A \in \HH([\vv Q:\alg A])$.  As $\alg A \in \vv Q$ we get $\alg A \in [\vv Q:\alg A]$, a contradiction; so $\alg A \in \SU(\alg B)$ and therefore it is weakly $\vv Q$-projective.
\end{proof}

Again Corollary \ref{cor: techlemma2} entails Lemma 5.1.23 from \cite{Gorbunov1998};
also  we get at once that if $\vv Q$ is primitive, then every finitely $\vv Q$-presented $\vv Q$-irreducible algebra  in $\vv Q$ must be weakly $\vv Q$-projective.
The converse however does not seem to be true in general; it is however when we restrict to locally finite quasivarieties (Theorem \ref{mainprimitive} below).

A class $\vv K$ of algebras is \Def{ tame} if every finitely generated algebra in $\vv K$ is $\vv K$-finitely
presented. Note that the concept has content: any class $\vv K$ of algebras of finite type which is
locally finite in the usual sense (i.e. every finitely generated algebra in $\vv K$ is finite)
is tame since in that case the class of finite algebras, the class of finitely generated and the class of finitely presented algebras coincide.
Tame classes of algebras have been studied mainly in groups (better, in algebras in
which groups are interpretable): for instance nilpotent  groups are tame,
so Abelian groups are tame (and it is an example of a tame non locally finite variety).

The next result is implicit in \cite{Gorbunov1998} (Section 5.1) for locally finite quasivarieties (for a self-contained proof the reader can look at \cite{AglianoUgolini2023}, Theorem 4.11); tameness
is weaker than local finiteness but the same proof goes through with minimal changes.

\begin{theorem}\label{mainprimitive} If $\vv Q$ is a tame quasivariety of finite type, then the following are equivalent.
\begin{enumerate}
\item $\vv Q$ is primitive;
\item for all finitely generated  $\alg A \in \vv Q$,  $[\vv Q:\alg A]$ is equational relative to $\vv Q$;
\item every finitely generated $\vv Q$-irreducible  $\alg A \in \vv Q$ is weakly $\vv Q$-projective;
\item every finitely generated  $\vv Q$-irreducible  $\alg A \in \vv Q$ is weakly $\vv Q_{fg}$-projective, where $\vv Q_{fg}$ is the class of finitely generated algebras in $\vv Q$.
\end{enumerate}
\end{theorem}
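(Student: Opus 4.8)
The plan is to prove the four conditions equivalent via the cycle $(1)\Rightarrow(2)\Rightarrow(3)\Rightarrow(4)\Rightarrow(3)\Rightarrow(1)$, isolating the single nontrivial link as $(3)\Rightarrow(1)$. Tameness is invoked precisely to replace ``finitely generated'' by ``finitely presented'', which is what lets me feed the relevant $\vv Q$-irreducible algebras into Corollary \ref{cor: techlemma2} and Lemma \ref{lemma: irr fin presented}. The implications $(1)\Rightarrow(2)$ and $(2)\Rightarrow(3)$ are then both immediate via Corollary \ref{cor: techlemma2}: for a finitely generated $\vv Q$-irreducible $\alg A$, tameness makes $\alg A$ $\vv Q$-finitely presented, so the remark following that corollary (primitivity forces finitely presented $\vv Q$-irreducible algebras to be weakly $\vv Q$-projective) gives $(1)\Rightarrow(2)$ through the equivalence of weak projectivity with $[\vv Q:\alg A]$ being equational, while the same equivalence read backwards gives $(2)\Rightarrow(3)$.

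Next I would clear the cosmetic step $(3)\iff(4)$. The direction $(3)\Rightarrow(4)$ is trivial since $\vv Q_{fg}\sse\vv Q$. For $(4)\Rightarrow(3)$, which uses only finite generation of $\alg A$, suppose $f\colon\alg B\tla\alg A$ with $\alg B\in\vv Q$; pulling back a finite generating set of $\alg A$ produces a finitely generated $\alg B_0\le\alg B$ with $f(\alg B_0)=\alg A$, so $\alg A\in\HH(\alg B_0)$ with $\alg B_0\in\vv Q_{fg}$, and weak $\vv Q_{fg}$-projectivity gives $\alg A\in\II\SU(\alg B_0)\sse\II\SU(\alg B)$.

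The substance is $(3)\Rightarrow(1)$, which I would route through Theorem \ref{primitiveQ}(3): it suffices that every $\vv Q$-irreducible $\alg C\in\vv Q$ be weakly $\vv Q$-primitive. Fix such a $\alg C$ and $\alg B\in\vv Q$ with $\alg C\in\HH(\alg B)$; the goal is $\alg C\in\II\SU\PP_u(\alg B)$. The first step is to show $\alg C\in\QQ(\alg B)$. Since quasiequations have finite character it is enough that every finitely generated $\alg C_0\le\alg C$ lie in $\QQ(\alg B)$; by Mal'cev's Theorem \ref{quasivariety}(1) write $\alg C_0\le_{sd}\prod_k\alg C_0/\phi_k$ with each $\alg C_0/\phi_k$ finitely generated and $\vv Q$-irreducible. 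Each such quotient lies in $\HH\SU\HH(\alg B)\sse\HH\SU(\alg B)$ (a subalgebra of a homomorphic image is a homomorphic image of a subalgebra), so $\alg C_0/\phi_k\in\HH(\alg B')$ for some $\alg B'\le\alg B$ in $\vv Q$; by hypothesis (3) it is weakly $\vv Q$-projective, whence $\alg C_0/\phi_k\in\II\SU(\alg B')\sse\II\SU(\alg B)\sse\QQ(\alg B)$. As $\QQ(\alg B)$ is closed under $\SU\PP$, $\alg C_0\in\QQ(\alg B)$, and therefore $\alg C\in\QQ(\alg B)$. The second step upgrades this to the required ultrapower embedding: since $\op{Con}_{\QQ(\alg B)}(\alg C)\sse\op{Con}_{\vv Q}(\alg C)$, the pair witnessing $\vv Q$-irreducibility of $\alg C$ in Lemma \ref{lemma: q-irreducible}(3) still lies in every nonzero $\QQ(\alg B)$-congruence, so $\alg C$ is $\QQ(\alg B)$-irreducible, hence finitely $\QQ(\alg B)$-irreducible; applying the Czelakowski--Dziobiak Theorem \ref{quasivariety}(2) to $\QQ(\alg B)=\QQ(\{\alg B\})$ yields $\alg C\in\II\SU\PP_u(\alg B)$.

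The hard part is exactly this last implication, and the obstacle it resolves is the mismatch between hypothesis and conclusion: (3) supplies only weak $\vv Q$-projectivity---embeddings without ultrapowers---and only for finitely generated irreducibles, whereas weak $\vv Q$-primitivity demands an $\II\SU\PP_u$-embedding of an arbitrary, possibly infinitely generated, $\vv Q$-irreducible $\alg C$. The detour through $\QQ(\alg B)$ bridges the gap: weak projectivity is applied only at the level of the finitely generated irreducible quotients $\alg C_0/\phi_k$ to place all of $\alg C$ inside $\QQ(\alg B)$, and Czelakowski--Dziobiak then reintroduces the ultrapowers for free. The point demanding the most care is controlling the class operators: a quotient of a finitely generated subalgebra of $\alg C\in\HH(\alg B)$ is only guaranteed to lie in $\HH\SU(\alg B)$ rather than $\HH(\alg B)$, so one must use $\SU\HH\sse\HH\SU$ and invoke weak $\vv Q$-projectivity relative to a suitable subalgebra of $\alg B$.
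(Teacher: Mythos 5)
Your proof is correct in substance, and since the paper itself gives no argument for Theorem \ref{mainprimitive} (it defers to Section 5.1 of \cite{Gorbunov1998} and to Theorem 4.11 of \cite{AglianoUgolini2023}, asserting that the locally finite proof survives the weakening to tameness), what you have written is essentially the self-contained proof the paper omits, along the expected route. Tameness enters exactly where it should: to upgrade ``finitely generated'' to ``$\vv Q$-finitely presented'' so that Lemma \ref{lemma: irr fin presented} and Corollary \ref{cor: techlemma2} apply in $(1)\Rightarrow(2)\Rightarrow(3)$. The equivalence $(3)\Leftrightarrow(4)$ by pulling back a finite generating set is fine, and the main step $(3)\Rightarrow(1)$ checks out completely: finite character of quasiequations reduces to finitely generated subalgebras $\alg C_0$ of a $\vv Q$-irreducible $\alg C\in\HH(\alg B)$; Mal'cev's theorem (Theorem \ref{quasivariety}(1)) decomposes $\alg C_0$ into finitely generated $\vv Q$-irreducible quotients; the inclusion $\SU\HH\sse\HH\SU$ together with hypothesis (3) places these quotients, hence $\alg C_0$, hence $\alg C$, in $\QQ(\alg B)$; and Czelakowski--Dziobiak (Theorem \ref{quasivariety}(2)) applied to $\QQ(\alg B)$ restores the ultrapower embedding needed for Theorem \ref{primitiveQ}(3). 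You also correctly note that this implication uses neither tameness nor finite type, which is precisely the paper's Corollary \ref{cor: mainprimitive}.

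One point must be made explicit rather than adopted tacitly. Your argument for $(1)\Rightarrow(2)$ establishes (2) only for finitely generated \emph{$\vv Q$-irreducible} algebras, whereas item (2) as printed quantifies over all finitely generated $\alg A\in\vv Q$. This is not a gap you could have closed, because the literal statement is false: take $\vv Q$ to be the variety of Boolean algebras (primitive, locally finite, of finite type, hence tame) and $\alg A=\two^{2}$; then $[\vv Q:\alg A]$ consists of the trivial and the two-element Boolean algebras, a class not closed under direct products (since $\two\times\two\cong\alg A$), hence not of the form $\VV(\vv K)\cap\vv Q$ for any $\vv K$, i.e.\ not equational relative to $\vv Q$. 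So (1) holds while literal (2) fails, and condition (2) must be read as restricted to $\vv Q$-irreducible algebras --- the only case that Corollary \ref{cor: techlemma2}, which your argument (rightly) relies on, can reach. Under that reading your cycle $(1)\Rightarrow(2)\Rightarrow(3)\Rightarrow(1)$ together with $(3)\Leftrightarrow(4)$ is complete and correct; you should simply state the restriction (and, ideally, the counterexample forcing it) instead of passing over the discrepancy in silence.
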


We observe that (4) implies (1) even in the absence of tameness; so we get:

\begin{corollary} \label{cor: mainprimitive}Let $\vv Q$ be a quasivariety; if every finitely generated $\vv Q$-irreducible algebra is weakly $\vv Q$-projective (or even weakly $\vv Q_{fg}$-projective) then $\vv Q$ is primitive.
\end{corollary}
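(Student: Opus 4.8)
The plan is to show directly that every subquasivariety $\vv Q'$ of $\vv Q$ satisfies $\HH(\vv Q') \cap \vv Q \sse \vv Q'$; since the reverse inclusion $\vv Q' \sse \HH(\vv Q') \cap \vv Q$ is automatic, this says exactly that $\vv Q'$ is equational relative to $\vv Q$, hence that $\vv Q$ is primitive. Because being weakly $\vv Q$-projective is stronger than being weakly $\vv Q_{fg}$-projective (the test algebras in $\vv Q_{fg}$ form a subclass of those in $\vv Q$), it is enough to run the argument under the weaker hypothesis, namely that every finitely generated $\vv Q$-irreducible algebra is weakly $\vv Q_{fg}$-projective. The proof is a version of the implication (3) $\Rightarrow$ (1) of Theorem \ref{primitiveQ}, but routed through a reduction to finitely generated algebras so that the hypothesis, which speaks only about finitely generated algebras, can be brought to bear.

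First I would reduce to finitely generated members. Given $\alg B \in \HH(\vv Q') \cap \vv Q$, fix a surjection $h \colon \alg C \tla \alg B$ with $\alg C \in \vv Q'$. For a finitely generated subalgebra $\alg B_0 \le \alg B$ I lift finitely many generators of $\alg B_0$ along $h$ and let $\alg C_0 \le \alg C$ be the subalgebra they generate; then $h$ restricts to a surjection $\alg C_0 \tla \alg B_0$, so $\alg B_0 \in \HH(\vv Q') \cap \vv Q$ with $\alg C_0$ finitely generated and in $\vv Q'$. Invoking the standard fact (used already in the proof of Lemma \ref{lemma: irr fin presented}) that every algebra embeds into an ultraproduct of its finitely generated subalgebras, together with closure of $\vv Q'$ under $\II\SU\PP_u$, it then suffices to prove that every finitely generated member of $\HH(\vv Q') \cap \vv Q$ lies in $\vv Q'$.

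Next I would reduce to the $\vv Q$-irreducible case. If $\alg B_0$ is a finitely generated member of $\HH(\vv Q') \cap \vv Q$, Mal'cev's theorem (Theorem \ref{quasivariety}(1)) gives a subdirect embedding $\alg B_0 \le_{sd} \prod_{i \in I} \alg A_i$ with each $\alg A_i$ $\vv Q$-irreducible. The key point is that each $\alg A_i$, being a homomorphic image of the finitely generated algebra $\alg B_0$, is itself finitely generated, and still lies in $\HH(\vv Q') \cap \vv Q$. Hence, once each $\alg A_i$ is shown to belong to $\vv Q'$, we obtain $\alg B_0 \in \II\SU\PP(\vv Q') = \vv Q'$. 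For the remaining core case let $\alg A$ be a finitely generated $\vv Q$-irreducible member of $\HH(\vv Q') \cap \vv Q$; applying the generator-lifting construction to a surjection $\alg C \tla \alg A$ with $\alg C \in \vv Q'$ produces a finitely generated $\alg C_0 \le \alg C$ in $\vv Q' \cap \vv Q_{fg}$ with $\alg A \in \HH(\alg C_0)$, whereupon weak $\vv Q_{fg}$-projectivity of $\alg A$ yields $\alg A \in \II\SU(\alg C_0) \sse \vv Q'$.

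The main obstacle is precisely the mismatch between the hypothesis and Mal'cev's decomposition: the $\vv Q$-irreducible factors produced by Theorem \ref{quasivariety}(1) for an arbitrary algebra need not be finitely generated, so the hypothesis cannot be applied to them directly. The device that dissolves this is to carry out the reduction to finitely generated algebras before invoking Mal'cev, which forces the subdirect factors to inherit finite generation as homomorphic images; the generator-lifting argument is what keeps every algebra encountered inside $\HH(\vv Q')$ along the way. I would also stress that no step appeals to tameness or to finite presentation, which is exactly why this implication holds beyond the scope of Theorem \ref{mainprimitive}.
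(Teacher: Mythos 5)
Your proof is correct. A caveat on the comparison: the paper gives no self-contained proof of this corollary at all --- it is obtained as the observation that the implication (4) $\Rightarrow$ (1) of Theorem \ref{mainprimitive} nowhere uses tameness, and the proof of Theorem \ref{mainprimitive} itself is only cited (from Gorbunov and from the Aglian\`o--Ugolini paper), not reproduced. Your argument is exactly the fleshing-out of that observation, and it is organized the right way: it is the paper's proof of Theorem \ref{primitiveQ}, (3) $\Rightarrow$ (1), modified in the two places needed to make the weaker, finitely generated hypothesis suffice. First, an arbitrary $\alg B \in \HH(\vv Q') \cap \vv Q$ is handled through its finitely generated subalgebras via the embedding into an ultraproduct of them (the same fact the paper uses in Lemma \ref{lemma: irr fin presented}), with generator lifting guaranteeing that each such subalgebra is a homomorphic image of a \emph{finitely generated} member of $\vv Q'$; second, Mal'cev's decomposition (Theorem \ref{quasivariety}(1)) is applied only to finitely generated algebras, whose subdirect factors inherit finite generation as surjective images, so the hypothesis reaches them. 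Note also that where Theorem \ref{primitiveQ} needs weak $\vv Q$-\emph{primitivity} (ultrapowers allowed), your core case gets by with plain weak $\vv Q_{fg}$-projectivity, the ultraproducts entering only in the reduction step. Your preliminary remark that weak $\vv Q$-projectivity implies weak $\vv Q_{fg}$-projectivity (the condition is antitone in the test class) correctly identifies which hypothesis is weaker, matching the ``or even'' of the statement, and the closure facts you invoke, $\II\SU\PP(\vv Q') = \vv Q' = \II\SU\PP_u(\vv Q')$ for a quasivariety $\vv Q'$, are sound.
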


From the fact that any locally finite variety is tame we get:

\begin{corollary} Let $\vv Q$ be a quasivariety such that every finite $\alg A \in \vv Q$ is weakly $\vv Q$-projective. Then every locally finite subquasivariety of $\vv Q$ is primitive.
\end{corollary}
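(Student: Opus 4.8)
The plan is to apply Corollary \ref{cor: mainprimitive} to the subquasivariety itself, exploiting that local finiteness collapses ``finitely generated'' into ``finite'' so that the global hypothesis on finite algebras becomes available inside the subquasivariety.

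First I would fix a locally finite subquasivariety $\vv Q' \sse \vv Q$ and a finitely generated $\vv Q'$-irreducible algebra $\alg A \in \vv Q'$. Since $\vv Q'$ is locally finite, $\alg A$ is finite, and since $\vv Q' \sse \vv Q$ we have $\alg A \in \vv Q$. By hypothesis every finite member of $\vv Q$ is weakly $\vv Q$-projective, so $\alg A$ is weakly $\vv Q$-projective.

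Next I would observe that weak projectivity is monotone in the ambient class: if $\alg A$ is weakly $\vv Q$-projective and $\vv Q' \sse \vv Q$, then $\alg A$ is weakly $\vv Q'$-projective. Indeed, the defining condition need only be verified on algebras $\alg B \in \vv Q'$, which form a subcollection of $\vv Q$; thus for any $\alg B \in \vv Q'$ with $\alg A \in \HH(\alg B)$, weak $\vv Q$-projectivity already delivers $\alg A \in \II\SU(\alg B)$. Consequently every finitely generated $\vv Q'$-irreducible algebra is weakly $\vv Q'$-projective.

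Finally I would invoke Corollary \ref{cor: mainprimitive} for the quasivariety $\vv Q'$: since every finitely generated $\vv Q'$-irreducible algebra is weakly $\vv Q'$-projective, $\vv Q'$ is primitive, and as $\vv Q'$ was an arbitrary locally finite subquasivariety the claim follows. There is no serious obstacle; the only point requiring care is the \emph{direction} of the monotonicity, namely that shrinking the ambient class makes weak projectivity easier to satisfy rather than harder, which is precisely what allows the passage from the hypothesis stated for $\vv Q$ to the required statement inside $\vv Q'$.
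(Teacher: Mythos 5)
Your proposal is correct and takes essentially the same route the paper intends: apply the preceding result (Corollary \ref{cor: mainprimitive}) to the locally finite subquasivariety $\vv Q'$, after observing that local finiteness makes every finitely generated $\vv Q'$-irreducible algebra finite and that weak $\vv Q$-projectivity is inherited as weak $\vv Q'$-projectivity when the ambient class shrinks. Your choice to invoke Corollary \ref{cor: mainprimitive} rather than tameness plus Theorem \ref{mainprimitive} is, if anything, slightly cleaner, since it avoids the finite-type hypothesis that Theorem \ref{mainprimitive} requires but the statement of this corollary does not assume.
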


We have another characterization of weak $\vv Q$-projectivity that gives rise to an interesting class of examples.

\begin{lemma}\label{prucnal} Let $\vv K$ be any class of algebras. Then   the following are equivalent:
\begin{enumerate}
\item every algebra in $\vv K$ is weakly $\vv K$-projective;
\item for all $\alg A \in \vv K$ and for any $\th \in \op{Con}_\vv K(\alg A)$ there is and endomorphism $f$ of $\alg A$ with $\th = \op{ker}(f)$.
 \end{enumerate}
 Moreover every algebra in $\vv K$ is projective in $\vv K$ if and only if $f$ in (2) can always  taken to be idempotent.
\end{lemma}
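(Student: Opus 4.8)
The plan is to prove the equivalence by passing back and forth between endomorphisms of an algebra and embeddings of its quotients, and then to track idempotency through the same constructions for the ``moreover'' clause. First I would prove $(1)\Rightarrow(2)$. Given $\alg A\in\vv K$ and $\th\in\op{Con}_\vv K(\alg A)$, the quotient $\alg A/\th$ lies in $\vv K$ by the very definition of a $\vv K$-congruence, and $\alg A/\th\in\HH(\alg A)$ via the canonical surjection $\pi\colon\alg A\to\alg A/\th$. Since by (1) the algebra $\alg A/\th$ is weakly $\vv K$-projective and $\alg A\in\vv K$, we get $\alg A/\th\in\II\SU(\alg A)$, i.e. there is an embedding $g\colon\alg A/\th\to\alg A$. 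Setting $f:=g\circ\pi$ gives an endomorphism of $\alg A$, and since $g$ is injective one computes $\op{ker}(f)=\op{ker}(\pi)=\th$.

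For $(2)\Rightarrow(1)$, suppose $\alg A\in\HH(\alg B)$ with $\alg B\in\vv K$, witnessed by a surjection $h\colon\alg B\to\alg A$, and put $\th:=\op{ker}(h)$. Then $\alg B/\th\cong\alg A\in\vv K$, so $\th\in\op{Con}_\vv K(\alg B)$; applying (2) to $\alg B$ yields an endomorphism $f$ of $\alg B$ with $\op{ker}(f)=\th$. By the homomorphism theorem the image $f(\alg B)$ is a subalgebra of $\alg B$ with $f(\alg B)\cong\alg B/\op{ker}(f)=\alg B/\th\cong\alg A$, whence $\alg A\in\II\SU(\alg B)$, as required.

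For the ``moreover'' statement I would run these same two constructions while keeping control of idempotency. If every algebra in $\vv K$ is projective in $\vv K$, then in the first construction projectivity of $\alg A/\th$ supplies the embedding $g$ as a section of $\pi$ (so $\pi\circ g=\op{id}$); then $f=g\pi$ satisfies $f^2=g(\pi g)\pi=g\pi=f$, so $f$ is idempotent. Conversely, if $f$ in (2) can always be chosen idempotent, then in the second construction I would show that $h$ restricts to an isomorphism $f(\alg B)\to\alg A$: the key point is that $(b,f(b))\in\op{ker}(f)=\op{ker}(h)$ for every $b$, which is exactly idempotency since $f(f(b))=f(b)$, whence $h\circ f=h$ and $h$ is already injective on $f(\alg B)$. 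Composing the inclusion $f(\alg B)\hookrightarrow\alg B$ with the inverse of this restricted isomorphism produces an embedding $g\colon\alg A\to\alg B$ with $h\circ g=\op{id}_\alg A$, so $\alg A$ is projective in $\vv K$.

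I expect the only genuinely delicate step to be the backward direction of the ``moreover'' clause, where one must check that an idempotent $f$ with $\op{ker}(f)=\op{ker}(h)$ forces $h$ to collapse the image $f(\alg B)$ isomorphically onto $\alg A$; the identity $h\circ f=h$ is what makes this work, and it is precisely here that idempotency (as opposed to a mere endomorphism with the right kernel) is used. Everything else is routine bookkeeping with the homomorphism theorem.
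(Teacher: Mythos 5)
Your proof is correct and follows essentially the same route as the paper's: both directions pass between endomorphisms and quotient embeddings via the homomorphism theorem, and the ``moreover'' clause is handled by tracking idempotency through the same two constructions (a section of the canonical projection giving $f^2=f$, and conversely $h\circ f=h$ forcing $h$ to restrict to an isomorphism on $f(\alg B)$). If anything, your write-up of the idempotent case is slightly cleaner than the paper's, which states the retraction identities somewhat loosely.
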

\begin{proof}  Assume (1) and let $\alg B \in \vv K$ and $\th \in \op{Con}_\vv K(\alg B)$; then $\alg A= \alg B/\th \in \vv K$ and $\alg A \in \HH(\alg B)$.
 By (1), $\alg A \in \II\SU(\alg B)$; if $f$ is the canonical epimorphism and $g$ is the embedding, then $gf$ is and endomorphism of $\alg A$ whose kernel is equal to $\th$. Moreover if $\alg B$ is projective, then $gf$ is the identity and so it is idempotent.

Conversely assume (2) let $\alg A \in \vv K$ such that $\alg A \in \HH(\alg B)$ for some $\alg B \in \vv K$.  Then $\alg A \cong \alg B/\th$ for some $\th \in \op{Con}_\vv K(\alg B)$ and there is an endomorphism $f$ of $\alg B$ with $\op{ker}(f) =\th$.  If $f(\alg B) = \alg C$ then
$$
\alg A \cong \alg B/\th \cong \alg B/\op{ker}(f) \cong \alg C \le \alg B.
$$
So $\alg A \in \II\SU(\alg B)$ and it is weakly $\vv K$-projective. If $f$ is idempotent then for all $b \in B$, $(b,f(b)) \in \op{ker}(f)$; if $g: \alg B \longrightarrow \alg A$ is the canonical epimorphism  then $g(b) = g(f(b))$. If $h: \alg C \longrightarrow \alg A$ is the isomorphism, then $h$ is a monomorphism from $\alg A$ to $\alg B$. If $a \in A$ and $b \in B$ with $g(b)=a$ then
$$
g(h(a))= g(f(b)) = g(b) = a.
$$
We have just proved that $\alg A$ is a retract of $\alg B$, so $\alg A$ is projective in $\vv K$.
\end{proof}

Since every locally finite variety is tame from Theorem \ref{mainprimitive} we get:

\begin{corollary}\label{cor: prucnal}  For any variety $\vv Q$ the following are equivalent:
\begin{enumerate}
\item every finite algebra in $\vv Q$ is weakly $\vv Q_{fin}$-projective, where $\vv Q_{fin}$ is the class of finite algebras in $\vv Q$;
\item every locally finite subquasivariety of $\vv Q$ is primitive.
\end{enumerate}
\end{corollary}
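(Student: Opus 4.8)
The plan is to deduce both implications from Theorem \ref{mainprimitive} together with the observation that a locally finite quasivariety is automatically tame (in a locally finite quasivariety the finitely generated, the finite, and the relatively finitely presented algebras all coincide). I work in finite type, as in Theorem \ref{mainprimitive}, so that every locally finite subquasivariety $\vv Q'$ of $\vv Q$ is a tame quasivariety of finite type to which that theorem applies; for such a $\vv Q'$, primitivity is equivalent to the statement that every finite (= finitely generated) $\vv Q'$-irreducible algebra is weakly $\vv Q'_{fg}$-projective. The whole argument thus reduces to transferring weak projectivity of finite relatively irreducible algebras between $\vv Q$ and its locally finite subquasivarieties.

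For $(1)\Rightarrow(2)$ I would fix a locally finite subquasivariety $\vv Q'$ of $\vv Q$ and a finite $\vv Q'$-irreducible $\alg A$. If $\alg B$ is a finitely generated (hence finite) member of $\vv Q'$ with $\alg A \in \HH(\alg B)$, then $\alg B \in \vv Q_{fin}$ because $\vv Q' \sse \vv Q$; by hypothesis $\alg A$ is weakly $\vv Q_{fin}$-projective, so $\alg A \in \II\SU(\alg B)$. Hence $\alg A$ is weakly $\vv Q'_{fg}$-projective, and Theorem \ref{mainprimitive}$(4)\Rightarrow(1)$ yields that $\vv Q'$ is primitive. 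Only weak projectivity of finite irreducibles is used here.

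For $(2)\Rightarrow(1)$ I would take a finite $\alg A \in \vv Q$ and a finite $\alg B \in \vv Q$ with $\alg A \in \HH(\alg B)$ and aim at $\alg A \in \II\SU(\alg B)$. Set $\vv Q'' = \QQ(\alg A,\alg B)$; since $\alg A,\alg B$ are finite, $\PP_u(\{\alg A,\alg B\})=\{\alg A,\alg B\}$ and $\vv Q'' \sse \VV(\alg A,\alg B)$, a finitely generated (hence locally finite) variety contained in the variety $\vv Q$, so $\vv Q''$ is a locally finite subquasivariety of $\vv Q$ and is primitive by $(2)$. Suppose first that $\alg A$ is $\vv Q$-irreducible. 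Every $\vv Q''$-congruence of $\alg A$ is a $\vv Q$-congruence, so $\op{Con}_{\vv Q''}(\alg A) \sse \op{Con}_{\vv Q}(\alg A)$, and the pair $a,b$ supplied by Lemma \ref{lemma: q-irreducible}(3) for $\vv Q$ also witnesses, through the same lemma, that $\alg A$ is $\vv Q''$-irreducible. Primitivity of $\vv Q''$ and Theorem \ref{primitiveQ}$(1)\Rightarrow(3)$ then make $\alg A$ weakly $\vv Q''$-primitive, whence $\alg A \in \II\SU\PP_u(\alg B)=\II\SU(\alg B)$, the last equality because $\alg B$ is finite.

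The hard part will be the reducible case of $(2)\Rightarrow(1)$. When $\alg A$ is not $\vv Q$-irreducible I would write $\alg A \le_{sd} \prod_{j} \alg A_j$ with each $\alg A_j$ a finite $\vv Q$-irreducible quotient of $\alg A$ (Theorem \ref{quasivariety}(1)); since $\alg A_j \in \HH(\alg A) \sse \HH(\alg B)$, the irreducible case already gives $\alg A_j \in \II\SU(\alg B)$ for every $j$, and therefore $\alg A \in \II\SU\PP(\alg B)$. The delicate point is to collapse this embedding into a power of $\alg B$ to an embedding into $\alg B$ itself, and this is exactly where the full force of primitivity of $\vv Q''$ must enter: concretely, I would lift the factor congruences of $\alg A$ to congruences of $\alg B$, realize each as the kernel of an endomorphism of $\alg B$ in the manner of Lemma \ref{prucnal} (using that each $[\vv Q'':\alg A_j]$ is $\vv Q''$-equational, via Corollary \ref{cor: techlemma2}), and combine these endomorphisms so that their joint image is a single copy of $\alg A$ inside $\alg B$. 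I expect this product-removal step to be the crux; once it is carried out one obtains $\alg A \in \II\SU(\alg B)$, and $(1)$ follows.
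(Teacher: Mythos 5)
Your direction $(1)\Rightarrow(2)$ is correct and is essentially the paper's own route: weak $\vv Q_{fin}$-projectivity restricts to weak $\vv Q'_{fg}$-projectivity inside any locally finite $\vv Q'\sse\vv Q$, and Theorem \ref{mainprimitive}$(4)\Rightarrow(1)$ finishes. The irreducible case of $(2)\Rightarrow(1)$ is also sound. The genuine gap is the reducible case of $(2)\Rightarrow(1)$, which you leave as an expectation rather than a proof, and the strategy you sketch cannot work as described: lifting the factor congruences of $\alg A=\alg B/\psi$ to congruences $\hat\theta_j$ of $\alg B$ and realizing each as $\op{ker}(f_j)$ for an endomorphism $f_j$ of $\alg B$ only yields the map $b\longmapsto(f_1(b),\dots,f_k(b))$, whose kernel is $\bigcap_j\hat\theta_j=\psi$ --- that is, exactly the embedding $\alg A\in\II\SU\PP(\alg B)$ you already had. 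There is no general mechanism for ``combining'' several endomorphisms with kernels $\hat\theta_j$ into a single endomorphism of $\alg B$ with kernel $\psi$; by Lemma \ref{prucnal}, producing such a single endomorphism is literally equivalent to the statement being proved, so the sketched product-removal step is circular.

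What you are missing is that you only invoke the parts of the machinery that speak about irreducible algebras (Theorem \ref{primitiveQ}(3), Theorem \ref{mainprimitive}(3),(4), Corollary \ref{cor: techlemma2}), whereas the needed hypothesis concerns an arbitrary finite $\alg A$; the tool designed for this is condition (2) of Theorem \ref{mainprimitive}, which applies to \emph{all} finitely generated algebras. Since $\vv Q''=\QQ(\alg A,\alg B)$ is locally finite of finite type (hence tame) and primitive by hypothesis $(2)$, that theorem gives that $[\vv Q'':\alg A]$ is equational relative to $\vv Q''$, in particular closed under homomorphic images within $\vv Q''$. If now $\alg A\notin\II\SU(\alg B)$, then $\alg B\in[\vv Q'':\alg A]$, whence $\alg A\in\HH(\alg B)\cap\vv Q''\sse\VV([\vv Q'':\alg A])\cap\vv Q''=[\vv Q'':\alg A]$, contradicting $\alg A\in\II\SU(\alg A)$. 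This argument handles arbitrary finite $\alg A$ at once, so no case split on irreducibility is needed; with this replacement for your final paragraph the proof is correct, and it is how the corollary is meant to follow from Theorem \ref{mainprimitive}.
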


Finally we observe that many of the known examples above have the property that every $\vv Q$-finitely presented $\vv Q$-irreducible algebra  is projective in $\vv Q$, which is an (apparently) stronger condition than the one requested by Theorem \ref{mainprimitive}. This is because it seems plausible that being projective in a quasivariety $\vv Q$ is a strictly stronger concept that being weakly $\vv Q$-projective (and indeed it is, see Example \ref{carr} below). However:

\begin{theorem}\label{thm: wp implies p} Let $\alg A$ be a finite subdirectly irreducible algebra; if $\alg A$ is weakly projective in $\QQ(\alg A)$, then it is projective in $\QQ(\alg A)$.
\end{theorem}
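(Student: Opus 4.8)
The plan is to reduce projectivity of $\alg A$ to the statement that $\alg A$ is a \emph{retract of a free algebra}. Recall that free algebras are projective in $\vv Q$ (they have the lifting property against surjections), and that a retract of a projective algebra is again projective; hence it suffices to exhibit $\alg A$ as a retract of some $\alg F_\vv Q(k)$, and there is no need to split the canonical cover $\pi\colon\alg F_\vv Q(k)\twoheadrightarrow\alg A$ itself --- any retraction onto $\alg A$ will do.

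First I would set up the finite combinatorics. Since $\alg A$ is finite, $\vv Q=\QQ(\alg A)\sse\VV(\alg A)$ is locally finite, so a finite free cover $\alg F:=\alg F_\vv Q(k)$, where $k$ is the number of elements in some generating set of $\alg A$, is itself finite and carries a surjection onto $\alg A$. Moreover $\PP_u(\alg A)=\{\alg A\}$ gives $\vv Q=\II\SU\PP(\alg A)$, so $\alg F$ embeds into a power $\alg A^I$; as $\alg F$ is finite, finitely many coordinates already separate its points and we may take $\alg F\le\alg A^n$. Write $p_1,\dots,p_n\colon\alg F\to\alg A$ for the coordinate projections, so that $\bigcap_{i}\op{ker}p_i=0_\alg F$.

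Next I would feed weak projectivity into this picture. Because $\alg A\in\HH(\alg F)$ and $\alg A$ is weakly $\vv Q$-projective, $\alg A\in\II\SU(\alg F)$; I would fix an embedding $g\colon\alg A\longrightarrow\alg F\le\alg A^n$ and set $g_i:=p_i\circ g\colon\alg A\to\alg A$. Injectivity of $g$ says $\bigcap_i\op{ker}g_i=0_\alg A$, and here subdirect irreducibility of $\alg A$ does the decisive work: $0_\alg A$ is (strictly) meet-irreducible in $\Con A$, so a finite meet of congruences can equal $0_\alg A$ only if one of them already does. Thus $\op{ker}g_i=0_\alg A$ for some $i$, i.e. $g_i$ is an injective endomorphism of the finite algebra $\alg A$, hence an automorphism.

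For this coordinate $i$, the map $s:=g\circ g_i^{-1}\colon\alg A\to\alg F$ is an embedding with $p_i\circ s=\op{id}_\alg A$, so $\alg A$ is a retract of the free algebra $\alg F$ (with retraction $p_i$) and therefore projective in $\vv Q$. The main obstacle --- where the naive approach stalls --- is precisely that the endomorphism $\pi\circ g$ read off the \emph{given} cover need not be invertible, so one cannot split $\pi$ directly; the resolution rests on the two observations above, namely that it is enough to retract onto $\alg A$ through \emph{any} map (here a coordinate projection of a finite subdirect representation), and that subdirect irreducibility forces one of the components of $g$ to be an automorphism.
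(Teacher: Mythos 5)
Your proof is correct, and it reaches the conclusion by a genuinely different route than the paper. The paper works on the congruence side: starting from the embedding $\alg A \le \alg F$ supplied by weak projectivity, it takes (via Zorn's Lemma) a maximal $\vv Q$-congruence $\th$ of $\alg F$ with $\th \cap A^2 = 0_\alg A$, uses subdirect irreducibility of $\alg A$ to show that $\th$ is meet irreducible in $\op{Con}_\vv Q(\alg F)$, so that $\alg F/\th$ is $\vv Q$-irreducible, and then invokes Theorem \ref{quasivariety}(2) (Czelakowski--Dziobiak) to conclude $\alg F/\th \in \II\SU(\alg A)$; finiteness then forces $\alg F/\th \cong \alg A$, and the canonical projection, corrected by this isomorphism, is the retraction. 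You work on the product side instead: since $\alg A$ is finite, $\PP_u(\alg A)=\{\alg A\}$, so $\vv Q = \II\SU\PP(\alg A)$ and the finite free algebra $\alg F$ embeds in a finite power $\alg A^n$; composing the weak-projectivity embedding $g$ with the coordinate projections and using that $0_\alg A$ is (completely) meet irreducible, you locate a coordinate where $p_i \circ g$ is injective, hence by finiteness an automorphism, and correcting by its inverse splits $p_i$. The two arguments share the same skeleton---weak projectivity gives the embedding, subdirect irreducibility gives the separation, finiteness gives the automorphism correction, and in the end $\alg A$ is exhibited as a retract of $\alg F$---but your key mechanism (a finite subdirect representation of $\alg F$ inside $\alg A^n$) replaces the paper's (a maximal separating congruence plus the Czelakowski--Dziobiak theorem). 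What your version buys is elementarity: no Zorn's Lemma, no relative-congruence machinery, nothing beyond the meaning of $\QQ(\alg A)$ for a finite $\alg A$. What the paper's version buys is coherence with its surrounding toolkit (it parallels Proposition \ref{pr-maxcon} and reuses Theorem \ref{quasivariety}), which is the shape of argument more likely to adapt when finiteness is weakened.
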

\begin{proof} Let $\vv Q= \QQ(\alg A)$; since $\alg A$ is finite, $\vv Q$ is locally finite. Let $\alg F$ be a finitely generated (hence finite) free algebra in $\vv Q$ such that $\alg A \in \HH(\alg F)$; since $\alg A$ is weakly projective, $\alg A$ is embeddable in $\alg F$ and without loss of generality we may assume that $\alg A \le \alg F$.
Consider the set
$$
 V =\{\a \in \op{Con}_\vv Q(\alg F):  \a \cap A^2 = 0_\alg A\},
$$
where we denote by $0_{\alg A}$ the minimal congruence of $\alg A$.
It is easy to see that $V$ is an inductive poset so we may apply Zorn's Lemma to find a maximal congruence $\th \in V$.  Clearly $a \longmapsto a/\th$ is an embedding of $\alg A$ into $\alg F/\th$. We claim that $\alg F/\th$ is $\vv Q$-irreducible and to prove so, since everything is finite, it is enough to show that $\th$ is meet irreducible in $\op{Con}_\vv Q(\alg F)$;
so let $\a,\b \in \op{Con}_\vv Q(\alg A)$ such that $\a \meet \beta = \th$.  Then
$$
0_ \alg A = \th \cap A^2 = (\a \meet \b) \cap A^2 = (\a \cap A^2) \meet (\b \cap A^2);
$$
But $\alg A$ is subdirectly irreducible, so $0_A$ is meet irreducible in $\op{Con}(\alg A)$; hence either $\a \cap A^2 = 0_\alg A$ or $\b \cap A^2 = 0_\alg A$, so either $\a \in V$ or $\b \in V$. Since $\th$ is maximal in $V$,
either $\a = \th$ or $\b = \th$, which proves that $\alg F/\th$ is relative subdirectly irreducible. Therefore, by Theorem \ref{quasivariety}(2), $\alg F/\th \in \II\SU(\alg A)$; since $\alg F/\th$ and $\alg A$ are both finite and
each one is embeddable in the other, they are in fact isomorphic. Thus $\alg A \le \alg F$, and there is a homomorphism from $\alg F$ onto $\alg A$ that maps each $a \in A$ to itself. This shows that $\alg A$ is a retract of $\alg F$, and therefore $\alg A$ is projective in $\QQ(\alg A)$.
\end{proof}

We close this section with several examples

\begin{example} (Fragments of Heyting algebras)
An early example of an application of Lemma \ref{prucnal}  is in the seminal papers  \cite{Prucnal1972}, \cite{Prucnal1973} and \cite{Prucnal1983}.
Remember that for any Hilbert algebra  $\alg A$  and $a,b,c \in A$
$$
a \imp (b \imp c) = (a \imp b) \imp (a \imp c).
$$
Let now $\alg A$ be a finite Hilbert algebra, $\th \in \Con A$ and $F =1/\th$  the filter associated with
$\th$. Since $\alg A$ is finite, so is $F$ and hence we may assume that $F= \{\vuc an\}$; define
$$
\f(x) = a_1 \imp (a_2 \imp \dots \imp(a_n \imp x)).
$$
Then clearly $\f$ is an endomorphism of $\alg A$; moreover if $\f(b) =1 $ then
$$
a_1 \imp (a_2 \imp \dots \imp(a_n \imp b)) =1
$$
and since $\vuc an \in F$ we get by modus ponens that $b \in F$. Hence $\op{Ker}(\f) \sse F$; on the other hand  $\op{Ker}(\f)$ is a filter
and for any $i \le n$
\begin{align*}
\f(a_i) &= a_1 \imp (a_2 \imp \dots \imp(a_n \imp a_i))\\
&\ge a_1 \imp (a_2 \imp \dots \imp(a_{n-1} \imp a_i))\\
&\quad\vdots\\
&\ge a_1 \imp (a_2 \imp \dots \imp(a_i \imp a_i)) =1
\end{align*}
so $a_1 \in \op{Ker}(\f)$ for $i\le n$. Hence $F=\op{Ker}(\f)$ and every finite Hilbert algebra is projective (since $\f$ is clearly idempotent) in the class of finite Hilbert algebras.
It follows that any locally finite subvariety of Hilbert algebras is primitive; but Hilbert algebras themselves are
locally finite \cite{Diego1966} so any variety of Hilbert algebras is primitive (and so are their prelinear versions, i.e. varieties of G\"odel BCK-algebras)).
All this follows of course also from Prucnal's results (\cite{Prucnal1973}, \cite{Prucnal1983}).

The ${\imp,\meet}$ fragment (i.e.  Brouwerian semilattices) is easier since $(\imp,\meet)$  form a residuated pair. Let $\alg A$ be a Brouwerian semilattice, then
for all $a,b,c \in A$
$$
a \imp (b\meet c) = (a \imp b) \meet (a \imp c).
$$
But Brouwerian semilattices are locally finite \cite{Kohler1981}, so every the variety of Brouwerian semilattices is primitive (and so is its  prelinear version, i.e. varieties of G\"odel hoops \cite{AFM}).

If we add the join (i.e. we have Brouwerian lattices), then we run into problems since we can no longer prove that the map we want to define is an endomorphism.
However in \cite{Citkin2020} the second author proved that there are two Brouwerian lattices, called $\alg S_1$ and $\alg S_2$ such that, for any variety $\vv V$ of Brouwerian lattices
 \begin{enumerate}
 \ib if $\alg S_1 \notin \vv V$, then $\vv V$ is locally finite;
 \ib if $\alg S_1 ,\alg S_2 \notin \vv V$, then every finite Brouwerian lattice in $\vv V$ is projective in $\vv V_{fin}$;
 \ib if either $\alg S_1 \in \vv V$ or $\alg S_2 \in \vv V$, then $\vv V$ is not primitive.
 \end{enumerate}
It follows that a variety of Brouwerian lattices is primitive if and only if it does not contain  $\alg S_1$ and $\alg S_2$; moreover
all such varieties are locally finite.

Primitive subvarieties of Heyting algebras have been totally characterized in \cite{Citkin2018}.
Other applications of Lemma \ref{prucnal} to (fragments of) commutative residuated lattices can be found in \cite{AglianoUgolini2022}; a more general theory encompassing these results is in Section \ref{prucnalterms} below.\qed
\end{example}

\begin{example} \label{carr} The only examples we know of a algebras that are $\vv Q$-weakly projective but not projective appeared very recently.
	 A {\em modal algebra} is a Boolean algebra with a modal operator $\Box$, that we take as a fundamental unary operation, satisfying $\Box 1 \app 1$ and $\Box (x \meet y) \app \Box x \meet \Box y$; there is an extensive literature on modal algebras (see for instance \cite{Wolter1997} and the bibliography therein).
  A modal algebra is a {\em K4-algebra} if it satisfies
 $\Box x \le \Box\Box x$;
 in \cite{Rybakov1995} V.V. Rybakov classified all the primitive varieties of K4-algebras. However very recently \cite{Carr2022} J. Carr discovered a mistake in Rybakov's proof; in his description some varieties are missing. Upon reading the thesis the first author of this paper suggested that this probably depended on the fact that these varieties contained a weakly projective algebra that is not projective. And J. Carr proved exactly that (private communication): all the varieties missed by Rybakov  contain a  weakly projective algebra that is not projective.\qed
\end{example}

\begin{example} The first example of a locally finite variety that is structurally complete but not primitive, appeared  in the seminal paper \cite{Bergman1991}. The idea is to find a finite algebra $\alg A$ such that
$\alg A$ is projective in $\QQ(\alg A)$ (so that $\QQ(\alg A)$ is structurally complete by Lemma \ref{lemma: wpstructcomplete}) but such that $\QQ(\alg A)$ contains a {\em totally non projective} algebra in the sense of  \cite{Citkin2018}.  In \cite{Bergman1991} Bergman  observed that the {\em Fano lattice} $\alg F$ has exactly those characteristics; the Fano lattice is the  lattice of subspaces of $(\mathbb Z_2)^3$ seen as a vector space on $\mathbb Z_2$ and it is displayed in Figure \ref{fano}.

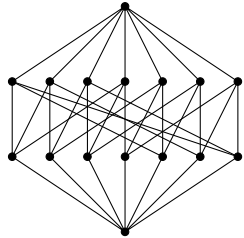
\begin{figure}[htbp]
\begin{center}
\begin{tikzpicture}[scale=1]
\draw (0,0) -- (-1.5,1) -- (-1.5,2) -- (0,3) -- (-1,2) -- (-1,1) -- (0,0) -- (-.5,1) -- (-.5,2) -- (0,3) -- (0,2) -- (0,1) -- (0,0) ;
\draw (0,0) -- (1.5,1) -- (1.5,2) -- (0,3) -- (1,2) -- (1,1) -- (0,0) -- (.5,1) -- (.5,2) -- (0,3)  ;
\draw (-1.5,2) -- (.5,1) -- (1,2);
\draw (-1.5,2) -- (1.5,1);
\draw (-1.5,1) -- (-1,2) -- (1,1) -- (1.5,2);
\draw (-1.5,1) -- (0,2) -- (-.5,1);
\draw (-1,1) -- (-.5,2) -- (1.5,1);
\draw (-1,1) -- (.5,2) -- (0,1);
\draw (-.5,1) -- (1,2);
\draw (0,1) -- (1.5,2);
\draw[fill] (0,0) circle [radius=0.05];
\draw[fill] (0,1) circle [radius=0.05];
\draw[fill] (-.5,1) circle [radius=0.05];
\draw[fill] (-1,1) circle [radius=0.05];
\draw[fill] (-1.5,1) circle [radius=0.05];
\draw[fill] (0.5,1) circle [radius=0.05];
\draw[fill] (1,1) circle [radius=0.05];
\draw[fill] (1.5,1) circle [radius=0.05];
\draw[fill] (0,2) circle [radius=0.05];
\draw[fill] (-.5,2) circle [radius=0.05];
\draw[fill] (-1,2) circle [radius=0.05];
\draw[fill] (-1.5,2) circle [radius=0.05];
\draw[fill] (0.5,2) circle [radius=0.05];
\draw[fill] (1,2) circle [radius=0.05];
\draw[fill] (1.5,2) circle [radius=0.05];
\draw[fill] (0,3) circle [radius=0.05];
\end{tikzpicture}
\caption{The Fano lattice}\label{fano}
\end{center}
\end{figure}
Now  $\alg F$ is projective in $\VV(\alg F)$ \cite{HermannHuhn1976}, hence in $\QQ(\alg F)$, but $\alg F$ has a subalgebra that is totally non projective.  Therefore $\QQ(\alg F)$ is a locally finite structurally complete quasivariety of lattices that is not primitive.

The second author in \cite{Citkin2018} used the same technique to show that there is a variety of Heyting algebras (which is the equivalent algebraic semantics of {\em Medvedev's Logic}) that is structurally complete but not primitive; more examples of structurally complete not primitive subvarieties of this variety can be found in \cite{Skvortsov1998}.  Various examples of structurally complete non primitive  quasivarieties of Wajsberg algebras and Wajsberg hoops appear in  \cite{Agliano2023} and \cite{AglianoManfucci2023}.\qed
\end{example}

\begin{example}\label{abgroups} (Quasivarieties of abelian groups and modules) Since abelian groups are tame we can deal only with finitely generated abelian groups; the Fundamental Theorem  states that a finitely generated abelian group is isomorphic to the direct product (direct sum really, but since there are only finitely many summands it is the same) of a finite power of $\mathbb Z$ and finitely many finite cyclic groups. This implies at once that:
 \begin{enumerate}
 \ib the variety of abelian groups is  $\VV(\mathbb Z)$ (since all finitely generated abelian groups belong to $\VV(\mathbb Z)$);
 \ib a subquasivariety of  abelian groups is proper if and only if it does not  contain $\mathbb Z$;
 \ib every proper subvariety is  finitely generated by finitely many cyclic groups.
 \end{enumerate}
Let's justify the last argument; if $\vv V$ is locally finite let $\alg F$ be the 1-generated free algebra in $\vv V$; then $\alg F$ is finite and cyclic and every cyclic group in $\vv V$ is a homomorphic image of $\alg F$.
As $\alg F$ is finite there are only finitely many of them and, by the Fundamental Theorem, they generate all the finitely generated groups in $\vv V$. Thus $\vv V$ is generated by them and thus it is finitely generated.
Hence every proper subvariety of the variety of abelian groups is locally finite.

Moreover any locally finite variety of abelian groups is primitive; in fact every  finite subdirectly irreducible abelian group is  cyclic of order $p^n$ for some prime $p$.
If  $\vv V$ is a locally finite variety of abelian groups, let  $\alg A \in \vv V$ and let $f: \alg A \longrightarrow \mathbb Z_{p^n}$ be a onto homomorphism; if $b$ is generator of $\mathbb Z_{p^n}$ then any $a \in f^{-1}(b)$ generates in $\alg A$ a finite cyclic group $\alg C$ of which $\mathbb Z_{p^n}$ is a homomorphic image. But it is well-known that homomorphic images and subalgebras of a cyclic group coincide so really  $\mathbb Z_{p^n}\in \II\SU(\alg C)$. Hence $\mathbb Z_{p^n}\in \II\SU(\alg B)$ and it is weakly projective in $\vv V$. By Theorem \ref{mainprimitive} $\vv V$ is primitive.

Now observe that $\mathbb Z$ is {\em torsion-free} so it satisfies all the quasi equations
\begin{equation}
nx \app 0 \qquad \Longrightarrow \qquad x \app 0 \tag{$\e_n$}.
\end{equation}
Hence $\QQ(\mathbb Z)$ satisfies the same quasiequations and  it is a proper subquasivariety; but of course $\HH(\QQ(\mathbb Z))$ is the entire variety of abelian groups, and so the variety of abelian groups cannot be structurally complete. So abelian groups form a variety in which every proper subvariety is primitive but the variety fails to be structurally complete.  To close the circle we observe that also $\QQ(\mathbb Z)$ is primitive since it is an atom in the lattice of subquasivarieties of abelian groups. To prove that, let $\vv Q$ be a proper subquasivariety of $\vv Q(\mathbb Z)$; then $\mathbb Z \notin \vv Q$. If $\vv Q$ is nontrivial then it contains finitely generated nontrivial algebras and each of them is a direct product of cyclic groups; since any factor of a direct product is also a subalgebra $\vv Q$  must contain at least a  finite cyclic group.   But  any  finite cyclic group does not satisfy $\e_n$ for some (really infinitely many) $n$;  thus $\vv Q$ must be trivial and $\QQ(\mathbb Z)$ is an atom in the lattices of subquasivarieties. Therefore it is  primitive.

Finally there are no other primitive subquasivarieties of the variety of abelian groups, other the ones we described. First observe that the only locally finite primitive subquasivarieties are the locally finite subvarieties.
If $\vv Q$ is a primitive proper subquasivariety of abelian groups that is not locally finite, then $\mathbb Z \in \vv Q$ so $\QQ(\mathbb Z) \sse \vv Q$; since $\vv Q$ is primitive it is structurally complete and thus every proper subquasivariety of $\vv Q$ must generate a proper subvariety of abelian groups (Theorem \ref{structural}) .  But $\QQ(\mathbb Z)$ generates the entire variety of abelian groups so it cannot be proper; hence $\QQ(\mathbb Z) = \vv Q$.

Now an abelian group is a module over $\mathbb Z$ seen as a principal ideal domain and not surprising most of what we said has been generalized to modules over a PID \cite{Belkin1995} and (at some cost) even to modules
over a Dedekind domain \cite{Jedlicka2019}. In general, if $\vv M_\alg R$ is the variety of modules over a Dedekind  domain $\alg R$,
 the primitive subquasivarieties of $\vv M_\alg R$ are exactly the locally finite varieties  and $\QQ(\alg R)$ (regarded as an $\alg R$-module over itself).\qed
\end{example}

\section{More on structural completeness}\label{sec:more}

\subsection{$C$-completeness}\label{Ccompleteness}
The concept of $C$-completeness has been introduced by the second author in \cite{Citkin2016}.
Let $A$ be any set; a {\bf clone} of operations on $A$ is a set of operations on $A$ that contains all the projections and it is closed under composition (whenever possible). As the intersection of any family of clones is still a clone, it makes sense to talk about clone generation (that is of course a closure operator). If $\alg A$ is any algebra, then the {\bf term clone} of $\alg A$, denoted by $\op{Clo}(\alg A)$ is the clone on $A$ generated by all the fundamental operations.

Let now $\vv Q$ be a quasivariety; then the terms in the language of $\vv Q$ can be seen as operations on $\alg F_\vv Q(\o)$, and the set of all terms is just the clone of all derived operations on $\alg F_\vv Q(\o)$, i.e. the clone on $\alg F_\vv Q(\o)$ generated by all the fundamental operations. We will refer to it as the {\bf term clone} of $\vv Q$ and we will denote it by $\op{Clo}(\vv Q)$. Let $C$ be a subclone of  $\op{Clo}(\vv Q)$; a {\bf $C$-quasiequation} is a quasiequation containing only operations from $C$. We say that $\vv Q$ is $C$-structurally complete if for every $C$-quasiequation $\Phi$,
if $\alg F_\vv Q(\o) \vDash \Phi$, then $\vv Q \vDash \Phi$. A quasivariety is $C$-primitive if all its subquasivarieties are $C$-structurally complete.
Observe that if $C'$ is a subclone of $C$ and $\vv Q$ is $C$-structurally complete ($C$ primitive), then $\vv Q$ is $C'$-structurally complete ($C'$-primitive).
Observe also that if $T$ is a set of generators for $C$, it is easy to check that $\vv Q$ is $C$-structurally complete if and only if  for every quasiequation $\Phi$ containing only operations from $T$,
$\alg F_\vv Q(\o) \vDash \Phi$ entails $\vv Q \vDash \Phi$. Therefore if $T$ is a set of terms that generates $C$ we may talk about $T$-structural completeness and $T$-primitivity, meaning the corresponding concept for the clone generated by $T$. If $C$ is the entire term clone of $\vv Q$ then $C$-structural completeness is the usual structural completeness.

 Let $\vv Q$ be a quasivariety and let $C$ a subclone of the term clone of $\vv Q$. Let $\vv Q^C$ be  the class of all $C$-subreducts of algebras in $\vv Q$; then it is easily seen that $\vv Q^C$ is a quasivariety in which all the $C$-quasiequation holding in $\vv Q$ are valid. So if $\vv Q^C$ is structurally complete or primitive, then $\vv Q$ is $C$-structurally complete or $C$-primitive. For instance if $\vv H$ is the variety of Heyting algebras, then its $\{\meet,\imp\}$-subreducts form the variety of Brouwerian semilattices, that is (as we have already observed) primitive; thus $\vv H$ is $\{\meet,\imp\}$-primitive.

  The converse however fails to hold; the variety $\vv H$ of Heyting algebras is $\{\imp,\neg\}$-structurally complete \cite{Mints1971} but the quasivariety of its $\{\imp,\neg\}$-subreducts is not structurally complete \cite{CintulaMetcalfe2010}. The problem is that there is a $\{\imp,\neg\}$-quasiequation that is valid in the in $\alg F_{\vv H^{\{\imp,\neg\}}}(\o)$ but it is not valid in $\alg F_\vv H(\o)$.

\subsection{ u-presentability and Prucnal terms}\label{prucnalterms}

Let $\alg A$ be any algebra, $\th \in \op{Con}(\alg A)$  and let $C$ be a subclone of $\op{Clo}(\alg A)$; by $\alg A^C$ we denote the algebra whose universe is $A$ and whose fundamental operations are those in $C$. We say that $\th$ is {\bf u-presentable} relative to $C$  if there is a set $\Delta \sse \op{Con}(\alg A)$ such that
\begin{enumerate}
\item $\th = \bigcup \Delta$;
\item $\Delta$ is closed under finite joins;
\item $\alg A^C/\d \in \II\SU\PP_u(\alg A^C)$ for all $\d \in \Delta$.
\end{enumerate}
In this case $\Delta$ is called a {\bf u-presentation} of $\th$ relative to $C$.

\begin{theorem} \label{upresentable} Let $\alg A$ be an algebra,  $\th \in \Con A$ and $C$ a subclone of $\op{Clo}(\alg A)$; then
the following are equivalent:
\begin{enumerate}
\item $\th$ is u-presentable relative to $C$;
\item $\alg A^C/\th \in \II\SU\PP_u(\alg A^C)$.
\end{enumerate}
\end{theorem}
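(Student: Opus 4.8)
The plan is to prove the two implications separately, with the forward direction being essentially a bookkeeping argument and the backward direction being the place where a genuine construction is needed.

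For the implication $(1) \Rightarrow (2)$, suppose $\Delta$ is a $u$-presentation of $\th$ relative to $C$. The idea is to realize $\alg A^C/\th$ as a member of $\II\SU\PP_u(\alg A^C)$ by exploiting the two structural features of $\Delta$: that it is directed (closed under finite joins) and that $\th = \bigcup \Delta$. Since $\Delta$ is directed, the quotient $\alg A^C/\th$ should arise as a direct limit of the system $(\alg A^C/\d)_{\d \in \Delta}$ with the natural projection maps $\alg A^C/\d \twoheadrightarrow \alg A^C/\d'$ for $\d \le \d'$. The first step is therefore to verify that $\alg A^C/\th \cong \varinjlim_{\d \in \Delta} \alg A^C/\d$, which follows from $\th = \bigcup\Delta$ and directedness. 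The second step is to recall that a direct limit over a directed system embeds into an ultraproduct of the terms of the system (a standard model-theoretic fact): taking the directed index set $\Delta$, one builds an ultrafilter on $\Delta$ refining the filter of up-sets, and the canonical map from the colimit into the corresponding ultraproduct $\prod_{\d} (\alg A^C/\d)/U$ is an embedding. Since each $\alg A^C/\d \in \II\SU\PP_u(\alg A^C)$ by hypothesis (3), the ultraproduct lies in $\II\SU\PP_u(\alg A^C)$, and this class is closed under $\SU$ and under $\PP_u$ (indeed $\PP_u\PP_u \sse \II\PP_u$), so $\alg A^C/\th \in \II\SU\PP_u(\alg A^C)$.

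For the converse $(2) \Rightarrow (1)$, the task is to manufacture a $u$-presentation out of the single assumption $\alg A^C/\th \in \II\SU\PP_u(\alg A^C)$. The natural candidate for $\Delta$ is the set of all $\d \in \Con(\alg A)$ with $\d \le \th$ such that $\alg A^C/\d \in \II\SU\PP_u(\alg A^C)$, intersected with a cofinal family whose union is $\th$. The key point to establish is that the compact (finitely generated) congruences below $\th$ already satisfy condition (3), so that one can take $\Delta$ to be the directed set of finite joins of such compact congruences. Here I would argue that if $\d$ is compact and $\d \le \th$, then $\alg A^C/\d$ maps onto nothing problematic because $\alg A^C/\d$ is, up to the finite data generating $\d$, reflected inside $\alg A^C/\th$; concretely, a finitely presented quotient that embeds into an ultrapower-accessible algebra should itself be recoverable in $\II\SU\PP_u(\alg A^C)$. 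Condition (1), $\th = \bigcup\Delta$, then holds because $\th$ is the join of the compact congruences it contains, and condition (2) holds by taking finite joins.

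The main obstacle I anticipate is precisely the verification of condition (3) for the elements of $\Delta$ in the converse direction: passing from the global membership $\alg A^C/\th \in \II\SU\PP_u(\alg A^C)$ down to membership of each approximating quotient $\alg A^C/\d$ in the same class. This is not automatic, since $\II\SU\PP_u(\alg A^C)$ need not be closed under taking further quotients. I expect the resolution to rest on the interplay with the earlier material on relatively finitely presented algebras and characteristic quasiidentities (Lemma \ref{lemma: characteristic} and Corollary \ref{cor: ISu(K)}): one shows that failure of $\alg A^C/\d \in \II\SU\PP_u(\alg A^C)$ for a compact $\d$ would produce a quasiequation separating $\alg A^C/\th$ from $\II\SU\PP_u(\alg A^C)$, contradicting (2). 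If such a clean reduction to compact congruences is unavailable, the fallback is to take $\Delta$ to be the \emph{entire} principal ideal $\{\d \in \Con A : \d \le \th,\ \alg A^C/\d \in \II\SU\PP_u(\alg A^C)\}$ and prove directly that it is directed and unions to $\th$, with $\th$ itself as its top element guaranteeing (3) at the relevant stage.
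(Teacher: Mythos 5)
Your proof of $(1)\Rightarrow(2)$ is correct and is essentially the paper's own argument: the paper builds an ultrafilter $U$ on $\Delta$ extending the filter of up-sets $L_\d=\{\e\in\Delta:\d\le\e\}$ (join-closure of $\Delta$ gives the finite intersection property), sends $a\longmapsto(a/\d)_{\d\in\Delta}/U$, and checks that the kernel of this homomorphism is $\bigcup\Delta=\th$. That is precisely a hands-on proof of the directed-colimit-into-ultraproduct fact you quote, finished by the same appeal to closure of $\II\SU\PP_u$ under $\SU$ and $\PP_u$.

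The genuine gap is in your treatment of $(2)\Rightarrow(1)$: you have the difficulty exactly backwards. This direction is immediate, and the paper dispatches it with the word ``Obviously.'' The u-presentation required by (1) can be taken to be the singleton $\Delta=\{\th\}$: its union is $\th$, it is closed under finite joins, and condition (3) for its unique member is literally hypothesis (2). (If one insists that the empty join $0_{\alg A}$ be included, add it; $\alg A^C/0_{\alg A}\cong\alg A^C$ is certainly in the class.) No reduction to compact congruences is needed, or indeed possible.

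Worse, the reduction you propose rests on a false claim, namely that a compact $\d\le\th$ inherits $\alg A^C/\d\in\II\SU\PP_u(\alg A^C)$ from $\alg A^C/\th\in\II\SU\PP_u(\alg A^C)$. Counterexample (abelian groups, $C$ the full clone): let $\alg A=\mathbb{Z}\times\mathbb{Z}$ and let $\th$ be the kernel of the first projection, so $\alg A/\th\cong\mathbb{Z}\in\II\SU\PP_u(\alg A)$; let $\d=\cg_{\alg A}((0,0),(0,2))$, which is compact and below $\th$. Then $\alg A/\d\cong\mathbb{Z}\times\mathbb{Z}_2$, which is \emph{not} in $\II\SU\PP_u(\alg A)$, since every member of that class satisfies the quasiequation $x+x\approx 0\Rightarrow x\approx 0$ (it holds in $\alg A$ and is preserved by ultrapowers and subalgebras), while $\mathbb{Z}\times\mathbb{Z}_2$ does not. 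Your fallback family $\{\d\le\th:\alg A^C/\d\in\II\SU\PP_u(\alg A^C)\}$ has a matching defect: it contains $\th$ and unions to $\th$, but it need not be closed under finite joins --- in $\mathbb{Z}\times\mathbb{Z}$ with $\th=1_{\alg A}$, the congruences corresponding to the subgroups generated by $(0,1)$ and by $(2,1)$ each have quotient isomorphic to $\mathbb{Z}$, yet their join has quotient $\mathbb{Z}_2$. So the converse half of your proposal does not go through as written; the repair is the one-line observation above.
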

\begin{proof} Obviously (2) implies (1).  Let then $\vv U = \II\SU\PP_u(\alg A^C)$ and let $\Delta$ be a u-presentation of $\th$ relative to $C$. For any $\d \in \Delta_\th$, let $L_\d \{\e \in \Delta: \d \le \e\}$.  By the definition of u-presentability  if $\d,\d'\in \Delta$, then $\d \join \d' \in \Delta$, hence
$\{L_\d: \d \in \Delta\}$ has the finite intersection property. Therefore there exists an ultrafilter $U$ on $\Delta$ such that $L_\d \in U$ for all $\d \in \Delta$.

 Consider the map
$$
f: a \longmapsto  (a/\d)_{\d \in \Delta}/U;
$$
this is clearly a homomorphism from $\alg A^C$ to $\prod_{d \in \Delta} \alg A^C/\d$. We claim that $\op{ker}(f) =\th$. In fact if $(a/\d)_{\d \in \Delta}/U =(b/\d)_{\d \in \Delta}/U$, then $\{\d: a/\d = \b /\d\} \in U$.
If $(a,b) \notin \th$ then, as $\th = \bigcup \Delta$, $(a,b) \notin \d$ for all $\d \in J$; this implies that  $\{\d: a/\d = \b /\d\} = \emptyset$ which contradicts the fact that $U$ is an ultrafilter.
Hence $(a,b) \in \th$ and $\th = \op{ker}(f)$.

Conversely assume that $(a,b) \in \th$; then $(a,b) \in \d'$ for some $\d'\in \Delta$. It follows that $(a,b) \in \e$ for all $\e \in L_{\d'}$ and thus
$$
L_{\d'} \sse \{\d: a/\d = b/\d\}.
$$
As $U$ extends  $\{L_\d: \d \in J\}$,  the right hand side of the above inclusion belongs to $U$, which means that $(a/\d)_{\d \in J}/U =(b/\d)_{\d \in J}/U$ as wished.
Therefore $\alg A^C/\d \in \II\SU\PP_u(\alg A^C)$.
\end{proof}

\begin{corollary} For any algebra $\alg A$ and $C \sse \op{Clo}(\alg A)$ the following are equivalent:
\begin{enumerate}
\item every congruence of $\alg A$ is u-presentable relative to $C$;
\item every compact congruence of $\alg A$ is $u$-presentable relative to $C$.
\end{enumerate}
\end{corollary}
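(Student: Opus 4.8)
The direction $(1)\Rightarrow(2)$ is immediate, since a compact congruence is in particular a congruence, so the entire content lies in $(2)\Rightarrow(1)$. The plan is to reduce an arbitrary congruence to the compact congruences lying below it, exploiting that $\op{Con}(\alg A)$ is an algebraic lattice. Concretely, given an arbitrary $\th\in\op{Con}(\alg A)$, I would set $\Delta=\{\d\in\op{Con}(\alg A):\d\text{ is compact and }\d\le\th\}$ and argue that $\Delta$ is a u-presentation of $\th$ relative to $C$ in the sense of the definition preceding Theorem \ref{upresentable}.

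First I would check the two purely lattice-theoretic conditions. The set $\Delta$ is closed under finite joins, because the join of finitely many compact elements of an algebraic lattice is again compact, and because $\d,\d'\le\th$ forces $\d\join\d'\le\th$; this is condition (2). In particular $\Delta$ is updirected, so its lattice join agrees with its set-theoretic union; combining this with the defining property of an algebraic lattice, namely that every element is the join of the compact elements below it, I would conclude $\th=\Join\Delta=\bigcup\Delta$, which is condition (1).

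The remaining condition (3) is where the hypothesis enters: for each $\d\in\Delta$ the element $\d$ is compact, hence u-presentable relative to $C$ by assumption, and Theorem \ref{upresentable} translates this directly into $\alg A^C/\d\in\II\SU\PP_u(\alg A^C)$. Thus $\Delta$ satisfies all three requirements and $\th$ is u-presentable relative to $C$. The only point requiring genuine care is the identity $\th=\bigcup\Delta$: one must invoke both that directed joins of congruences coincide with set-theoretic unions and that $\op{Con}(\alg A)$ is algebraic, so that $\th$ is recovered as the join of the compact congruences beneath it. Everything else is bookkeeping, and Theorem \ref{upresentable} does the real work of passing between u-presentability and the $\II\SU\PP_u$ condition.
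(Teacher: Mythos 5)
Your proof is correct and follows essentially the same route as the paper: take $\Delta$ to be the set of compact congruences below $\th$, verify closure under finite joins and $\th=\bigcup\Delta$ via algebraicity of $\op{Con}(\alg A)$, and use Theorem \ref{upresentable} together with hypothesis (2) to get condition (3) of u-presentability. The paper's proof is just a terser version of exactly this argument.
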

\begin{proof} (1) trivially implies (2). Let $\th \in \Con A$ and let $\Delta_\th = \{\d \in \Con A: \d \le \th,\ \text{$\d$ compact}\}$.
Then $\Delta_\th$ is closed under finite joins an $\bigcup\Delta_\th =\th$. Moreover by Theorem \ref{upresentable}, $\alg A^C/\d \in \II\SU\PP_u(\alg A^C)$ for all
$\d \in \Delta_\th$. Therefore $\th$ is u-presentable relative to $C$.
\end{proof}

Now we can connect u-presentability and structural completeness, but first we need a lemma.

\begin{lemma} Let $\vv Q$ be a quasivariety and let
$$
\Phi := \bigwedge_{i=1}^n r_i(\mathbf x) \app s_i(\mathbf x) \Rightarrow r(\mathbf x) \app s(\mathbf x)
$$
be a quasiequation. Then $\vv Q \vDash\Phi$ if and only if $\alg A \vDash \Phi$ where $\alg A$ is an algebra in $\vv Q$ finitely presented by the relations
$r_i(\mathbf x) \app s_i(\mathbf x)$, $i=1,\dots,n$.
\end{lemma}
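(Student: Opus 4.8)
The plan is to treat the two implications separately: the forward direction is immediate, and the reverse direction reduces, via the Mal'cev presentation theorem, to simply unwinding the definition of $\vv Q$-consequence. For the ``only if'' direction there is nothing to prove, since $\alg A$ belongs to $\vv Q$ by construction, so $\vv Q \vDash \Phi$ at once gives $\alg A \vDash \Phi$.

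For the ``if'' direction, I would set $X = \{x_1,\dots,x_m\}$ and $\Sigma = \{r_i \app s_i : i=1,\dots,n\}$, so that $\alg A = \alg F_\vv Q(X,\Sigma)$, and let $\a\colon X \longrightarrow \alg A$ be the presentation map, which we may take to be the identity on generators. The first observation is that, because the $r_i \app s_i$ are precisely the defining relations, we have $r_i(\a(\mathbf x)) = s_i(\a(\mathbf x))$ in $\alg A$ for every $i$; thus the premises of $\Phi$ are satisfied under the canonical assignment $\a$. Applying the hypothesis $\alg A \vDash \Phi$ to this one assignment yields the conclusion $r(\a(\mathbf x)) = s(\a(\mathbf x))$, that is, $r$ and $s$ represent the same element of $\alg A$.

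It then remains to promote this single equality in $\alg A$ to a statement about all of $\vv Q$. By the Mal'cev presentation theorem (applied with $p = r$, $q = s$) the equality $r(\a(\mathbf x)) = s(\a(\mathbf x))$ is equivalent to $\Sigma \vdash_\vv Q r \app s$; equivalently, by Lemma \ref{lemma: equivalent}, $(r,s) \in \th_\vv Q(\Sigma)$. Unwinding the definition of $\vv Q$-consequence, $\Sigma \vdash_\vv Q r \app s$ asserts that for every $\alg B \in \vv Q$ and every assignment $\b$, if $\b(r_i) = \b(s_i)$ for all $i$ then $\b(r) = \b(s)$---and this is verbatim the assertion $\vv Q \vDash \Phi$. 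Hence the reverse implication holds.

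The only point needing care---and the step I expect to be the crux---is this last identification: one must recognize that the universally quantified implication packaged in the quasiequation $\Phi$ is literally the same first-order statement as the consequence relation $\vdash_\vv Q$, so that once Mal'cev's theorem has been invoked no further argument is required. Everything else is routine bookkeeping about the canonical assignment triggering the premises.
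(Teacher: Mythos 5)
Your proof is correct, and it differs from the paper's in organization rather than in mathematical substance. The paper argues directly and by contradiction: given $\alg B \in \vv Q$ and elements $b_1,\dots,b_n$ satisfying the premises but not the conclusion, it invokes the universal property of the presentation to extend $a_i \longmapsto b_i$ to a homomorphism $f\colon \alg A \longrightarrow \alg B$, notes that $\alg A \vDash \Phi$ forces $r(\vec a)=s(\vec a)$ at the canonical generators (since those generators satisfy the defining relations), and transports this equality along $f$ to contradict $r(\vec b)\ne s(\vec b)$. You instead route the argument through the Mal'cev presentation theorem already stated in the paper: the canonical assignment satisfies the premises, so $\alg A \vDash \Phi$ yields $r(\a(\vec x))=s(\a(\vec x))$, which by that theorem is equivalent to $\Sigma \vdash_\vv Q r \app s$, and the latter is definitionally the assertion $\vv Q \vDash \Phi$. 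The two arguments share the same core --- the homomorphism-extension property of $\alg F_\vv Q(X,\Sigma)$ is precisely what proves Mal'cev's theorem --- but your version has the merit of exposing the lemma as essentially a restatement of that theorem (validity of a quasiequation in $\vv Q$ \emph{is} the consequence relation $\vdash_\vv Q$ applied to its premises and conclusion), at the cost of leaning on the cited result, whereas the paper's version is self-contained. Your closing identification is legitimate and is indeed the crux: both $\Sigma \vdash_\vv Q r \app s$ and $\vv Q \vDash \Phi$ quantify over all $\alg B \in \vv Q$ and all maps of the variables into $B$, so they are the same first-order statement, and no further argument is needed once Mal'cev's theorem is invoked.
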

\begin{proof} The left-to-right implication is trivial. Suppose that $\alg A \vDash \Phi$ and assume by way of contradiction
that there is $\alg B \in\vv  Q$ with $\alg B \not\vDash\Phi$.
Then there are $\vuc bn \in B$ such that
$$
r_i(\vuc bn) = s_i(\vuc bn)\quad\text{ for all $i$}
$$
 but $r(\vuc bn) \ne s(\vuc bn)$.

As $\alg A$ is finitely presented and generated by $\vuc an$ we can extend the
map $f : a_i \longmapsto b_i$ to a homomorphism $f : \alg A \longrightarrow \alg B$. By assumption, $\alg A \vDash \Phi$ and thus $r(\vuc an) = s(\vuc an)$.
Therefore
\begin{align*}
r(\vuc bn) &= r (f(a_1),\dots,f(a_n)) \\
&= f(r(\vuc an)) = f(s(\vuc an)) \\
&= s(f(a_1),\dots,f(a_n)) = s(\vuc bn).
\end{align*}
 So $\alg B\vDash \Phi$, a clear contradiction.
 \end{proof}

\begin{corollary}\label{compact}  Let $\vv Q$ be a quasivariety and $\Phi$ be a quasiequation. Then $\vv Q \vDash \Phi$ if and only if for every compact $\vv Q$-congruence
$\th$ of $\alg F_\vv Q(\o)$ we have $\alg F_\vv Q(\o)/\th \vDash \Phi$.
\end{corollary}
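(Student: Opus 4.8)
The forward implication I would treat as immediate: if $\vv Q\vDash\Phi$ then every member of $\vv Q$ validates $\Phi$, and since each $\vv Q$-congruence $\th$ of $\alg F_\vv Q(\o)$ produces a quotient $\alg F_\vv Q(\o)/\th\in\vv Q$, in particular every quotient by a compact $\vv Q$-congruence validates $\Phi$. The content is the converse, and my plan is to reduce it to the Lemma just proved, which says that $\vv Q\vDash\Phi$ holds precisely when the algebra $\alg A\in\vv Q$ finitely presented by the premises of $\Phi$ validates $\Phi$. So it suffices to produce a single compact $\vv Q$-congruence $\th$ of $\alg F_\vv Q(\o)$ whose quotient contains $\alg A$ as a subalgebra, so that validity of $\Phi$ transports back to $\alg A$.

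Concretely, I would write $\Phi=\bigwedge_{i=1}^n r_i(\mathbf x)\app s_i(\mathbf x)\Rightarrow r(\mathbf x)\app s(\mathbf x)$ with $\mathbf x=(x_1,\dots,x_k)$, identify $x_1,\dots,x_k$ with the first $k$ free generators of $\alg F_\vv Q(\o)$, and set $\th=\th_\vv Q(\{(r_i,s_i):1\le i\le n\})$, the $\vv Q$-congruence of $\alg F_\vv Q(\o)$ generated by these finitely many pairs. Since $\op{Con}_\vv Q(\alg F_\vv Q(\o))$ is an algebraic lattice and $\th$ is generated by a finite set of pairs, $\th$ is compact, so by hypothesis $\alg F_\vv Q(\o)/\th\vDash\Phi$. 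By the earlier characterization of relatively finitely presented algebras, the finitely presented algebra in the Lemma is $\alg A\cong\alg F_\vv Q(\mathbf x)/\th_\vv Q(\Sigma)$ with $\Sigma=\{r_i\app s_i:1\le i\le n\}$.

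The main step I would then carry out is to show $\alg A\in\II\SU(\alg F_\vv Q(\o)/\th)$. For this I would use the inclusion of generators $\iota\colon\alg F_\vv Q(\mathbf x)\longrightarrow\alg F_\vv Q(\o)$, $x_j\mapsto x_j$, together with the retraction $\pi\colon\alg F_\vv Q(\o)\longrightarrow\alg F_\vv Q(\mathbf x)$ that fixes $x_1,\dots,x_k$ and sends every remaining free generator to $x_1$, so that $\pi\circ\iota=\op{id}$. Because $\iota$ carries each generating pair $(r_i(\mathbf x),s_i(\mathbf x))$ of $\th_\vv Q(\Sigma)$ into $\th$, and $\pi$ carries each generating pair of $\th$ into $\th_\vv Q(\Sigma)$, both maps descend to the quotients, yielding $\bar\iota\colon\alg A\longrightarrow\alg F_\vv Q(\o)/\th$ and $\bar\pi\colon\alg F_\vv Q(\o)/\th\longrightarrow\alg A$ with $\bar\pi\circ\bar\iota=\op{id}_\alg A$. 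Hence $\bar\iota$ is a (split) embedding and $\alg A$ is, up to isomorphism, a subalgebra of $\alg F_\vv Q(\o)/\th$. Since validity of a quasiequation is inherited by subalgebras and $\alg F_\vv Q(\o)/\th\vDash\Phi$, I conclude $\alg A\vDash\Phi$, and the Lemma gives $\vv Q\vDash\Phi$.

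I expect the delicate point to be the embedding $\bar\iota$: a naive attempt only produces a surjection of $\alg A$ onto the subalgebra of $\alg F_\vv Q(\o)/\th$ generated by $x_1,\dots,x_k$, which is the wrong direction for transporting a quasiequation. The retraction $\pi$ that discards the surplus free generators is exactly what upgrades this to a split embedding, and verifying that $\iota$ and $\pi$ respect the two congruences (so that $\bar\iota,\bar\pi$ are well defined and compose to the identity on $\alg A$) is the one computation that must be done with care.
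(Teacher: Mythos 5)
Your proof is correct and follows the route the paper intends: the corollary is stated there without a separate proof, as an immediate consequence of the preceding Lemma, and your argument supplies exactly the bridge that derivation needs, namely that the algebra $\alg A$ finitely presented by the premises of $\Phi$ lies in $\II\SU(\alg F_\vv Q(\o)/\th)$ for the compact $\vv Q$-congruence $\th$ generated by the same pairs. Your split embedding $\bar\pi\circ\bar\iota=\op{id}_{\alg A}$ is the right device and is genuinely needed: $\alg F_\vv Q(\o)/\th$ is in general not even finitely generated, so it cannot simply be identified with $\alg A$, and the naive surjection onto the subalgebra generated by $x_1/\th,\dots,x_k/\th$ would, as you note, transport $\Phi$ in the wrong direction.
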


\begin{theorem} Let $\vv Q$ be a quasivariety, $\alg A\in \vv Q$ and $C$ a clone of operations of $\vv Q$; if every compact congruence of $\alg F_\vv Q(\o)$ is u-presentable relative to $C$, then $\vv Q$ is $C$-structurally complete.
\end{theorem}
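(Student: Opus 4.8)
The plan is to check the definition of $C$-structural completeness head-on, using Corollary \ref{compact} to reduce the verification to quotients by compact $\vv Q$-congruences. So let $\Phi$ be a $C$-quasiequation with $\alg F_\vv Q(\o) \vDash \Phi$, and set $\alg F = \alg F_\vv Q(\o)$; I must show $\vv Q \vDash \Phi$. By Corollary \ref{compact} this is equivalent to proving that $\alg F/\th \vDash \Phi$ for \emph{every} compact $\vv Q$-congruence $\th$ of $\alg F$, so it is enough to fix such a $\th$ and establish $\alg F/\th \vDash \Phi$.

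The first reduction is to pass to $C$-reducts. Since $\Phi$ contains only operations from $C$, its validity in any algebra depends solely on the $C$-reduct; thus $\alg F \vDash \Phi$ already gives $\alg F^C \vDash \Phi$, and for every congruence $\th$ one has $\alg F/\th \vDash \Phi \iff \alg F^C/\th \vDash \Phi$, because $(\alg F/\th)^C$ and $\alg F^C/\th$ are the same algebra (restricting to the operations of $C$ commutes with forming the quotient by $\th$). Hence the goal becomes $\alg F^C/\th \vDash \Phi$.

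Now I invoke the hypothesis. Since $\th$ is u-presentable relative to $C$, Theorem \ref{upresentable} yields $\alg F^C/\th \in \II\SU\PP_u(\alg F^C)$. Quasiequations are universal Horn sentences and are therefore preserved under $\II$, $\SU$ and $\PP_u$; consequently the validity $\alg F^C \vDash \Phi$ established above propagates to every member of $\II\SU\PP_u(\alg F^C)$, and in particular to $\alg F^C/\th$. By the previous paragraph this says exactly $\alg F/\th \vDash \Phi$. As $\th$ was an arbitrary compact $\vv Q$-congruence, Corollary \ref{compact} now delivers $\vv Q \vDash \Phi$, which is precisely what $C$-structural completeness requires.

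I do not expect a serious obstacle here: the argument is an assembly of preservation facts rather than a computation. The one point that needs care is the bookkeeping connecting the three ingredients---Corollary \ref{compact} produces the relevant congruences (the compact $\vv Q$-congruences of $\alg F$), the u-presentability hypothesis together with Theorem \ref{upresentable} places each resulting quotient's $C$-reduct inside $\II\SU\PP_u(\alg F^C)$, and the Horn character of $\Phi$ guarantees that the implication defining $\Phi$ survives this closure. Indeed, the notion of u-presentability is tailored precisely so that each quotient $\alg F^C/\th$ lands in a class on which any $C$-quasiequation valid in $\alg F^C$ must remain valid.
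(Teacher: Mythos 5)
Your proof is correct and takes essentially the same route as the paper's: both rest on Corollary \ref{compact}, on Theorem \ref{upresentable} to place $\alg F^C_\vv Q(\o)/\th$ inside $\II\SU\PP_u(\alg F^C_\vv Q(\o))$, and on the preservation of quasiequations under these operators, together with the observation that validity of a $C$-quasiequation depends only on $C$-reducts. The only cosmetic difference is that the paper argues by contraposition (starting from $\vv Q \not\vDash \Phi$ and propagating the failure back to $\alg F_\vv Q(\o)$), whereas you argue directly.
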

\begin{proof} Let $\Phi$ be a $C$-quasiequation and suppose that $\vv Q \not\vDash \Phi$; then by Corollary \ref{compact} there exists a compact $\vv Q$-congruence $\th$ of $\alg F_\vv Q (\o)$ such that $\alg F_\vv Q(\o)/\th \not\vDash \Phi$.  Since $\th$ is u-presentable relative to $C$, $\alg F^C_\vv Q/\th$ embeds in an ultrapower of $\alg F^C_\vv Q(\o)$ and so $\alg F^C_\vv Q(\o) \not\vDash \Phi$.  As $\Phi$ is a $C$-quasiequation $\alg F_\vv Q(\o)  \not\vDash \Phi$,
hence the thesis holds.
\end{proof}

Given a quasivariety $\vv Q$ and a countably generated algebra $\alg A \in \vv Q$, it is well-known that there is an epimorphism $\f$ from $\op{Clo}(\alg F_\vv Q(\o))$ to $\op{Clo}(\alg A)$ (regarded as algebras in the type of $\vv Q$). If $C \sse \op{Clo}(\alg F_\vv Q(\o))$ we say that $\alg A$ is u-presentable relative
to $C$ if it is u-presentable relative to $\f(C)$.

\begin{corollary}  Let $\vv Q$ be a quasivariety, $\alg A\in \vv Q$ and $C$ a clone of operations of $\vv Q$; if every compact  $\vv Q$-congruence of every countably generated algebra in $\vv Q$ is u-presentable with respect to $C$, then $\vv Q$ is $C$-primitive.
\end{corollary}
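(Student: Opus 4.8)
The plan is to reduce everything to the previous theorem, which already produces $C$-structural completeness from u-presentability of the compact congruences of a countably generated free algebra. To establish $C$-primitivity I must show that \emph{every} subquasivariety $\vv Q'\sse\vv Q$ is $C$-structurally complete, and applying that theorem to $\vv Q'$ it is enough to check that every compact $\vv Q'$-congruence of $\alg F_{\vv Q'}(\o)$ is u-presentable relative to $C$ (the clone $C$ being transported to $\op{Clo}(\alg F_{\vv Q'}(\o))$ along the canonical clone epimorphism, exactly as in the convention stated just before the corollary).

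So I would fix $\vv Q'\sse\vv Q$, write $\alg B=\alg F_{\vv Q'}(\o)$, and record the two facts that make the hypothesis applicable: $\alg B$ is countably generated and, since $\vv Q'\sse\vv Q$, it lies in $\vv Q$. Let $\th$ be a compact $\vv Q'$-congruence of $\alg B$. Because $\alg B/\th\in\vv Q'\sse\vv Q$, the congruence $\th$ is a $\vv Q$-congruence of $\alg B$ — although typically \emph{not} a compact one, which is the crux of the matter.

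The key step is to u-present $\th$ from below by compact $\vv Q$-congruences, to which the hypothesis does apply. Set
\[
\Delta=\{\theta_\vv Q(F):F\sse\th,\ F\text{ finite}\},
\]
the collection of finitely generated (hence compact) $\vv Q$-congruences of $\alg B$ lying under $\th$. Then $\Delta$ is up-directed, being closed under the joins of $\op{Con}_\vv Q(\alg B)$ via $\theta_\vv Q(F_1)\join\theta_\vv Q(F_2)=\theta_\vv Q(F_1\cup F_2)$; and $\th=\bigcup\Delta$, because every $(a,b)\in\th$ lies in the principal $\vv Q$-congruence $\theta_\vv Q(a,b)$, which is compact and, as $\th$ is a $\vv Q$-congruence containing $(a,b)$, is contained in $\th$, so belongs to $\Delta$. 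Finally, each $\d\in\Delta$ is a compact $\vv Q$-congruence of the countably generated algebra $\alg B\in\vv Q$, whence the hypothesis gives $\alg B^C/\d\in\II\SU\PP_u(\alg B^C)$. Thus $\Delta$ meets the three requirements in the definition and is a u-presentation of $\th$ relative to $C$, so $\th$ is u-presentable relative to $C$ (equivalently $\alg B^C/\th\in\II\SU\PP_u(\alg B^C)$ by Theorem \ref{upresentable}).

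This verifies the hypothesis of the previous theorem for $\vv Q'$, giving that $\vv Q'$ is $C$-structurally complete; as $\vv Q'$ was an arbitrary subquasivariety, $\vv Q$ is $C$-primitive. The one delicate point is precisely the mismatch between what the previous theorem asks (u-presentability of congruences that are compact \emph{relative to} $\vv Q'$) and what the hypothesis supplies (u-presentability of congruences compact relative to the ambient $\vv Q$): it is resolved by noting that a $\vv Q'$-congruence of $\alg B$ is automatically a $\vv Q$-congruence and is a directed union of principal $\vv Q$-congruences, so its own compactness relative to $\vv Q'$ is never used — all that matters is the compactness, relative to $\vv Q$, of the members of $\Delta$, which is where the hypothesis is invoked. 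Here I also use that in the proof of Theorem \ref{upresentable} the closure of $\Delta$ under finite joins serves only to guarantee up-directedness, so computing those joins inside $\op{Con}_\vv Q(\alg B)$ is harmless.
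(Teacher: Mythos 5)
Your proposal is correct, and its skeleton is exactly the paper's: the paper's entire proof is the observation that for $\vv Q'\sse\vv Q$ the algebra $\alg F_{\vv Q'}(\o)$ is countably generated and lies in $\vv Q$, after which the preceding theorem is applied to $\vv Q'$. Where you go beyond the paper is in the ``delicate point'' you flag, and you are right that it is a real one: the theorem applied to $\vv Q'$ asks for u-presentability of the compact \emph{$\vv Q'$}-congruences of $\alg F_{\vv Q'}(\o)$, whereas the hypothesis of the corollary only speaks of compact \emph{$\vv Q$}-congruences, and a congruence of the form $\theta_{\vv Q'}(F)$ with $F$ finite need not be of the form $\theta_{\vv Q}(G)$ for any finite $G$, since $\op{Con}_{\vv Q'}$ is a coarser closure system than $\op{Con}_{\vv Q}$. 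The paper's one-line proof passes over this silently; your bridge --- writing a compact $\vv Q'$-congruence $\th$ as the up-directed union of the compact $\vv Q$-congruences $\theta_{\vv Q}(F)$, $F\sse\th$ finite, each u-presentable by hypothesis, and noting that in the proof of the u-presentability theorem closure under finite joins is used only to get the finite intersection property of the sets $L_\d$, for which up-directedness (joins taken in $\op{Con}_{\vv Q}$) suffices --- is exactly what is needed to make the reduction airtight, and each step of it checks out. So your write-up is not a different route but a completed version of the paper's route.
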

\begin{proof} It is enough to observe that if $\vv Q'\sse \vv Q$, then $\alg F_{\vv Q'}(\o)$ is a countably generated algebra in $\vv Q$.
\end{proof}

It is interesting to observe that if $C$ is the entire term clone of $\vv Q$, then we obtain a new necessary and sufficient condition.

\begin{theorem}\label{thm: cmi and u-presentable} A quasivariety $\vv Q$ is structurally complete if and only if every completely meet irreducible congruence $\th \in \op{Con}_\vv Q(\alg F_\vv Q(\o))$ is u-presentable.
\end{theorem}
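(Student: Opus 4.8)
The plan rests on the remark immediately preceding the statement: here ``u-presentable'' means u-presentable relative to the full term clone $C=\op{Clo}(\vv Q)$, so that $\alg F_\vv Q(\o)^C=\alg F_\vv Q(\o)$ and, by Theorem \ref{upresentable}, a $\vv Q$-congruence $\th$ of $\alg F_\vv Q(\o)$ is u-presentable exactly when $\alg F_\vv Q(\o)/\th\in\II\SU\PP_u(\alg F_\vv Q(\o))$. Thus the assertion to prove becomes: $\vv Q$ is structurally complete if and only if $\alg F_\vv Q(\o)/\th\in\II\SU\PP_u(\alg F_\vv Q(\o))$ for every completely meet irreducible $\th\in\op{Con}_\vv Q(\alg F_\vv Q(\o))$.

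For the forward implication I would assume $\vv Q$ structurally complete, so $\vv Q=\QQ(\alg F_\vv Q(\o))$ by Theorem \ref{structural}. If $\th$ is completely meet irreducible in $\op{Con}_\vv Q(\alg F_\vv Q(\o))$, then the set of relative congruences strictly above $\th$ has a least element, so passing to the interval above $\th$ and using Lemma \ref{lemma: q-irreducible}(2) shows that $\alg F_\vv Q(\o)/\th$ is $\vv Q$-irreducible, and in particular finitely $\vv Q$-irreducible. Applying the Czelakowski--Dziobiak theorem (Theorem \ref{quasivariety}(2)) with $\vv K=\{\alg F_\vv Q(\o)\}$ yields $\alg F_\vv Q(\o)/\th\in\II\SU\PP_u(\alg F_\vv Q(\o))$, i.e. $\th$ is u-presentable.

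For the reverse implication I would assume every completely meet irreducible $\vv Q$-congruence of $\alg F_\vv Q(\o)$ is u-presentable, and verify condition (4) of Theorem \ref{structural} by showing that every quasiequation valid in $\alg F_\vv Q(\o)$ is valid in $\vv Q$. So suppose $\vv Q\not\vDash\Phi$. By Corollary \ref{compact} there is a compact $\th\in\op{Con}_\vv Q(\alg F_\vv Q(\o))$ with $\alg F_\vv Q(\o)/\th\not\vDash\Phi$; writing $\Phi$ as $\bigwedge_i r_i\app s_i\Rightarrow r\app s$ and fixing a refuting assignment $\mathbf a$, the premises give $(r_i(\mathbf a),s_i(\mathbf a))\in\th$ while $(r(\mathbf a),s(\mathbf a))\notin\th$. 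Since $\op{Con}_\vv Q(\alg F_\vv Q(\o))$ is algebraic and $\th_\vv Q(r(\mathbf a),s(\mathbf a))$ is compact, the set $V=\{\a\ge\th:(r(\mathbf a),s(\mathbf a))\notin\a\}$ is closed under joins of chains exactly as in Proposition \ref{pr-maxcon}, so Zorn's Lemma supplies a maximal element $\mu$. Every congruence strictly above $\mu$ contains $(r(\mathbf a),s(\mathbf a))$, hence lies above $\mu\join\th_\vv Q(r(\mathbf a),s(\mathbf a))$; thus $\mu$ has a unique cover and is completely meet irreducible. As $\mu\ge\th$ keeps all premise pairs and excludes the conclusion pair, $\alg F_\vv Q(\o)/\mu\not\vDash\Phi$.

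Finally, $\mu$ being u-presentable gives $\alg F_\vv Q(\o)/\mu\in\II\SU\PP_u(\alg F_\vv Q(\o))\sse\QQ(\alg F_\vv Q(\o))$; since every quasiequation valid in $\alg F_\vv Q(\o)$ persists to all of $\QQ(\alg F_\vv Q(\o))$, the failure $\alg F_\vv Q(\o)/\mu\not\vDash\Phi$ forces $\alg F_\vv Q(\o)\not\vDash\Phi$. Hence every quasiequation valid in $\alg F_\vv Q(\o)$ holds throughout $\vv Q$, so $\vv Q=\QQ(\alg F_\vv Q(\o))$ and $\vv Q$ is structurally complete. I expect the main obstacle to be precisely the descent step in the reverse direction: one must replace an arbitrary compact congruence refuting $\Phi$ by a \emph{completely meet irreducible} one that still refutes $\Phi$, and this is where the algebraicity of the relative congruence lattice and the compactness/maximality argument are essential.
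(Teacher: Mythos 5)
Your proof is correct. The forward direction coincides with the paper's: structural completeness gives $\vv Q=\QQ(\alg F_\vv Q(\o))$, the Czelakowski--Dziobiak theorem (Theorem \ref{quasivariety}(2)) then places every $\vv Q$-irreducible quotient of $\alg F_\vv Q(\o)$ in $\II\SU\PP_u(\alg F_\vv Q(\o))$, and Theorem \ref{upresentable} converts this into u-presentability. Your reverse direction, however, takes a genuinely different route. The paper argues structurally: every $\vv Q$-irreducible algebra embeds in an ultraproduct of its finitely generated subalgebras, each of which is a subdirect product of finitely generated $\vv Q$-irreducible algebras; these are exactly the quotients $\alg F_\vv Q(\o)/\d$ with $\d$ completely meet irreducible in $\op{Con}_\vv Q(\alg F_\vv Q(\o))$, hence lie in $\II\SU\PP_u(\alg F_\vv Q(\o))$ by hypothesis, and Mal'cev's subdirect decomposition then yields $\vv Q=\QQ(\alg F_\vv Q(\o))$. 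You instead fix a single quasiequation $\Phi$ failing in $\vv Q$, use Corollary \ref{compact} to refute it in $\alg F_\vv Q(\o)/\th$ for a compact $\th$, and run the Zorn argument of Proposition \ref{pr-maxcon} to enlarge $\th$ to a $\vv Q$-congruence $\mu$ maximal for excluding the conclusion pair; since every relative congruence strictly above $\mu$ contains $\mu\join\th_\vv Q(r(\mathbf a),s(\mathbf a))$, such a $\mu$ has a least strict overcongruence and is therefore completely meet irreducible, still refutes $\Phi$, and u-presentability together with preservation of quasiequations under $\II\SU\PP_u$ transfers the refutation back to $\alg F_\vv Q(\o)$. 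Your route is more elementary and self-contained in this direction: it avoids both Czelakowski--Dziobiak and the embedding into ultraproducts of finitely generated subalgebras, needing only Corollary \ref{compact}, the compactness/algebraicity argument behind Zorn's Lemma, and Theorem \ref{upresentable}; it also isolates, as you note, the real content of the hypothesis, namely the descent from an arbitrary compact refuting congruence to a completely meet irreducible one. What the paper's route buys in exchange is the stronger structural conclusion, proved directly, that every $\vv Q$-irreducible algebra lies in $\II\SU\PP_u(\alg F_\vv Q(\o))$, which is the form of the statement that meshes with the rest of the paper's machinery (e.g.\ Corollary \ref{cor: cmi and u-presentable} and Theorem \ref{i-discriminator complete}).
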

\begin{proof} If $\vv Q$ is structurally complete, then $\vv Q = \QQ(\alg F_\vv Q(\o))$ and therefore every $\vv Q$-irreducible algebra is in $\II\SU\PP_u(\alg F_\vv Q(\o))$. It follows that every completely meet irreducible congruence of $\alg F_\vv Q(\o)$ is  u-presentable.

Conversely if (2) holds then, by  Theorem \ref{upresentable}, if $\th$ is a  completely meet irreducible congruence of $\alg F_\vv Q(\o)$ then $\alg F_\vv Q (\o)/\th \in \II\SU\PP_u(\alg F_\vv Q(\o))$. Every $\vv Q$-irreducible  algebra $\alg A$ is embeddable in an ultraproduct of finitely generated subalgebras and each one of them is a subdirect product of finitely generated $\vv Q$-irreducible algebras that are necessarily equal to $\alg F_\vv Q(\o)/\d$ for some completely meet irreducible congruence
of $\alg F_\vv Q(\o)$. Then each one of them is in $\II\SU\PP_u(\alg F_\vv Q(\o))$ and so does $\alg A$, since it is $\vv Q$-irreducible.   It follows that $\vv Q = \QQ(\alg F_\vv Q(\o))$ and hence $\vv Q$ is structurally complete.
\end{proof}

\begin{corollary} \label{cor: cmi and u-presentable} A quasivariety $\vv Q$ is primitive if and only if for every countably generated $\alg A \in \vv Q$ every completely meet irreducible $\th \in \op{Con}_\vv Q(\alg A)$ is u-presentable.
\end{corollary}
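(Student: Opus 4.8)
The plan is to combine the two characterisations already established. Theorem \ref{primitiveQ} reduces primitivity of $\vv Q$ to structural completeness of every subquasivariety, while Theorem \ref{thm: cmi and u-presentable} characterises structural completeness of a quasivariety by u-presentability of the completely meet irreducible congruences of its free $\o$-generated algebra. Throughout I would use the two bookkeeping facts: for $\alg A \in \vv Q$ a congruence $\th \in \op{Con}_\vv Q(\alg A)$ is completely meet irreducible exactly when $\alg A/\th$ is $\vv Q$-irreducible, and by Theorem \ref{upresentable} u-presentability of $\th$ (for the full clone) means precisely $\alg A/\th \in \II\SU\PP_u(\alg A)$.

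For the forward implication I would argue directly. Assume $\vv Q$ is primitive, fix $\alg A \in \vv Q$ and a completely meet irreducible $\th \in \op{Con}_\vv Q(\alg A)$. Then $\alg A/\th$ is $\vv Q$-irreducible, so by the equivalence of primitivity with weak $\vv Q$-primitivity of $\vv Q$-irreducibles (Theorem \ref{primitiveQ}) the algebra $\alg A/\th$ is weakly $\vv Q$-primitive; since $\alg A/\th \in \HH(\alg A)$ and $\alg A \in \vv Q$, this gives $\alg A/\th \in \II\SU\PP_u(\alg A)$, i.e. $\th$ is u-presentable. Countable generation of $\alg A$ is not even needed in this direction.

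For the converse, by Theorem \ref{primitiveQ} it suffices to show every subquasivariety $\vv Q' \sse \vv Q$ is structurally complete, and by Theorem \ref{thm: cmi and u-presentable} applied to $\vv Q'$ it is enough to verify that each completely meet irreducible $\th \in \op{Con}_{\vv Q'}(\alg A)$, where $\alg A := \alg F_{\vv Q'}(\o)$, satisfies $\alg D := \alg A/\th \in \II\SU\PP_u(\alg A)$; note $\alg A$ is countably generated and lies in $\vv Q$. The difficulty is that $\alg D$ is only $\vv Q'$-irreducible, and a $\vv Q'$-irreducible algebra need not be $\vv Q$-irreducible (the implication only goes the other way), so the hypothesis, which speaks about completely meet irreducible $\vv Q$-congruences, cannot be applied to $\th$ itself. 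This relativisation mismatch is exactly the main obstacle.

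To repair it I would decompose instead of applying the hypothesis to $\th$ directly. By Mal'cev's theorem (Theorem \ref{quasivariety}(1)) embed $\alg D \le_{sd} \prod_{i} \alg D_i$ with each $\alg D_i \in \vv Q_{ir}$; since $\alg D_i \in \HH(\alg D) \sse \HH(\alg A)$ we may write $\alg D_i \cong \alg A/\th_i$ with $\th_i \in \op{Con}_\vv Q(\alg A)$ completely meet irreducible. The hypothesis now applies to each $\th_i$ (as $\alg A$ is countably generated in $\vv Q$), giving $\alg D_i \in \II\SU\PP_u(\alg A) \sse \QQ(\alg A)$, and hence $\alg D \in \QQ(\alg A)$. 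Finally, since $\QQ(\alg A) \sse \vv Q'$ and $\alg D$ is $\vv Q'$-irreducible, the pair witnessing $\vv Q'$-irreducibility still lies in every nonzero member of the smaller lattice $\op{Con}_{\QQ(\alg A)}(\alg D)$, so by Lemma \ref{lemma: q-irreducible}(3) the algebra $\alg D$ is $\QQ(\alg A)$-irreducible; Theorem \ref{quasivariety}(2) then yields $\alg D \in \II\SU\PP_u(\alg A)$, as required. The Mal'cev-decomposition-plus-Czelakowski--Dziobiak detour is precisely what converts the $\vv Q$-level information supplied by the hypothesis back into the $\vv Q'$-level conclusion needed for structural completeness of $\vv Q'$.
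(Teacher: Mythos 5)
Your proof is correct, and it is in fact more careful than the paper, which states this corollary without any proof. The argument the authors evidently have in mind (it is the same one-liner they use for the unlabelled corollary on compact congruences earlier in Section \ref{sec:more}) is: for each subquasivariety $\vv Q'\sse\vv Q$ the algebra $\alg F_{\vv Q'}(\o)$ is a countably generated member of $\vv Q$, so Theorem \ref{thm: cmi and u-presentable} applied to $\vv Q'$ finishes the job. You correctly identify that this glosses over a real issue: the theorem applied to $\vv Q'$ requires u-presentability of the completely meet irreducible members of $\op{Con}_{\vv Q'}(\alg F_{\vv Q'}(\o))$, whereas the hypothesis of the corollary supplies it only for congruences that are completely meet irreducible in the \emph{larger} lattice $\op{Con}_{\vv Q}(\alg F_{\vv Q'}(\o))$, and a $\vv Q'$-irreducible quotient need not be $\vv Q$-irreducible (for instance $\mathbb Z$ is $\QQ(\mathbb Z)$-irreducible but is not subdirectly irreducible as an abelian group, so $0_{\mathbb Z}$ is completely meet irreducible in $\op{Con}_{\QQ(\mathbb Z)}(\mathbb Z)$ but not in $\op{Con}(\mathbb Z)$). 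Your repair is sound at every step: Mal'cev-decompose the $\vv Q'$-irreducible quotient $\alg D$ into $\vv Q$-irreducible quotients $\alg A/\th_i$ with each $\th_i$ completely meet irreducible in $\op{Con}_\vv Q(\alg A)$, apply the hypothesis together with Theorem \ref{upresentable} to each $\th_i$ to get $\alg D\in\QQ(\alg A)$, observe that $\alg D$ is $\QQ(\alg A)$-irreducible because $\op{Con}_{\QQ(\alg A)}(\alg D)\sse\op{Con}_{\vv Q'}(\alg D)$ and Lemma \ref{lemma: q-irreducible}(3) applies, and conclude with Czelakowski--Dziobiak (Theorem \ref{quasivariety}(2)) that $\alg D\in\II\SU\PP_u(\alg A)$. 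The forward direction via weak $\vv Q$-primitivity (Theorem \ref{primitiveQ}) is also right, as is your remark that countable generation plays no role there. In short: you follow the strategy the paper intends, but you supply the missing bridge between $\vv Q'$-relative and $\vv Q$-relative complete meet irreducibility, which is precisely what makes the corollary a consequence of the theorem rather than a formality.
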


Sometimes $u$-presentability is expressible directly via term operations.
Let $\alg A$ be any algebra, $C$ a subclone of the clone of all term operations on $\alg A$, $T$ a set of generators for $C$ and $\alg A^T$ the reduct of
$\alg A$ to $T$; we say that $\alg A$ has the {\bf Prucnal property} relative to $C$ if for all $n \in \mathbb N$ there is a term $t_n(\vuc xn,\vuc yn,z)$ such that for any compact $\th \in \op{Con}_\vv Q(\alg A)$, $\th = \bigvee_{i=1}^n  \cg^\vv Q_\alg A(a_i,b_i)$
\begin{enumerate}
\item   the map $\sigma_n : c \longmapsto t_n(\vuc an,\vuc bn, c)$ is an endomorphism of $\alg A^T$;
\item   $\op{ker}(\sigma_n) = \th$.
\end{enumerate}
In this case the terms $\sigma_n$ are called the {\bf Prucnal terms} relative to $C$ and the endomorphisms $\sigma_n$ are called the {\bf Prucnal $C$-endomorphisms}.
If $C$ is the entire clone of derived operations on $\alg A$, then we will drop the decoration $C$.

\begin{theorem} Let $\vv Q$ be a quasivariety and let $C$ a clone of term operations of $\vv Q$. If $\alg F_\vv Q(\o)$ has the Prucnal property relative to $C$, then $\vv Q$ is $C$-structurally complete.
\end{theorem}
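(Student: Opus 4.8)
The plan is to feed the Prucnal property into the earlier u-presentability criterion. Concretely, I would show that the Prucnal property relative to $C$ forces every compact $\vv Q$-congruence of $\alg F_\vv Q(\o)$ to be u-presentable relative to $C$; the preceding theorem, which connects u-presentability of the compact $\vv Q$-congruences of $\alg F_\vv Q(\o)$ with $C$-structural completeness, then delivers the conclusion at once.

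Set $\alg A = \alg F_\vv Q(\o)$ and let $T$ be a set of generators of $C$. Since $T$ generates $C$, the algebras $\alg A^T$ and $\alg A^C$ share the same universe and the same term clone; in particular they have the same subalgebras and ultrapowers, and a self-map of $A$ is an endomorphism of $\alg A^T$ exactly when it is an endomorphism of $\alg A^C$. Now fix a compact $\th \in \op{Con}_\vv Q(\alg A)$. Because $\op{Con}_\vv Q(\alg A)$ is an algebraic lattice whose compact elements are the finite joins of principal $\vv Q$-congruences, I may write $\th = \bigvee_{i=1}^n \cg^\vv Q_\alg A(a_i, b_i)$ for suitable $a_i, b_i \in A$, which is precisely the shape required to apply the Prucnal property.

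By hypothesis there is then a Prucnal term $t_n(\vuc xn, \vuc yn, z)$ whose induced map $\sigma_n : c \longmapsto t_n(\vuc an, \vuc bn, c)$ is an endomorphism of $\alg A^T$, hence of $\alg A^C$, with $\op{ker}(\sigma_n) = \th$. The homomorphism theorem gives
$$
\alg A^C/\th = \alg A^C/\op{ker}(\sigma_n) \cong \sigma_n(\alg A^C) \le \alg A^C,
$$
so that $\alg A^C/\th \in \II\SU(\alg A^C) \sse \II\SU\PP_u(\alg A^C)$. By Theorem \ref{upresentable} this says exactly that $\th$ is u-presentable relative to $C$.

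Since $\th$ was an arbitrary compact $\vv Q$-congruence of $\alg F_\vv Q(\o)$, every such congruence is u-presentable relative to $C$, and the preceding theorem then yields that $\vv Q$ is $C$-structurally complete. I do not expect a genuine obstacle here: the whole argument is a translation through Theorem \ref{upresentable}, and the only points demanding care are the clone-theoretic identification of endomorphisms of $\alg A^T$ with those of $\alg A^C$, and the standard representation of a compact $\vv Q$-congruence as a finite join of principal $\vv Q$-congruences so that the Prucnal property is applicable.
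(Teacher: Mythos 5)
Your proof is correct and follows essentially the same route as the paper: reduce to showing that every compact $\vv Q$-congruence of $\alg F_\vv Q(\o)$ is u-presentable relative to $C$, then use the Prucnal endomorphism $\sigma_n$ together with the homomorphism theorem to get $\alg F^C_\vv Q(\o)/\th \in \II\SU(\alg F^C_\vv Q(\o)) \sse \II\SU\PP_u(\alg F^C_\vv Q(\o))$, which by Theorem \ref{upresentable} is u-presentability. Your write-up is in fact more careful than the paper's, since it makes explicit the identification of endomorphisms of $\alg A^T$ with those of $\alg A^C$ and the representation of a compact $\vv Q$-congruence as a finite join of principal ones, both of which the paper leaves tacit.
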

\begin{proof} By Corollary \ref{compact} it is enough to show that every compact congruence of $\alg F_\vv Q(\o)$ is u-presentable relative to $C$. So let $\th$ be such a congruence; then from the definition of Prucnal terms and  a straightforward application of the Third Homomorphism Theorem it follows that
$\alg F^C_\vv Q(\o)/\th \in \II\SU(\alg F^C_\vv Q(\o)$. This implies that $\th$ is $u$-presentable.
\end{proof}

\begin{corollary}\label{cor:compact} Let $\vv Q$ be a quasivariety and let $C$ a clone of term operations of $\vv Q$. If every countably generated algebra in $\vv Q$  has the Prucnal property relative to $C$, then $\vv Q$ is $C$-primitive.
\end{corollary}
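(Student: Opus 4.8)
Corollary (restated). Let $\vv Q$ be a quasivariety and let $C$ be a clone of term operations of $\vv Q$. If every countably generated algebra in $\vv Q$ has the Prucnal property relative to $C$, then $\vv Q$ is $C$-primitive.

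The plan is to reduce $C$-primitivity to $C$-structural completeness of each subquasivariety by exploiting the fact that free algebras of subquasivarieties are countably generated members of $\vv Q$. Recall that by definition $\vv Q$ is $C$-primitive precisely when every subquasivariety $\vv Q' \sse \vv Q$ is $C$-structurally complete. So I would fix an arbitrary subquasivariety $\vv Q' \sse \vv Q$ and aim to apply the immediately preceding theorem (the one asserting that $\alg F_\vv Q(\o)$ having the Prucnal property relative to $C$ implies $C$-structural completeness) to $\vv Q'$ in place of $\vv Q$.

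The first step is to identify the relevant free algebra. The countably generated free algebra $\alg F_{\vv Q'}(\o)$ of the subquasivariety is itself a countably generated algebra, and since $\vv Q' \sse \vv Q$ it lies in $\vv Q$. By hypothesis, therefore, $\alg F_{\vv Q'}(\o)$ has the Prucnal property relative to $C$. This is exactly the hypothesis needed to invoke the previous theorem with the quasivariety taken to be $\vv Q'$: once $\alg F_{\vv Q'}(\o)$ has the Prucnal property relative to $C$, that theorem yields that $\vv Q'$ is $C$-structurally complete.

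Finally, since $\vv Q'$ was an arbitrary subquasivariety of $\vv Q$, this shows every subquasivariety of $\vv Q$ is $C$-structurally complete, which is the definition of $C$-primitivity. The only subtlety worth checking explicitly is that the clone $C$ is to be interpreted correctly on the smaller free algebra $\alg F_{\vv Q'}(\o)$: as noted in the paragraph preceding this corollary, relative to a countably generated $\alg A \in \vv Q$ one identifies $C$ with its image under the canonical clone epimorphism $\f : \op{Clo}(\alg F_\vv Q(\o)) \to \op{Clo}(\alg A)$, so the Prucnal property relative to $C$ is unambiguous for $\alg F_{\vv Q'}(\o)$. I do not expect any genuine obstacle here; the content is entirely in the preceding theorem, and this corollary is a short quantifier-shift argument. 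The proof is therefore essentially the single observation that $\alg F_{\vv Q'}(\o)$ is a countably generated algebra in $\vv Q$.
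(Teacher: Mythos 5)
Your argument is the paper's intended one: the paper states this corollary without proof, and its proof of the parallel corollary on u-presentability consists of precisely your observation that $\alg F_{\vv Q'}(\o)$ is a countably generated algebra in $\vv Q$. There is, however, one step at which ``this is exactly the hypothesis needed to invoke the previous theorem'' is not quite exact, and it concerns the relativization of congruences rather than of clones (the latter you do address). The Prucnal property that the hypothesis gives you for $\alg A = \alg F_{\vv Q'}(\o)$ is formulated in terms of compact members of $\op{Con}_{\vv Q}(\alg A)$, written as joins of principal $\vv Q$-congruences $\cg^{\vv Q}_{\alg A}(a_i,b_i)$; the preceding theorem applied to $\vv Q'$ asks for the same property with respect to compact members of $\op{Con}_{\vv Q'}(\alg A)$. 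Since $\cg^{\vv Q'}_{\alg A}(a,b)$ may properly contain $\cg^{\vv Q}_{\alg A}(a,b)$ (the quotient must lie in the smaller quasivariety), the hypothesis of the theorem for $\vv Q'$ is not literally verified by the hypothesis of the corollary. The paper glosses over exactly the same point, so your proposal faithfully reproduces its proof; but the point is real and worth closing.

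The repair stays within tools already proved. The Prucnal property with respect to $\vv Q$ makes every compact $\th \in \op{Con}_{\vv Q}(\alg A)$ the kernel of an endomorphism of $\alg A^{C}$, so $\alg A^{C}/\th \in \II\SU(\alg A^{C})$ and $\th$ is u-presentable relative to $C$. Every $\vv Q$-congruence of $\alg A$ is the union of the upward directed family of compact $\vv Q$-congruences below it, so by Theorem \ref{upresentable} (whose proof uses only directedness of the family $\Delta$) every $\vv Q$-congruence of $\alg A$ is u-presentable relative to $C$; in particular so is every compact $\vv Q'$-congruence, which is a $\vv Q$-congruence because $\vv Q' \sse \vv Q$, though in general not a compact one. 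Now run the proof of the preceding theorem with $\vv Q'$ in place of $\vv Q$: if a $C$-quasiequation $\Phi$ fails in $\vv Q'$, then by Corollary \ref{compact} applied to $\vv Q'$ it fails in $\alg A/\th$ for some compact $\vv Q'$-congruence $\th$, and u-presentability of $\th$ places $\alg A^{C}/\th$ in $\II\SU\PP_u(\alg A^{C})$, whence $\Phi$ fails in $\alg A^{C}$ and hence in $\alg F_{\vv Q'}(\o)$, which is what $C$-structural completeness of $\vv Q'$ requires. (For the full term clone the issue evaporates: there the Prucnal map is an endomorphism of $\alg A$ itself, its image is a subalgebra of $\alg A \in \vv Q'$, so its kernel is already a $\vv Q'$-congruence and the compact $\vv Q$- and $\vv Q'$-congruences generated by the same pairs coincide; it is only for proper subclones $C$ that the detour through u-presentability is needed.)
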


A quasivariety $\vv Q$ has the {\bf principal Prucnal property} relative to $C$, if there is a term $t(x,y,z)$ that is a Prucnal term for principal congruences, relative to $C$, i.e. for all $\alg A \in \vv Q$ and for all $a,b, \in \alg A$
\begin{enumerate}
\item the map $\sigma : c \longmapsto t(a,b,c)$ is an endomorphism of $\alg A^C$;
\item $\op{ker}(\sigma) = \cg_\alg A^\vv Q(a,b)$.
\end{enumerate}
We will show that for any quasivariety the principal Prucnal property (relative to $C$) implies the Prucnal property (relative to $C$).
To this aim we need several lemmas.

\begin{lemma} \label{techlemma1} \cite{CzelakowskiDziobiak1990} Let $\vv Q$ be a quasivariety and $\alg A \in \vv Q$; if $\th \in \op{Con}_\vv Q(\alg A)$ and $a,b \in A$ then
$(\th \join \cg_\alg A^\vv Q(a,b))/\th = \cg_{\alg A/\th}(a/\th,b/\th)$.
\end{lemma}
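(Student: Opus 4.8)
The plan is to reduce everything to the correspondence theorem for relative congruences. Fix $\th \in \op{Con}_\vv Q(\alg A)$ and write $\pi\colon \alg A \longrightarrow \alg A/\th$ for the canonical surjection. First I would record that the assignment $\psi \longmapsto \psi/\th$ is an order isomorphism from the set of $\vv Q$-congruences of $\alg A$ lying above $\th$ onto $\op{Con}_\vv Q(\alg A/\th)$. This is the relative analogue of the usual correspondence theorem and follows from the Third Homomorphism Theorem: for $\psi \ge \th$ one has $\alg A/\psi \cong (\alg A/\th)/(\psi/\th)$, so $\psi$ is a $\vv Q$-congruence exactly when $\psi/\th$ is. Its inverse sends a $\vv Q$-congruence $\eta$ of $\alg A/\th$ to the $\pi$-preimage $\pi^{-1}(\eta) = \{(x,y): (x/\th,y/\th) \in \eta\}$, a $\vv Q$-congruence of $\alg A$ containing $\th$. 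The only point to keep in mind is that $\th \join \cg_\alg A^\vv Q(a,b)$ is the join in $\op{Con}_\vv Q(\alg A)$, i.e. the smallest $\vv Q$-congruence above both $\th$ and $\cg_\alg A^\vv Q(a,b)$; the argument will use only its order-theoretic minimality, never a formula for it, so no confusion with the absolute join can arise.

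For the inclusion $\cg_{\alg A/\th}(a/\th,b/\th) \sse (\th \join \cg_\alg A^\vv Q(a,b))/\th$, I would show that the right-hand side is a legitimate competitor for the left. Indeed $\th \join \cg_\alg A^\vv Q(a,b)$ is a $\vv Q$-congruence lying above $\th$, so by the correspondence its image $(\th \join \cg_\alg A^\vv Q(a,b))/\th$ is a $\vv Q$-congruence of $\alg A/\th$; and since $(a,b) \in \cg_\alg A^\vv Q(a,b) \sse \th \join \cg_\alg A^\vv Q(a,b)$, this image contains the pair $(a/\th,b/\th)$. By minimality of the relative principal congruence $\cg_{\alg A/\th}(a/\th,b/\th)$, the inclusion follows at once.

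For the reverse inclusion, let $\psi := \pi^{-1}(\cg_{\alg A/\th}(a/\th,b/\th))$, so that $\psi$ is a $\vv Q$-congruence of $\alg A$ with $\th \le \psi$ and $\psi/\th = \cg_{\alg A/\th}(a/\th,b/\th)$. Because $(a/\th,b/\th) \in \cg_{\alg A/\th}(a/\th,b/\th)$ we get $(a,b) \in \psi$, so $\psi$ is a $\vv Q$-congruence containing both $\th$ and $(a,b)$; by minimality of $\cg_\alg A^\vv Q(a,b)$ together with that of the relative join, $\th \join \cg_\alg A^\vv Q(a,b) \le \psi$. Applying the order-preserving correspondence yields $(\th \join \cg_\alg A^\vv Q(a,b))/\th \le \psi/\th = \cg_{\alg A/\th}(a/\th,b/\th)$, which is the remaining inclusion, and equality follows. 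The only genuine work is the bookkeeping of the correspondence theorem in the relative setting — checking that $\psi \ge \th$ is a $\vv Q$-congruence precisely when $\psi/\th$ is, and that the correspondence is order preserving both ways — after which both inclusions are immediate from the universal property of relative principal congruences. I expect this verification, handled uniformly by the Third Homomorphism Theorem, to be the only real obstacle.
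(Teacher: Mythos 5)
Your proof is correct. The paper itself gives no argument for this lemma — it is quoted directly from Czelakowski--Dziobiak \cite{CzelakowskiDziobiak1990} — so there is nothing internal to compare against; your argument via the relative correspondence theorem (the observation, through the Third Homomorphism Theorem, that for $\psi \supseteq \th$ one has $\alg A/\psi \cong (\alg A/\th)/(\psi/\th)$, so $\psi \in \op{Con}_\vv Q(\alg A)$ exactly when $\psi/\th \in \op{Con}_\vv Q(\alg A/\th)$) is the standard one and is complete. You are also right to flag that the join must be read in $\op{Con}_\vv Q(\alg A)$ rather than in $\Con A$; that is the reading the paper uses elsewhere (e.g.\ in the proof of Theorem \ref{RPIP} and in Lemma \ref{sigmakernel}), and your two inclusions use only its minimality among $\vv Q$-congruences, which is exactly what is needed.
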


For the following lemma, in order to avoid clutter we use a special notation; first we will denote a sequence $\vuc an \in A$ by
$\mathbf a^n$. Next if $\th \in \Con A$ we will write $\bar {\alg A}$ for $\alg A/\th$, $\ol a$ for $a/\th$ and
$\ol{{\mathbf a}}^n$for $\vuc {\ol{a}}n$.

\begin{lemma}\label{techlemma2} Let $\vv Q$ be a quasivariety, $\alg A \in \vv Q$ and $\vuc an,\vuc bn \in A$. If $\overline x = x/\cg_\alg A^\vv Q(a_n,b_n)$, $\bar {\alg A} = \alg A/\cg_\alg A^\vv Q(a_n,b_n)$ and
$c,d \in A$ then
\begin{align*}
&(c,d) \in \cg_\alg A^\vv Q(a_1,b_1) \join \dots \join \cg_\alg A^\vv Q(a_n,b_n)\quad\text{if and only if}\\
&(\overline  c,\overline  d) \in \cg_{\bar{\alg A}}^\vv Q({\overline  a}_1,{\overline  b}_1) \join \dots \join \cg_{\bar{\alg A}}^\vv Q({\overline  a}_{n-1},{\overline  b}_{n-1}).
\end{align*}
\end{lemma}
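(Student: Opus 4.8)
The plan is to reduce the biconditional to a single identity between quotient congruences. Write $\psi = \cg_\alg A^\vv Q(a_n,b_n)$, so that $\bar{\alg A} = \alg A/\psi$ and $\bar x = x/\psi$, and set
$$
\Theta = \bigvee_{i=1}^{n}\cg_\alg A^\vv Q(a_i,b_i),
$$
the join being taken in the relative congruence lattice $\op{Con}_\vv Q(\alg A)$. Since $\psi$ is the $n$-th join-and we have $\psi\le\Theta$, and the whole claim follows once I establish that $\Theta/\psi$ coincides with $\bar\Theta := \bigvee_{i=1}^{n-1}\cg_{\bar{\alg A}}^\vv Q(\bar a_i,\bar b_i)$. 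Indeed, because $\psi\le\Theta$, the projection $\alg A\to\bar{\alg A}$ satisfies $(c,d)\in\Theta$ if and only if $(\bar c,\bar d)\in\Theta/\psi$, so proving $\Theta/\psi=\bar\Theta$ settles both directions at once.

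To compute $\Theta/\psi$ I first absorb $\psi$ into each of the remaining join-ands. Using the lattice identity $\psi\join\bigvee_{i=1}^{n-1}x_i=\bigvee_{i=1}^{n-1}(\psi\join x_i)$, valid in any lattice, together with the fact that $\psi$ is itself the summand $\cg_\alg A^\vv Q(a_n,b_n)$ of $\Theta$, I rewrite
$$
\Theta=\bigvee_{i=1}^{n-1}\bigl(\psi\join\cg_\alg A^\vv Q(a_i,b_i)\bigr),
$$
a join of $\vv Q$-congruences each of which now lies above $\psi$.

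Now I invoke the relative correspondence theorem: the map $\th\mapsto\th/\psi$ is a lattice isomorphism from the interval $[\psi,1_\alg A]$ of $\op{Con}_\vv Q(\alg A)$ onto $\op{Con}_\vv Q(\bar{\alg A})$, the point being that $(\alg A/\psi)/(\th/\psi)\cong\alg A/\th$, so that membership in $\vv Q$ transfers correctly and $\vv Q$-congruences above $\psi$ match up with $\vv Q$-congruences of $\bar{\alg A}$. Being an order isomorphism of complete lattices, it preserves joins, whence
$$
\Theta/\psi=\bigvee_{i=1}^{n-1}\bigl(\psi\join\cg_\alg A^\vv Q(a_i,b_i)\bigr)\big/\psi.
$$
Finally I apply Lemma \ref{techlemma1} with $\th=\psi$ to each summand, which yields $(\psi\join\cg_\alg A^\vv Q(a_i,b_i))/\psi=\cg_{\bar{\alg A}}^\vv Q(\bar a_i,\bar b_i)$, and therefore $\Theta/\psi=\bar\Theta$ as required.

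The main point to get right is the join-preservation in the third step: relative joins in $\op{Con}_\vv Q(\alg A)$ need not agree with ordinary congruence joins, so it is essential that $\th\mapsto\th/\psi$ is literally an isomorphism of the \emph{relative} congruence lattices, and hence preserves the relative join $\join$ appearing in the statement. The absorption rewriting of the second step is precisely what makes Lemma \ref{techlemma1} applicable termwise, since that lemma requires each principal $\vv Q$-congruence to be joined with the congruence one is quotienting by; without first lifting every summand above $\psi$ the correspondence could not be applied to the individual factors.
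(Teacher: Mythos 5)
Your proof is correct, and it rests on the same key fact as the paper's proof, namely Lemma \ref{techlemma1}, but the scaffolding is genuinely different: the paper disposes of the lemma by induction on $n$, applying Lemma \ref{techlemma1} in the base step and in the induction step, whereas you avoid induction entirely. After absorbing $\psi=\cg_\alg A^\vv Q(a_n,b_n)$ into each remaining joinand, you invoke the correspondence theorem for relative congruence lattices (the map $\th\mapsto\th/\psi$ is an order isomorphism, hence join-preserving, from the interval above $\psi$ in $\op{Con}_\vv Q(\alg A)$ onto $\op{Con}_\vv Q(\bar{\alg A})$) and then apply Lemma \ref{techlemma1} to all joinands at once. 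What your route buys is transparency: it isolates exactly the two ingredients (join-preservation of the quotient correspondence plus Lemma \ref{techlemma1}) and settles all $n$ in one stroke; the cost is that you must check, as you do, that $\vv Q$-congruences above $\psi$ correspond precisely to $\vv Q$-congruences of $\bar{\alg A}$ via $(\alg A/\psi)/(\th/\psi)\cong\alg A/\th$, so that the isomorphism is one of \emph{relative} congruence lattices and the relative joins in the statement are respected; the paper's induction keeps this correspondence implicit inside repeated one-principal-congruence applications of Lemma \ref{techlemma1}. Two minor points: your displayed formula for $\Theta/\psi$ should have the quotient applied to each joinand separately (that is what join-preservation actually yields, and what your final step uses); and for $n=1$ the absorption identity degenerates into an empty join, though the lemma holds trivially there since $\Theta=\psi$ and $\Theta/\psi=0_{\bar{\alg A}}$ is exactly the empty join on the right-hand side.
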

\begin{proof} The proof is by induction, using Lemma \ref{techlemma1} both in the base step and the induction step.
\end{proof}

Let $\vv Q$ be a quasivariety with a principal Prucnal term relative to $C$, say $t(x,y,z)$.
We define for $n \ge 1$
\begin{align*}
&t_1(x_1,y_1,z) := t(x_1,y_1,z)\\
&t_{n+1} = t_n(\vuc xn,\vuc yn, t(x_n,y_n,z)).
\end{align*}

\begin{lemma}\label{sigmakernel} Let $\vv Q$ be a quasivariety with principal Prucnal term $t(x,y,z)$ relative to $C$, $\alg A \in \vv Q$ and $a,b,c,d \in A$.
Then
\begin{align*}
&(c,d) \in \cg_\alg A^\vv Q(a_1,b_1) \join\dots\join \cg_\alg A^\vv Q (a_n,b_n)\quad\text{if and only if}\\
&t_n({\mathbf a}^n,{\mathbf b}^n,c) = t_n({\mathbf a}^n,{\mathbf b}^n,d).
\end{align*}
\end{lemma}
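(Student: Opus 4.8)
The plan is to prove the biconditional by induction on $n$, realizing it as a single chain of equivalences. Throughout I write $\theta_i = \cg_\alg A^\vv Q(a_i,b_i)$, and I use the defining recursion in the form $t_n(\mathbf a^n,\mathbf b^n,c) = t_{n-1}(\mathbf a^{n-1},\mathbf b^{n-1},\,t(a_n,b_n,c))$, so that everything is governed by how the principal Prucnal endomorphism $c\mapsto t(a_n,b_n,c)$ sits relative to the join $\theta_1\join\dots\join\theta_{n-1}$. For the base case $n=1$ I appeal directly to clause (2) of the principal Prucnal property: the map $c\mapsto t(a_1,b_1,c)$ has kernel exactly $\cg_\alg A^\vv Q(a_1,b_1)=\theta_1$, so $(c,d)\in\theta_1$ holds iff $t_1(a_1,b_1,c)=t_1(a_1,b_1,d)$, which is the claim.

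For the inductive step I will set $\Psi = \theta_1\join\dots\join\theta_{n-1}$ (the join computed in $\op{Con}_\vv Q(\alg A)$), put $\widetilde{\alg A}=\alg A/\Psi$, let $\pi\colon\alg A\to\widetilde{\alg A}$ be the quotient homomorphism, and write $\tilde x = x/\Psi$. Since $\Psi$ is a $\vv Q$-congruence we have $\widetilde{\alg A}\in\vv Q$, so the principal Prucnal property is available there. The full congruence in the statement is $\Psi\join\theta_n$, and Lemma \ref{techlemma1}, applied with $\th=\Psi$ and the pair $a_n,b_n$, collapses it onto a single principal $\vv Q$-congruence of the quotient: $(\Psi\join\theta_n)/\Psi=\cg_{\widetilde{\alg A}}^\vv Q(\tilde a_n,\tilde b_n)$. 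Because $\Psi\sse\Psi\join\theta_n$, the correspondence theorem gives $(c,d)\in\Psi\join\theta_n$ iff $(\tilde c,\tilde d)\in\cg_{\widetilde{\alg A}}^\vv Q(\tilde a_n,\tilde b_n)$. By clause (2) of the principal Prucnal property in $\widetilde{\alg A}$, the kernel of $w\mapsto t(\tilde a_n,\tilde b_n,w)$ is precisely $\cg_{\widetilde{\alg A}}^\vv Q(\tilde a_n,\tilde b_n)$, so this is equivalent to $t(\tilde a_n,\tilde b_n,\tilde c)=t(\tilde a_n,\tilde b_n,\tilde d)$. Now $t$ is a genuine term and $\pi$ a homomorphism, whence $t(\tilde a_n,\tilde b_n,\tilde c)=\pi\bigl(t(a_n,b_n,c)\bigr)$; thus the condition reads $\bigl(t(a_n,b_n,c),\,t(a_n,b_n,d)\bigr)\in\op{ker}(\pi)=\Psi$. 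Finally the induction hypothesis for the $n-1$ pairs $a_1,b_1,\dots,a_{n-1},b_{n-1}$, applied to the two elements $t(a_n,b_n,c)$ and $t(a_n,b_n,d)$, converts membership in $\Psi$ into the equation $t_{n-1}(\mathbf a^{n-1},\mathbf b^{n-1},\,t(a_n,b_n,c))=t_{n-1}(\mathbf a^{n-1},\mathbf b^{n-1},\,t(a_n,b_n,d))$, which by the recursion is exactly $t_n(\mathbf a^n,\mathbf b^n,c)=t_n(\mathbf a^n,\mathbf b^n,d)$. Reading this chain from end to end yields the lemma.

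The step I expect to require the most care is the bookkeeping of relative congruences: the joins in the statement live in $\op{Con}_\vv Q(\alg A)$, not in the full congruence lattice, so at no point may I replace the $\vv Q$-join $\Psi\join\theta_n$ by an ordinary congruence join, nor manipulate the composite kernel of $t_n$ as if it were an honest congruence of $\alg A$ before it is identified with the intended one. This is precisely where Lemma \ref{techlemma1} does the essential work, pushing the $\vv Q$-join down to a single principal $\vv Q$-congruence of $\alg A/\Psi$ on which clause (2) of the principal Prucnal property can be read off; the only remaining subtlety is to confirm that $\alg A/\Psi\in\vv Q$ (guaranteed since $\Psi$ is a $\vv Q$-congruence) and that $\pi$ commutes with $t$ (immediate, as $t$ is a term). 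With those in place the argument is an unbroken sequence of equivalences rather than two separate inclusions.
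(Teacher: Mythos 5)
Your proof is correct, but it is essentially the mirror image of the paper's argument rather than a reproduction of it. The paper factors by the principal congruence $\cg_\alg A^\vv Q(a_n,b_n)$ of the \emph{last} pair, invokes Lemma \ref{techlemma2} to turn the $n$-fold join into an $(n-1)$-fold join in that quotient, applies the induction hypothesis \emph{in the quotient}, and only then uses the Prucnal kernel property for the pair $(a_n,b_n)$ back in $\alg A$. You instead factor by $\Psi=\cg_\alg A^\vv Q(a_1,b_1)\join\dots\join\cg_\alg A^\vv Q(a_{n-1},b_{n-1})$, collapse $\Psi\join\cg_\alg A^\vv Q(a_n,b_n)$ to a single principal relative congruence of $\alg A/\Psi$ via Lemma \ref{techlemma1} and the correspondence theorem, apply the Prucnal kernel property \emph{in the quotient} (legitimate, since $\Psi$ is a $\vv Q$-congruence, so $\alg A/\Psi\in\vv Q$ and the property holds there by definition), and finish with the induction hypothesis in $\alg A$ itself, applied to the elements $t(a_n,b_n,c)$ and $t(a_n,b_n,d)$. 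Your route buys two things: it bypasses Lemma \ref{techlemma2} entirely (whose proof is itself an induction from Lemma \ref{techlemma1}), and its composition order matches the recursion as actually displayed in the paper, namely $t_n(\mathbf a^n,\mathbf b^n,z)=t_{n-1}(\mathbf a^{n-1},\mathbf b^{n-1},t(a_n,b_n,z))$ with $t$ innermost, whereas the paper's own chain of equivalences reads $t_n$ with $t$ outermost, i.e.\ $t(a_n,b_n,t_{n-1}(\mathbf a^{n-1},\mathbf b^{n-1},z))$ --- a harmless internal discrepancy of the paper, since either ordering yields a true lemma, but your reading is the one faithful to the stated definition. What the paper's arrangement buys in exchange is that the Prucnal endomorphism is only ever invoked in the original algebra, with the quotient serving purely as the carrier of the inductive hypothesis; both arrangements rely equally on the fact that the hypotheses of the lemma are available in every member of $\vv Q$.
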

\begin{proof} We induct on $n\ge 1$; the case $n=1$ comes straight from the definition of principal Prucnal term.
Suppose then that the conclusion holds for any positive integer less than $n$. Then
\begin{align*}
&(c,d) \in \cg_\alg A^\vv Q(a_1,b_1) \join\dots\join \cg_\alg A^\vv Q (a_n,b_n)\quad\text{if and only if}\\
&(\overline  c,\overline  d) \in \cg_{\bar{\alg A}}^\vv Q({\overline  a}_1,{\overline  b}_1) \join \dots \join \cg_{\bar{\alg A}}^\vv Q({\overline  a}_{n-1},{\overline  b}_{n-1})\quad\text{if and only if}\\
&t_{n-1}(\ol{{\mathbf a}}^{n-1},\ol{{\mathbf b}}{}^{n-1}, \overline c) = t_{n-1}(\ol{{\mathbf a}}^{n-1},\ol{{\mathbf b}}{}^{n-1}, \overline d)\quad\text{if and only if}\\
 &t_{n-1}(\ol{a}^{n-1},\ol{b}^{n-1}, \ol{d})
d\quad \mathrel{\cg_\alg A^\vv Q(a_n,b_n)}\quad t_{n-1}(\ol{a}^{n-1},\ol{b}^{n-1}, d) \quad\text{if and only if}\\
&t(a_n,b_n,t_{n-1}(\ol{a}^{n-1},\ol{b}^{n-1}, c))  = t(a_n,b_n,t_{n-1}(\ol{a}^{n-1},\ol{b}^{n-1}, d)) \quad\text{if and only if}\\
&t_n({\mathbf a}^n,{\mathbf b}^n,c) = t_n({\mathbf a}^n,{\mathbf b}^n,d).
\end{align*}
\end{proof}

We observe that Lemma \ref{sigmakernel} is a generalization of Theorem 2.6 in \cite{EDPC3}.

\begin{corollary} If a quasivariety $\vv Q$ has a principal Prucnal term relative to $C$, then it has the Prucnal property relative to $C$.
\end{corollary}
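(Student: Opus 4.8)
The plan is to show that every $\alg A \in \vv Q$ has the Prucnal property relative to $C$, using the iterated terms $t_n$ defined just before Lemma \ref{sigmakernel} as the required witnesses, so that the corollary reduces to assembling Lemma \ref{sigmakernel} with one standard structural fact. Fix $\alg A \in \vv Q$. Every compact congruence $\th \in \op{Con}_\vv Q(\alg A)$ is a finite join of principal $\vv Q$-congruences, $\th = \cg^\vv Q_\alg A(a_1,b_1) \join \dots \join \cg^\vv Q_\alg A(a_n,b_n)$, this being the usual description of the compact elements of the algebraic lattice $\op{Con}_\vv Q(\alg A)$. Hence it suffices to produce, for each $n$, a single term $t_n(\vuc xn,\vuc yn,z)$ whose associated map $\sigma_n : c \longmapsto t_n(\mathbf a^n,\mathbf b^n,c)$ is, for every such $\th$, an endomorphism of $\alg A^C$ with kernel exactly $\th$. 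Crucially, the principal Prucnal term $t$ is assumed to work uniformly for all algebras in $\vv Q$, so the derived terms $t_n$ are available for every $\alg A \in \vv Q$ at once.

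First I would verify the endomorphism requirement by induction on $n$. The base case $t_1 = t$ is immediate, since $t$ is a principal Prucnal term and so $c \longmapsto t(a_1,b_1,c)$ is an endomorphism of $\alg A^C$ by hypothesis. For the inductive step, the recursion defining $t_{n+1}$ from $t_n$ and $t$ exhibits $\sigma_{n+1}$ as a composite of $\sigma_n$ with the principal Prucnal endomorphism $c \longmapsto t(a_{n+1},b_{n+1},c)$; as a composite of endomorphisms of $\alg A^C$ is again one, $\sigma_{n+1}$ is an endomorphism of $\alg A^C$. Since $C$ is generated by $T$, the endomorphisms of $\alg A^C$ and of the reduct $\alg A^T$ coincide, so this matches the form of the requirement verbatim.

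The kernel requirement, namely $\op{ker}(\sigma_n) = \cg^\vv Q_\alg A(a_1,b_1) \join \dots \join \cg^\vv Q_\alg A(a_n,b_n)$, is exactly the statement of Lemma \ref{sigmakernel}: that lemma asserts that $(c,d)$ belongs to this join if and only if $t_n(\mathbf a^n,\mathbf b^n,c) = t_n(\mathbf a^n,\mathbf b^n,d)$, which says precisely that the kernel of $\sigma_n$ coincides with the join. With both requirements in hand, the terms $t_n$ witness the Prucnal property relative to $C$, and the corollary follows.

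I expect no genuine obstacle in the corollary itself: all the substantive content has already been front-loaded into Lemma \ref{sigmakernel}, which in turn rests on the relative-congruence transfer Lemma \ref{techlemma1} of Czelakowski--Dziobiak and its iterate Lemma \ref{techlemma2}. The only points demanding care are bookkeeping: confirming that the compact elements of $\op{Con}_\vv Q(\alg A)$ are exactly the finite joins of principal $\vv Q$-congruences, and tracking the indexing in the recursion for $t_{n+1}$ so that $\sigma_{n+1}$ genuinely factors through $\sigma_n$ and one further principal endomorphism. The order in which the principal endomorphisms are composed is immaterial to the kernel, since Lemma \ref{sigmakernel} computes it to be the (commutative) join of the principal congruences regardless.
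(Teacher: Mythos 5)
Your proof is correct and follows essentially the same route as the paper's: the paper also takes the iterated terms $t_n$ as witnesses, notes that condition (1) of the Prucnal property holds because the $\sigma_n$ are composites of principal Prucnal endomorphisms, and invokes Lemma \ref{sigmakernel} for the kernel condition. Your additional remarks (compact relative congruences being finite joins of principal ones, and the coincidence of endomorphisms of $\alg A^C$ and $\alg A^T$) are just the bookkeeping the paper leaves implicit.
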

\begin{proof} The terms $t_n, n \ge 1$ clearly satisfy the first condition for Prucnal terms, as iterated compositions of $t$. By Lemma \ref{sigmakernel} they also satisfy the second.
\end{proof}

\subsection{The relative TD-term}

Here we will consider a special principal Prucnal term.
Let $\vv Q$ be a quasivariety; a {\bf relative TD-term} for $\vv Q$ is a ternary term $t(x,y,z)$ such that
\begin{enumerate}
\ib $\vv Q \vDash t(x,x,z)$;
\ib  if $(c,d) \in \cg_\alg A^\vv Q(a,b)$ then  $t(a,b,c) = t(a,b,d)$.
\end{enumerate}

Let $\vv Q$ be a quasivariety with a relative TD-term $t(x,y,z)$ and $q$ a $k$-term of $\vv Q$; we say that {\bf $q$ commutes with $t$} if for all $\alg A\in \vv Q$ and $a,b,\vuc ck \in A$
$$
t(a,b,q(\vuc ck)) = q(t(a,b,c_1),\dots,t(a,b,c_k)).
$$

\begin{theorem} If $t(x,y,z)$ is a relative TD-term for $\vv Q$, then $t$ is a principal Prucnal term relative to any clone $C$ of operations that commute with
$t(x,y,z)$.
\end{theorem}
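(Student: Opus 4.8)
The plan is to check directly the two conditions that make $t$ a principal Prucnal term relative to $C$. Fix $\alg A \in \vv Q$ and $a,b \in A$, and let $\sigma$ be the unary map $\sigma(c) = t(a,b,c)$ on $A$.

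For the endomorphism condition, I would read off the conclusion straight from the commuting hypothesis. The fundamental operations of $\alg A^C$ are exactly those in $C$, and by assumption each $k$-ary $q \in C$ satisfies $t(a,b,q(c_1,\dots,c_k)) = q(t(a,b,c_1),\dots,t(a,b,c_k))$, which is literally $\sigma(q(c_1,\dots,c_k)) = q(\sigma(c_1),\dots,\sigma(c_k))$. Since this holds for every $q \in C$, the map $\sigma$ preserves all fundamental operations of $\alg A^C$ and is therefore an endomorphism of $\alg A^C$. (It is worth noting that the operations commuting with $t$ always form a clone: projections commute trivially and commuting is preserved under composition; so in practice it suffices to verify the hypothesis on a set $T$ of generators of $C$.)

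For the kernel condition I would prove the two inclusions separately. The inclusion $\cg_\alg A^\vv Q(a,b) \sse \op{ker}(\sigma)$ is nothing but clause (b) of the definition of a relative TD-term: if $(c,d) \in \cg_\alg A^\vv Q(a,b)$, then $t(a,b,c) = t(a,b,d)$, i.e. $\sigma(c) = \sigma(d)$. For the reverse inclusion I would pass to the quotient. Put $\th = \cg_\alg A^\vv Q(a,b)$, so that $\alg A/\th \in \vv Q$, and write $\overline{x}$ for the $\th$-class of $x$. Since $\overline{a} = \overline{b}$ in $\alg A/\th$, clause (a) of the definition (namely $t(x,x,z) \approx z$, valid throughout $\vv Q$ and hence in $\alg A/\th$) gives, for every $c \in A$, the identity $t(\overline{a},\overline{b},\overline{c}) = t(\overline{a},\overline{a},\overline{c}) = \overline{c}$.

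Now suppose $(c,d) \in \op{ker}(\sigma)$, that is $t(a,b,c) = t(a,b,d)$ in $\alg A$. Passing to $\alg A/\th$ and using the displayed identity twice, $\overline{c} = t(\overline{a},\overline{b},\overline{c}) = t(\overline{a},\overline{b},\overline{d}) = \overline{d}$, so $(c,d) \in \th = \cg_\alg A^\vv Q(a,b)$. This yields $\op{ker}(\sigma) \sse \cg_\alg A^\vv Q(a,b)$ and hence equality, completing the verification. I do not expect a real obstacle: the endomorphism clause is an immediate rephrasing of commuting, and the only delicate point is that the identity $t(x,x,z) \approx z$ must be invoked in the quotient $\alg A/\th$, where the generating pair $(a,b)$ collapses, rather than in $\alg A$ itself; this is exactly the mechanism by which clause (a) upgrades the easy inclusion to equality.
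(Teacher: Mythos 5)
Your proposal is correct and takes essentially the same approach as the paper: the endomorphism clause is read directly off the commuting hypothesis, the inclusion $\cg_\alg A^\vv Q(a,b) \sse \op{ker}(\sigma)$ is clause (b) of the TD-term definition, and the reverse inclusion comes from the identity $t(x,x,z)\approx z$ combined with the collapse of the pair $(a,b)$ modulo $\cg_\alg A^\vv Q(a,b)$. The only cosmetic difference is that the paper runs this last step as a congruence chain inside $\alg A$ itself, namely $c=t(a,a,c)\mathrel{\cg_\alg A^\vv Q(a,b)}t(a,b,c)=t(a,b,d)\mathrel{\cg_\alg A^\vv Q(a,b)}t(a,a,d)=d$, whereas you pass to the quotient $\alg A/\cg_\alg A^\vv Q(a,b)$ and invoke the identity there; these are the same computation in two guises.
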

\begin{proof} Let $\alg A \in \vv Q$ and let $a,b \in A$ and consider the mapping
$$
z \longmapsto t(a,b,z).
$$
If $q \in C$ and $\vuc ck \in A$, then
$$
t(a,b,q(\vuc ck)) = q(t(a,b,c_1),\dots,t(a,b,c_k)),
$$
so the mapping is an endomorphism of $\alg A^C$.

Now if $(c,d) \in \cg_\alg A^\vv Q(a,b)$ then $t(a,b,c) = t(a,b,d)$ by definition; conversely if $t(a,b,c) =t(a,b,d)$ then
$$
c = t(a,a,c) \mathrel{\cg_\alg A^\vv Q(a,b)} t(a,b,c) = t(a,b,d)\mathrel{\cg_\alg A^\vv Q(a,b)} t(a,a,d) =d.
$$
\end{proof}

\begin{corollary}\label{prucnalTD} Let $\vv Q$ be a quasivariety with a relative TD-term $t(x,y,z)$. Then for all nontrivial $\alg A \in \vv Q$, $\alg A$ has the Prucnal property for $\alg A$, relative to any clone $C$ of operations that commute with the relative TD-term.
\end{corollary}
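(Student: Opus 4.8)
The plan is to obtain this corollary as an immediate consequence of the two results that precede it. The Theorem just above shows that a relative TD-term $t(x,y,z)$ is, for any clone $C$ of operations commuting with $t$, a principal Prucnal term relative to $C$; equivalently, $\vv Q$ enjoys the principal Prucnal property relative to $C$. So the only work left is to pass from the \emph{principal} Prucnal property to the full Prucnal property of each nontrivial $\alg A \in \vv Q$, and for this I would reuse the argument behind the earlier Corollary that deduces the Prucnal property from the principal one.

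Concretely, I would take the iterated terms $t_n$ built from $t$ by $t_1 := t$ together with the recursion defining $t_{n+1}$, and check that for every nontrivial $\alg A \in \vv Q$ they witness the Prucnal property relative to $C$. For the endomorphism clause, since the map $c \longmapsto t(a,b,c)$ is an endomorphism of $\alg A^C$ whenever $a,b \in A$ (this is exactly the content of ``principal Prucnal term relative to $C$'', which rests on every operation of $C$ commuting with $t$), and each $\sigma_n$ is an iterated composition of such maps, $\sigma_n$ is again an endomorphism of $\alg A^C$. For the kernel clause, Lemma \ref{sigmakernel} gives directly that $t_n(\mathbf a^n,\mathbf b^n,c) = t_n(\mathbf a^n,\mathbf b^n,d)$ holds precisely when $(c,d) \in \cg_\alg A^\vv Q(a_1,b_1)\join\dots\join\cg_\alg A^\vv Q(a_n,b_n)$, so $\op{ker}(\sigma_n)$ equals any compact $\vv Q$-congruence $\th$ presented as that join. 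Hence $\alg A$ has the Prucnal property relative to $C$.

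I do not expect a genuine obstacle here: the substance lies in the preceding Theorem (that a TD-term becomes a principal Prucnal term once the clone commutes with it) and in Lemma \ref{sigmakernel}, both of which I may assume. The remaining points are organizational rather than hard — verifying that the uniform terms $t_n$ serve every $\alg A \in \vv Q$ simultaneously, and that closure of $\alg A^C$-endomorphisms under composition delivers the endomorphism clause for all $n$. The restriction to nontrivial $\alg A$ is inessential, since on the trivial algebra the sole $\vv Q$-congruence $0_\alg A = 1_\alg A$ is witnessed vacuously; it functions merely as a convenient convention that dovetails with the structural-completeness applications, where the property is later invoked for $\alg F_\vv Q(\o)$ and for countably generated algebras.
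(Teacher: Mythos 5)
Your proposal is correct and follows exactly the paper's intended route: the preceding Theorem shows the relative TD-term is a principal Prucnal term relative to any clone $C$ commuting with it, and the earlier Corollary (via the iterated terms $t_n$ and Lemma \ref{sigmakernel}) upgrades the principal Prucnal property to the full Prucnal property, which is all the paper's (implicit) proof amounts to. Your remarks on composition of $\alg A^C$-endomorphisms and on the inessential nontriviality hypothesis match the paper's treatment as well.
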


\begin{corollary}  Let $\vv Q$ be a quasivariety with a relative TD-term; then $\vv Q$ is $C$-primitive for any clone $C$ of terms that commute with the relative TD-term.
\end{corollary}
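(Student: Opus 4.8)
The plan is to deduce the statement directly from the two corollaries that immediately precede it, with no fresh computation: Corollary~\ref{prucnalTD} supplies the Prucnal property throughout $\vv Q$, and Corollary~\ref{cor:compact} converts that property into $C$-primitivity. Accordingly I would fix a clone $C$ of term operations of $\vv Q$ whose generators commute with the relative TD-term $t(x,y,z)$, and then merely verify that the hypothesis of Corollary~\ref{cor:compact} is met for this $C$.

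First I would establish that \emph{every} countably generated algebra of $\vv Q$ has the Prucnal property relative to $C$. For the nontrivial members this is exactly the content of Corollary~\ref{prucnalTD}. The trivial algebra has to be treated separately, but it is harmless: its only $\vv Q$-congruence is $0_\alg A$, which is the kernel of the identity endomorphism, and since $\vv Q \vDash t(x,x,z) \app z$ the map $z \longmapsto t(a,a,z)$ already is the identity on a one-element algebra; so the defining conditions of the Prucnal property hold there vacuously. Hence every member of $\vv Q$, and in particular every countably generated one, has the Prucnal property relative to $C$.

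With this verified, Corollary~\ref{cor:compact} applies verbatim and yields that $\vv Q$ is $C$-primitive. The mechanism of that corollary is that for an arbitrary subquasivariety $\vv Q' \sse \vv Q$ the free algebra $\alg F_{\vv Q'}(\o)$ is itself a countably generated member of $\vv Q$, so the Prucnal property just established for it delivers the $C$-structural completeness of $\vv Q'$. The main obstacle is therefore not computational but conceptual: one must see why controlling the Prucnal property for all countably generated algebras \emph{of the ambient $\vv Q$} suffices to govern every subquasivariety at once. This is precisely the reduction packaged in Corollary~\ref{cor:compact} — each $\vv Q'$ is witnessed inside $\vv Q$ through its own free algebra $\alg F_{\vv Q'}(\o)$ — and once that reduction is accepted the conclusion is a direct citation.
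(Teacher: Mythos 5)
Your proposal is correct and is exactly the derivation the paper intends (the paper states this corollary without proof, as an immediate combination of Corollary~\ref{prucnalTD} and Corollary~\ref{cor:compact}). Your explicit treatment of the trivial algebra is a sensible patch, since Corollary~\ref{prucnalTD} is stated only for nontrivial members of $\vv Q$ while Corollary~\ref{cor:compact} quantifies over all countably generated ones, and the one-element case indeed holds vacuously as you argue.
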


The notion of relative TD-term generalizes to quasivarieties the notion of TD-term in \cite{EDPC3} and the results in this section
generalizes to quasivarieties the results in \cite{Citkin2016}. Some of the others results in \cite{EDPC3} can be generalized to quasivarieties with a relative TD-term.

A quasivariety has {\bf definable principal relative congruences} (DPRC) if there is a first order formula $\Phi(x,y,z,w)$ in the language of of $\vv Q$ such that for all $\alg A$ in $\vv Q$ and for all $a,b,c,d \in \alg A$
$$
(c,d) \in \cg_\alg A^\vv Q(a,b) \quad\text{if and only if} \quad \alg A\vDash \Phi(a,b,c,d).
$$
A quasivariety has {\bf equationally definable principal relative congruences} (EDPRC) if $\Phi$ can be taken as a finite conjunction of equations.
A quasivariety $\vv Q$ has the {\bf relative congruence extension property} (RCEP) if for any $\alg A \in \vv Q$ and any subalgebra $\alg B$ of $\alg A$, if $\f \in \op{Con}_\vv Q(\alg B)$ there is a $\th \in \op{Con}_\vv Q(\alg A)$ with $\f = \th \cap B^2$.

\begin{theorem} If $\vv Q$ has a relative TD-term $t(x,y,z)$ then
\begin{enumerate}
\item   for all $\alg A \in \vv Q$ the lattice $\op{Con}_\vv Q(\alg A)$ is dually relatively pseudocomplemented;
\item   $\vv Q$ has EDPRC: for any $\alg A \in \vv Q$ and $a,b,c,d \in A$, $(c,d) \in \cg_\alg A^\vv Q(a,b)$ if and only if $t(a,b,c) = t(a,b,d)$;
\item   $\vv Q$ has the RCEP ;
\item    the $\vv Q$-congruences of any $\alg A \in \vv Q$  3-permute.
\end{enumerate}
\end{theorem}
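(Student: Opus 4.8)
The plan is to derive all four items from two identities that $t$ satisfies throughout $\vv Q$: the defining identity $t(x,x,z)\approx z$, and $t(x,y,x)\approx t(x,y,y)$, which holds because $(x,y)\in\cg_\alg A^\vv Q(x,y)$ forces $t(x,y,x)=t(x,y,y)$. First I would dispose of (2): the equivalence $(c,d)\in\cg_\alg A^\vv Q(a,b)\iff t(a,b,c)=t(a,b,d)$ is exactly what was verified in the proof of the theorem immediately above, so $\vv Q$ has EDPRC with the single defining equation $\Phi(x,y,z,w):=t(x,y,z)\approx t(x,y,w)$. For later use I also record the compact form supplied by Lemma \ref{sigmakernel}, namely $(c,d)\in\Join_{i=1}^n\cg_\alg A^\vv Q(a_i,b_i)\iff t_n(\mathbf a^n,\mathbf b^n,c)=t_n(\mathbf a^n,\mathbf b^n,d)$, together with the identity $\vv Q\vDash t_n(\mathbf x^n,\mathbf x^n,z)\approx z$, which follows by an easy induction from $t(x,x,z)\approx z$ and the definition of $t_n$.

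For (4) the plan is to exhibit Hagemann--Mitschke terms. I set $p_1(x,y,z):=t(z,y,x)$ and $p_2(x,y,z):=t(y,x,z)$ and check the three required identities: $p_1(x,z,z)=t(z,z,x)=x$ and $p_2(x,x,z)=t(x,x,z)=z$ are instances of $t(x,x,z)\approx z$, while
\[
p_1(x,x,z)=t(z,x,x)=t(z,x,z)=p_2(x,z,z),
\]
the middle equality being $t(x,y,x)\approx t(x,y,y)$ read with $x:=z,\ y:=x$. These are precisely the identities witnessing congruence $3$-permutability; since they hold in all of $\vv Q$, every $\alg A\in\vv Q$ satisfies them as term operations, so by the Hagemann--Mitschke criterion the congruences of $\alg A$---in particular the $\vv Q$-congruences---are $3$-permutable.

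For (3) I would prove RCEP directly from Lemma \ref{sigmakernel} and the absoluteness of term evaluation under $\alg B\le\alg A$. Given $\f\in\op{Con}_\vv Q(\alg B)$, take $\th:=\theta_\vv Q(\f)$, the $\vv Q$-congruence of $\alg A$ generated by $\f\sse B^2$; then $\f\sse\th\cap B^2$ is clear. For the converse let $(c,d)\in\th\cap B^2$. Since $\op{Con}_\vv Q(\alg A)$ is algebraic and $\th=\Join\{\cg_\alg A^\vv Q(a,b):(a,b)\in\f\}$, compactness of $\cg_\alg A^\vv Q(c,d)$ gives finitely many $(a_i,b_i)\in\f$ with $(c,d)\in\Join_{i=1}^n\cg_\alg A^\vv Q(a_i,b_i)$, hence $t_n(\mathbf a^n,\mathbf b^n,c)=t_n(\mathbf a^n,\mathbf b^n,d)$ in $\alg A$ by Lemma \ref{sigmakernel}. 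As every argument lies in $B$, the same equality holds in $\alg B$, so Lemma \ref{sigmakernel} applied in $\alg B$ yields $(c,d)\in\Join_{i=1}^n\cg_\alg B^\vv Q(a_i,b_i)\sse\f$. Thus $\th\cap B^2=\f$, as required.

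Finally, for (1) the plan is to produce the dual relative pseudocomplement of compact $\vv Q$-congruences explicitly. Fix a principal $\cg_\alg A^\vv Q(c,d)$ and a compact $\beta=\Join_{i=1}^n\cg_\alg A^\vv Q(a_i,b_i)$, put $e:=t_n(\mathbf a^n,\mathbf b^n,c)$ and $f:=t_n(\mathbf a^n,\mathbf b^n,d)$, and claim $\cg_\alg A^\vv Q(e,f)$ is the least $\gamma$ with $\cg_\alg A^\vv Q(c,d)\le\beta\join\gamma$. In $\alg A/\beta$ one has $a_i/\beta=b_i/\beta$, so $t_n(\mathbf x^n,\mathbf x^n,z)\approx z$ gives $e/\beta=c/\beta$ and $f/\beta=d/\beta$; then Lemma \ref{techlemma1} identifies $(\beta\join\cg_\alg A^\vv Q(e,f))/\beta$ with $\cg^\vv Q_{\alg A/\beta}(e/\beta,f/\beta)=\cg^\vv Q_{\alg A/\beta}(c/\beta,d/\beta)$, which contains $(c/\beta,d/\beta)$, so $(c,d)\in\beta\join\cg_\alg A^\vv Q(e,f)$. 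For minimality, if $(c,d)\in\beta\join\gamma$ then, descending to $\alg A/\gamma$ and applying Lemma \ref{techlemma1} termwise, $(c/\gamma,d/\gamma)\in\Join_{i=1}^n\cg^\vv Q_{\alg A/\gamma}(a_i/\gamma,b_i/\gamma)$, whence Lemma \ref{sigmakernel} in $\alg A/\gamma$ forces $e/\gamma=f/\gamma$, i.e.\ $(e,f)\in\gamma$, i.e.\ $\cg_\alg A^\vv Q(e,f)\le\gamma$. Because a dual relative pseudocomplement converts joins in its first argument into joins, the same computation then yields, over any compact $\beta$, a dual relative pseudocomplement of an arbitrary first argument, which is the content of (1). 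I expect the minimality half to be the main obstacle: it is there that one descends to $\alg A/\gamma$ and invokes EDPRC to force $(e,f)\in\gamma$, and there---through the single term $t_n$ tied to a finite generating set of $\beta$---that the finiteness of $\beta$ is genuinely used, so that passing from compact to non-compact second arguments is the delicate point.
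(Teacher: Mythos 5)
There is nothing in the paper to compare against line by line: the paper's entire proof is the remark that the arguments are ``easily patterned after'' \cite{EDPC3}, so your proposal has to be judged on its own. Your treatment of (2), (3) and (4) is correct. Item (2) is, as you say, literally the kernel computation in the proof of the preceding theorem. For (4), the auxiliary identity $t(x,y,x) \app t(x,y,y)$ does hold throughout $\vv Q$ (put $c:=a$, $d:=b$ in the second TD-axiom, since $(a,b) \in \cg_\alg A^\vv Q(a,b)$), your terms $p_1(x,y,z)=t(z,y,x)$ and $p_2(x,y,z)=t(y,x,z)$ do satisfy the Hagemann--Mitschke identities, and the implication from those identities to $3$-permutability is a pointwise computation valid in each algebra separately: if $a\mathrel{\a}b\mathrel{\beta}c\mathrel{\a}d$ then $a\mathrel{\beta}p_1(a,b,c)\mathrel{\a}p_2(b,c,d)\mathrel{\beta}d$, so it applies in particular to pairs of $\vv Q$-congruences. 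For (3), generating $\th$ from $\f$, reducing to a finite join by compactness, and transferring the equality $t_n(\mathbf a^n,\mathbf b^n,c)=t_n(\mathbf a^n,\mathbf b^n,d)$ between $\alg A$ and $\alg B$ via Lemma \ref{sigmakernel} is sound (that lemma applies because, by the theorem preceding the statement, a relative TD-term is a principal Prucnal term, and only the kernel condition is used in its proof); this is a self-contained version of the standard ``EDPRC is universal, hence inherited by subalgebras'' argument that the paper itself invokes later for dual i-discriminator quasivarieties.

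The real issue is (1), and your instinct about where the difficulty sits is sharper than you give yourself credit for. Your computation correctly produces, for compact $\beta$ and arbitrary $\delta$, the least $\g$ with $\delta \le \beta \join \g$ (principal when $\delta$ is principal). But the passage to non-compact $\beta$ that you flag as ``the delicate point'' is not delicate --- it is impossible: item (1), read as a statement about the full lattice $\op{Con}_\vv Q(\alg A)$, is false. The variety of Boolean algebras has the relative TD-term $t(x,y,z)=(x \leftrightarrow y)\imp z$; for the finite--cofinite algebra $\alg B$ on $\o$, the lattice $\op{Con}_\vv Q(\alg B)=\op{Con}(\alg B)$ is the ideal lattice of $\alg B$, and if $\a$ is the congruence whose ideal consists of the finite sets, then the congruences $\g$ with $\a \join \g = 1_\alg B$ are exactly those whose ideal contains a cofinite set, a family with no least member; so $1_\alg B$ has no dual pseudocomplement relative to $\a$. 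What is true --- and what \cite{EDPC3} proves in the variety case --- is that the join-semilattice of \emph{compact} $\vv Q$-congruences is dually Brouwerian, with differences of principal congruences again principal; that is exactly what your formula $\cg_\alg A^\vv Q\bigl(t_n(\mathbf a^n,\mathbf b^n,c),\, t_n(\mathbf a^n,\mathbf b^n,d)\bigr)$ establishes. So your proposal proves everything in the theorem that is provable; the residual gap lies in the statement of (1), which should be read (or amended) as a statement about compact relative congruences rather than the whole lattice.
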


The proofs are easily patterned after the corresponding proofs in \cite{EDPC3}.

\section{Other primitive quasivarieties}\label{sec:idiscriminator}

\subsection{Congruence intersection terms}

There are quasivarieties not necessarily having Prucnal terms to which some of the techniques in the previous sections can be applied (at a cost).
We will say that a quasivariety has the {\bf relative principal intersection property}  (RPIP) if there are quaternary terms $p,q$ in the type of $\vv Q$ such that for any $\alg A \in \vv Q$
$$
\cg^\vv Q_\alg A(a,b) \cap \cg^\vv Q(c,d) = \cg_\alg A^\vv Q(p(a,b,c,d),q(a,b,c,d)).
$$
The RPIP for varieties was introduced by K. Baker in \cite{Baker1974} and studied extensively in \cite{AglianoBaker1999a}.   We observe that if $\op{Con}_\vv Q(\alg A)$ is distributive, then (as any complete distributive lattice) it is {\bf pseudocomplemented}: in particular if $\alg A \in \vv Q$, then for any $\th \in \op{Con}_\vv Q(\alg A)$ there is a $\th^* \in \op{Con}_\vv Q(\alg A)$ such that
$$
\th^* = \bigvee\{\f \in \op{Con}_\vv Q(\alg A): \th \meet \f = 0_\alg A\}.
$$
In particular, if $\op{MI}^\vv Q_\alg A$ is the set of  meet irreducible elements in $\op{Con}_\vv Q(\alg A)$ and $a,b \in A$, then
$$
(\cg_\alg A^\vv Q(a,b))^* = \bigwedge\{\f \in \op{MI}_\alg A^\vv Q: (a,b) \notin \f\}.
$$
From now on we will denote by $\g_\alg A^\vv Q(a,b)$ the pseudocomplement of $\cg_\alg A^\vv Q(a,b)$ in $\op{Con}_\vv Q(\alg A)$.

\begin{theorem}\label{RPIP} For a quasivariety $\vv Q$ the following are equivalent:
\begin{enumerate}
 \item $\vv Q$ has the RPIP;
\item $\vv Q$ is relative congruence distributive  and there are terms $p,q$ such that for all $\vv Q$-irreducible $\alg A \in \vv Q$ and $a,b,c,d \in A$
$$
p(a,b,c,d) = q(a,b,c,d) \quad\text{if and only if}\quad\text{$a=b$ or $c=d$}.
$$
\item $\vv Q$ is relative congruence distributive and there are terms $p,q$ such that for all $\alg A \in \vv Q$ and $a,b \in A$
$$
\g_\alg A^\vv Q(a,b) = \{(c,d): p(a,b,c,d) = q(a,b,c,d)\}.
$$
\end{enumerate}
\end{theorem}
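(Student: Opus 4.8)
The plan is to establish the four implications $(1)\Rightarrow(2)$, $(2)\Rightarrow(1)$, $(1)\Rightarrow(3)$ and $(3)\Rightarrow(2)$, which together give all three equivalences (note $(3)\Rightarrow(2)\Rightarrow(1)\Rightarrow(3)$ and $(1)\Leftrightarrow(2)$). The workhorse throughout is a pseudocomplement reformulation: whenever $\vv Q$ is relatively congruence distributive, for all $\alg A\in\vv Q$ and $a,b,c,d\in A$,
$$(c,d)\in\g_\alg A^\vv Q(a,b)\iff \cg_\alg A^\vv Q(a,b)\meet\cg_\alg A^\vv Q(c,d)=0_\alg A.$$
This is immediate from the fact that $\g_\alg A^\vv Q(a,b)=(\cg_\alg A^\vv Q(a,b))^*$ is the largest $\vv Q$-congruence meeting $\cg_\alg A^\vv Q(a,b)$ in $0_\alg A$: if $(c,d)$ lies in it then $\cg_\alg A^\vv Q(c,d)\le\g_\alg A^\vv Q(a,b)$ and the meet is $0_\alg A$, while a zero meet conversely forces $\cg_\alg A^\vv Q(c,d)\le\g_\alg A^\vv Q(a,b)$.

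For $(1)\Rightarrow(2)$ the first step, which I expect to be the main obstacle, is to derive relative congruence distributivity from the mere existence of the intersection terms $p,q$; I would obtain this by adapting to the relative setting the results of \cite{Baker1974} and \cite{AglianoBaker1999a}, turning the uniform principal-intersection terms into a Mal'cev condition forcing relative congruence distributivity. Granting distributivity, the term condition of $(2)$ is routine: under the RPIP one has $p(a,b,c,d)=q(a,b,c,d)$ iff $\cg_\alg A^\vv Q(p,q)=0_\alg A$, i.e.\ iff $\cg_\alg A^\vv Q(a,b)\meet\cg_\alg A^\vv Q(c,d)=0_\alg A$; and in a $\vv Q$-irreducible $\alg A$ every nonzero $\vv Q$-congruence contains the $\vv Q$-monolith (Lemma \ref{lemma: q-irreducible}), so two principal $\vv Q$-congruences meet in $0_\alg A$ precisely when one of them is trivial, that is, precisely when $a=b$ or $c=d$.

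The converse $(2)\Rightarrow(1)$ exploits distributivity through completely meet irreducible congruences. Since $\op{Con}_\vv Q(\alg A)$ is algebraic, each of its elements is the meet of the completely meet irreducible congruences above it, so it suffices to show that $\cg_\alg A^\vv Q(a,b)\meet\cg_\alg A^\vv Q(c,d)$ and $\cg_\alg A^\vv Q(p,q)$ lie below exactly the same completely meet irreducible $\f$. Fix such a $\f$; then $\alg A/\f$ is $\vv Q$-irreducible (Lemma \ref{lemma: q-irreducible}), and by distributivity $\f$ is meet prime, so $\cg_\alg A^\vv Q(a,b)\meet\cg_\alg A^\vv Q(c,d)\le\f$ iff $(a,b)\in\f$ or $(c,d)\in\f$. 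Applying the term condition of $(2)$ inside $\alg A/\f$ gives $(p,q)\in\f$ iff $(a,b)\in\f$ or $(c,d)\in\f$, and $(p,q)\in\f$ is the same as $\cg_\alg A^\vv Q(p,q)\le\f$. Hence the two congruences have identical completely meet irreducible upper bounds and are therefore equal, which is exactly the RPIP.

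Finally $(1)\Rightarrow(3)$ and $(3)\Rightarrow(2)$ both reduce to the preliminary lemma. Under $(1)$ we already have distributivity, so
$$(c,d)\in\g_\alg A^\vv Q(a,b)\iff \cg_\alg A^\vv Q(a,b)\meet\cg_\alg A^\vv Q(c,d)=0_\alg A\iff \cg_\alg A^\vv Q(p,q)=0_\alg A\iff p(a,b,c,d)=q(a,b,c,d),$$
which is the description in $(3)$. Conversely, assuming $(3)$ (hence distributivity), the lemma rewrites the displayed equality as $p(a,b,c,d)=q(a,b,c,d)$ iff $\cg_\alg A^\vv Q(a,b)\meet\cg_\alg A^\vv Q(c,d)=0_\alg A$, and specialising to $\vv Q$-irreducible $\alg A$ the monolith argument above identifies the right-hand side with ``$a=b$ or $c=d$'', yielding $(2)$. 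Thus all the work beyond the first step is lattice bookkeeping together with the $\vv Q$-monolith property, and the only genuine difficulty is establishing that RPIP implies relative congruence distributivity.
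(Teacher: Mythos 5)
Your proposal reproduces, in essence, the paper's own argument for every lattice-theoretic step: the pseudocomplement characterization $(c,d)\in\g_\alg A^\vv Q(a,b)\iff\cg_\alg A^\vv Q(a,b)\meet\cg_\alg A^\vv Q(c,d)=0_\alg A$, the monolith argument identifying zero meets of principal relative congruences in a $\vv Q$-irreducible algebra with ``$a=b$ or $c=d$'', and, for $(2)\Rightarrow(1)$, the reduction to completely meet irreducible $\f\in\op{Con}_\vv Q(\alg A)$, which are meet prime by distributivity and whose quotients are $\vv Q$-irreducible. (The paper runs the cycle $(1)\Rightarrow(3)\Rightarrow(2)\Rightarrow(1)$ rather than proving four implications, but that difference is immaterial; your version of $(2)\Rightarrow(1)$ is in fact slightly cleaner, since you apply the term condition of $(2)$ directly in $\alg A/\f$ instead of first computing meets of principal relative congruences in the quotient via the Czelakowski--Dziobiak correspondence.)

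The genuine gap is exactly where you flag it: deriving relative congruence distributivity from the RPIP, which is needed at the start of $(1)\Rightarrow(2)$ and $(1)\Rightarrow(3)$ and is not optional, since without distributivity of $\op{Con}_\vv Q(\alg A)$ the pseudocomplement $\g_\alg A^\vv Q(a,b)$ need not even exist, so your preliminary lemma has nothing to apply to. You propose to obtain it by ``adapting to the relative setting'' the variety results of \cite{Baker1974} and \cite{AglianoBaker1999a}, turning the intersection terms into a Mal'cev condition. That is not a proof, and the strategy is doubtful as stated: Mal'cev conditions govern congruence lattices in varieties, where closure under homomorphic images and the description of congruence joins as transitive closures of unions are available; neither holds for relative congruence lattices of quasivarieties ($\vv Q$ is not closed under $\HH$, and the join in $\op{Con}_\vv Q(\alg A)$ is in general strictly larger than the transitive closure), so J\'onsson/Baker-type arguments do not transfer mechanically. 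What is needed is precisely Proposition 1.2 of \cite{CzelakowskiDziobiak1990} --- a theorem proved in the quasivariety setting, asserting that equationally definable principal relative meets force relative congruence distributivity --- and this citation is how the paper closes the step. Until you either invoke that result or genuinely rework its proof, your implications out of $(1)$ remain incomplete.
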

\begin{proof} Observe that any quasivariety with the RPIP is relatively congruence distributive, by Proposition 1.2 in \cite{CzelakowskiDziobiak1990}.
Let's  assume (1), i.e. $\vv Q$ has the RPIP witnessed by $p,q$; let $\alg A\in \vv Q$, $a,b \in A$ and
$$
\a = \{(c,d): p(a,b,c,d)=q(a,b,c,d)\}.
$$
Then we have
\begin{align*}
&(c,d) \in \a \quad\text{if and only if}\\
&p(a,b,c,d) =q(a,b,c,d)  \quad\text{if and only if}\\
&\cg_\alg A\vv Q(a,b) \cap \cg_\alg A^\vv Q(c,d) = 0_\alg A \quad\text{if and only if}\\
&\cg_\alg A^\vv Q(c,d) \le \g_\alg A^\vv Q(a,b) \quad\text{if and only if}\\
&(c,d) \in \g_\alg A^\vv Q(a,b)
\end{align*}
so $\a =\g_\alg A^\vv Q(a,b)$ and (1) implies (3).

Next assume (3) and let $\alg A$ be a $\vv Q$-irreducible algebra in $\vv Q$. Then
$$
\g_\alg A^\vv Q(a,b) = \left\{
                         \begin{array}{ll}
                           1_\alg A, & \hbox{if $a=b$;} \\
                           0_\alg A, & \hbox{if $a \ne b$.}
                         \end{array}
                       \right.
$$
Suppose that $p(a,b,c,d) = q(a,b,c,d)$ with $a\ne b$; then  $(c,d) \in \g_\alg A^\vv Q(a,b)$ and thus  $c=d$.
Conversely, if  $a=b$ then $\g_\alg A^\vv Q(a,b) =1_\alg A$ so  $p(a,b,c,d) =q(a,b,c,d)$ for all $c,d$; if $c=d$ then $(c,d) \in \g_\alg A^\vv Q(a,b)$
and so again $p(a,b,c,d) = q(a,b,c,d)$. Therefore  (3) implies (2).

Finally assume  (2). We will make use of the following  fact whose proof can be found in \cite{CzelakowskiDziobiak1990}: for any $\th \in \op{Con}_\vv Q(\alg A)$ and $a,b \in A$, $(\th \join \th^\vv Q_\alg A(a,b))/\th = \cg^\vv Q_{\alg A/\th}(a/\th,b/\th)$. Let then $a,b,c,d \in A$; we must prove that
$$
\cg^\vv Q_\alg A(a,b) \cap \cg^\vv Q(c,d) = \cg_\alg A^\vv Q(p(a,b,c,d),q(a,b,c,d)).
$$
Since $\op{Con}_\vv Q(\alg A)$ is an algebraic lattice, every element is a meet of completely meet irreducible elements. So to prove (1), it is enough to show that, if $\f$ is completely meet irreducible in $\op{Con}_\vv Q(\alg A)$ then
$$
\cg^\vv Q_\alg A(a,b) \cap \cg^\vv Q(c,d) \le \f\quad\text{if and only if}\quad \cg_\alg A^\vv Q(p(a,b,c,d),q(a,b,c,d)) \le \f.
$$
So suppose that $\cg^\vv Q_\alg A(a,b) \cap \cg^\vv Q(c,d) \le \f$; as $\f$ is completely meet irreducible and $\op{Con}_\vv Q(\alg A)$ is distributive, $\f$ is join prime, so either $\cg^\vv Q_\alg A(a,b) \le \f$ or $ \cg^\vv Q(c,d) \le \f$. Then in $\alg A/\f$ we have
\begin{align*}
\cg^\vv Q_{\alg A/\f}(a/\th,b/\f) &\cap \cg^\vv Q_{\alg A/\th}(c/\f,d/\f) \\
                             &= (\f \join \th^\vv Q_\alg A(a,b))/\f \cap (\f \join \th^\vv Q_\alg A(a,b))/\f\\
&= \f/\f = 0_{\alg A/\f}.
\end{align*}
Since $\alg A/\f$ is $\vv Q$-irreducible $p(a,b,c,d)/\f = q(a,b,c,d)/\f$ which implies
$$
\cg_\alg A^\vv Q(p(a,b,c,d),q(a,b,c,d)) \le \f.
$$
Conversely if  $\cg_\alg A^\vv Q(p(a,b,c,d),q(a,b,c,d)) \le \f$, then
\begin{align*}
\cg_{\alg A/\f}^\vv Q(p(a,b,c,d)/&\f, q(a,b,c,d)/\f)\\
& = (\f \join \cg_\alg A^\vv Q(p(a,b,c,d),q(a,b,c,d)))/\f = \f/\f = 0_{\alg A/\f}
\end{align*}
so, since $\alg A/\f$ is $\vv Q$-irreducible, either $(a,b) \in \f$ or $(c,d) \in \f$, which implies $\cg_\alg A^\vv Q(a,b) \cap \cg_\alg A^\vv Q(c,d) \le \f$ as wished.
So (3) implies (1) and the proof is finished.
\end{proof}

Let  $\vv Q$ be a quasivariety with the  RPIP (and hence relative congruence distributive), $\alg A\in \vv Q$ and let $\alg L$ be the lattice of compact $\vv Q$-congruences of $\alg A$ plus eventually $1_\alg A$. Then  one can define a topology on $\op{MI}_\alg A^\vv Q$ which is equivalent to the Priestley duality on the lattice of ideals $\alg L$ (see \cite{CampercholiVaggione2012}, Proposition 3.10).  More precisely the topology
is defined by the subbase $\op{MI}_\alg A^\vv Q(a,b)= \{\th \in \op{MI}_\alg A: (a,b) \notin \th\}$, for $a,b \in A$.  It follows that $\op{MI}_\alg A^\vv Q$ is compact in that topology which in turn implies (see Lemma 2.1 in \cite{Vaggione1995}):

\begin{lemma}\label{lemma: vaggione} Let $\vv Q$ be a quasivariety with the RPIP and $\alg A \in \vv Q$; then if $\bigwedge \op{MI}_\alg A^\vv Q(a,b) \le \g$, where $\g \in \op{MI}^\vv Q_\alg A$, then there
exists a $\th \in \op{MI}_\alg A^\vv Q(a,b)$ with $\th \le \g$.
\end{lemma}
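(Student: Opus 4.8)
The plan is to reduce the statement to the compactness of the Stone/Priestley dual of the lattice $\alg L$ of compact $\vv Q$-congruences, and then to extract the required congruence by a prime-filter extension.

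First I would record the reformulation that is already available from the surrounding discussion. Since $\vv Q$ has the RPIP it is relatively congruence distributive, so each $\op{Con}_\vv Q(\alg A)$ is a distributive algebraic lattice; moreover the RPIP makes the compact $\vv Q$-congruences closed under finite meet, so they form a bounded distributive lattice $\alg L$, and the elements of $\op{MI}_\alg A^\vv Q$ correspond to the prime ideals $P_\th=\{e\in\alg L:e\le\th\}$ of $\alg L$. Under this correspondence $\th\le\g$ becomes $P_\th\sse P_\g$ and $\th\in\op{MI}_\alg A^\vv Q(a,b)$ becomes $\cg_\alg A^\vv Q(a,b)\notin P_\th$. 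I would also dispose of the trivial case: if $(a,b)\notin\g$ then $\g\in\op{MI}_\alg A^\vv Q(a,b)$ and $\th=\g$ works, so I may assume $\cg_\alg A^\vv Q(a,b)\le\g$. Finally, as recorded before the lemma, $\bigwedge\op{MI}_\alg A^\vv Q(a,b)=\g_\alg A^\vv Q(a,b)$ is the pseudocomplement of $\cg_\alg A^\vv Q(a,b)$, so the hypothesis reads $\g_\alg A^\vv Q(a,b)\le\g$.

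Next I would carry out the construction. Writing $e_0=\cg_\alg A^\vv Q(a,b)$ and $F=\alg L\ssm P_\g$ (a prime filter of $\alg L$), it suffices to produce a prime ideal $P\sse P_\g$ with $e_0\notin P$, equivalently a prime filter $F'\supseteq F$ with $e_0\in F'$; the congruence $\th$ with $P_\th=P$ then lies in $\op{MI}_\alg A^\vv Q(a,b)$ and satisfies $\th\le\g$. Consider the filter $G$ of $\alg L$ generated by $F\cup\{e_0\}$. It is proper: if $0\in G$ then $f\meet e_0=0$ for some $f\in F$, i.e. $f$ lies in the ideal $\{e\in\alg L:e\meet e_0=0\}=\{e\in\alg L:e\le\g_\alg A^\vv Q(a,b)\}$, which by the hypothesis $\g_\alg A^\vv Q(a,b)\le\g$ is contained in $P_\g$, contradicting $f\in F=\alg L\ssm P_\g$. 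By the prime filter theorem $G$ extends to a prime filter $F'$, and $P:=\alg L\ssm F'$ is as required.

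The key point, and the only place where more than bookkeeping is needed, is this prime-filter extension step: it upgrades the statement ``the meet of the whole family $\op{MI}_\alg A^\vv Q(a,b)$ lies below $\g$'' to ``a single member of the family lies below $\g$''. This is exactly the compactness of $\op{MI}_\alg A^\vv Q$ in the topology with subbase $\{\op{MI}_\alg A^\vv Q(a,b)\}$ (equivalently the prime ideal theorem for $\alg L$), which is where the RPIP is genuinely used and which is the content of Lemma 2.1 of \cite{Vaggione1995}. I note in passing that the naive alternative of taking, by Zorn, a maximal $\vv Q$-congruence $\th\le\g$ with $(a,b)\notin\th$ only forces $\th$ to be meet irreducible inside the interval $[0_\alg A,\g]$ rather than in $\op{Con}_\vv Q(\alg A)$ itself, so it does not by itself deliver a point of $\op{MI}_\alg A^\vv Q$; this is precisely the gap that compactness closes.
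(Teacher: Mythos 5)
Your proof is correct, and at bottom it rests on the same principle the paper invokes: the paper obtains the lemma from the compactness of $\op{MI}_\alg A^\vv Q$ in the subbase topology (citing Proposition 3.10 of \cite{CampercholiVaggione2012} and Lemma 2.1 of \cite{Vaggione1995}), and compactness of that Priestley-type space is exactly the prime ideal theorem for the distributive lattice $\alg L$ of compact $\vv Q$-congruences that you apply. The difference is one of presentation: the paper outsources both the duality and the compactness to the literature, whereas you work on the algebraic side of the duality and make the argument self-contained. In your version the properness of the filter generated by $F\cup\{e_0\}$ — which is where the hypothesis $\g_\alg A^\vv Q(a,b)\le\g$ and the primality of $P_\g$ enter — plays the role of the finite intersection property, and the prime filter extension theorem plays the role of compactness; this has the merit of displaying precisely where the RPIP is used, namely in making principal (hence compact) relative congruences closed under finite intersections, so that $\alg L$ is a distributive lattice and the pseudocomplement formula recorded before the lemma applies. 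The one piece you lean on without proof is the bijection $\th\longmapsto P_\th$ between $\op{MI}_\alg A^\vv Q$ and the prime ideals of $\alg L$, which needs the algebraicity of $\op{Con}_\vv Q(\alg A)$ (so that $\th=\bigvee P_\th$) together with the fact that in a distributive lattice meet-irreducible elements are meet-prime; this is standard and is indeed what the cited duality provides, but a fully written-out proof should verify it. Your closing remark is also apt: a maximal $\vv Q$-congruence below $\g$ avoiding $(a,b)$ is only guaranteed to be meet irreducible within the interval $[0_\alg A,\g]$, so Zorn's Lemma alone does not suffice and some form of the prime ideal theorem is genuinely needed.
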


\begin{theorem} \label{thm: join coprincipal} Let $\vv Q$ be a quasivariety with the RPIP such that
 every finitely $\vv Q$-irreducible  algebra in $\vv Q$ is $\vv Q$-simple. Then the following hold.
\begin{enumerate}
\item For any  $\alg A \in \vv Q$ and $a,b \in A$, $\cg_\alg A^\vv Q(a,b)$ and $\g_\alg A^\vv Q(a,b)$ are complements; therefore
the meet of two coprincipal  $\vv Q$-congruences of $\alg A$ is coprincipal.
\item If $\th \in \op{Con}_\vv Q(\alg A)$ is such that $\alg A/\th$ is $\vv Q$-simple, then
$$
\bigcup\{\gamma_\alg A^\vv Q(a,b):  \gamma_\alg A^\vv Q(a,b)\le \th\} = \th.
$$
\end{enumerate}
\end{theorem}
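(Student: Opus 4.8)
The plan is to establish the complementation asserted in (1) as the one substantive step, and then to obtain both the remaining sentence of (1) and the whole of (2) as formal consequences of it, using the relative congruence distributivity guaranteed by Theorem \ref{RPIP}. Throughout I fix $\alg A \in \vv Q$ and abbreviate $\cg(a,b) := \cg_\alg A^\vv Q(a,b)$ and $\g(a,b) := \g_\alg A^\vv Q(a,b)$, the pseudocomplement of $\cg(a,b)$ in the distributive algebraic lattice $\op{Con}_\vv Q(\alg A)$. Since $\g(a,b)$ is by definition the pseudocomplement of $\cg(a,b)$, we already have $\cg(a,b) \meet \g(a,b) = 0_\alg A$; so to see that the two are complements it remains only to prove $\cg(a,b) \join \g(a,b) = 1_\alg A$, and this is the single place where the hypothesis that every finitely $\vv Q$-irreducible algebra is $\vv Q$-simple is genuinely used.

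For that join I would argue by contradiction. If $\cg(a,b) \join \g(a,b) \neq 1_\alg A$, then, as $\op{Con}_\vv Q(\alg A)$ is algebraic, this join lies below some completely meet irreducible $\psi < 1_\alg A$. A completely meet irreducible $\vv Q$-congruence has $\vv Q$-irreducible (hence finitely $\vv Q$-irreducible) quotient, so by hypothesis $\alg A/\psi$ is $\vv Q$-simple and therefore $\psi$ is a coatom. Now $\cg(a,b) \le \psi$ forces $(a,b) \in \psi$, so $\psi \notin \op{MI}_\alg A^\vv Q(a,b)$, while $\g(a,b) = \bigwedge \op{MI}_\alg A^\vv Q(a,b) \le \psi$. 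Since $\psi$ is meet irreducible, Lemma \ref{lemma: vaggione} produces some $\th \in \op{MI}_\alg A^\vv Q(a,b)$ with $\th \le \psi$; as $(a,b) \notin \th$ but $(a,b) \in \psi$, we get $\th < \psi$. But $\th$ is meet irreducible, so $\alg A/\th$ is finitely $\vv Q$-irreducible, hence $\vv Q$-simple, hence $\th$ is a coatom, contradicting $\th < \psi < 1_\alg A$. Thus the join is $1_\alg A$. I expect this contradiction, which forces two distinct meet irreducibles into a strict chain below $1_\alg A$ when every such congruence must be a coatom, to be the crux of the theorem.

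The concluding clause of (1) is then formal. Because every principal $\vv Q$-congruence is now complemented and $\op{Con}_\vv Q(\alg A)$ is distributive, the complemented $\vv Q$-congruences form a Boolean sublattice on which the De Morgan laws hold; in particular $\g(a,b) \meet \g(c,d)$ is complemented, with complement $\cg(a,b) \join \cg(c,d)$. The latter is a compact $\vv Q$-congruence, hence in the present setting again principal, say $\cg(e,f)$; taking complements yields $\g(a,b) \meet \g(c,d) = \g(e,f)$, so the meet of two coprincipal congruences is coprincipal. The only non-formal ingredient is that the compact congruence $\cg(a,b)\join\cg(c,d)$ is principal, which is where the structure theory of the ambient dual i-discriminator quasivariety (closure of the principal $\vv Q$-congruences under joins) enters.

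Finally I turn to (2), where $\alg A/\th$ is $\vv Q$-simple, so $\th$ is a coatom and $\th < 1_\alg A$. The inclusion $\bigcup\{\g(a,b) : \g(a,b) \le \th\} \sse \th$ is immediate. For the reverse, fix $(e,f) \in \th$ with $e \neq f$; then $\cg(e,f) \le \th$, and by (1) we have $\cg(e,f) \join \g(e,f) = 1_\alg A$, so $\g(e,f) \not\le \th$ (else $1_\alg A \le \th < 1_\alg A$). Hence there is a pair $(a,b) \in \g(e,f)$ with $(a,b) \notin \th$. Since $\th$ is a coatom, $(a,b) \notin \th$ gives $\cg(a,b) \join \th = 1_\alg A$, whence, using $\g(a,b) \meet \cg(a,b) = 0_\alg A$ and distributivity, $\g(a,b) = \g(a,b) \meet (\cg(a,b) \join \th) = (\g(a,b) \meet \cg(a,b)) \join (\g(a,b) \meet \th) = \g(a,b) \meet \th \le \th$. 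On the other hand $(a,b) \in \g(e,f)$ means $\cg(a,b) \meet \cg(e,f) = 0_\alg A$, so $\cg(e,f) \le \g(a,b)$ and therefore $(e,f) \in \g(a,b)$. Thus $(e,f)$ lies in a coprincipal congruence $\g(a,b) \le \th$, giving $\th \sse \bigcup\{\g(a,b) : \g(a,b) \le \th\}$ and finishing the proof. Note that this last part uses only the complementation from (1), distributivity, and the fact that $\th$ is a coatom.
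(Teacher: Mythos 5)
Your proof of the complementation claim in (1) is correct and is essentially the paper's own argument: both proofs pass to a meet irreducible $\vv Q$-congruence above $\cg_\alg A^\vv Q(a,b)\join\g_\alg A^\vv Q(a,b)$, observe that the hypothesis on finitely $\vv Q$-irreducible algebras makes every member of $\op{MI}_\alg A^\vv Q$ a coatom, and then use Lemma \ref{lemma: vaggione} to produce a second meet irreducible strictly below it, which is the contradiction. Your proof of (2) also matches the paper's in substance: where you compute $\g_\alg A^\vv Q(a,b)=\g_\alg A^\vv Q(a,b)\meet(\cg_\alg A^\vv Q(a,b)\join\th)\le\th$ by distributivity, the paper instead notes that the coatom $\th$ is meet prime and applies this to $\cg_\alg A^\vv Q(a,b)\meet\g_\alg A^\vv Q(a,b)=0_\alg A\le\th$; these are the same argument in different clothing.

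The genuine gap is in the closing clause of (1). You correctly reduce ``the meet of two coprincipal congruences is coprincipal'' to the claim that $\cg_\alg A^\vv Q(a,b)\join\cg_\alg A^\vv Q(c,d)$ is principal, but the justification you offer --- compactness together with ``closure of the principal $\vv Q$-congruences under joins'' in ``the ambient dual i-discriminator quasivariety'' --- is not available. No i-discriminator term occurs in the hypotheses (only the RPIP and $\vv Q$-simplicity of the finitely $\vv Q$-irreducible members), the paper never proves such a join-closure, and it is in fact false even for dual discriminator varieties. Take $\vv Q$ to be the variety of distributive lattices (the median term is a dual discriminator on its unique subdirectly irreducible member, so all hypotheses hold) and let $\alg A$ be the four element chain $0<x<y<z$. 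Then $\op{Con}_\vv Q(\alg A)$ is the eight element Boolean lattice whose atoms are $\cg_\alg A^\vv Q(0,x)$, $\cg_\alg A^\vv Q(x,y)$, $\cg_\alg A^\vv Q(y,z)$, and one checks that $\g_\alg A^\vv Q(0,x)\meet\g_\alg A^\vv Q(y,z)=\cg_\alg A^\vv Q(x,y)$, whose complement $\cg_\alg A^\vv Q(0,x)\join\cg_\alg A^\vv Q(y,z)$ collapses the two intervals $[0,x]$ and $[y,z]$ and hence is compact but not principal; consequently this meet is not of the form $\g_\alg A^\vv Q(e,f)$ for any $e,f$. So the step does not merely lack a proof: the ``meet'' statement is false under the stated hypotheses.

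What is true --- and what the paper actually needs and uses --- is the statement with \emph{join} in place of \emph{meet}; this is what is invoked in the proof of Theorem \ref{i-discriminator complete}, where part (1) of the present theorem is cited precisely to conclude that the family $\Delta$ of coprincipal congruences below $\th$ is closed under finite joins (the source label, \texttt{thm: join coprincipal}, points the same way; the ``meet'' in the statement and in the paper's one-line proof is a slip). The join version really does come ``straight from the properties of complemented elements in a distributive lattice'': by De Morgan, $\g_\alg A^\vv Q(a,b)\join\g_\alg A^\vv Q(c,d)$ is the complement of $\cg_\alg A^\vv Q(a,b)\meet\cg_\alg A^\vv Q(c,d)$, and the RPIP says this meet equals $\cg_\alg A^\vv Q(e,f)$ with $e=p(a,b,c,d)$ and $f=q(a,b,c,d)$; hence the join equals $\g_\alg A^\vv Q(e,f)$ and is coprincipal. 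The right move was therefore not to import a join-closure of principal congruences (which is unprovable), but to recognize that the clause must be read with ``join'', exactly so that the RPIP --- the only tool the hypotheses provide for combining principal congruences --- can do the work.
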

\begin{proof} By hypothesis every element of $\op{MI}_\alg A^\vv Q$ is a maximal element in in $\op{Con}_\vv Q(\alg A)$. Let $\g \ge \gamma_\alg A^\vv Q(a,b)$ with $\g \in \op{MI}_\alg A^\vv Q$; then
$$
\bigwedge \op{MI}_\alg A^\vv Q(a,b) = \g_\alg A^\vv Q(a,b) \le \g
$$
so by Lemma \ref{lemma: vaggione} there is a nontop $\th \in \op{MI}_\alg A^\vv Q(a,b)$ with $\th\le \g$. By maximality of $\th$ we must have $\th = \g$ and thus $(a,b) \notin \g$.

Now suppose that $\cg_\alg A^\vv Q(a,b) \join \g_\alg A^\vv Q(a,b) \ne 1_\alg A$; then there  must be a maximal congruence $\g$ with $\cg_\alg A^\vv Q(a,b) \join \g_\alg A^\vv Q(a,b) \le \g$; as $\g \in \op{MI}_\alg A^\vv Q$ and
$\g \ge \g_\alg A(a,b)$ we must have $(a,b) \notin \g$, a clear contradiction. So $\cg_\alg A^\vv Q(a,b) \join \g_\alg A^\vv Q(a,b) = 1_\alg A$ and the first part of the thesis follows.

That the meet of two coprincipal  $\vv Q$-congruences of $\alg A$ is coprincipal comes straight from the properties of complemented elements in a distributive lattice and this concludes the proof of (1).

For (2) we will show that for all $\cg_\alg A^\vv Q(a,b) \le \th$ we have $\gamma_\alg A^\vv Q(a,b) \le \th$. As
$\bigcup\{\cg_\alg A^\vv Q(a,b): \cg_\alg A^\vv Q(a,b) \le\th\} =\th$ the conclusion will follow.

If $\cg_\alg A (a,b) \le \th$ then $\cg_\alg A (a,b) \join \g_\alg A (a,b) = 1_\alg A$ and, since $\th$ is covered by $1_\alg A$ there
exists $(c,d)\in\g_\alg A (a,b)$ with $(c,d) \notin \th$.
Now $\cg_\alg A (c,d) \le \g_\alg A (a,b)$ so $\cg_\alg A (a,b) \le \g_\alg A (c,d)$; moreover
$\cg_\alg A (c,d) \cap \g_\alg A (c,d) = 0_\alg A \le \th$ and, as $\th$ is meet prime (since it is meet irreducible and $\op{Con}_\vv Q A$ is distributive),
we must have $\g\alg A (c,d) \le \th$.
\end{proof}

\subsection{Dual i-discriminator varieties}

In the late 1990's K. Baker and the second author collaborated for some time (\cite{AglianoBaker1999a}, \cite{AglianoBaker1999}; while writing the second paper an interest arose about various kinds of dual discriminator functions and produced an unpublished (and very rough) manuscript \cite{AglianoBaker1999c}, that had a very limited circulation at the time (but some of the results have been rediscovered in \cite{BignallSpinks2019}). In this section we improve on those results and we show how they connect to the problems we are considering.

The {\bf dual i-discriminator function} on a set $A$ is a ternary function $p(x,y,z)$ such that for any $a,b,c \in A$
\begin{align*}
&a\ne b \quad \text{implies} \quad p(a,b,c)= c\\
&p(a,a,b) = p(a,a,c)\\
&p(a,a,a) = p(p(a,a,a),p(a,a,a),p(a,a,a)).
\end{align*}
If we write $\pi(x) = p(x,x,x)$, this definition becomes:
$$
\pi(\pi(a)) = \pi(a)\quad\text{and}\quad p(a,b,c) = \begin{cases}
\pi(a)&\text{if $a=b$}\\
c&\text{if $a \ne b$}\end{cases}.
$$
 In case $\pi(x)$ is the identity, then the dual i-discriminator function is just the dual discriminator function described in \cite{FriedPixley1979}.  We first observe the following easy fact:

\begin{lemma}\label{semi} If $\alg A$ is an algebra and $p(x,y,z)$ is a dual i-discriminator term for $\alg A$, then $\alg A$ is simple.
\end{lemma}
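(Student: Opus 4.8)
The plan is to show that the only congruences of $\alg A$ are $0_\alg A$ and $1_\alg A$, so that (assuming $\alg A$ is nontrivial, as simplicity requires) $\alg A$ is simple. Concretely, I would take an arbitrary $\th \in \Con A$ with $\th > 0_\alg A$ and prove that $\th = 1_\alg A$; since $\th$ is an arbitrary nonzero congruence, this yields $\Con A = \{0_\alg A, 1_\alg A\}$.

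First I would use $\th > 0_\alg A$ to fix a pair $(a,b) \in \th$ with $a \ne b$. The key idea is that holding the first argument of $p$ equal to $a$ while switching the second argument between $a$ and $b$ moves the dual i-discriminator between its two regimes: because the first two arguments agree, $p(a,a,c) = \pi(a)$ for every $c \in A$, whereas because $a \ne b$ we have $p(a,b,c) = c$ for every $c \in A$. Thus the single related pair $(a,b)$ lets us interpolate between the constant value $\pi(a)$ and an arbitrary element $c$.

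Then I would invoke compatibility of $\th$ with the term operation $p$. Since $(a,a),(a,b),(c,c) \in \th$, applying $p$ coordinatewise to the triples $(a,a,c)$ and $(a,b,c)$ gives $p(a,a,c) \mathrel{\th} p(a,b,c)$, that is, $\pi(a) \mathrel{\th} c$. As $c \in A$ is arbitrary, every element of $\alg A$ lies in the $\th$-class of the fixed element $\pi(a)$, and by transitivity any two elements are $\th$-related, so $\th = 1_\alg A$. I do not expect a genuine obstacle here: there is no case analysis or estimate, and the only point requiring insight is the choice of the two argument triples fed to the congruence — namely exploiting the related pair $(a,b)$ to toggle the discriminator between the diagonal output $\pi(a)$ and the generic output $c$.
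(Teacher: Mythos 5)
Your proof is correct and is essentially the paper's own argument: both exploit compatibility of $\th$ with $p$ on the pair $(a,b)\in\th$, $a\ne b$, to toggle between the regimes $p(a,b,c)=c$ and $p(a,a,c)$. The only cosmetic difference is that the paper chains $c = p(a,b,c) \mathrel{\th} p(a,a,c) = p(a,a,d) \mathrel{\th} p(a,b,d) = d$ to relate two arbitrary elements directly, while you relate every $c$ to the fixed hub $\pi(a)$ and finish by transitivity.
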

\begin{proof} If $\th \in \Con A$ and there are $(a,b) \in \th$ with $a \ne b$, then for all $c,d \in A$
$$
c = p(a,b,c) \mathrel{\th} p(a,a,c) = p(a,a,d) \mathrel{\th} p(a,b,d) = d.
$$
Therefore $\th = 1_\alg A$ and $\alg A$ is simple.
\end{proof}

A quasivariety $\vv Q$ is a {\bf dual i-discriminator} quasivariety if there is term $p(x,y,z)$ that is the dual i-discriminator on every $\vv Q$-irreducible algebra in $\vv Q$. Observe that, by Lemma \ref{semi}, if $\vv Q$ is a dual i-discriminator variety, then every $\vv Q$-irreducible algebra in $\vv Q$ is simple; so $\vv Q$ is semisimple in the absolute sense.

Dual i-discriminator varieties share many properties with dual discriminator varieties in the sense of \cite{FriedPixley1979}. However
there are dual i-discriminator quasivarieties that are not dual discriminator quasivarieties (see Example \ref{implalg} below).
First we want to show that every dual i-discriminator quasivariety is relative congruence distributive. In order to do that first we show:

\begin{theorem}\label{dualdist} Let $\vv K$ be a class of similar algebras
with a term $p(x,y,z)$ that is the dual i-discriminator on
each member of $\vv K$. Then $\VV(\vv K)$ is congruence distributive and in
fact satisfies J\'onsson's condition
$\Delta_3$.
\end{theorem}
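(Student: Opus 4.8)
The plan is to verify directly that $\VV(\vv K)$ satisfies the Mal'cev condition characterizing congruence distributivity, that is, to produce four ternary terms $t_0,t_1,t_2,t_3$ in the language of $\vv K$ witnessing J\'onsson's condition $\Delta_3$: $t_0(x,y,z)\app x$, $t_3(x,y,z)\app z$, $t_i(x,y,x)\app x$ for all $i$, together with $t_0(x,x,z)\app t_1(x,x,z)$, $t_2(x,x,z)\app t_3(x,x,z)$ and $t_1(x,z,z)\app t_2(x,z,z)$. Since these are equations, it suffices to check that they hold in every $\alg A\in\vv K$, because equations are preserved by $\HH$, $\SU$ and $\PP$ and $\VV(\vv K)=\HH\SU\PP(\vv K)$; the standard Mal'cev characterization of congruence distributivity by J\'onsson terms (see \cite{BurrisSanka}) then yields that $\VV(\vv K)$ is congruence distributive, the J\'onsson Lemma recorded as Theorem~\ref{birkhoff}(2) being the companion implication.

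The terms are to be built from $p$ together with the unary term $\pi(x):=p(x,x,x)$, using the governing behaviour of $p$ on members of $\vv K$: $p(a,b,c)=c$ when $a\ne b$, $p(a,a,c)=\pi(a)$, and $\pi(\pi(a))=\pi(a)$. The essential feature is that $p$ acts as the dual discriminator $d$ (which returns $x$ if $x=y$ and $z$ otherwise) \emph{except} that on the diagonal branch it returns $\pi(x)$ rather than $x$. Since $d$ is a majority term, if $d$ were a term of $\vv K$ we would already obtain J\'onsson's $\Delta_2$; but when $\pi$ is not the identity we cannot recover $x$ from $\pi(x)$, so no majority term is available and a genuine chain of length three is required. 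The key device for forcing \emph{clean} outputs is the guarded subterm $p(x,\pi(x),z)$, which evaluates to $z$ when $x$ is not a fixed point of $\pi$ and to $x$ when it is; because its collapse branch fires exactly when $x=\pi(x)$, it never returns a corrupted value, and this lets one force the value $x$ on the slices $z=x$ and $y=x$ without the spurious $\pi$ that a bare application of $p$ would introduce.

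I would then verify each $\Delta_3$ identity on an arbitrary $\alg A\in\vv K$ by a short case analysis: for each identity split according to whether the arguments compared by the outermost occurrences of $p$ are equal (so the $\pi$-branch fires) or distinct (so a coordinate passes through), and, in the equal case, according to whether the relevant element is a fixed point of $\pi$. The clean demands $t_i(x,y,x)\app x$, $t_1(x,x,z)\app x$ and $t_2(x,x,z)\app z$ are exactly where the guarded subterm is needed, while the remaining identity $t_1(x,z,z)\app t_2(x,z,z)$ only asks for agreement of the two middle terms and may be met by a common, possibly $\pi$-valued, expression that is nevertheless forced to equal $x$ on the full diagonal.

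I expect the main obstacle to be precisely the construction and verification of the middle terms $t_1,t_2$: arranging that every point where an identity demands a clean $x$ or a clean $z$ is reached through a projection or through a $\pi$-guarded occurrence of $p$, rather than through a collapse $p(a,a,-)=\pi(a)$ that would corrupt the output. Once the terms are fixed the verification is routine case-checking, and congruence distributivity of $\VV(\vv K)$ follows at once; the conclusion can be cross-checked against the Fried--Pixley treatment of dual discriminator varieties, which is the case $\pi=\mathrm{id}$ where $p$ is itself a majority term.
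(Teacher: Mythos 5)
Your high-level frame coincides with the paper's proof: both reduce the theorem to exhibiting J\'onsson terms for $\Delta_3$ built from $p$ and $\pi(x)=p(x,x,x)$, and both observe that, since the required conditions are equations, it suffices to verify them on each member of $\vv K$ (your statement of the $\Delta_3$ identities is in fact cleaner than the paper's, which contains typos). But your proposal stops exactly where the mathematical content of the theorem begins: you never write down $t_1$ and $t_2$, and you yourself flag ``the construction and verification of the middle terms'' as the main obstacle. A plan that defers precisely this step is not a proof; everything you do establish (equations transfer from $\vv K$ to $\VV(\vv K)$, the reduction to a Mal'cev-type condition) is standard bookkeeping.

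Moreover, the device you single out as the key --- the guarded subterm $p(x,\pi(x),z)$ --- does not deliver the terms, at least not directly. Take the natural candidate $t_1(x,y,z) = p(x,\pi(x),p(y,z,x))$: for an element $a$ with $\pi(a)\ne a$ the guard passes through, so $t_1(a,a,a)=p(a,a,a)=\pi(a)\ne a$, and the identity $t_1(x,y,x)\app x$ already fails on the diagonal. The guard only protects the output when $x$ is a $\pi$-fixed point; when it is not, the inner term must itself avoid producing $\pi$-corrupted values on the critical slices, which is the original difficulty all over again. The paper resolves it by a different mechanism: it sets $t_1(x,y,z)=p(p(x,y,z),y,x)$ and $t_2(x,y,z)=p(p(x,y,y),z,z)$, allowing corrupted values $\pi(a)$ to arise, but only in the first coordinate of an outer occurrence of $p$ whose remaining two arguments are equal. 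The evaluations $p(\pi(a),a,a)=a$ and $p(\pi(a),c,c)=c$ --- valid whether or not $\pi(a)$ equals $a$, respectively $c$, using idempotence of $\pi$ in the collapse case --- then make the four identities a routine case check. Without this (or some other) explicit construction, your argument has a hole at its center.
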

\begin{proof}
We must produce four ternary terms
$t_0,t_1,t_2,t_3$ such that $t_0(x,y,z) \approx z$, $t_3(x,y,z)\approx z$
and moreover
\begin{enumerate}
\item $t_i(x,y,x) \approx x$ for $i=1,3$,
\item $t_1(x,x,z) \approx x$,
\item $t_2(x,x,z) \approx z$,
\item $t_1(x,z,z) \approx t_2(x,z,z)$
\end{enumerate}
all hold in $\VV(\vv K)$. Let
$$
t_1(x,y,z) = p(p(x,y,z),y,x)\qquad\qquad t_2(x,y,z) = p(p(x,y,y),z,z).
$$
Since conditions 1.--4. are equational, it is enough to show that
they hold for each  member of $\vv K$.
Accordingly, let $\alg A \in \vv K$  and let
$a,b,c \in A$. Then $t_1(a,b,a) = p(p(a,b,a),b,a)$. If $a\ne b$, then
$p(p(a,b,a),b,a)= p(a,b,a) = a$.
If $a=b$  then $p(p(a,b,a),b,a) =p(\pi(a),a,a)=a$, whether or not
$\pi(a) = a$. An identical argument shows that $t_2(a,b,a) =a$, thus proving
1.

Next, $t_1(a,a,c) = p(p(a,a,c),a,a) = p(\pi(a),a,a) =a$ as
above. On the other hand $t_2(a,a,c) = p(p(a,a,a),c,c) = p(\pi(a),c,c)$,
which is equal to $c$ if $\pi(a) \ne c$, or to
$\pi\pi(a) = \pi(a) = c$ if $\pi(a)=c$. This proves 2. and 3.

Finally we have to show that $t_1(a,c,c) = t_2(a,c,c)$. If
$a = c$, both sides reduce to $p(p(a,a,a),a,a)$. If
$a\ne c$ we have
\begin{align*}
&t_1(a,c,c) = p(p(a,c,c),c,a) = p(c,c,a) = \pi(c)\\
&t_2(a,c,c) = p(p(a,c,c),c,c) = p(c,c,c)=  \pi(c).
\end{align*}
This proves 4. and therefore $\VV(\vv K)$ is congruence distributive.
\end{proof}

Next:

\begin{lemma}\label{all} Let $\vv K$ be a class of similar algebras with
a term $p(x,y,z)$ that is the dual i-discriminator on
each member of $\vv K$. Then $\vv Q=\QQ(\vv K)$ is a dual i-discriminator
variety with dual i-discriminator term $p(x,y,z)$. Moreover $p(x,y,z)$ is
a dual i-discriminator term for any finitely $\vv Q$-irreducible algebra in $\vv Q$.
\end{lemma}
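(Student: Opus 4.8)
The plan is to split the three defining clauses of the dual i-discriminator into those that are genuine equations and the one that is a disjunctive universal sentence, and then to track which class operators preserve each kind. Writing $\pi(x) = p(x,x,x)$, the first two clauses are the equations
$$
p(x,x,y) \app p(x,x,z) \qquad\text{and}\qquad \pi(\pi(x)) \app \pi(x),
$$
while the clause ``$a \ne b$ implies $p(a,b,c) = c$'' is the universal sentence $\forall x,y,z\,(x \app y \ \text{or}\ p(x,y,z) \app z)$, whose matrix is quantifier free.

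First I would observe that the two equations hold in every member of $\vv K$ by hypothesis and, being equations, are preserved by $\HH$, $\SU$ and $\PP$; hence they hold throughout $\VV(\vv K) \supseteq \QQ(\vv K) = \vv Q$, and in particular in every algebra of $\vv Q$. The disjunctive clause also holds in every member of $\vv K$, but it is only a universal sentence, so it is preserved by $\II$ and $\SU$ and, by the \L{\`o}s Lemma, by $\PP_u$; thus it holds throughout $\II\SU\PP_u(\vv K)$. At this point I would invoke the Czelakowski--Dziobiak half of Theorem \ref{quasivariety}, namely $\vv Q_{fir} \sse \II\SU\PP_u(\vv K)$, to conclude that the disjunctive clause holds in every finitely $\vv Q$-irreducible algebra of $\vv Q$.

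Combining the two observations, all three clauses hold in every finitely $\vv Q$-irreducible algebra, so $p(x,y,z)$ is the dual i-discriminator term there, which is exactly the ``moreover''. Since a $\vv Q$-irreducible algebra satisfies the defining condition for every subdirect embedding and hence in particular for every finite one, $\vv Q_{ir} \sse \vv Q_{fir}$; therefore $p$ is the dual i-discriminator on every $\vv Q$-irreducible algebra, so $\vv Q$ is a dual i-discriminator quasivariety with term $p(x,y,z)$. The only real subtlety is the bookkeeping of operators: the two equations survive all of $\VV$, whereas the genuinely disjunctive clause survives only $\II\SU\PP_u$, which is exactly why the statement has to be restricted to (finitely) $\vv Q$-irreducible algebras and why Czelakowski--Dziobiak is the crux of the argument.
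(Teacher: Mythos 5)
Your proof is correct and follows essentially the same route as the paper: both arguments rest on the Czelakowski--Dziobiak inclusion $\vv Q_{fir} \sse \II\SU\PP_u(\vv K)$ together with the fact that the dual i-discriminator property is expressible by universal sentences and hence preserved by $\II$, $\SU$ and $\PP_u$, and both deduce the first claim from the ``moreover'' via $\vv Q_{ir} \sse \vv Q_{fir}$. The only cosmetic difference is that you split off the purely equational clauses (which survive all of $\VV$) from the disjunctive one, whereas the paper packages the entire property into a single universal sentence.
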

\begin{proof} Since the second claim clearly implies the first we will prove that one.  By Theorem
\ref{quasivariety}
all the finitely $\vv Q$-irreducible members of $\vv Q$ lie in
$\II\SU\PP_u(\vv K)$. Since being a dual i-discriminator function
can be expressed by the universal sentence
\begin{equation*}
\forall x,y,z[p(x,x,y) \approx p(x,x,z)] \meet [\neg (x\approx y) \imp
p(x,y,z)\app \pi(z)],
\end{equation*}
$p(x,y,z)$ is a dual i-discriminator on each member of $\II\SU\PP_u(\vv
K)$. This concludes the proof.
\end{proof}

\begin{theorem} Every dual i-discriminator quasivariety is relative congruence distributive.
\end{theorem}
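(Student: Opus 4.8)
The plan is to show, for every $\alg A \in \vv Q$, that the algebraic lattice $\op{Con}_\vv Q(\alg A)$ is distributive, by invoking the standard criterion that an algebraic lattice is distributive if and only if each of its completely meet irreducible elements is meet prime (the nontrivial direction: if $m$ is meet prime and $(\a\meet\b) \le m$, then $\a\le m$ or $\b\le m$, which yields $\a\meet(\b\join\g)\le m$ for every completely meet irreducible $m$ above $(\a\meet\b)\join(\a\meet\g)$). Since $\op{Con}_\vv Q(\alg A)$ is algebraic, every element is a meet of completely meet irreducibles, so it suffices to prove that each completely meet irreducible $\th \in \op{Con}_\vv Q(\alg A)$ is meet prime. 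By the correspondence between $\vv Q$-congruences above $\th$ and $\vv Q$-congruences of $\alg A/\th$, together with Lemma \ref{lemma: q-irreducible}(2), $\th$ is completely meet irreducible exactly when $\alg A/\th$ is $\vv Q$-irreducible; hence by hypothesis $p$ is the dual i-discriminator on $\alg A/\th$.

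First I would record the one identity I need: the clause $p(x,x,y) \app p(x,x,z)$ is part of the definition of the dual i-discriminator, so it holds on every $\vv Q$-irreducible algebra. By Theorem \ref{quasivariety}(1) each $\alg A \in \vv Q$ embeds subdirectly into a product of members of $\vv Q_{ir}$, and identities pass from the factors to the product and to subalgebras; therefore $p(x,x,y) \app p(x,x,z)$ holds in $\alg A$.

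The heart of the argument is then a direct meet-primeness computation. Suppose $\f,\psi \in \op{Con}_\vv Q(\alg A)$ with $\f \not\sse \th$ and $\psi \not\sse \th$; I would pick $(a,b) \in \f \ssm \th$ and $(c,d) \in \psi \ssm \th$ and consider the \emph{single} pair $(p(a,b,c),p(a,b,d))$. Compatibility of $\psi$ with the term operation $p(a,b,-)$ gives $(p(a,b,c),p(a,b,d)) \in \psi$; next $(a,b)\in\f$ and the identity above yield the chain $p(a,b,c) \mathrel{\f} p(a,a,c) = p(a,a,d) \mathrel{\f} p(a,b,d)$, so the pair also lies in $\f$; finally, passing to $\alg A/\th$, where $\bar a \ne \bar b$ and $p$ is the dual i-discriminator, we get $p(\bar a,\bar b,\bar c) = \bar c \ne \bar d = p(\bar a,\bar b,\bar d)$, so the pair is not in $\th$. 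Thus $(p(a,b,c),p(a,b,d)) \in (\f\cap\psi)\ssm\th$, whence $\f\cap\psi \not\sse \th$; this is precisely meet-primeness of $\th$. As $\alg A$ was arbitrary, every relative congruence lattice is distributive and $\vv Q$ is relatively congruence distributive.

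The obstacle to anticipate is that one cannot attack $\f \join_\vv Q \psi$ directly by a relative analogue of the usual Jónsson-term (or Mal'cev-chain) argument, since the relative join taken in $\op{Con}_\vv Q(\alg A)$ is in general strictly larger than the lattice join in $\Con \alg A$ and is not described by alternating $\f$/$\psi$ chains; in particular the congruence distributivity of $\VV(\vv Q_{ir})$ supplied by Theorem \ref{dualdist} does not by itself transfer to the \emph{relative} congruence lattices, which is exactly why I route the proof through the meet-prime characterization instead. The only genuinely delicate step is manufacturing, out of two pairs that separately survive modulo $\th$, a single pair that simultaneously lies in $\f$ and in $\psi$ while still surviving modulo $\th$; the first clause of the dual i-discriminator together with the identity $p(x,x,y)\app p(x,x,z)$ is precisely what makes this combination go through.
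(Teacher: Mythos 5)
Your proof is correct, but it takes a genuinely different route from the paper's. The paper argues in two steps: first, Lemma \ref{all} together with Theorem \ref{dualdist} shows that $\VV(\vv Q)$ is congruence distributive (via an explicit construction of J\'onsson terms satisfying $\Delta_3$); second, it invokes Dziobiak's criterion (Corollary 2.4 of the cited paper) that a subquasivariety $\vv R$ of a congruence distributive variety is relatively congruence distributive if and only if every finitely $\vv R$-irreducible member is finitely subdirectly irreducible in the absolute sense, which holds here because every finitely $\vv Q$-irreducible algebra carries the dual i-discriminator and is therefore simple (Lemma \ref{semi}). You instead verify distributivity of each $\op{Con}_\vv Q(\alg A)$ directly: completely meet irreducible relative congruences correspond to $\vv Q$-irreducible quotients, the identity $p(x,x,y) \app p(x,x,z)$ propagates from $\vv Q_{ir}$ to all of $\vv Q$ by Mal'cev's subdirect decomposition (Theorem \ref{quasivariety}(1)), and the single pair $(p(a,b,c),p(a,b,d))$ witnesses meet-primeness of every completely meet irreducible $\th$; distributivity of the algebraic lattice then follows from the standard meet-prime criterion. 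What your approach buys is self-containedness: it needs neither the J\'onsson-term computation nor Dziobiak's theorem, and it never leaves the relative congruence lattice --- your closing remark correctly identifies why absolute congruence distributivity of $\VV(\vv Q)$ does not transfer to relative congruence lattices for free, which is precisely the gap Dziobiak's result bridges in the paper. What the paper's route buys is reusable infrastructure: Theorem \ref{dualdist} and Lemma \ref{all} are needed again later (e.g.\ in Theorem \ref{cddualti} and in the finite-basis argument), so the paper gets the present theorem almost for free once those are in place. It is also worth noting that your computation essentially reproves, in this special ternary case, the Czelakowski--Dziobiak fact quoted in the proof of Theorem \ref{RPIP} that the RPIP implies relative congruence distributivity; combined with Lemma \ref{RTPIP} this gives a second way to see that your argument and the paper's are consistent.
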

\begin{proof} Let $\vv Q$ be a dual i-discriminator variety; then by Lemma \ref{all} $\VV(\vv Q)$ is congruence distributive. Now in \cite{Dziobiak1989} (Corollary 2.4) the author showed
that a subquasivariety $\vv R$ of a congruence distributive variety is relative congruence distributive if and only if any finitely $\vv R$-irreducible algebra in $\vv R$ is finitely subdirectly irreducible in
the absolute sense.

But by Lemma \ref{all} every finitely $\vv Q$-irreducible algebra  has $p(x,y,z)$ as a dual i-discriminator term, so it is simple; hence it is finitely subdirectly irreducible in the absolute sense.
Therefore $\vv Q$ is relatively congruence distributive.
\end{proof}

\begin{remark} Let $\vv V$ be a dual i-discriminator variety with witness term $p(x,y,z)$. Which subquasivarieties of $\vv V$ are dual i-discriminator
with the same term?  In general not all of them; however let $\vv V_{fsi}$ be the class of finitely subdirectly irreducible (i.e. simple) algebras in $\vv V$.
Then $\QQ(\vv K)$ is a dual i-discriminator quasivariety with witness term $p(x,y,z)$ if and only if  $\vv K \sse \vv V_{fsi}$, so apparently we have a recipe to construct many dual i-discriminator quasivarieties. However it is an immediate consequence of Proposition 2.5 in \cite{Dziobiak1989} that if $\vv V$ is locally finite, then each of those subquasivarieties is really a variety.
\end{remark}

Dual i-discriminator variety have a special kind of RPIP; we say that
a quasivariety $\vv Q$ has the {\bf relative ternary principal intersection property} (RTPIP for short) if there is a ternary term $p(x,y,z)$ such that
$$
\cg_\alg A^\vv Q(a,b) \cap \cg_\alg A^\vv Q(a,b) = \cg_\alg A^\vv Q(p(a,b,c),p(a,b,d)).
$$

\begin{lemma}\label{RTPIP} Every dual i-discriminator variety $\vv Q$ has the RTPIP.
\end{lemma}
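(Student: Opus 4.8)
The plan is to use the dual i-discriminator term $p(x,y,z)$ itself as the witness for the RTPIP and to verify the identity
$$
\cg_\alg A^\vv Q(a,b) \meet \cg_\alg A^\vv Q(c,d) = \cg_\alg A^\vv Q(p(a,b,c),p(a,b,d))
$$
for every $\alg A \in \vv Q$ and all $a,b,c,d \in A$ by testing both sides against the completely meet irreducible $\vv Q$-congruences of $\alg A$. Since every dual i-discriminator quasivariety is relatively congruence distributive (as shown above), each $\op{Con}_\vv Q(\alg A)$ is a distributive algebraic lattice, so every $\vv Q$-congruence is a meet of completely meet irreducible ones. Hence it suffices to show, for every completely meet irreducible $\f \in \op{Con}_\vv Q(\alg A)$, that the left-hand side of the displayed equality lies below $\f$ if and only if the right-hand side does.

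Fix such a $\f$ and write $\bar x := x/\f$. As $\f$ is completely meet irreducible it has a unique upper cover in $\op{Con}_\vv Q(\alg A)$; since the interval $[\f,1_\alg A]$ is isomorphic to $\op{Con}_\vv Q(\alg A/\f)$, this means $\alg A/\f$ has a unique minimal $\vv Q$-congruence above $0_{\alg A/\f}$, so by Lemma \ref{lemma: q-irreducible} the quotient $\alg A/\f$ is $\vv Q$-irreducible. Consequently, by Lemma \ref{all}, $p$ acts as the dual i-discriminator on $\alg A/\f$, and the heart of the matter is the equivalence
$$
\text{$\bar a = \bar b$ or $\bar c = \bar d$} \quad\text{if and only if}\quad p(\bar a,\bar b,\bar c) = p(\bar a,\bar b,\bar d),
$$
which is read straight off the case split in the dual i-discriminator definition: if $\bar a = \bar b$ both sides equal $\pi(\bar a)$, if $\bar c = \bar d$ they agree trivially, and if $\bar a \ne \bar b$ the two sides reduce to $\bar c$ and $\bar d$.

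It then remains to rephrase the two containments in these terms. For the left-hand side, relative congruence distributivity together with complete meet irreducibility guarantees that $\cg_\alg A^\vv Q(a,b) \meet \cg_\alg A^\vv Q(c,d) \le \f$ forces $\cg_\alg A^\vv Q(a,b) \le \f$ or $\cg_\alg A^\vv Q(c,d) \le \f$; hence this containment holds exactly when $(a,b) \in \f$ or $(c,d) \in \f$, i.e. when $\bar a = \bar b$ or $\bar c = \bar d$. For the right-hand side, $\cg_\alg A^\vv Q(p(a,b,c),p(a,b,d)) \le \f$ holds exactly when $(p(a,b,c),p(a,b,d)) \in \f$, i.e. when $p(\bar a,\bar b,\bar c) = p(\bar a,\bar b,\bar d)$. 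The equivalence above identifies these two conditions, so both sides of the identity lie below precisely the same completely meet irreducible congruences and the equality follows. I expect no genuine obstacle beyond the reduction to completely meet irreducibles, which rests on the relative congruence distributivity of dual i-discriminator quasivarieties; after that the verification is just the routine dual i-discriminator case analysis.
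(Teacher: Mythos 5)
Your proof is correct and is essentially the paper's: the paper verifies the same key equivalence ($p(a,b,c)=p(a,b,d)$ iff $a=b$ or $c=d$) on $\vv Q$-irreducible algebras, invokes the already-established relative congruence distributivity, and then simply cites the implication (2)$\Rightarrow$(1) of Theorem \ref{RPIP}, whose proof is precisely the reduction to completely meet irreducible $\vv Q$-congruences, meet-primeness, and $\vv Q$-irreducibility of the quotient that you spell out by hand. The only difference is that you re-derive that implication inline instead of citing the theorem.
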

\begin{proof} Let $\alg A$ be a $\vv Q$-irreducible algebra in $\vv Q$ and let $a,b,c,d \in A$.
Then $p(x,y,z)$ is the dual i-discriminator function on $A$ therefore
$$
p(a,b,c) = p(a,b,d)\quad\text{if and only if}\quad\text{$a=b$ or $c=d$}.
$$
As $\vv Q$ is relatively congruence distributive, Theorem \ref{RPIP} yields the conclusion.
\end{proof}

The concept of having the RTPIP and being  a dual i-discriminator quasivariety  coincide for
finitely generated quasivarieties.

\begin{theorem}\label{cddualti} A  finitely generated quasivariety
 has  the RTPIP if and only if it is a  dual i-discriminator
quasivariety.
\end{theorem}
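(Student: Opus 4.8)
The backward implication is exactly Lemma \ref{RTPIP}, so the whole content is the forward direction: assuming $\vv Q$ is finitely generated with the RTPIP witnessed by a ternary term $p(x,y,z)$, I must produce a term that is the dual i-discriminator on every $\vv Q$-irreducible algebra. First I would record the structural consequences of the hypothesis. Since the RTPIP is the special case of the RPIP with $p(a,b,c,d):=p(a,b,c)$ and $q(a,b,c,d):=p(a,b,d)$, Theorem \ref{RPIP} applies and gives two things at once: $\vv Q$ is relatively congruence distributive, and for every $\vv Q$-irreducible $\alg A$ and all $a,b,c,d\in A$,
\[
p(a,b,c)=p(a,b,d)\quad\Longleftrightarrow\quad a=b\ \text{or}\ c=d,
\]
an equivalence I will call $(\star)$. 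Writing $\vv Q=\QQ(\vv K)$ with $\vv K$ a finite set of finite algebras, Theorem \ref{quasivariety}(2) together with $\PP_u(\vv K)=\vv K$ yields $\vv Q_{fir}\sse\II\SU(\vv K)$; hence every $\vv Q$-irreducible algebra is finite, and there are only finitely many of them up to isomorphism. So it suffices to analyze $p$ on a fixed finite $\vv Q$-irreducible $\alg A$.

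On such an $\alg A$, formula $(\star)$ has two immediate readings. Taking the first two arguments equal shows that $c\mapsto p(a,a,c)$ is constant; call its value $\pi(a)$, so the middle clause $p(a,a,c)=\pi(a)$ of the dual i-discriminator holds. Taking $a\ne b$ shows that $c\mapsto p(a,b,c)$ is injective, hence a permutation of the finite set $A$; write $g_{a,b}$ for it. I would also note that $\alg A$ is $\vv Q$-simple: choosing $(a,b)$ in the $\vv Q$-monolith $\mu$ (Lemma \ref{lemma: q-irreducible}) and any $c\ne d$, the RTPIP gives $\cg_\alg A^\vv Q(p(a,b,c),p(a,b,d))=\mu\cap\cg_\alg A^\vv Q(c,d)=\mu$, so the distinct values $g_{a,b}(c)$ all lie in one $\mu$-class; injectivity then forces that class to be all of $A$, i.e. $\mu=1_\alg A$ (this will also follow a posteriori from Lemma \ref{semi}).

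The heart of the argument, and the main obstacle, is that $(\star)$ yields only that $g_{a,b}$ is a permutation, whereas the dual i-discriminator demands $g_{a,b}=\mathrm{id}$; and on a single simple algebra the RTPIP is equivalent to bare injectivity, so identity behaviour cannot be forced by congruence identities on one algebra. This is where finite generation enters decisively, and the plan is to kill the permutations by iterating the witness term. Set $p_1:=p$ and $p_{k+1}(x,y,z):=p(x,y,p_k(x,y,z))$, so that $p_k(a,b,c)=g_{a,b}^{\,k}(c)$ when $a\ne b$ while $p_k(a,a,c)=\pi(a)$ for every $k$. Because there are only finitely many finite $\vv Q$-irreducible algebras and each is finite, the finitely many permutations $g_{a,b}$ admit a common finite order $N$; the single term $p':=p_N$ then satisfies $p'(a,b,c)=g_{a,b}^{\,N}(c)=c$ for all $a\ne b$ and $p'(a,a,c)=\pi(a)$, simultaneously on every $\vv Q$-irreducible algebra. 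This secures the two principal clauses of the dual i-discriminator for the uniform term $p'$.

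It remains to establish idempotency $\pi(\pi(a))=\pi(a)$ of the retraction, the clause separating the dual i-discriminator from the dual discriminator. I would handle this by the same finiteness philosophy, replacing $p'$ by a further composite that drives $\pi$ to an idempotent power: when $\pi$ is injective one absorbs a suitable power of $\pi$ into the first two arguments, which preserves the inequality test precisely because $\pi$ is then a permutation and so leaves the off-diagonal values untouched. I expect the genuinely delicate point to be the remaining possibility of a non-injective, non-idempotent $\pi$, which I would argue cannot arise under the standing hypothesis of relative congruence distributivity, since such a retraction obstructs the J\'onsson terms exhibited in Theorem \ref{dualdist}. Once $p'$ is shown to be the dual i-discriminator on each finite $\vv Q$-irreducible algebra, it is so on all of $\vv Q_{ir}$, which is exactly the statement that $\vv Q$ is a dual i-discriminator quasivariety; the two places I anticipate having to work hardest are this normalization of $\pi$ and the uniform choice of the exponent $N$ across the finitely many generators.
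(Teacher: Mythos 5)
Your backward direction and the first half of your forward direction coincide with the paper's proof: Theorem \ref{RPIP} yields relative congruence distributivity together with the biconditional $p(a,b,c)=p(a,b,d)$ iff $a=b$ or $c=d$ on $\vv Q$-irreducible members, finite generation yields finitely many finite $\vv Q$-irreducible algebras, and iterating $p$ in its third variable up to a common order of the permutations $z\mapsto p(a,b,z)$ produces a term $p_n$ with $p_n(a,b,c)=c$ whenever $a\ne b$. The gap is in your treatment of the diagonal. Your normalization of $\pi$ works only when $\pi$ is injective (substituting $\pi^k$ into the first two arguments preserves the inequality test precisely because $\pi^k$ is then injective), and you dispose of the remaining case by asserting that a non-injective, non-idempotent $\pi$ cannot arise under relative congruence distributivity since it would ``obstruct the J\'onsson terms'' of Theorem \ref{dualdist}. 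That assertion is false, and the reasoning behind it is invalid: Theorem \ref{dualdist} constructs J\'onsson terms \emph{from} a dual i-discriminator term, and says nothing about a witness term whose diagonal fails idempotency; congruence distributivity need not be witnessed by terms built from $p$ at all. Concretely, take $A=\{1,2,3\}$, let $f(1)=2$, $f(2)=3$, $f(3)=3$ (non-injective and non-idempotent), and let $\alg A=\la A,p\ra$ with $p(a,b,c)=c$ for $a\ne b$ and $p(a,a,c)=f(a)$. Every subalgebra of $\alg A$ is simple, $\QQ(\alg A)$ is a dual i-discriminator quasivariety (witnessed by the \emph{different} term $p(p(x,y,x),p(x,y,y),p(x,y,z))$, whose diagonal is the constant $3$), hence relatively congruence distributive, and then Theorem \ref{RPIP} shows that $p$ itself witnesses the RTPIP for $\QQ(\alg A)$ --- yet its diagonal is exactly the kind of map you claim impossible.

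The case you tried to exclude is therefore genuine, and it is exactly what the second half of the paper's proof handles, with no injectivity hypothesis. Since each $\vv Q$-irreducible algebra is finite, the unary map $\tau(x)=p_n(x,x,x)$ is eventually periodic on it, $\tau^{l+m}=\tau^l$ for some $l,m>0$; choosing $L$ to be a common multiple of the periods exceeding all the tails (possible because there are finitely many irreducibles) makes $\pi=\tau^L$ idempotent. This power is then installed as the diagonal by the nested composition $q_0(x,y,z)=z$, $q_{k+1}(x,y,z)=p_n(q_k(x,y,x),\,q_k(x,y,y),\,q_k(x,y,z))$: when $a\ne b$ the inner terms return $a$, $b$, $z$ unchanged, so the off-diagonal identity behaviour of $p_n$ is preserved without any injectivity, while on the diagonal $q_k(a,a,z)=\tau^k(a)$, so $d=q_L$ is a dual i-discriminator term. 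The essential idea you are missing is that the previous term --- not a power of $\pi$ --- must be fed into the first two argument places; with that replacement for your exclusion argument, your proof becomes the paper's.
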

\begin{proof} Any dual i-discriminator variety  has the RTPIP by Lemma \ref{RTPIP}.
Suppose $\vv Q$ has the RTPIP, witness $p(x,y,z)$; then it is relative congruence distributive by Theorem \ref{RPIP}.

Moreover, since it is finitely generated, it has only finitely many $\vv Q$-irreducible algebras, all of them finite.
Let then $\alg A$ one of them and let $a,b \in A$ with $a \ne b$; then by Theorem \ref{RPIP} the function
$z \longmapsto t(a,b,z)$ is one-to one on $A$ and hence it is a permutation. The order of this permutation
divides $n!$, where $n$ is the largest among the cardinalities of $\vv Q$-irreducible algebras and the $n$-th power
of this permutation is the identity.

Define inductively
a sequence of terms $p_k(x,y,z)$ for $k=0,1,2,\dots,n-1$ by
$$
p_0(x,y,z) = z \qquad\qquad p_{k+1}(x,y,z) = p(x,y,p_k(x,y,z)).
$$
Then each $t_k$ for $1 \le k \le n$ witnesses the RTPIP for $\vv V$ and moreover  $t_n$
has the property that in each $\vv Q$-irreducible member of $\vv Q$,
$a\ne b$ implies  $t_n(a,b,c) = c$.
However, the unary term $\tau(x) = p_n(x,x,x)$ is not necessarily idempotent,
so further modifications are needed.

 For any $\vv Q$-irreducible algebra
$\alg A \in \vv Q$ there exist $l_\alg A,m_\alg A >0$ such that $\tau^{l_\alg
A+m_\alg A} = \tau^{l_\alg A}$ on $\alg A$. Let $L$ be a positive integer
such that for any $\vv  Q$-irreducible $\alg A \in \vv V$,
$L$ is both a multiple of $m_\alg A$ and greater than $l_\alg A$;
since there are only finitely many $\vv Q$-irreducible algebras in
$\vv Q$ we can take $L$ to be the product of all $l_\alg A$ and
$m_\alg A$.  Then on each $\vv Q$-irreducible member we have
$\tau^{2L} = \tau^{L}$, so the term $\pi(x) = \tau^L(x)$ is idempotent.

Now we must construct a dual idempotent discriminator term of which $\pi(x)$ is
the diagonal. Define terms $q_i(x,y,z)$ for $i=0,1,2,\dots,L$ by
$$
q_0(x,y,z) = z\qquad\qquad q_{k+1}(x,y,z) =
p_n(q_k(x,y,x),q_k(x,y,y),q_k(x,y,z)).
$$
Set $d = q_L$. Then $p(x,x,x) = \pi(x)$ as desired, while  $d$
retains the properties of $p_n(x,y,z)$.
Hence $p(x,y,z)$ is a dual i-discriminator term for $\vv Q$.
\end{proof}

\begin{corollary} \label{subirrsim} Let $\vv Q$ be a quasivariety with the
RTPIP; if $\alg A$ is a finite algebra in $\vv Q$, then
every $\QQ(\alg A)$ is dual i-discriminator variety.
\end{corollary}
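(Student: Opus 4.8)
The plan is to deduce the statement from Theorem \ref{cddualti}. Since $\alg A$ is finite, $\vv Q' := \QQ(\alg A)$ is a finitely generated quasivariety, so by Theorem \ref{cddualti} it is a dual i-discriminator quasivariety as soon as we know that it has the RTPIP. Thus the whole problem reduces to transferring the RTPIP from $\vv Q$ down to the subquasivariety $\vv Q'$.

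To establish the RTPIP for $\vv Q'$ I would use the characterization in Theorem \ref{RPIP}: it suffices to check that (i) $\vv Q'$ is relatively congruence distributive and (ii) there is a ternary term $p(x,y,z)$ such that, on every $\vv Q'$-irreducible algebra $\alg B$, one has $p(a,b,c) = p(a,b,d)$ iff $a=b$ or $c=d$. For (i), relative congruence distributivity is inherited by subquasivarieties, so it passes from $\vv Q$ to $\vv Q'$. For (ii), the natural candidate is the term $p$ witnessing the RTPIP of $\vv Q$. The key structural input is that, by the Czelakowski--Dziobiak theorem (Theorem \ref{quasivariety}(2)) applied to $\vv Q'=\QQ(\alg A)$ together with $\PP_u(\alg A)=\alg A$ (valid since $\alg A$ is finite), every $\vv Q'$-irreducible algebra is isomorphic to a subalgebra of $\alg A$, and hence lies in $\vv Q$. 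On such a $\alg B\in\vv Q$ the RTPIP of $\vv Q$ gives that $p(a,b,c)=p(a,b,d)$ iff $\cg_\alg B^\vv Q(a,b)\cap\cg_\alg B^\vv Q(c,d)=0_\alg B$; and since $\alg B$ is relatively irreducible, both principal relative congruences lie above its monolith, so the intersection is $0_\alg B$ exactly when $a=b$ or $c=d$. This would yield (ii) and, with (i), the RTPIP for $\vv Q'$, after which Theorem \ref{cddualti} finishes the proof.

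The main obstacle is the mismatch of congruence operators hidden in step (ii). The behaviour of $p$ is dictated by the $\vv Q$-congruences of $\alg B$, whereas the relative irreducibility I want to exploit refers to the $\vv Q'$-congruences, and on a subalgebra of $\alg A$ these two closure operators need not agree: a priori $\cg_\alg B^{\vv Q'}(a,b)\ge\cg_\alg B^\vv Q(a,b)$, possibly with strict inequality. The crux is therefore to prove that every $\vv Q'$-irreducible subalgebra $\alg B\le\alg A$ is in fact $\vv Q$-irreducible (equivalently, that on it the two notions of principal congruence coincide), ruling out the possibility that a $\vv Q$-congruence witnessing reducibility fails to be a $\vv Q'$-congruence. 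I expect this to be precisely the point where the finite generation of $\vv Q'$ and the rigidity imposed by the RTPIP must be combined; once it is settled, $p$ genuinely witnesses the term condition on all $\vv Q'$-irreducibles and the conclusion follows.
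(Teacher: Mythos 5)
Your skeleton is the intended one (reduce to Theorem~\ref{cddualti}; use Theorem~\ref{quasivariety}(2) with $\PP_u(\alg A)=\II(\alg A)$ to place the $\QQ(\alg A)$-irreducibles inside $\II\SU(\alg A)$; use Theorem~\ref{RPIP} to convert the RTPIP into the term condition), and you have put your finger on exactly the right spot. But the obstacle you flag at the end is not a missing lemma: it is fatal, and so is your claim (i). Neither relative congruence distributivity nor relative irreducibility transfers from $\vv Q$ to $\QQ(\alg A)$, and in fact the corollary is false as literally stated. Take $\alg S=\la\{0,1\};d,f\ra$ and $\alg T=\la\{0,1,2\};d,f\ra$, where $d$ is the dual discriminator on each universe, $f^{\alg S}$ is the transposition and $f^{\alg T}$ a $3$-cycle. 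Then $\vv Q:=\VV(\alg S,\alg T)$ is congruence distributive ($d$ is a majority term), its subdirectly irreducible members are exactly $\alg S$ and $\alg T$ (neither has proper subalgebras, and both are simple by Lemma~\ref{semi}), so $\vv Q$ is a dual discriminator variety and has the RTPIP with witness $d$ (Lemma~\ref{RTPIP}; relative congruences here are just congruences). Now let $\alg A:=\alg S\times\alg T\in\vv Q$, which is finite. Since $f^{\alg A}$ is a $6$-cycle, $\alg A$ has no proper subalgebras, and a direct computation with $d$ shows that $\Con A=\{0_\alg A,\ker\pi_{\alg S},\ker\pi_{\alg T},1_\alg A\}$. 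Neither $\alg S$ nor $\alg T$ admits a homomorphism into any power of $\alg A$: composing with a projection would give a homomorphism into $\alg A$ whose image is a proper subalgebra. Hence the two nontrivial proper quotients of $\alg A$ lie outside $\QQ(\alg A)$, so $\op{Con}_{\QQ(\alg A)}(\alg A)=\{0_\alg A,1_\alg A\}$ and $\alg A$ is $\QQ(\alg A)$-irreducible, although it is not even finitely subdirectly irreducible. By Lemma~\ref{semi} no term whatsoever can be a dual i-discriminator on $\alg A$, so $\QQ(\alg A)$ is not a dual i-discriminator quasivariety; moreover, by Dziobiak's criterion (Corollary 2.4 of \cite{Dziobiak1989}, quoted in this paper), $\QQ(\alg A)$ is not even relatively congruence distributive, although $\vv Q$ is. So a $\QQ(\alg A)$-irreducible algebra genuinely need not be (finitely) $\vv Q$-irreducible, and no proof can close the gap you identified.

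What is true — and, judging from the label and from the way Corollary~\ref{subirrsim} is used in the Heyting-algebra remark that follows it, what is surely intended — is the statement for finite $\alg A$ that are in addition finitely $\vv Q$-irreducible (in particular $\vv Q$-irreducible, or subdirectly irreducible). Under the RTPIP of $\vv Q$, finite $\vv Q$-irreducibility of an algebra of $\vv Q$ is exactly your equivalence: $p(a,b,c)=p(a,b,d)$ iff $\cg^{\vv Q}(a,b)\cap\cg^{\vv Q}(c,d)=0$, so the term condition ``$p(a,b,c)=p(a,b,d)$ iff $a=b$ or $c=d$'' holds in $\alg A$ precisely when $\alg A$ is finitely $\vv Q$-irreducible. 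Once this is assumed, no transfer of congruence operators is needed at all: the term condition is a universal sentence, hence holds in every member of $\II\SU(\alg A)$; by Theorem~\ref{quasivariety}(2) and finiteness of $\alg A$ every $\QQ(\alg A)$-irreducible lies in $\II\SU(\alg A)$; and the iteration carried out in the proof of Theorem~\ref{cddualti}, performed uniformly on the finitely many finite members of $\II\SU(\alg A)$, upgrades $p$ to a genuine dual i-discriminator term on all of them, whence $\QQ(\alg A)=\QQ(\II\SU(\alg A))$ is a dual i-discriminator quasivariety by Lemma~\ref{all}. In short, the hypothesis your argument needs cannot be derived from the stated ones; it must be added, and once added, your universal-sentence strategy works with the congruence-transfer worries removed.
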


Observe that Corollary \ref{subirrsim} is a handy tool for testing the
RTPIP for a  quasivariety. For instance, since a Heyting algebra is subdirectly irreducible if and only if the top element
is completely join irreducible, Heyting algebras have the RPIP with $q_1(x,y,z,w) := (x \leftrightarrow y) \join (z \leftrightarrow w)$ and
$q_2(x,y,z,w) \app 1$. However the only simple Heyting algebra is the two element one, so the only variety of Heyting
algebra with the RTPIP is the variety of Boolean algebras.

\subsection{Filtrality}

Let $\alg A \le_{sd} \prod_{i \in I} \alg A_i$; a congruence $\th \in \Con A$ is {\bf filtral} if there is a filter $F$ on $I$ such that
$$
\th = \{(a,b): \{i: a_i=b_i\} \in F\}.
$$
A quasivariety $\vv Q$ is {\bf relatively filtral} if, whenever $\alg A$ is subdirectly embeddable in a product of $\vv Q$-irreducible algebras, then every $\vv Q$-congruence of $\alg A$ is filtral.

\begin{theorem} \cite{CampercholiRaftery2017} Let $\vv Q$ be a quasivariety; then
\begin{enumerate}
\item $\vv Q$ has EDPRC if and only if it has DPRC and the RCEP;
\item $\vv Q$ is relatively filtral if and only if $\vv Q$ has EDPRC and every $\vv Q$-irreducible algebra  is $\vv Q$-simple.
\end{enumerate}
\end{theorem}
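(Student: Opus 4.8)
The plan is to prove the two statements in order, since part~(2) can be bootstrapped from part~(1). Throughout I will use that the relative principal congruence $\cg_\alg A^\vv Q(a,b)$ admits Mal'cev's positive-existential description and that $\op{Con}_\vv Q(\alg A)$ is algebraic, so every $\vv Q$-congruence is the directed union of its compact members, each of which is a finite join $\cg_\alg A^\vv Q(a_1,b_1)\join\dots\join\cg_\alg A^\vv Q(a_n,b_n)$ of principals. For part~(1), the implication EDPRC $\Rightarrow$ DPRC is immediate, since a finite conjunction of equations is a first-order formula. So the content of part~(1) is the equivalence of EDPRC with the conjunction DPRC $\meet$ RCEP.

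First I would show EDPRC $\Rightarrow$ RCEP. Let $\Phi(x,y,z,w)=\bigwedge_j p_j\app q_j$ define principal relative congruences. Because equations are absolute across an embedding $\alg B\le\alg A$, for $a,b,c,d\in B$ we get $(c,d)\in\cg_\alg B^\vv Q(a,b)\iff\alg B\vDash\Phi(a,b,c,d)\iff\alg A\vDash\Phi(a,b,c,d)\iff(c,d)\in\cg_\alg A^\vv Q(a,b)$; that is, principal relative congruences restrict correctly. I would then lift this to finite joins by induction, quotienting out one principal congruence at a time and applying Lemma~\ref{techlemma1} at each step, and finally to arbitrary $\f\in\op{Con}_\vv Q(\alg B)$ by writing $\f$ as a directed union of compacts. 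Setting $\th:=\cg_\alg A^\vv Q(\f)$ (the $\vv Q$-congruence on $\alg A$ generated by $\f\sse B^2$) then yields $\th\cap B^2=\f$, which is RCEP.

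The hard direction is DPRC $\meet$ RCEP $\Rightarrow$ EDPRC, and I expect this to be the main obstacle. By DPRC there is a first-order $\Phi(x,y,z,w)$ defining $\cg_\alg A^\vv Q(a,b)$. The relation it defines is preserved under ultraproducts in $\vv Q$ (the Mal'cev chains are positive-existential, so \L o\'s's Lemma applies) and, by the restriction property granted by RCEP, under subalgebras and extensions inside $\vv Q$. The key step is then a compactness/preservation argument: definability by a \emph{single} first-order formula together with closure under ultraproducts forces a uniform finite bound on the length of the congruence-generating Mal'cev chains, and RCEP lets one discard the existential quantifiers in favour of the witnessing terms. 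This collapses $\Phi$ to a finite conjunction of equations, giving EDPRC. The delicate point is precisely extracting the uniform bound; without closure under ultraproducts one could not rule out unboundedly long chains.

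For part~(2), assume EDPRC together with the hypothesis that every $\vv Q$-irreducible algebra is $\vv Q$-simple, and let $\alg A\le_{sd}\prod_{i\in I}\alg A_i$ with each $\alg A_i$ $\vv Q$-irreducible, hence $\vv Q$-simple; thus each $\op{ker}(p_i)$ is a maximal $\vv Q$-congruence and $\bigcap_i\op{ker}(p_i)=0_\alg A$. Given $\th\in\op{Con}_\vv Q(\alg A)$ I would set $F_\th=\{J\sse I:\bigcap_{i\in J}\op{ker}(p_i)\le\th\}$, verify it is a filter using the relative congruence distributivity that EDPRC entails, and check that $\th=\{(a,b):\{i:a_i=b_i\}\in F_\th\}$; the nontrivial inclusion rests on the equational definability of principal congruences and on the simplicity (hence maximality of $\op{ker}(p_i)$) of the factors, so $\th$ is filtral. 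For the converse, suppose $\vv Q$ is relatively filtral. Taking the trivial one-point subdirect representation $\alg A\le_{sd}\alg A$, the only filtral congruences are $0_\alg A$ and $1_\alg A$, so every $\vv Q$-irreducible $\alg A$ is $\vv Q$-simple. Finally, filtrality yields DPRC (reading $(c,d)\in\cg_\alg A^\vv Q(a,b)$ off the filter attached to a subdirect decomposition into $\vv Q$-irreducibles, via Theorem~\ref{quasivariety}(1)) and RCEP (filters extend along index-set inclusions when passing to a subalgebra), so EDPRC follows from part~(1).
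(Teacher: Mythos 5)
First, a point of reference: the paper does not prove this theorem at all --- it is quoted from Campercholi and Raftery \cite{CampercholiRaftery2017} --- so there is no in-paper argument to compare yours against; the proposal has to stand on its own. Parts of it do. EDPRC $\Rightarrow$ DPRC is trivial; EDPRC $\Rightarrow$ RCEP by restricting the defining equations to a subalgebra, inducting on finite joins via Lemma \ref{techlemma1}, and passing to directed unions is a standard, workable plan; and your derivation of $\vv Q$-simplicity of $\vv Q$-irreducible algebras from relative filtrality, via the singleton subdirect representation $\alg A \le_{sd} \alg A$ whose only filtral congruences are $0_\alg A$ and $1_\alg A$, is correct and clean.

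The genuine gaps sit exactly at the three junctures that constitute the content of the theorem, and in each case you substitute a gesture for the key argument. For DPRC $\meet$ RCEP $\Rightarrow$ EDPRC: granting that ultraproduct closure bounds the length of Mal'cev chains, and that the RCEP lets you take chain witnesses inside the subalgebra generated by $\{a,b,c,d\}$, hence as term values $t(a,b,c,d)$, what you actually obtain is that $(c,d) \in \cg_\alg A^\vv Q(a,b)$ is defined by a (after compactness, finite) \emph{disjunction} of finite conjunctions of equations, since different instances need different witnessing terms and chain orientations. A finite disjunction of conjunctions of equations is not EDPRC; collapsing it to a single conjunction is the heart of the matter and requires a separate idea (evaluate the relation at a generic instance, i.e.\ a suitable finitely presented algebra on $x,y,z,w$, select the disjunct holding there, and propagate it to arbitrary instances by preservation under homomorphisms). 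Nothing in your sketch supplies this. In part (2), your family $F_\th=\{J \sse I : \bigcap_{i\in J}\op{ker}(p_i) \le \th\}$ is not obviously closed under intersection: shrinking $J$ \emph{enlarges} $\bigcap_{i\in J}\op{ker}(p_i)$, and $\bigcap_{i\in J_1\cap J_2}\op{ker}(p_i)$ can strictly exceed the join of the two congruences you control, so "verify it is a filter using relative congruence distributivity" assumes essentially what is to be proved; likewise the inclusion $\th \sse \th_{F_\th}$ needs $\bigcap_{i:\,a_i=b_i}\op{ker}(p_i) \le \th$ for $(a,b)\in\th$, which is precisely where EDPRC and $\vv Q$-simplicity must be used quantitatively. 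Finally, the converse claim that filtrality "yields DPRC" is unsupported: the filter description of $\cg_\alg A^\vv Q(a,b)$ is relative to a chosen subdirect representation and is prima facie not a first-order condition on $\alg A$ alone; converting it into one (and extracting the RCEP) is again real work. So the proposal is a reasonable roadmap, but each hard step is missing its proof.
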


\begin{lemma} If $\vv Q$ is a dual i-discriminator variety, with witness term $p(x,y,z)$, then $\vv Q$ has DPRC and the CEP. In fact if $\alg A \in \vv Q$ and $a,b,c,d \in A$ then
$$
(c,d) \in \cg_\alg A^\vv Q(a,b)\quad\text{if and only if}\quad \forall u [ p(c,d,u) = p(c,d,p(a,b,u))].
$$
\end{lemma}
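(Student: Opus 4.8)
The plan is to prove the displayed biconditional, which says exactly that the first-order formula $\Phi(x,y,z,w):=\forall u\,[p(z,w,u)\app p(z,w,p(x,y,u))]$ defines principal relative congruences; this gives DPRC, and the CEP will be read off from it. The whole argument rests on reducing to $\vv Q$-irreducible algebras, where by Lemma \ref{all} the term $p$ is literally the dual i-discriminator and hence, by Lemma \ref{semi}, each such algebra is simple, so that its relative congruences are only $0_\alg A$ and $1_\alg A$. Recall also that $\vv Q$ is relatively congruence distributive.

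First I would settle the $\vv Q$-irreducible case by a direct computation. Let $\alg S$ be $\vv Q$-irreducible (hence simple) and fix $a,b,c,d\in S$, writing $\pi(x)=p(x,x,x)$. If $a\ne b$ then $p(a,b,u)=u$ for every $u$, so the two sides of the defining equation coincide and $\Phi(a,b,c,d)$ holds, matching $\cg^\vv Q_\alg S(a,b)=1_\alg S$. If $a=b$ then $p(a,b,u)=\pi(a)$ is constant, so $\Phi(a,b,c,d)$ asserts $p(c,d,u)=p(c,d,\pi(a))$ for all $u$; when $c\ne d$ the left side is $u$ and the right side is $\pi(a)$, which fails on the nontrivial $\alg S$, while when $c=d$ both sides equal $\pi(c)$. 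Thus for $a=b$ the formula holds iff $c=d$, matching $\cg^\vv Q_\alg S(a,b)=0_\alg S$. Hence on every $\vv Q$-irreducible algebra, $\Phi(a,b,c,d)$ holds iff $(c,d)\in\cg^\vv Q_\alg S(a,b)$.

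Next I would transfer this to an arbitrary $\alg A\in\vv Q$, using that $\alg A$ embeds subdirectly into a product $\prod_i\alg A_i$ of $\vv Q$-irreducible (hence simple) algebras by Theorem \ref{quasivariety}(1), with projections $h_i$. For the implication $(c,d)\in\cg^\vv Q_\alg A(a,b)\Rightarrow\Phi(a,b,c,d)$, each $h_i$ sends the pair $(c,d)$ into $\cg^\vv Q_{\alg A_i}(h_i a,h_i b)$ (homomorphisms carry relative principal congruences into relative principal congruences, by Lemma \ref{techlemma1}); by the irreducible case $\Phi(h_i a,h_i b,h_i c,h_i d)$ then holds in each $\alg A_i$, and since the defining equation is evaluated coordinatewise, $\Phi(a,b,c,d)$ holds in $\alg A$. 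For the converse, the key observation is that the universally quantified equation $\Phi$ is preserved by surjective homomorphisms: if $g$ is onto and $\Phi(a,b,c,d)$ holds, then lifting each element of the image shows $\Phi(ga,gb,gc,gd)$ holds there. Now let $\mu$ be any completely meet irreducible $\vv Q$-congruence of $\alg A$ with $(a,b)\in\mu$; then $\alg A/\mu$ is $\vv Q$-irreducible (Lemma \ref{lemma: q-irreducible}), the images of $a,b$ coincide, and $\Phi$ holds in $\alg A/\mu$, so the irreducible case (with the first two arguments equal) forces $g(c)=g(d)$, i.e. $(c,d)\in\mu$. Since $\cg^\vv Q_\alg A(a,b)$ is the intersection of all such $\mu$, we get $(c,d)\in\cg^\vv Q_\alg A(a,b)$, completing the biconditional and hence DPRC.

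Finally, for the CEP I would exploit that the same formula defines relative principal congruences in every member of $\vv Q$. For $\alg B\le\alg A$ and $a,b\in B$ the inclusion $\cg^\vv Q_\alg B(a,b)\subseteq\cg^\vv Q_\alg A(a,b)\cap B^2$ is automatic, and the reverse holds because a pair $(c,d)\in B^2$ satisfying $\Phi$ in $\alg A$ a fortiori satisfies it in $\alg B$ (restricting the quantifier $\forall u$ from $A$ to $B$ only weakens the condition); thus $\cg^\vv Q_\alg A(a,b)\cap B^2=\cg^\vv Q_\alg B(a,b)$. To upgrade this to the full relative congruence extension property I would write an arbitrary $\phi\in\op{Con}_\vv Q(\alg B)$ as an intersection of completely meet irreducible $\vv Q$-congruences $\mu_j$, each with $\alg B/\mu_j$ simple, extend each $\mu_j$ to a $\vv Q$-congruence $\nu_j$ of $\alg A$ with $\nu_j\cap B^2=\mu_j$ via a maximal-congruence argument in the style of Proposition \ref{pr-maxcon}, and take $\bigcap_j\nu_j$, which restricts to $\phi$ since restriction commutes with arbitrary intersections. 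I expect the main obstacle to be exactly this last step: the clean, formula-based reasoning delivers the restriction identity only for principal congruences, and passing to arbitrary relative congruences forces one to extend completely meet irreducible congruences, where the relative congruence distributivity of $\vv Q$ and the simplicity of its $\vv Q$-irreducible quotients are essential.
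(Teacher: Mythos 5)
Your proof of the displayed biconditional (the DPRC part, which is the heart of the lemma) is correct, but it takes a genuinely different route from the paper's. The paper argues lattice-theoretically: by Lemma \ref{semi} and Theorem \ref{thm: join coprincipal}, $\cg^\vv Q_\alg A(a,b)$ and its pseudocomplement $\g^\vv Q_\alg A(a,b)$ are complements in the distributive lattice $\op{Con}_\vv Q(\alg A)$, so $(c,d)\in\cg^\vv Q_\alg A(a,b)$ if and only if $\g^\vv Q_\alg A(a,b)\le\g^\vv Q_\alg A(c,d)$; by Theorem \ref{RPIP}(3) the latter is the condition $\forall u,v\,[p(a,b,u)=p(a,b,v)\Rightarrow p(c,d,u)=p(c,d,v)]$, which a short computation using the identity $p(x,y,p(x,y,z))\app p(x,y,z)$ converts into the displayed universally quantified equation. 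You instead verify the biconditional directly on $\vv Q$-irreducible (hence simple) algebras and transfer it: the forward direction through Mal'cev's subdirect decomposition (Theorem \ref{quasivariety}(1)) together with Lemma \ref{techlemma1}, the converse through completely meet irreducible $\vv Q$-congruences and preservation of the universal formula under surjections. Both arguments are sound; yours is more elementary and self-contained (it bypasses the RTPIP/pseudocomplement machinery entirely, and in fact never uses relative congruence distributivity), while the paper's is shorter given that machinery and makes explicit that the kernel of $z\longmapsto p(a,b,z)$ is exactly $\g^\vv Q_\alg A(a,b)$, a fact it reuses later in Lemma \ref{dualendomorphism}.

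On the congruence extension part there is, however, a genuine gap. Like the paper, you derive the principal restriction identity $\cg^\vv Q_\alg A(a,b)\cap B^2=\cg^\vv Q_\alg B(a,b)$ from the universality of the formula; the paper then simply cites \cite{BlokPigozzi1997} for the upgrade to the full RCEP, whereas you attempt to prove the upgrade, and your sketch does not close. A Zorn-style argument ``in the style of Proposition \ref{pr-maxcon}'' produces a $\nu_j\in\op{Con}_\vv Q(\alg A)$ that is maximal with respect to $\nu_j\cap B^2\sse\mu_j$, but maximality alone gives only that inclusion; nothing forces the reverse inclusion $\mu_j\sse\nu_j$, which is what $\nu_j\cap B^2=\mu_j$ needs (Proposition \ref{pr-maxcon} concerns the restriction being $0_\alg B$, not a prescribed congruence). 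To close the gap one must show that for each $(a,b)\in\mu_j$ the join $\nu_j\join\cg^\vv Q_\alg A(a,b)$ still lies in the Zorn family: pass to $\alg A/\nu_j$, note that by Lemma \ref{techlemma1} the image of this join is the principal congruence $\cg^\vv Q_{\alg A/\nu_j}(\ol a,\ol b)$, apply the principal restriction identity to the subalgebra $\alg B/(\nu_j\cap B^2)\le\alg A/\nu_j$, and pull back with Lemma \ref{techlemma1} again; maximality then yields $(a,b)\in\nu_j$. This is precisely the content of the Blok--Pigozzi theorem the paper invokes. Note also that once this argument is available it works for an arbitrary $\f\in\op{Con}_\vv Q(\alg B)$ directly: complete meet irreducibility of the $\mu_j$, simplicity of $\alg B/\mu_j$, and relative congruence distributivity play no role, contrary to your closing guess; what matters is only that the principal identity holds uniformly for every subalgebra pair in $\vv Q$, so that it can be applied inside quotients.
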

\begin{proof} By Lemma \ref{semi} and Theorem \ref{thm: join coprincipal},
$\cg^\vv Q_\alg A(a,b)$ and $\g^\vv Q_\alg A(a,b)$ are complements, thus
$$
 \cg^\vv Q_\alg A(c,d) \le \cg^\vv Q_\alg
A(a,b)\qquad\text{if and only if}\qquad\g^\vv Q_\alg A(a,b) \le \g^\vv Q_\alg A(c,d).
$$
By the definition of $\g^\vv Q_\alg A(a,b)$ the latter is equivalent to
\begin{equation}\label{eq5}
\forall u,v\ \ p(a,b,u) = p(a,b,v) \quad \Longrightarrow\quad
p(c,d,u) = p(c,d,v).
\end{equation}
Now assume the latter and let $v = p(a,b,u)$. Since the equation
$$
p(x,y,p(x,y,z)) \app p(x,y,z)
$$
holds in $\vv Q$ we get
$$
p(a,b,u) =  p(a,b,p(a,b,u)) = p(a,b,v);
$$
therefore
$$
p(c,d,u) = p(c,d,v) = p(c,d,p(a,b,u)).
$$
Conversely, if for all $u$, $p(c,d,u) = p(c,d,p(a,b,u))$ and $p(a,b,u) =
p(a,b,v)$, then
$$
p(c,d,u) = p(c,d,p(a,b,u)) = p(c,d,p(a,b,v)) = p(c,d,v).
$$
Therefore that $\vv Q$ has DPRC. Also  observe that the formula
defining principal $\vv Q$-congruences in $\vv Q$ is universal and so it is preserved
by subalgebras. Hence if $\alg A \in \vv Q$ any principal
$\vv Q$-congruence  of a subalgebra $\alg B$ of $\alg A$ extends to a principal
$\vv Q$-congruence of $\alg A$. This is enough to guarantee the RCEP \cite{BlokPigozzi1997}.
\end{proof}

\begin{corollary} \label{did relatively filtral} Every dual i-discriminator quasivariety is relatively filtral.
\end{corollary}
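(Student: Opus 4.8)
The plan is to reduce everything to the Campercholi--Raftery characterization quoted just above: a quasivariety is relatively filtral if and only if it has EDPRC and each of its $\vv Q$-irreducible members is $\vv Q$-simple. So, fixing a dual i-discriminator quasivariety $\vv Q$ with witness term $p(x,y,z)$, I only need to verify these two conditions.

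The $\vv Q$-simplicity condition is immediate. By definition (or by Lemma \ref{all}, which in fact gives the stronger statement for all finitely $\vv Q$-irreducible algebras) $p(x,y,z)$ is a dual i-discriminator term on every $\vv Q$-irreducible algebra in $\vv Q$; here I use that a $\vv Q$-irreducible algebra is a fortiori finitely $\vv Q$-irreducible. Lemma \ref{semi} then forces each such algebra to be simple in the absolute sense, and since $\op{Con}_\vv Q(\alg A) \sse \Con A$, an absolutely simple algebra has no proper nontrivial $\vv Q$-congruence; hence every $\vv Q$-irreducible member of $\vv Q$ is $\vv Q$-simple.

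For EDPRC I would apply part (1) of the cited theorem, namely that a quasivariety has EDPRC exactly when it has both DPRC and the RCEP. The lemma immediately preceding this corollary supplies both: it produces the explicit universally quantified definition
$$
(c,d) \in \cg_\alg A^\vv Q(a,b) \iff \forall u\,[\,p(c,d,u) = p(c,d,p(a,b,u))\,],
$$
establishing DPRC, and it also proves the RCEP. Part (1) then upgrades DPRC together with the RCEP to genuine EDPRC.

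Having both EDPRC and the $\vv Q$-simplicity of all $\vv Q$-irreducibles, part (2) of the theorem concludes that $\vv Q$ is relatively filtral. There is no serious difficulty remaining: all the substantive work has been absorbed into Lemmas \ref{semi} and \ref{all}, the preceding lemma, and the Campercholi--Raftery theorem, so the corollary is simply their correct assembly. If anything demands a moment's attention it is the passage from DPRC---which the preceding lemma delivers only through a first-order formula carrying a universal quantifier---to equational EDPRC, and this is exactly what combining DPRC with the RCEP buys us via part (1) of the theorem.
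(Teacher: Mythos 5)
Your proof is correct and is exactly the argument the paper intends: the corollary is stated immediately after the Campercholi--Raftery theorem and the lemma giving DPRC and the RCEP, and its implicit proof is precisely your assembly (simplicity of $\vv Q$-irreducibles via Lemma \ref{semi}, DPRC plus RCEP upgraded to EDPRC by part (1), then part (2) concluding relative filtrality). Nothing in your write-up deviates from the paper's route, and the one point you flag as delicate---passing from a universally quantified congruence formula to equational definability---is handled the same way in both.
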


We will close this section with some examples.

\begin{example} There are filtral varieties without the RTPIP, which shows that the implication in Corollary \ref{did relatively filtral}
cannot be reversed.

A {\bf de Morgan algebra}
is an algebra $\la A,\join,\meet,\overline{\phantom{a}},0,1\ra$ in which
the $\{\meet,\join,0,1\}$-reduct is a bounded distributive lattice and
moreover
$$
\overline{x \join y} = \overline{x}\meet\overline{y}\qquad
\overline{x \meet y} = \overline{x}\join\overline{y}\qquad
\overline{\overline{x}} = x.
$$
The only  subdirectly irreducible de Morgan
algebras are shown in Figure \ref{demorgan} (see \cite{Kalman1958}) and they are all simple; moreover the variety $\vv M$ of de
Morgan algebras
has EDPC \cite{EDPC1} and hence it is filtral.
\medskip

\begin{figure}[htbp]
\begin{center}
\begin{tikzpicture}[scale=.9]
\draw (0,0) --(0,1);
\draw (3,0) -- (3,2);
\draw  (7,0) -- (6,1) -- (7,2) -- (8,1) --(7,0);
\draw[fill] (0,0) circle [radius=0.05];
\draw[fill] (0,1) circle [radius=0.05];
\draw[fill] (3,0) circle [radius=0.05];
\draw[fill] (3,1) circle [radius=0.05];
\draw[fill] (3,2) circle [radius=0.05];
\draw[fill] (7,0) circle [radius=0.05];
\draw[fill] (6,1) circle [radius=0.05];
\draw[fill] (7,2) circle [radius=0.05];
\draw[fill] (8,1) circle [radius=0.05];
\node[right] at (0,0) {\footnotesize  $0=\ol{1}$};
\node[right] at (0,1) {\footnotesize  $1=\ol{0}$};
\node[right] at (3,0) {\footnotesize  $0=\ol{1}$};
\node[right] at (3,1) {\footnotesize  $a=\ol{a}$};
\node[right] at (3,2) {\footnotesize  $1=\ol{0}$};
\node[right] at (7,0) {\footnotesize  $0=\ol{1}$};
\node[left] at (6,1) {\footnotesize  $a=\ol{b}$};
\node[right] at (8,1) {\footnotesize  $b=\ol{a}$};
\node[right] at (7,2.2) {\footnotesize  $1=\ol{0}$};
\node[below] at (0,-.2) {\footnotesize  $\alg{M}_2$};
\node[below] at (3,-.2) {\footnotesize  $\alg{M}_3$};
\node[below] at (7,-.2) {\footnotesize  $\alg{M}_4$};
\end{tikzpicture}
\end{center}
\caption{Simple de Morgan algebras}\label{demorgan}
\end{figure}
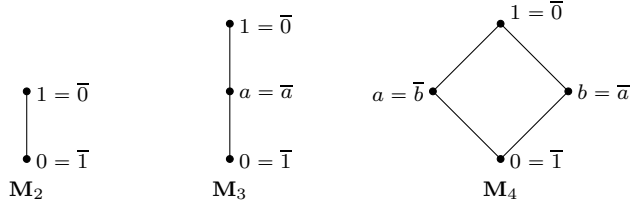

The variety $\vv K$ of {\bf Kleene algebras} is the variety generated by $\alg M_3$ and we will show that $\vv K$ does not have the TPIP.  From now on, let us  denote $\join$ and $\meet$
by $+$ and juxtaposition, in order  to take advantage of the familiar
linear algebra notation.  By
using the fact that the operation $x \mapsto \bar x$ is a dual
automorphism, we may write any unary
polynomial function $p(x)$ of $\alg A \in \vv M$
 in the form $p(x) = \alpha x \bar x
+ \beta x + \gamma \bar x + \delta$ for $\alpha, \beta, \gamma, \delta
\in A$.
\bigskip

\noindent{\bf Claim.}
{\it Let $p(x)$ be a unary polynomial on $\alg M_3$; then either $p(x)$ has a
constant Boolean value ($0$ or $1$) or else $p(a)= a$. Hence
if $p(x)$ is one-to-one on $\alg M_3$, then either $p(x) = x$
or $p(x) = \bar x$; in particular $p(x)$ takes Boolean elements ($0$ and $1$)
to Boolean elements.}
\medskip

To prove this claim, observe that
$$
p(a) = \a a\bar a +\b a + \g \bar a + \d = (\a + \b + \g) a + \d
$$
for some $\a,\b,\g,\d \in \alg M_3$. If $\d = 1$, then $p(x)$ is constantly
$1$; if $\d = a$, then $p(x)$ is constantly $a$. Finally if
$\d = 0$, then $p(x)$ is constantly $0$ if $\a + \b + \g = 0$ and
otherwise $p(a) = a$.
For the second assertion of the claim, note that $p(a) = a$ and, since $p(x)$
is one-to-one, it must take $0,a,1$ either to $0,a,1$ or $1,a,0$.
\qed
\medskip

Now suppose that $\vv K$ has the RTPIP with ternary intersection term $t(x,y,z)$. Then the polynomial $p(z) = t(0,a,z)$ is
one-to-one on $\alg M_3$ and hence by the Claim
$p(0) = t(0,a,0)$ is Boolean, i.e. $0$ or $1$.
Now consider the polynomial $q(y) = t(0,y,0)$; since
$q(a)= t(0,a,0) \ne a$, then $q(y)$ is constant giving
$t(0,0,0) = q(0) = q(1) = t(0,1,0)$.
Similarly, $t(0,0,1) = t(0,1,1)$. Since
$q'(z) = t(0,0,z)$ is assumed to be constant we  also have
$t(0,0,0) = t(0,0,1)$. Putting these facts together we get
$$
t(0,1,0) = t(0,0,0) = t(0,0,1) = t(0,1,1),
$$
which contradicts the assumption that $t(x,y,z)$ is a ternary intersection
term.\qed
\end{example}

\begin{remark}  The variety $\vv K$ of Kleene algebras does have the
RPIP, with witness terms $q_1(x,y,z,w) =h(x,y) + h(z,w)$, where
$h(x,y) = x \bar xy\bar y$, and $q_2(x,y,z,w) = q_1(x,y,z,w) + (x + y)(\bar x
+ \bar y)(z + w) (\bar z + \bar w)$.
To see this first observe that
the only subdirectly irreducibles in $\vv K$ are
$\alg M_3$ and the two-element Boolean algebra $\alg M_2$. However,
since $\alg M_2$ is a subalgebra of $\alg M_3$ it is enough to consider
the latter. We have hence to show that for any  $r,s,u,v \in \alg M_3$
\begin{equation}
p(r,s,u,v) = q(r,s,u,v)\qquad\text{if and only if}\qquad r = s\quad \text{or}\quad u =
v. \tag{P}
\end{equation}
First observe that
$$
h(r,s) = \begin{cases} 0\quad\text{if $(r,s) \ne (a,a)$}\\
a\qquad\text{otherwise} \end{cases}.
$$
Suppose then that either $(r,s) = (a,a)$ or $(u,v) = (a,a)$ in
(P). Then $p(r,s,u,v) = a = q(u,v,r,s)$, since
the summands in the definition of $q$ have values $a$ and at most $a$.

Otherwise, $h(r,s)=h(u,v) = 0$ so (P) becomes
$$
0 = (r +s)(\bar r + \bar s)(u + v)(\bar u + \bar v);
$$
since $0$ is meet irreducible this occurs when some  factor
is $0$. But this happens exactly when $r=s$ or $u = v$, which is precisely
what (P) asserts.

 It can be shown that the variety $\vv M$ of de Morgan
algebras does not have the RPIP. We will not prove it but we observe that  the above reasoning
fails in  $\vv M$: in  $\alg M_4$,  (P) is contradicted
by
$$
p(0,a,0,b) = 0 = q(0,a,0,b).
$$
\end{remark}

\begin{example}\label{realring} There are also quasivarieties with the RTPIP that fail to be filtral, showing that finite generation cannot be removed from the  hypotheses of Corollary \ref{subirrsim}.
Let $\alg A = \la \mathbb R,+,-,\cdot,0,\join,\meet,\sigma\ra$, where $\sigma$ is the unary operation
given by $\sigma(r) = \sqrt{|r|}$ for $r \in \mathbb R$. Thus $\alg A$
is a lattice ordered ring with an additional operation $\sigma$ and it is easy to check that  in
$\VV(\alg A)$ the congruences are determined by the $\sigma$-ideals,
i.e. the convex ring ideals such that $a-b \in I$ implies
$\sigma(a)-\sigma(b) \in I$.

Let now $\vv Q=\QQ(\alg A)$ and  $t(x,y,z) = (x-y)z$; to prove that $t(x,y,z)$ witnesses the RTPIP for $\vv Q$ it is enough to show that every algebra in $\II\SU\PP_u(\alg A)$ is an integral
domain, regarded as a ring (because of Theorems \ref{quasivariety} and \ref{RPIP}). But this is obvious, since $\alg A$ is an integral domain and we can express ``integral domain''  via a universal sentence.

To show that $\vv Q$ is not relatively filtral we observe first that in a relative filtral quasivariety $\vv R$, if an algebra $\alg B\in \vv R$ is such that $\op{Con}_\vv R(\alg B)$ is totally ordered, then
$\alg B$ is $\vv R$-simple.  Indeed, since $\vv R$-irreducible algebras are $\vv R$-simple,
every strictly meet irreducible $\vv R$-congruence of $\alg B$  is a
coatom in $\op{Con}_\vv R(\alg B)$ and since $\op{Con}_\vv R(\alg  B)$ is totally ordered there can be only one coatom.
But $0_\alg B \in \op{Con}_\vv R(\alg B)$  and it is the meet of strictly meet irreducible congruences
so it is itself a coatom and thus $\alg B$ is $\vv R$-simple.

We will produce an algebra in $\vv Q$ with a totally ordered lattice of $\vv Q$-congruences that it is not simple.  Let $\alg A^*$ be any ultraproduct of $\alg A$; then the universe of $\alg A^*$ is a nonstandard
model $\mathbb R^*$ of the reals and since $\alg A^*$ is totally ordered, then  $\op{Con}_\vv Q(\alg A^*)$ is totally ordered. Let $F$ be all the elements of $\mathbb R^*$ that are not infinite; it is trivial to check that $F$ is the universe of a subalgebra $\alg F$ of $\alg A^*$.  We claim that
if $I$ is the set of infinitesimals in $F$ is a $\sigma$-ideal of $\alg F$; this happens because the continuity of $\sigma$ as a real-valued function ensures that if $a$ and $b$ are infinitely close in $F$, then so are $\sigma(a)$ and $\sigma(b)$.  Let $\th_I$ be the congruence associated to $I$; then $\th_I$ collapses all the infinitesimals to $0$, so $\alg F/\th_I \cong \alg A$ and $\th_I \in \op{Con}_\vv Q(\alg F)$. As $\th_I$ is neither the largest or the smallest $\vv Q$-congruence of $\alg F$ we have proved out claim.\qed
\end{example}

\subsection{Fixedpoint discriminator quasivarieties}

In this section we will explore an important subclass of dual i-discriminator quasivarieties. A pointed quasivariety $\vv Q$ (and from now on we will denote constant by $0$) is a
{\bf fixedpoint discriminator quasivariety} if there is a ternary term $d(x,y,z)$ such that, for any $\vv Q$-irreducible algebra $\alg A \in \vv Q$ and for any $a,b,c \in A$
$$
t(a,b,c) = \left\{
             \begin{array}{ll}
               c, & \hbox{if $a=b$;} \\
               0, & \hbox{if $a \ne b$.}
             \end{array}
           \right.
$$

\begin{theorem}\label{fixpoint discriminator} For a pointed quasivariety $\vv Q$ the following are equivalent:
\begin{enumerate}
\item $\vv Q$ is a fixedpoint discriminator variety;
\item $\vv Q$ is a dual i-discriminator variety with dual i-discriminator term $p(x,y,z)$ satisfying
$p(x,x,x) \app 0$.
\end{enumerate}
\end{theorem}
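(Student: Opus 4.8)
The plan is to prove both implications by exhibiting explicit term compositions that toggle one discriminator into the other, verifying the required behaviour on $\vv Q$-irreducible algebras only and lifting the needed identity to all of $\vv Q$ via the Mal'cev subdirect representation (Theorem \ref{quasivariety}(1)). Throughout I write $\pi(x) = p(x,x,x)$ as in the definition of the dual i-discriminator.

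For (1) $\Rightarrow$ (2), assume $\vv Q$ has a fixedpoint discriminator term $d(x,y,z)$, so on every $\vv Q$-irreducible $\alg A$ one has $d(a,b,c)=c$ if $a=b$ and $d(a,b,c)=0$ if $a\ne b$. I would set
$$ p(x,y,z) := d(d(x,y,z),0,z) $$
and check case by case on $\vv Q$-irreducible algebras that $p(a,b,c)=0$ when $a=b$ (the inner value is $c$, and $d(c,0,c)=0$ whether or not $c=0$) and $p(a,b,c)=c$ when $a\ne b$ (the inner value is $0$, and $d(0,0,c)=c$). This is exactly the dual i-discriminator pattern with $\pi$ constantly $0$; in particular $p(x,x,x)=d(d(x,x,x),0,x)=d(x,0,x)=0$ holds on every $\vv Q$-irreducible algebra, hence $\vv Q\vDash p(x,x,x)\app 0$ by Theorem \ref{quasivariety}(1). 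That $p$ is genuinely a dual i-discriminator term (the clauses $p(a,a,b)\app p(a,a,c)$ and the idempotency of $\pi$) is then immediate since $\pi$ is identically $0$.

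For (2) $\Rightarrow$ (1), assume $p(x,y,z)$ is a dual i-discriminator term with $\vv Q\vDash p(x,x,x)\app 0$, so on $\vv Q$-irreducible algebras $p(a,b,c)=0$ if $a=b$ and $p(a,b,c)=c$ if $a\ne b$. Here the symmetric composition fails, because the diagonal of $p$ is $0$ rather than a pass-through; instead I would set
$$ d(x,y,z) := p(p(x,y,z),z,z) $$
and verify on $\vv Q$-irreducibles that $d(a,b,c)=c$ when $a=b$ (the inner value is $0$, and $p(0,c,c)=c$, using $p(x,x,x)\app 0$ in the degenerate subcase $c=0$) and $d(a,b,c)=0$ when $a\ne b$ (the inner value is $c$, and $p(c,c,c)=0$). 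This is precisely the fixedpoint discriminator, so $\vv Q$ is a fixedpoint discriminator quasivariety.

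The only genuine obstacle is discovering the two compositions above; once they are written down the verification is a finite case analysis on $\vv Q$-irreducible algebras, and the one subtle identity, $p(x,x,x)\app 0$ in the first implication, transfers from $\vv Q$-irreducibles to all of $\vv Q$ by subdirect representation. The point worth flagging is the asymmetry: for (1)$\Rightarrow$(2) one iterates $d$ with the constant $0$ in the middle slot and keeps $z$ as the final passenger, whereas for (2)$\Rightarrow$(1) one iterates $p$ with $z$ in both remaining slots. Swapping these roles (using $d(d(x,y,z),z,z)$ or $p(p(x,y,z),0,z)$ instead) merely reproduces the original term, precisely because the diagonals $d(a,a,c)=c$ and $p(a,a,c)=0$ behave oppositely.
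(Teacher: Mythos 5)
Your proof is correct and follows essentially the same route as the paper's: both directions are handled by exhibiting an explicit composition of the given discriminator term and verifying its behaviour by case analysis on $\vv Q$-irreducible algebras (with the equation $p(x,x,x)\app 0$ transferring to all of $\vv Q$ by subdirect representation). Your terms $d(d(x,y,z),0,z)$ and $p(p(x,y,z),z,z)$ differ from the paper's choices $d(0,d(x,y,z),z)$ and $p(z,p(x,y,z),z)$ only in the placement of arguments, and they compute the same functions on the irreducibles.
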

\begin{proof} Assume (1) and let $p(x,y,z) = d(0,d(x,y,z),z)$; then for any $\alg A \in \vv Q_{ir}$ and $a,b,c \in A$
$$
p(a,b,c) = \left\{
             \begin{array}{ll}
               0, & \hbox{if $a=b$;} \\
               c, & \hbox{if $a\ne b$.}
             \end{array}
           \right.
$$
Then $p(a,a,a) = 0$ and (2) holds.

Conversely assume (2) and let $d(x,y,z) = p(z,p(x,y,z),z)$. Let $\alg A \in \vv Q_{ir}$
and $a,b,c \in A$; then if $a=b$
$$
d(a,a,c) = p(c,p(a,a,c),c) = p(c,0,c) = c
$$
and if $a \ne b$
$$
d(a,b,c) = p(c,p(a,b,c),c) =  p(c,c,c) = 0.
$$
Therefore $d(x,y,z)$ is a fixedpoint discriminator for $\vv Q$.
\end{proof}

\begin{example}\label{implalg} (Implication algebras) An {\em implication algebra} is
an algebra with a single binary operation $\imp$ axiomatized by
\begin{align*}
&(x \imp y) \imp x \approx x \\
&(x \imp y) \imp y \approx (y \imp x)\imp x\\
& x \imp (y\imp z) \approx y \imp (x \imp z).
 \end{align*}
Equivalently it is an  $\imp$-subreduct of a Boolean algebra.
The only subdirectly irreducible implication algebra is the simple algebra
$\la \{0,1\},\imp\ra$, where $\imp$ is the Boolean implication. A dual
i-discriminator on $\la \{0,1\},\imp\ra$ with respect to $1$ is
$$
p(x,y,z) = [(x\imp y) \imp ((y\imp x) \imp z)] \imp z.
$$
and of course $p(x,x,x)=1$.  So the variety of implication algebras is
a fixedpoint discriminator variety by Theorem \ref{fixpoint discriminator}.
Implication algebras are not congruence permutable \cite{Mitschke1971},
hence they are not a discriminator variety. Moreover  one shows by induction
that, for any ternary term $t(x,y,z)$, at least
one of $t(0,0,1)$, $t(0,1,0)$, $t(1,0,0)$ evaluates to $1$ in $\la
\{0,1\},\imp\ra$ \cite{EDPC1}. Hence the variety of implication algebras
is not a dual discriminator variety.\qed
\end{example}

\begin{remark} In \cite{EDPC3} the term {\em fixedpoint discriminator variety} refers to a slightly different concept.
The {\em fixedpoint discriminator function} on a set $A$ is a ternary function $t(x,y,z)$ with the property that there is a $t_A \in A$ (called the {\bf discriminating element})
such that for any $a,b,c \in A$
$$
t(a,b,c) = \left\{
             \begin{array}{ll}
               c, & \hbox{if $a =b$;} \\
               t_A, & \hbox{if $a=b$.}
             \end{array}
           \right.
$$
A {\em fixedpoint discriminator variety} in the sense of \cite{EDPC3} is a variety having a fixedpoint discriminator function on each subdirectly irreducible algebra.

Let us show that the concepts differ. Let $\alg Q = \la \{0,1\},q\ra$; we regard $\alg Q$
as a term reduct of the two-element Boolean algebra, where
$$
q(x,y,z) = x \meet ((y\meet z) \join (y' \meet z')).
$$
By Theorem \ref{birkhoff}, $\alg Q$ is the unique subdirectly
irreducible member of $\VV(\alg Q)$ and for any $a,b,c \in Q$
$$
q(a,b,c)=\begin{cases} a&\text{if $b=c$}\\ 0&\text{if $b\ne c$.}\end{cases}.
$$
Thus $t(x,y,z) = q(z,x,y)$ is a fixedpoint discriminator function for $\alg
Q$ with discriminating element $0$ and $\VV(\alg Q)$ is a fixedpoint
discriminator variety in the sense of \cite{EDPC3}. It is easily seen that $\VV(\alg Q)$ has the RPIP
witnessed by
$$
q_1(x,y,z,w)=q(x,y,q(x,z,w))\qquad q_2(x,y,z,w)=q(y,x,q(y,z,w)).
$$
However $\VV(\alg Q)$ does not have the RTPIP (\cite{AglianoBaker1999} Proposition 2.6) so it cannot be a fixedpoint discriminator variety.

 The problem is that the discriminating element $0$ of $\alg Q$ above is not induced
by any constant term of the variety. If we add it to type i.e., we consider
$\VV(\la\{0,1\},q,0\ra)$, then this variety is termwise equivalent to
the variety of implication algebras.

This is not a coincidence; it is easily seen that a pointed (quasi)variety
 is a fixedpoint discriminator variety if and only if
it is a fixedpoint discriminator variety in the sense of \cite{EDPC3} in which the discriminating element
is a constant.  However the latter is quite a mouthful, so we will stick to our name.
\end{remark}

A fixedpoint discriminator variety is not merely
a variety with a constant arbitrarily designated: there is no ternary term
that is the  fixedpoint discriminator with constant $0$ or $1$ in the
variety of bounded distributive lattices (which is a dual discriminator variety).
Rather we will see  that ``local'' properties near $0$ in a member of a  fixedpoint discriminator variety
determine its ``global'' properties.

For classical pointed structures, like groups, rings or
Boolean algebras, one often deals with subsets (like normal subgroups, ideals
or filters) that have two properties:
\begin{enumerate}
\ib   they are the congruence blocks of some constant;
\ib   they are definable by certain ``closure''  properties.
\end{enumerate}
For instance a subset $I$ of a commutative ring $\alg R$ is an ideal if and
only if $r \in R$ and $a \in I$ implies $ra \in I$. Using
the classical description of terms in commutative rings,  $I$ is an ideal
of $\alg R$ if and only if for any term $t(\vuc xm,\vuc yn)$ in the language
of rings such that
\begin{equation}
t(\vuc xm,0,\dots,0) \approx 0,\tag{$**$}
\end{equation}
then for any $\vuc rm \in R$ and $\vuc an \in I$,
$$
t(\vucc rm,\vuc an) \in I.
$$
It is a classical theorem in ring theory that it is sufficient to check
the above for the terms $y_1-y_2$ and $x_1y_1$.

Let now $\vv K$ be any class of algebras whose type contains a  $0$.
 A term $p(\vuc xm,\vuc yn)$ is a {\bf $\vv K$-ideal term in $\vec y$} \cite{OSV1}  if the identity
$p(\vec x,0,\dots,0) \ap 0$ holds in $\vv K$.
A nonempty subset $I$ of $\alg A
\in \vv K$ is a {\bf $\vv K$-ideal} of $\alg A$ if for any $\vv K$-ideal term $p(\vec x,\vec y)$, for $\vec a \in A$ and $\vec b \in I$, $p(\vec a,
\vec b)\in I$. Under inclusion,
the set $\op{I}_{\vv K}(\alg A)$ of all $\vv K$-ideals of $\alg A$ is an
algebraic lattice; if $H \sse A$, the $\vv K$-ideal generated by $H$
is easily seen to be the set
$$
I_\vv K( H) =\{p(\vec a,\vec b) : p(\vec x,\vec y)\ \text{a $\vv K$-ideal term}, \vec a \in A, \vec b \in H\}.
$$
Moreover one easily checks that for all $\th \in \Con A$, $0/\th\in \op{I}_\vv K(\alg A)$ and that the map
from $\Con A$ into $\op{I}_{\vv K}(\alg A)$ defined by
$\th \longmapsto 0/\th$ is a lattice homomorphism.

In general being an ideal is not an absolute concept and depends on the class $\vv K$ we are considering; moreover in general the above homomorphism
is not onto, i.e. there are ideals which are not $0$-classes of any congruence. However there is a sufficient condition for this to happen.

Let $\vv K$ be any class of algebras whose type contains a  $0$; {\bf subtraction term} for $\vv K$ is a binary term $s(x,y)$ such that $\vv K$
satisfies
$$
s(x,x) \app 0\qquad s(x,0) \app x.
$$
In this case we say that $\vv K$ is {\bf $s$-subtractive} or simply {\bf subtractive}.  The key point is the following observation.

\begin{lemma} (\cite{OSV1}, Proposition 1.4) Let $\vv K$ be a subtractive class  and let $\alg A \in \vv K$; then every $\vv K$-ideal of $\alg A$ is the $0$-class of a congruence of $\alg A$.
\end{lemma}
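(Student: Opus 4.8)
The plan is to realise $I$ as the $0$-class of the congruence it visibly generates, namely $\theta := \cg_\alg A(I \times \{0\})$, the smallest congruence of $\alg A$ collapsing $I$ onto $0$. First observe that $0 \in I$: since $I \ne \varnothing$ and $s(y,y) \ap 0$ holds in $\vv K$, the unary term $s(y,y)$ is a $\vv K$-ideal term, so $0 = s(e,e) \in I$ for any $e \in I$. Because $(a,0) \in \theta$ for every $a \in I$, we have $I \sse 0/\theta$; the whole content is therefore the reverse inclusion $0/\theta \sse I$, which will give $0/\theta = I$ and hence the claim. (Note that $0/\theta$ is automatically a $\vv K$-ideal, as already observed, but we will not even need this.)

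For $0/\theta \sse I$, I would fix $b$ with $(b,0) \in \theta$ and invoke the standard description of a generated congruence (see \cite{BurrisSanka}): there is a chain $b = c_0, c_1, \dots, c_m = 0$ and, for each $j$, a unary polynomial $\lambda_j(u) = r_j(\vec e_j, u)$ (with $r_j$ a term of $\vv K$ and parameters $\vec e_j \in A$) together with some $a_j \in I$ such that $\{c_j, c_{j+1}\} = \{\lambda_j(a_j), \lambda_j(0)\}$. I would then prove, by downward induction on $j$, that every $c_j \in I$; the base case is $c_m = 0 \in I$, and the final output $c_0 = b \in I$ is exactly what is wanted.

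The inductive step -- assuming $c_{j+1} \in I$ and deducing $c_j \in I$ -- is where subtractivity does the real work, and is the main obstacle. The idea is to recover $c_j$ from the two elements $a_j, c_{j+1} \in I$ by an appropriate $\vv K$-ideal term. Set
$$Q(\vec x, y, z) = s\bigl(r_j(\vec x, y),\, s(r_j(\vec x, 0), z)\bigr), \qquad Q'(\vec x, y, z) = s\bigl(r_j(\vec x, 0),\, s(r_j(\vec x, y), z)\bigr).$$
Using $s(w,0) \ap w$ and $s(w,w) \ap 0$ one checks that $Q(\vec x, 0, 0) \ap 0 \ap Q'(\vec x, 0, 0)$, so both are $\vv K$-ideal terms in the variables $(y,z)$, and moreover that $Q(\vec x, y, r_j(\vec x, 0)) \ap r_j(\vec x, y)$ and $Q'(\vec x, y, r_j(\vec x, y)) \ap r_j(\vec x, 0)$. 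Consequently, if $c_j = \lambda_j(a_j)$ (so $c_{j+1} = \lambda_j(0)$) then $c_j = Q(\vec e_j, a_j, c_{j+1})$, while if $c_j = \lambda_j(0)$ (so $c_{j+1} = \lambda_j(a_j)$) then $c_j = Q'(\vec e_j, a_j, c_{j+1})$; in either orientation $c_j$ is the value of a $\vv K$-ideal term evaluated at the elements $a_j, c_{j+1} \in I$, whence $c_j \in I$. This closes the induction and the proof.

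The genuinely delicate points are precisely the two that use that $\vv K$ is subtractive: checking that $Q$ and $Q'$ vanish when $y = z = 0$ (which needs both subtraction identities), and bookkeeping the two possible orientations of each link $\{c_j, c_{j+1}\}$ in the chain so that the correct term $Q$ or $Q'$ is applied. I would also flag why the approach must go through the \emph{generated} congruence: the naive candidate $\{(a,b) : s(a,b), s(b,a) \in I\}$ need not even be transitive, so it is the chain decomposition of $\theta$, together with the recovery terms $Q, Q'$, that makes the argument succeed.
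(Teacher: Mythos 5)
Your proof is correct: the Mal'cev chain description of $\cg_\alg A(I\times\{0\})$ together with the recovery terms $Q,Q'$ (which are $\vv K$-ideal terms exactly because both subtraction identities hold throughout $\vv K$) gives $0/\theta = I$, and every step checks out. The paper does not reproduce a proof but cites \cite{OSV1}, Proposition 1.4, and your argument is essentially the standard one given there, so there is nothing substantive to compare beyond noting the agreement.
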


Therefore being a $\vv K$-ideal of an algebra in a subtractive class is an {\em absolute} concept and we can drop the decoration $\vv K$ and simply talk about ideals of $\alg A$. Since
any subtractive class generates a subtractive variety, really the theory of ideals in subtractive classes is the theory of ideals in subtractive {\em varieties}. This theory has been developed
at length in the 1990 by A. Ursini and the first author (see \cite{OSV4} and the bibliography therein). Here we simply quote a general description, whose proof can be found in \cite{OSV1}.

\begin{theorem}\label{sub} For a variety $\vv V$ the following are equivalent:
\begin{enumerate}
\item $\vv V$ is subtractive;
\item for any algebra $\alg A \in \vv V$ and any $\th,\f \in \Con A$
$$
0/(\th \join \f) = 0/(\th \circ \f),
$$
where $0/(\th \circ\f) = \{a \in A: \text{exists a}\ b \in A\ \text{with}\
0\mathrel{\th}b\mathrel{\f}a\}$;
\item for any $\alg A \in \vv V$ the mapping $\th \longmapsto 0/\th$ from $\Con A$ to $\op{I}(\alg A)$ is a surjective and
complete lattice homomorphism.
\end{enumerate}
\end{theorem}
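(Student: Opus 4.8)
The plan is to establish $(1)\Leftrightarrow(2)$ and $(1)\Leftrightarrow(3)$, using a subtraction term $s(x,y)$ (so that $\vv V\vDash s(x,x)\app 0$ and $\vv V\vDash s(x,0)\app x$) as the common pivot; the two genuinely new inputs are pushing $s$ across a single composition and reading a subtraction term off a $0$-class. For $(1)\Rightarrow(2)$ only the inclusion $0/(\th\join\f)\sse 0/(\th\circ\f)$ needs work. First I would record the symmetry $0/(\th\circ\f)=0/(\f\circ\th)=:J$: given $0\mathrel\th b\mathrel\f a$, put $w=s(a,b)$; since $(a,b)\in\f$ we get $w\mathrel\f s(a,a)=0$, and since $(b,0)\in\th$ we get $w\mathrel\th s(a,0)=a$, so $0\mathrel\f w\mathrel\th a$ and $a\in0/(\f\circ\th)$, the reverse inclusion being identical. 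Then $0\in J$, and $J$ is closed under single $\th$- and $\f$-steps: closure under $\f$ is immediate from transitivity of $\f$, while for a new step $a\mathrel\th a'$ the element $w=s(a,b)$ re-presents $a$ as $0\mathrel\f w\mathrel\th a$, so that $0\mathrel\f w\mathrel\th a'$ and $a'\in 0/(\f\circ\th)=J$. As $\th\join\f$ is the transitive closure of $\th\cup\f$, an induction on chain length gives $0/(\th\join\f)\sse J$, as required.

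For $(2)\Rightarrow(1)$ I would work in $\alg F_\vv V(\{x,y\})$ and set $\th=\cg(x,y)$, $\f=\cg(y,0)$. Then $0\mathrel\f y\mathrel\th x$ shows $x\in 0/(\th\join\f)$, so by $(2)$ there is $b$ with $0\mathrel\th b\mathrel\f x$. Writing $b=s(x,y)$ for a binary term, the relation $0\mathrel\th b$ read modulo $\cg(x,y)$ (i.e.\ under $x=y$) gives $s(x,x)\app 0$, and $b\mathrel\f x$ read modulo $\cg(y,0)$ (i.e.\ under $y=0$) gives $s(x,0)\app x$; hence $s$ is a subtraction term.

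For $(1)\Leftrightarrow(3)$, surjectivity of $\th\mapsto 0/\th$ is exactly the preceding Lemma, and meets are automatic since $0/(\Meet_i\th_i)=\bigcap_i 0/\th_i$. The one substantial point is join preservation, i.e.\ that $0/(\th\join\f)$ equals the ideal $J$ generated by $0/\th\cup 0/\f$; since $J$ is an ideal containing both classes and $0/(\th\join\f)$ is one, only $0/(\th\join\f)\sse J$ is at issue, and by $(2)$ this reduces to $0/(\th\circ\f)\sse J$. Here is the second key trick: by the Lemma $J=0/\psi$ for some $\psi\in\Con A$; given $0\mathrel\th b\mathrel\f a$ we have $b\in 0/\th\sse J$, so $b\mathrel\psi 0$, and $s(a,b)\mathrel\f s(b,b)=0$ puts $s(a,b)\in 0/\f\sse J$, so $s(a,b)\mathrel\psi 0$; passing to $\alg A/\psi$, the equality $\bar b=0$ forces $0=\overline{s(a,b)}=s(\bar a,0)=\bar a$ by $s(x,0)\app x$, whence $a\in J$. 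Arbitrary joins then follow by reducing a connecting chain to finitely many factors and iterating, giving $(1)\Rightarrow(3)$. For $(3)\Rightarrow(1)$ I would use that $(3)$ makes $\th\mapsto 0/\th$ onto on every algebra; this surjectivity is the converse of the quoted Lemma and yields a subtraction term by realizing, in $\alg F_\vv V(\{x,y\})$, the ideal generated by $x$ as the $0$-class of a congruence collapsing it, exactly as in $(2)\Rightarrow(1)$.

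The main obstacle throughout is the gap between the lattice join $\th\join\f$, which is a transitive closure of a priori unbounded length, and the single composition $\th\circ\f$; both $(1)\Rightarrow(2)$ and the join-preservation half of $(1)\Rightarrow(3)$ hinge on the fact that one application of $s$ collapses a length-two alternation, which is precisely what makes the transitive closure terminate at a single $\th\circ\f$ step. The computations applying $s$ to congruent pairs are routine once the correct element $w=s(a,b)$ and the correct passage to $\alg A/\psi$ are identified.
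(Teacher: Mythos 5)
Your $(1)\Leftrightarrow(2)$ and your $(1)\Rightarrow(3)$ are correct, and they follow the classical route (note that the paper itself does not prove Theorem \ref{sub}; it quotes it from \cite{OSV1}, so the comparison is with that standard argument): the element $w=s(a,b)$ collapsing a $\th$--$\f$ alternation, the extraction of $s$ in $\alg F_{\vv V}(\{x,y\})$ from $\th=\cg_{\alg F}(x,y)$ and $\f=\cg_{\alg F}(y,0)$ via the kernels of the substitutions $y\mapsto x$ and $y\mapsto 0$, and the passage to $\alg A/\psi$ in the join-preservation step are all sound. The problem is $(3)\Rightarrow(1)$.

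In that direction you propose to use only surjectivity of $\th\longmapsto 0/\th$, i.e.\ the bare converse of the quoted Lemma, and that is genuinely weaker than $(3)$: it does not imply subtractivity. Counterexample: the variety of pointed sets, whose type consists of a single constant $0$ and nothing else. There the ideals of any algebra are exactly the subsets containing $0$, and every such subset $I$ is the $0$-class of the equivalence relation whose blocks are $I$ and the remaining singletons (every equivalence relation is a congruence, since there are no operations to respect); so the $0$-class map is onto for every algebra in the variety. Yet the only binary terms are $x$, $y$ and $0$, and none of them is a subtraction term. (Statement $(3)$ does fail here, but only through its join half: with $A=\{0,a,b\}$, $\th$ collapsing $\{0,a\}$ and $\f$ collapsing $\{a,b\}$, one has $b\in 0/(\th\join\f)$ while the ideal join $0/\th\join 0/\f$ is $\{0,a\}$.) Consequently your plan of ``realizing the ideal generated by $x$ as the $0$-class of a congruence, exactly as in $(2)\Rightarrow(1)$'' cannot be completed: knowing $(x,0)\in\psi$ for \emph{some} congruence $\psi$ yields no identities, because, unlike $\cg_{\alg F}(x,y)$ and $\cg_{\alg F}(y,0)$, the congruence $\psi$ comes with no computable description of its kernel.

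The repair must invoke the join-preservation half of $(3)$, which your sketch never uses in this direction. In $\alg F_{\vv V}(\{x,y\})$ with $\th,\f$ as above, $(3)$ gives $x\in 0/(\th\join\f)=0/\th\join 0/\f$, the ideal generated by $H=0/\th\cup 0/\f$; by the explicit description of generated ideals quoted in the paper, $x=p(\vec a,\vec b)$ where $p(\vec x,\vec y)$ is an ideal term in $\vec y$, $\vec a\in F$, and each $b_j\in H$. Each $b_j$ is (represented by) a binary term satisfying either $b_j(x,x)\app 0$ (when $b_j\in 0/\th$) or $b_j(x,0)\app 0$ (when $b_j\in 0/\f$) in $\vv V$. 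Now put $s(x,y):=p(\vec a(x,y),\vec c(x,y))$, where $c_j:=b_j$ if $b_j\in 0/\th$ and $c_j:=0$ if $b_j\in 0/\f$. Then $s(x,x)\app 0$, because every entry $c_j(x,x)\app 0$ and $p$ is an ideal term; and $s(x,0)\app x$, because substituting $y\mapsto 0$ in the identity $x\app p(\vec a,\vec b)$ and using $b_j(x,0)\app 0$ for the replaced entries turns its right-hand side into exactly $s(x,0)$. With this argument in place of your last paragraph, the proof is complete; everything else you wrote stands.
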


It is obvious that if a subtractive variety $\vv V$ is also point regular at $0$ in the classical sense, then for any $\alg A \in \vv V$ the epimorphism from $\Con A$ to $\op{I}(\alg A)$ is in fact
an isomorphism; such varieties are usually called {\bf ideal determined} \cite{OSV1}.

\begin{remark} Let $\alg A$ be the algebra in Example  \ref{realring}. Then $\VV(\alg A)$ is a good
example of this behavior. In fact $\VV (\alg A)$ is a variety of $\ell$-rings with operators ring subtraction witness at the same
time point-regularity at $0$ and subtractivity. So $\VV(\alg A)$ is ideal determined and  a $\sigma$-ideal of any
$\alg B\in \VV(\alg A)$ is simply an ideal in our sense.
\end{remark}

We observe that while subtractivity of a class implies subtractivity for the variety generated by that class this is not true
for point regularity. For instance the quasivariety of $\mathsf{BCK}$-algebras is point regular at $1$ but generates a variety that is not point regular at $1$ (see Section 4.5 in \cite{OSV3}).

A quasivariety $\vv Q$ (whose type contains a constant $0$) is {\bf relatively $0$-regular} if for all $\alg A \in \vv Q$ and for all $\th,\f \in \op{Con}_\vv Q(\alg A)$, if $0/\th =0/\f$, then $\th = \f$. The proof of the following theorem is easily patterned after the analogous result for varieties.

\begin{theorem}\label{regular} For a quasivariety $\vv Q$  whose type contains  a constant $0$
the following are equivalent;
\begin{enumerate}
\item  $\vv Q$ is relatively $0$-regular;
\item  there are binary terms $\vuc rn$ such that the equivalence
$$
r_1(x,y) = \dots = r_n(x,y) = 0 \qquad \text{if and only if} \qquad x = y
$$
holds in $\vv Q$;
\item  for any algebra $\alg A \in \vv Q$ and $\th \in \op{Con}_\vv Q(\alg A)$
$$
0/\th = \{0\}\qquad\text{if and only if}\qquad \th = 0_\alg A.
$$
\item  for any algebra $\alg A \in \vv Q$ and   $a,b \in A$
$$
0/\cg^\vv Q_\alg A(a,b) = \{0\} \qquad\text{if and only if}\qquad a = b.
$$

\end{enumerate}
\end{theorem}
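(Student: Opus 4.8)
The plan is to establish the cycle of implications $(1)\Rightarrow(3)\Rightarrow(4)\Rightarrow(2)\Rightarrow(1)$. The first two links are immediate and carry no content: $(3)$ is the instance of $(1)$ obtained by taking $\f = 0_\alg A$ (so that $0/\f = \{0\}$), and $(4)$ is the instance of $(3)$ obtained by taking $\th = \cg_\alg A^\vv Q(a,b)$, using that $\cg_\alg A^\vv Q(a,b) = 0_\alg A$ precisely when $a=b$. So the mathematical work sits in the two remaining links, namely the construction of the terms in $(2)$ from the weak hypothesis $(4)$, and the recovery of relative $0$-regularity from those terms.

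For $(4)\Rightarrow(2)$ I would work in the free algebra $\alg F = \alg F_\vv Q(x,y)$ on two generators. Put $\th = \cg_\alg F^\vv Q(x,y)$ and $I = 0/\th$, and let $\lambda$ be the $\vv Q$-congruence of $\alg F$ generated by $\{(c,0) : c \in I\}$. Then $\lambda \le \th$, while every $c \in I$ satisfies $(c,0)\in\lambda$, so in the quotient $\alg F/\lambda \in \vv Q$ the $0$-class of the induced congruence $\th/\lambda$ is $\{u/\lambda : (u,0)\in\th\} = \{u/\lambda : u \in I\} = \{0/\lambda\}$, i.e.\ it is trivial. Since $\th$ is principal and $\lambda \le \th$, Lemma \ref{techlemma1} gives $\th/\lambda = \cg_{\alg F/\lambda}^\vv Q(x/\lambda, y/\lambda)$; applying hypothesis $(4)$ to this principal relative congruence with trivial $0$-class forces $x/\lambda = y/\lambda$, whence $\th \le \lambda$ and therefore $\lambda = \th$.

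Now I would invoke compactness: $\th = \cg_\alg F^\vv Q(x,y)$ is a compact element of the algebraic lattice $\op{Con}_\vv Q(\alg F)$, whereas $\lambda = \bigvee_{c\in I}\cg_\alg F^\vv Q(c,0)$, so from $\th = \lambda$ there are finitely many $c_1,\dots,c_n \in I$ with $\th = \bigvee_{i=1}^n \cg_\alg F^\vv Q(c_i,0)$. Choosing terms $r_i(x,y)$ representing $c_i$ in $\alg F$, the membership $c_i \in I = 0/\cg_\alg F^\vv Q(x,y)$ translates via Lemma \ref{lemma: equivalent} into $\{x\ap y\}\vdash_\vv Q r_i \ap 0$, which is exactly $\vv Q \vDash r_i(x,x)\ap 0$; this is the $\Leftarrow$ direction of $(2)$. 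For the $\Rightarrow$ direction, given $\alg B \in \vv Q$ and $a,b$ with $r_i(a,b)=0$ for all $i$, the homomorphism $h:\alg F \to \alg B$ sending $x\mapsto a$, $y\mapsto b$ has each $(c_i,0)\in\op{ker}(h)$; as $\op{ker}(h)$ is a $\vv Q$-congruence (because $h(\alg F)\le\alg B\in\vv Q$) containing the generators of $\th$, we get $(x,y)\in\th\le\op{ker}(h)$, so $a=b$. Finally $(2)\Rightarrow(1)$ is a short Mal'cev-style argument: if $0/\th = 0/\f$ and $(a,b)\in\th$, then $\vv Q\vDash r_i(x,x)\ap 0$ forces $r_i(a,b)\in 0/\th = 0/\f$, so $r_i(a/\f,b/\f)=0$ in $\alg A/\f\in\vv Q$ for each $i$, and the quasiequation in $(2)$ yields $a/\f=b/\f$; by symmetry $\th=\f$.

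The main obstacle is the step $(4)\Rightarrow(2)$. The difficulty is that $(4)$ is a very weak hypothesis, speaking only about principal relative congruences with a trivial $0$-class, so one cannot apply it directly to $\th$ on $\alg F$; the device of collapsing $I$ to $0$ and passing to $\alg F/\lambda$ is needed precisely to manufacture a principal relative congruence whose $0$-class is a singleton, and the verification that $\th/\lambda$ is again principal (Lemma \ref{techlemma1}) and that its $0$-class collapses is the delicate relative-congruence bookkeeping. The extraction of \emph{finitely many} terms then rests on the compactness of principal relative congruences in $\op{Con}_\vv Q(\alg F)$, which is what converts the a priori infinite generating set $\{(c,0):c\in I\}$ into the finite list $r_1,\dots,r_n$.
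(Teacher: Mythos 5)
Your proof is correct: every link in the cycle $(1)\Rightarrow(3)\Rightarrow(4)\Rightarrow(2)\Rightarrow(1)$ checks out, including the delicate step $(4)\Rightarrow(2)$, where collapsing $I=0/\cg^{\vv Q}_{\alg F}(x,y)$ to $0$ in $\alg F_{\vv Q}(x,y)$, invoking Lemma \ref{techlemma1} to keep the quotient congruence principal, and then using compactness of principal relative congruences to extract finitely many terms is exactly the right bookkeeping. The paper gives no explicit proof, saying only that it "is easily patterned after the analogous result for varieties," and your argument is precisely that classical Mal'cev-style pattern for $0$-regular varieties transported to relative congruences, so it coincides with the intended proof.
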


 Now let $\vv Q$ be relatively $0$-regular; if $\alg A \in \vv Q$ and $I = 0/\th$ for some $\th \in \op{Con}_\vv Q(\alg A)$, then $\th$ is the only $\vv Q$-congruence with that property. So we can define a {\bf $\vv Q$-ideal} to be an ideal of $\alg A$ that is
the $0$-block of a (necessarily unique) $\vv Q$-congruence of $\alg A$.  Now, as the intersection of a family of $\vv Q$-ideals is a $\vv Q$-ideal, $\vv Q$-ideals form an algebraic lattice
which we denote by $\op{I}_\vv Q(\alg A)$.  The following lemma can be obtained  by combining the results in Section 5 and 6 of
\cite{BlokRaftery1998}, keeping in mind that,  in a  relatively $0$-regular quasivariety, the {\em strong ideals} in the sense of \cite{BlokRaftery1998}  are $0$-classes of $\vv Q$-congruences and hence $\vv Q$-ideals in our sense.

\begin{lemma}\label{relativehom} If $\vv Q$ is relatively $0$-regular and $\alg A \in \vv Q$ then the mapping $\th \longmapsto 0/\th$ is a lattice isomorphism from $\op{Con}_\vv Q(\alg A)$ to $\op{I}_\vv Q(\alg A)$.
\end{lemma}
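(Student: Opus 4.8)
The plan is to show that the map $\Phi \colon \th \longmapsto 0/\th$ is an \emph{order} isomorphism between the two algebraic lattices $\op{Con}_\vv Q(\alg A)$ and $\op{I}_\vv Q(\alg A)$, both ordered by inclusion; since a bijection between lattices that preserves and reflects the order automatically carries least upper bounds to least upper bounds and greatest lower bounds to greatest lower bounds, this immediately upgrades to a lattice isomorphism. The advantage of routing through the order is that it avoids having to compute the relative join in $\op{Con}_\vv Q(\alg A)$ (which need not agree with the join taken in $\Con A$) and then match it against the join of $\vv Q$-ideals; that join is precisely the step that would be delicate in a direct verification that $\Phi$ is a lattice homomorphism.

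First I would check that $\Phi$ is a well-defined bijection. Well-definedness is immediate, since $0/\th$ is by definition a $\vv Q$-ideal whenever $\th \in \op{Con}_\vv Q(\alg A)$. Surjectivity is also built into the definition of a $\vv Q$-ideal, as each such ideal is the $0$-block of some (necessarily unique) $\vv Q$-congruence. Injectivity is exactly the standing hypothesis of relative $0$-regularity, equivalently condition (1) of Theorem \ref{regular}: if $0/\th = 0/\f$ then $\th = \f$.

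Next I would verify that $\Phi$ preserves and reflects inclusion. One direction is trivial: $\th \le \f$ gives $0/\th \sse 0/\f$. For the converse, suppose $0/\th \sse 0/\f$. Using the elementary identity $0/(\th \cap \f) = (0/\th) \cap (0/\f)$, the assumption yields $0/(\th \cap \f) = 0/\th$. Since the meet of two $\vv Q$-congruences is again a $\vv Q$-congruence, we have $\th \cap \f \in \op{Con}_\vv Q(\alg A)$, so relative $0$-regularity (the injectivity of $\Phi$) forces $\th \cap \f = \th$, that is $\th \le \f$. Hence $\Phi$ is an order isomorphism.

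Finally, because both $\op{Con}_\vv Q(\alg A)$ and $\op{I}_\vv Q(\alg A)$ are lattices and $\Phi$ is a bijection preserving and reflecting order, $\Phi$ sends the least upper bound and greatest lower bound of any pair to the corresponding bounds of the images, so it is a lattice isomorphism. The only point requiring care is the one already flagged, namely the behaviour of joins, but the order-theoretic argument dispatches it at no cost; I therefore expect no genuine obstacle beyond invoking relative $0$-regularity at the two places indicated (for injectivity, and for reflecting the order).
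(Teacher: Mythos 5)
Your proof is correct, but it takes a genuinely different route from the paper: the paper gives no direct argument for this lemma at all, instead obtaining it by citing Sections 5 and 6 of Blok--Raftery's work on ideals in quasivarieties, after observing that in a relatively $0$-regular quasivariety the \emph{strong ideals} of that theory are exactly the $0$-classes of $\vv Q$-congruences, i.e. the $\vv Q$-ideals. Your argument is self-contained and elementary: you check that $\th \longmapsto 0/\th$ is a well-defined bijection (surjectivity is built into the definition of $\vv Q$-ideal, injectivity is precisely relative $0$-regularity), and you reflect the order via the identity $0/(\th \cap \f) = (0/\th) \cap (0/\f)$ together with the fact that $\op{Con}_\vv Q(\alg A)$ is closed under intersection, so that injectivity applied to the pair $\th \cap \f$, $\th$ forces $\th \le \f$. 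The final order-theoretic upgrade is sound: a bijection between two lattices ordered by inclusion that preserves and reflects the order carries least upper bounds and greatest lower bounds to the corresponding bounds of the images, so the delicate point (matching the relative join of $\vv Q$-congruences against the join of $\vv Q$-ideals) is dispatched without any computation. What your route buys is transparency and independence from external machinery; what the paper's citation buys is that the correspondence arrives embedded in the general Blok--Raftery framework, which also supplies related facts (e.g. descriptions of joins and algebraicity of the ideal lattice) that the authors can reuse. The only cosmetic gloss in your write-up is the phrase ``by definition'' for well-definedness: strictly speaking one also needs the easy fact, recorded earlier in the paper, that $0/\th$ is an ideal for \emph{every} congruence $\th$, since a $\vv Q$-ideal is required to be an ideal as well as a $0$-block of a $\vv Q$-congruence.
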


To prove the main theorem of this section we need to recall the theory of definability of principal ideals developed in \cite{OSV4}, adapted to subtractive quasivarieties.  A subtractive quasivariety $\vv Q$ has  {\bf equationally definable principle relative ideals} (EDPRI) if there is an $n \in \mathbb N$ and binary terms $d_i,e_i$, $i=1,\dots,n$ such that for all $\alg A \in\vv Q$ and $a,b \in A$
$$
a\in I_\vv Q(b)\qquad\text{if and only if}\qquad d_i(a,b) = e_i(a,b),\   i =1,\dots,n.
$$
The following lemma is a straightforward generalization of   Theorem 2.2 in \cite{OSV4}.

\begin{lemma}\label{EDPRC EDPRI} If a subtractive quasivariety $\vv Q$ has EDPRC, then it has EDPRI; if $\vv Q$ is relatively ideal determined, then it has
EDRPC if and only if it has EDPRI.
\end{lemma}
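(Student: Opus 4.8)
The plan is to prove both implications through the dictionary between principal relative congruences and principal relative ideals. The first fact I would record is that in any subtractive quasivariety the principal relative ideal $I_\vv Q(b)$ coincides with the $0$-class $0/\cg_\alg A^\vv Q(0,b)$. Indeed, $0/\cg_\alg A^\vv Q(0,b)$ is a $\vv Q$-ideal containing $b$, and if $0/\f$ is any $\vv Q$-ideal containing $b$ then $(0,b)\in\f$, whence $\cg_\alg A^\vv Q(0,b)\le\f$ and $0/\cg_\alg A^\vv Q(0,b)\sse 0/\f$. Thus membership in a principal relative ideal is exactly the membership of the pair $(0,a)$ in the principal relative congruence $\cg_\alg A^\vv Q(0,b)$, and this observation needs only subtractivity.

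For the first implication, assume $\vv Q$ is subtractive and has EDPRC, witnessed by equations so that $(c,d)\in\cg_\alg A^\vv Q(a,b)$ if and only if $\alg A\vDash\bigwedge_{i=1}^n p_i(a,b,c,d)\app q_i(a,b,c,d)$. Specializing the generating pair to $(0,b)$ and the tested pair to $(0,a)$ and invoking the previous paragraph, $a\in I_\vv Q(b)$ if and only if $\alg A\vDash\bigwedge_{i=1}^n p_i(0,b,0,a)\app q_i(0,b,0,a)$. Setting $d_i(x,y):=p_i(0,y,0,x)$ and $e_i(x,y):=q_i(0,y,0,x)$ yields binary terms witnessing EDPRI, which proves the first claim.

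For the converse, under relative ideal determinedness, I would run the dictionary backwards. By Theorem \ref{regular}(2) choose binary terms $r_1,\dots,r_m$ such that $r_1(x,y)=\dots=r_m(x,y)=0$ holds in $\vv Q$ exactly when $x=y$; then in any $\alg A/\th\in\vv Q$ one has $(c,d)\in\th$ if and only if $r_j(c,d)\in 0/\th$ for all $j$. Applying this with $\th=\cg_\alg A^\vv Q(a,b)$, and noting that each $r_j(a,b)\mathrel{\cg_\alg A^\vv Q(a,b)}r_j(a,a)=0$ while $0$-regularity forces $\cg_\alg A^\vv Q(a,b)\le\cg_\alg A^\vv Q(\{(r_j(a,b),0):j\})$, the isomorphism $\th\mapsto 0/\th$ of Lemma \ref{relativehom} identifies $0/\cg_\alg A^\vv Q(a,b)$ with the relative ideal generated by $\{r_1(a,b),\dots,r_m(a,b)\}$. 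Consequently $(c,d)\in\cg_\alg A^\vv Q(a,b)$ if and only if $r_j(c,d)\in I_\vv Q(\{r_1(a,b),\dots,r_m(a,b)\})$ for every $j$, and it remains only to turn these finitely generated ideal memberships into equations in $a,b,c,d$.

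The main obstacle is precisely this last step: EDPRI delivers equational definability for \emph{principal} relative ideals, whereas the $0$-class of a principal relative congruence is in general finitely, rather than singly, generated. Resolving it requires propagating equational definability from principal to finitely generated relative ideals, which is carried out exactly as in the proof of Theorem 2.2 of \cite{OSV4}, transported to the relative setting through Lemma \ref{relativehom} and the relative ideal theory of \cite{BlokRaftery1998}. Granting that propagation, each condition $r_j(c,d)\in I_\vv Q(\{r_1(a,b),\dots,r_m(a,b)\})$ becomes a finite conjunction of equations in $a,b,c,d$, and the conjunction of all of them over $j=1,\dots,m$ furnishes the finite equational formula defining principal relative congruences, establishing EDPRC.
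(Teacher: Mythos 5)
Your proposal is correct and follows essentially the paper's own route: the paper gives no argument at all here, simply declaring the lemma ``a straightforward generalization of Theorem 2.2 in \cite{OSV4}'', and what you write out --- the identification $I_{\vv Q}(b)=0/\cg_{\alg A}^{\vv Q}(0,b)$, the variable-specialization proving that EDPRC implies EDPRI, and the regularity-term reduction of principal relative congruences to finitely generated relative ideals --- is exactly that generalization made explicit. The one step you defer, propagating equational definability from principal to finitely generated relative ideals (accomplished by iterating the term $u$ of Lemma \ref{term} through successive quotients, which is also the shape of the term $d(x,y,z)$ the paper later builds in Theorem \ref{rid rfi}), is precisely what the cited proof in \cite{OSV4} supplies, so your account is, if anything, more detailed than the paper's.
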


In \cite{OSV4} it is shown that in subtractive variety with EDPRI we can always reduce the number of terms witnessing  EDPRI to one; actually one easily sees, by looking at the proofs of the relevant parts of Theorems 2.6, 3.1 and 3.4 in \cite{OSV4},  that the conclusions hold for any subtractive class that is closed under $\SU$ and $\PP$. In particular they hold for quasivarieties and relative ideals so we can formulate the following:

\begin{lemma}\label{term} Let $\vv Q$ be a subtractive quasivariety with EDPRI; then there exists a binary term $u(x,y)$ such that for all $\alg A \in \vv Q$ and $a,b \in A$
\begin{align*}
&u(a,a) =0\\
&u(a,0) = a\\
&u(0,a)= 0\\
& a \in I_\vv Q(b)\qquad\text{if and only if}\qquad  u(a,b) = 0.
\end{align*}
\end{lemma}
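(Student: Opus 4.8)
The plan is to deduce the lemma from the reduction theorems of \cite{OSV4}, transferred to the relative setting. Recall that the preceding remark observed that the proofs of Theorems 2.6, 3.1 and 3.4 of \cite{OSV4} invoke the ambient class only through closure under $\SU$ and $\PP$; since a quasivariety is closed under both, and since in a subtractive class every relative ideal is the $0$-class of a $\vv Q$-congruence (the relative version of Proposition 1.4 of \cite{OSV1}, which again uses only $\SU$ and $\PP$), those arguments apply to $\vv Q$ and its relative ideals $I_\vv Q(-)$. Thus the strategy is: first compress the $n$ defining pairs $d_i,e_i$ of the EDPRI hypothesis into a single pair, and then normalize that single pair, with the help of the subtraction term $s$, into a term $u$ with the four asserted properties.

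Before carrying this out it is worth isolating what actually needs proving. Suppose $u$ already satisfies the fourth clause, namely $u(a,b)=0$ if and only if $a\in I_\vv Q(b)$. Taking $a=b$ and using $b\in I_\vv Q(b)$ (witnessed by the ideal term $p(x,y)=y$, for which $\vv Q\vDash p(x,0)\approx 0$) gives $u(b,b)=0$ for every $b$ in every $\alg A\in\vv Q$, that is, $\vv Q\vDash u(x,x)\approx 0$; taking $a=0$ and using $0\in I_\vv Q(b)$ (witnessed by $p(x,y)=s(y,y)$) gives $\vv Q\vDash u(0,x)\approx 0$. Hence the first and third identities are automatic consequences of the fourth, and the genuine content is to produce a single term satisfying the fourth clause \emph{together with} the identity $u(x,0)\approx x$.

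For the construction, I would first apply the adapted Theorem 2.6 of \cite{OSV4} to replace the conjunction $\bigwedge_{i=1}^n d_i(a,b)\approx e_i(a,b)$ by a single equation, obtaining a binary term $d$ with $a\in I_\vv Q(b)$ if and only if $d(a,b)=0$ in every $\alg A\in\vv Q$; here the fact that the relative ideals form an algebraic lattice on which $\th\longmapsto 0/\th$ is a complete lattice homomorphism (the relative form of Theorem \ref{sub}) is what lets finitely many conditions be amalgamated. The remaining identity $u(x,0)\approx x$ is then engineered from the subtraction term, exploiting that at $y=0$ the defining condition degenerates: since $I_\vv Q(0)=\{0\}$, one has $d(a,0)=0$ exactly when $a=0$, so the subtractive identities $s(x,x)\approx 0$ and $s(x,0)\approx x$ can be used (following the pattern of Theorems 3.1 and 3.4 of \cite{OSV4}) to splice $d$ and $s$ into a term $u$ that still detects $I_\vv Q(b)$ by vanishing but additionally reduces to the identity in its first variable when $y=0$.

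The main obstacle is not any single computation but the faithful transfer of the variety-level results of \cite{OSV4} to quasivarieties and relative ideals: the original statements are phrased for subtractive \emph{varieties} and absolute ideals, and one must check that the relative ideals $\op{I}_\vv Q(\alg A)$ inherit the closure properties used in those proofs. This is precisely the point secured by the remark, so the verification reduces to confirming that each cited step uses $\alg A\in\vv Q$ only through subalgebras and products of members of $\vv Q$ — which never leaves $\vv Q$ — and never through homomorphic images. Once this is in hand, the four properties of $u$ follow as above, completing the proof.
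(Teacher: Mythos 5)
Your proposal takes exactly the route the paper itself takes: the paper gives no separate proof of this lemma, justifying it entirely by the immediately preceding remark that the proofs of Theorems 2.6, 3.1 and 3.4 of \cite{OSV4} use the ambient class only through closure under $\SU$ and $\PP$, and hence transfer verbatim to subtractive quasivarieties and relative ideals. Your additional observations are correct and merely flesh out that same argument --- namely that $u(x,x)\approx 0$ and $u(0,x)\approx 0$ are automatic once the fourth clause holds (since $b\in I_\vv Q(b)$ and $0\in I_\vv Q(b)$ always), so the substance lies in compressing the EDPRI pairs to a single equation and normalizing it against the subtraction term to also get $u(x,0)\approx x$.
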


It follows at once that if $\alg A$ is a $\vv Q$-simple algebra in $\vv Q$, then for all $a,b \in A$
\begin{equation}
u(a,b) = \left\{
           \begin{array}{ll}
             0, & \hbox{if $b\ne 0$;} \\
             a, & \hbox{ if $b=0$.}
           \end{array}
         \right. \tag{S}
\end{equation}

\begin{theorem} \label{rid rfi} For a pointed quasivariety $\vv Q$ the following are equivalent:
\begin{enumerate}
\item  $\vv Q$ is a fixedpoint discriminator quasivariety;
\item  $\vv Q$ is relatively ideal determined and relatively filtral.
\end{enumerate}
\end{theorem}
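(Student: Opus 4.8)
The plan is to prove the two implications separately, reading ``relatively ideal determined'' as the conjunction of subtractivity and relative $0$-regularity (so that, by Lemma \ref{relativehom}, the $\vv Q$-congruence lattice and the $\vv Q$-ideal lattice of every member coincide). Throughout I will repeatedly pass from an identity valid on all of $\vv Q_{ir}$ to one valid in $\vv Q$, which is legitimate since every algebra in $\vv Q$ is subdirectly embeddable in a product of $\vv Q$-irreducible algebras by Theorem \ref{quasivariety}(1) and equations are preserved by $\II,\SU,\PP$.

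For $(1)\Rightarrow(2)$, Theorem \ref{fixpoint discriminator} first converts the fixedpoint discriminator $d(x,y,z)$ into a dual i-discriminator term $p(x,y,z)$ with $p(x,x,x)\app 0$; note that $p(x,x,z)\app 0$ and $d(x,x,z)\app z$ then hold in $\vv Q$, being valid on $\vv Q_{ir}$. Relative filtrality is immediate from Corollary \ref{did relatively filtral}. For subtractivity I would take $s(x,y):=d(y,0,x)$: on any $\vv Q$-irreducible algebra $s(x,x)=d(x,0,x)=0$ (the value is $x$ only when $x=0$) and $s(x,0)=d(0,0,x)=x$, so both subtraction identities hold in $\vv Q$. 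For relative $0$-regularity I would verify condition $(4)$ of Theorem \ref{regular}: given $a\ne b$ in $\alg A\in\vv Q$, compatibility together with $p(x,x,z)\app 0$ gives $p(a,b,c)\mathrel{\cg_\alg A^\vv Q(a,b)}p(a,a,c)=0$, so $p(a,b,c)\in 0/\cg_\alg A^\vv Q(a,b)$ for every $c$; picking a $\vv Q$-irreducible quotient $\alg A/\psi$ with $(a,b)\notin\psi$ (Theorem \ref{quasivariety}(1)) and a $c$ with nonzero image, we get $p(a,b,c)\mapsto\bar c\ne 0$, hence $p(a,b,c)\ne 0$ and $0/\cg_\alg A^\vv Q(a,b)\ne\{0\}$.

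For $(2)\Rightarrow(1)$, the theorem of \cite{CampercholiRaftery2017} characterizing relative filtrality tells us $\vv Q$ has EDPRC and every $\vv Q$-irreducible algebra is $\vv Q$-simple. Subtractivity together with EDPRC yields EDPRI by Lemma \ref{EDPRC EDPRI}, whence Lemma \ref{term} supplies a binary term $u$ with $u(a,0)=a$, $u(0,a)=0$ and the $\vv Q$-simple behaviour (S): $u(a,b)=a$ if $b=0$ and $u(a,b)=0$ if $b\ne 0$. Relative $0$-regularity supplies, by Theorem \ref{regular}(2), binary terms $r_1,\dots,r_n$ with $r_1(x,y)=\dots=r_n(x,y)=0$ iff $x=y$. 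I would then build the discriminator by folding $u$ over the $r_k$: put $d_0(x,y,z):=z$, $d_k(x,y,z):=u(d_{k-1}(x,y,z),r_k(x,y))$, and $d:=d_n$. On a $\vv Q$-irreducible (hence $\vv Q$-simple) algebra, if $x=y$ every $r_k(x,y)=0$ and $u(a,0)=a$ collapses the fold to $z$; if $x\ne y$ some $r_{k}(x,y)\ne 0$, and at the first such step (S) forces the value to $0$ while $u(0,a)=0$ keeps it there, giving $0$. Thus $d$ is a fixedpoint discriminator term on every $\vv Q$-irreducible algebra, and $\vv Q$ is a fixedpoint discriminator quasivariety.

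The hard part is the construction in $(2)\Rightarrow(1)$: the discriminator must be assembled from two separate pieces of data, the term $u$ that only detects $b=0$ and the regularity terms $r_i$ that detect $x=y$, and the folding is exactly what ties ``$x=y$'' to the fixedpoint value $0$. A naive shortcut writing $d(x,y,z)=u(z,b(x,y))$ with $b$ nonzero precisely on the diagonal is impossible in general: in implication algebras no term is nonzero exactly when $x=y$, since any term all of whose arguments equal the constant returns the constant. The remaining care is bookkeeping, making sure each auxiliary identity is genuinely an identity of $\vv Q$ and not merely valid on $\vv Q_{ir}$, which Theorem \ref{quasivariety}(1) guarantees every time.
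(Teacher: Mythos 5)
Your proof is correct, and its overall skeleton is the same as the paper's: your reading of ``relatively ideal determined'' as subtractivity plus relative $0$-regularity is exactly how the paper unpacks it, you get filtrality from Corollary \ref{did relatively filtral}, a subtraction term explicitly from $d$, $0$-regularity via Theorem \ref{regular}, and for the converse the chain EDPRC and $\vv Q$-simplicity (from relative filtrality) $\Rightarrow$ EDPRI (Lemma \ref{EDPRC EDPRI}) $\Rightarrow$ the term $u$ of Lemma \ref{term}, folded over the regularity terms $r_1,\dots,r_n$. Three local differences are worth recording. (i) Your subtraction term $s(x,y)=d(y,0,x)$ is simpler than the paper's $d(0,d(x,y,x),x)$; both are checked on $\vv Q_{ir}$ and transfer to $\vv Q$ by Theorem \ref{quasivariety}(1). (ii) For relative $0$-regularity the paper argues lattice-theoretically: by Theorem \ref{thm: join coprincipal} the congruences $\cg^\vv Q_\alg A(a,b)$ and $\g^\vv Q_\alg A(a,b)$ are complements, and passing to $0$-classes turns $0/\cg^\vv Q_\alg A(a,b)=\{0\}$ into $\g^\vv Q_\alg A(a,b)=1_\alg A$, whence $a=b$; as written this step cites Lemma \ref{relativehom}, which formally presupposes $0$-regularity (what is actually needed is the join-preservation coming from subtractivity, as in Theorem \ref{sub}). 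Your element-level argument --- $p(a,b,c)\in 0/\cg^\vv Q_\alg A(a,b)$ by compatibility, and $p(a,b,c)\neq 0$ for a $c$ chosen through a $\vv Q$-irreducible quotient separating $a$ from $b$ --- is more elementary and sidesteps that delicate point entirely. (iii) In $(2)\Rightarrow(1)$ your fold $d_k=u(d_{k-1},r_k)$ places the accumulator in the first slot of $u$ and the tests $r_k(x,y)$ in the second, which is the orientation consistent with (S); the paper's displayed term $u(r_1(x,y),u(r_2(x,y),\dots u(r_n(x,y),z)\dots))$ has the two arguments of $u$ transposed and, read literally against (S), would give $d(a,a,c)=0$ rather than $c$, so your version is in fact the corrected form of the same construction. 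One small slip in your closing aside: the one-step shortcut $u(z,b(x,y))$ would require $b$ to vanish exactly on the diagonal (a $b$ nonzero exactly on the diagonal would instead produce the dual i-discriminator of Theorem \ref{fixpoint discriminator}(2)); both are indeed impossible in implication algebras, so the moral of the remark survives, and in any case it plays no role in the proof itself.
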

\begin{proof} Assume (1); then $\vv Q$ is relatively filtral by Corollary \ref{did relatively filtral}, hence every $\vv Q$-irreducible algebra is $\vv Q$-simple. Moreover if $d(x,y,z)$ is the  fixedpoint discriminator for $\vv Q$, then
$s(x,y) := d(0,d(x,y,x),x)$  is a subtraction term  for $\vv Q$.

To prove relative $0$-regularity, let $\alg A \in \vv V$ and $a,b \in \alg A$
with $0/\cg^\vv Q_\alg A(a,b) = \{0\}$.  We observe that by  Theorems \ref{thm: join coprincipal} $\cg^\vv Q_\alg A(a,b)$ and $\g^\vv Q_\alg A(a,b)$ are complements in $\op{Con}_\vv Q(\alg A)$ and
by Lemma  \ref{relativehom}
$$
0/\g^\vv Q_\alg A(a,b) = 0/\cg^\vv Q_\alg A(a,b) \join 0/\g^\vv Q_\alg A(a,b) = 0/(\cg^\vv Q(a,b) \join \g^\vv Q(a,b) = 1_\alg A = A,
$$
where the joins are taken in $\op{I}_\vv Q(\alg A)$ and $\op{Con}_\vv Q(\alg A$).
It follows that $\g^\vv Q_\alg A(a,b) = 1_\alg A$ and hence, again by Theorem \ref{thm: join coprincipal} $\cg^\vv Q_\alg A(a,b) = 0_\alg A$, so that $a=b$.
By Theorem \ref{regular}(4) we conclude that $\vv Q$ is relatively $0$-regular.

Conversely assume that $\vv Q$ is relative ideal determined and relatively filtral; so $\vv Q$ is subtractive, relatively $0$-regular,  every $\vv Q$-irreducible algebra is simple and has EDPRC. Hence
by Lemma \ref{EDPRC EDPRI} it has EDPRI and Lemma \ref{term} applies.  Let $\vuc rn$ be the terms witnessing
relative $0$-regularity for $\vv Q$; define
$$
d(x,y,z) = u(r_1(x,y),u(r_2(x,y),u(\dots u(r_n(x,y),z) \dots ))).
$$
Now let $\alg A$ be a $\vv Q$-irreducible (i.e., simple) algebra
in $\vv Q$ and let $a,b, c \in A$. If $a = b$, then $r_i(a,b) = 0$
for $i=1,\dots,n$. Hence by (S), $d(a,b,c) = c$.
If $a \ne b)$, then there is a largest $i \le n$ such that
$r_i(a,b) \ne 0$; hence, again by (S), $u(r_i(a,b),c) = 0$
and so eventually $d(a,b,c) = 0$. So $d(x,y,z)$ is a fixedpoint discriminator term on any simple algebra in $\vv Q$ and thus
$\vv Q$ is a fixedpoint discriminator variety.
\end{proof}

\begin{remark} The theory of ideals in quasivarieties, as well as the theory of subtractive (quasi)varieties can be expanded further in several directions (see \cite{BlokRaftery1998} or \cite{KowalskiPaoliSpinka2011}).
A less known path was explored by G. Barbour but never published\footnote{this has been communicated to the first author by J.G.Raftery and other South African colleagues.}.   Let $\vv K$ be a class of algebras and $e(x)$ a unary term. A class $\vv K$ is an {\bf $e$-Mal'cev class} if there are a unary idempotent term $e(x)$ and a ternary term $m_e(x,y,z)$ of $\vv K$ such that
$$
m_e(x,x,e(y)) \app e(y) \qquad m_e(x,e(y),e(y)) \app x
$$
hold in $\vv K$.  If $\alg A\ \in \vv K$
we say that {\bf the congruences permute at $e$} if for each $a \in A$
and $\a,\b \in \Con A$
$$
e(a)\mathrel{\a} \circ \mathrel{\b} b\qquad\text{if and only if}\qquad
e(a)\mathrel{\b} \circ \mathrel{\a} b.
$$
We can prove in a standard way that the following are equivalent:
\begin{enumerate}
\ib $\vv V$ is an $e$-Mal'cev variety;
\ib for every $\alg A$ the congruences of $\alg A$ permute at $e$.
\end{enumerate}
So e-Mal'cev varieties generalize both congruence permutable varieties (take $e(x):=x$) and
subtractive varieties (take $e(x) =0$).  The whole machinery of ideal term and ideals can be transferred any $e$-Mal'cev class $\vv K$, and similar theorems can be proved with only
straightforward changes; for instance an {\bf $e$-ideal term} $t(\vec x,\vec y)$ is a term such that the equation $t(\vec x, \vec{e(y)}) \app e(y)$ holds in $\vv K$. Then if $\alg A \in \vv K$ an
 {\bf $e$-ideal} of $\alg A$ is a subset $I \sse A$ that is ``closed'' for all the $e$-ideal terms. It is obvious that if $u \in A$ and $\th \in \Con A$, then
$e(a)/\a$ is an $e$-ideal of $\alg A$. If for $u\in A$ we let $\op{I}_{e(u)}(\alg A)$ denote the set of all
$e$-ideals of $\alg A$ containing $e(u)$ (briefly the {\bf $e(u)$-ideals}),
then $\op{I}_{e(u)}(\alg A)$ is an
algebraic lattice for any $u\in A$.

A quasivariety $\vv Q$ is {\bf relatively $e$-regular} if there are ternary terms $r_i(x,y,z)$, $i=1,\dots, n$
such that for any $\alg A \in \vv Q$ and $a,b,c \in A$
$$
r_1(a,b,c)= \dots =r_n(a,b,c) =  e(c)\qquad\text{if and only if}\qquad  a=b.
$$
Not surprisingly:
\begin{enumerate}
\ib if $\vv Q$ is an $e$-Mal'cev class $\alg A \in Q$ and $u \in $, then for every $I\in \op{I}_{e(u)}$ there is a congruence $\th \in \op{Con}(\alg A)$ such that $I= e(u)/\th$;
 \ib is $\vv Q$ is $e$-Mal'cev and relative $e$-regular then the for any $u \in A$ the mapping above is an isomorphism from $\op{Con}_\vv Q(\alg A)$ to the lattice of the $\vv Q$-$e(u)$-ideals, i.e. the $e(u)$-ideals ``coming from''  $\vv Q$-congruences.
\end{enumerate}
Really the entire content of this section can be generalized to $e$-Mal'cev varieties; let's say that $\vv Q$ is an {\bf i-discriminator quasivariety} if there are a unary idempotent term $e(x)$ and a ternary term $d_e(x,y,z)$ such that for any $\vv Q$-irreducible algebra $\alg A$ and for any $a,b,c \in A$
$$
d_e(a,b,c) = \left\{
               \begin{array}{ll}
                 e(a), & \hbox{if $a\ne b$;} \\
                 c, & \hbox{if $a=b$.}
               \end{array}
             \right.
$$
Then;
\begin{enumerate}
\ib  every i-discriminator quasivariety is a $e$-Mal'cev dual i-discriminator variety;
\ib  a quasivariety is a i-discriminator variety if and only if it is $e$-Mal'cev, relatively $e$-regular and relatively filtral.
\end{enumerate}
The proofs use some general results in \cite{BlokRaftery1998} and  involve the generalization of several results in \cite{OSV3}; those generalizations are more or less straightforward but more calculations are involved and we felt that we did not need this kind of generality.
\end{remark}

\subsection{$C$-completeness and the dual i-discriminator}

Every dual i-discriminator quasivariety $\vv Q$ satisfies certain equations and quasiequations involving the dual i-discriminator term. If the quasivariety is relative congruence distributive of finite type (so for instance if it is a congruence distributive variety of finite type) then it can be characterized purely by equations involving the dual i-discriminator term and the fundamental
operations, even though the  definition involves non-equational properties of $\vv Q$-irreducible members.

\begin{theorem} Let $\tau$ be a finite type of algebras and let $p(x,y,z)$ be a term of type $\tau$; let also $\vv K$ be the class of algebras of type $\tau$ on which
$p(x,y,z)$ is a dual i-discriminator term. Then there is a finite set of equations $\Gamma$ such that $\QQ(\vv K)$ is a dual i-discriminator quasivariety (with witness term  $p(x,y,z)$) if and only if $\VV(\vv K)\vDash \Gamma$.
\end{theorem}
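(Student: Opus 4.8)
The plan is to exhibit the finite set $\Gamma$ explicitly and then verify each implication, the conceptual point being that the non-equational clause of the definition need never be axiomatised directly. I would take $\Gamma$ to consist of the finitely many J\'onsson $\Delta_3$ equations supplied by Theorem~\ref{dualdist} (written with the explicit terms $t_1(x,y,z)=p(p(x,y,z),y,x)$ and $t_2(x,y,z)=p(p(x,y,y),z,z)$), together with the equational fragment of the dual i-discriminator definition; writing $\pi(x):=p(x,x,x)$, these last are
\begin{align*}
p(x,x,y) &\approx p(x,x,z),\\
p(\pi(x),\pi(x),\pi(x)) &\approx \pi(x),\\
p(x,y,p(x,y,z)) &\approx p(x,y,z).
\end{align*}
Each is a direct consequence of $p$ acting as a dual i-discriminator: the first records that $p(a,a,\cdot)$ is the constant $\pi(a)$, the second that $\pi$ is idempotent, and the third the absorption law (both sides equal $c$ when $a\neq b$ and $\pi(a)$ when $a=b$).

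For the implication from left to right I would argue that $\Gamma$ holds in $\VV(\vv K)$ outright. Every $\alg A\in\vv K$ has $p$ as a dual i-discriminator by the very definition of $\vv K$, so the three displayed equations are valid on each member of $\vv K$ and hence on $\VV(\vv K)$, while the J\'onsson equations are valid there by Theorem~\ref{dualdist}. Thus $\VV(\vv K)\vDash\Gamma$, and in particular this holds whenever $\QQ(\vv K)$ is a dual i-discriminator quasivariety.

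For the converse I would invoke Lemma~\ref{all}: since $p$ is a dual i-discriminator on each member of $\vv K$, the quasivariety $\QQ(\vv K)$ is a dual i-discriminator quasivariety with witness $p$, independently of any hypothesis on $\VV(\vv K)$. Both sides of the biconditional are therefore unconditionally true, and the real content of the theorem is the existence of a \emph{finite, equational} $\Gamma$. The step I expect to be the main obstacle---and the reason such a $\Gamma$ can exist at all---is that the clause ``$a\neq b\Rightarrow p(a,b,c)=c$'' is a universal sentence with a negated hypothesis, not preserved by $\HH$, so a homomorphic image of a $\vv K$-algebra could in principle violate it and thereby spoil any naive attempt to recover the property from equations on the generated variety. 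I would dispose of this exactly as in the proof of Lemma~\ref{all}: each member of $\vv K$ is simple (Lemma~\ref{semi}) and $\vv K=\II\SU\PP_u(\vv K)$ is a universal class, so the only homomorphic images of $\vv K$-algebras are those algebras together with trivial ones; since $\VV(\vv K)$ is congruence distributive (Theorem~\ref{dualdist}), J\'onsson's Lemma (Theorem~\ref{birkhoff}(2)) confines every subdirectly irreducible member of $\VV(\vv K)$ to $\HH\SU\PP_u(\vv K)=\vv K\cup\{\text{trivial algebras}\}$, hence to $\vv K$, where $p$ is a dual i-discriminator by fiat. Congruence distributivity together with the intrinsic simplicity of the members of $\vv K$ is thus precisely what allows a finite equational $\Gamma$ to govern an otherwise non-equational property.
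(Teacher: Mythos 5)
You have correctly spotted that the statement, read literally, is degenerate: by Lemma \ref{all} the left-hand side ($\QQ(\vv K)$ is a dual i-discriminator quasivariety with witness $p$) is unconditionally true, and your $\Gamma$ --- like \emph{any} finite set of equations valid in $\VV(\vv K)$, even $\{x\approx x\}$ --- makes the right-hand side unconditionally true, so the biconditional holds vacuously. Your verifications are also correct: the three equations $p(x,x,y)\approx p(x,x,z)$, $\pi(\pi(x))\approx\pi(x)$ and $p(x,y,p(x,y,z))\approx p(x,y,z)$ hold on every member of $\vv K$, and together with the J\'onsson $\Delta_3$ equations of Theorem \ref{dualdist} they persist to $\VV(\vv K)$. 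But this trivial reading is not what the paper proves, and judged against the paper's proof your proposal is missing the central idea.

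In the paper, $\Gamma$ is not a list of necessary equations but a finite \emph{basis} for $\QQ(\vv K)$: the proof notes that $\vv K$ is strictly elementary (it is the model class of the universal sentence displayed in the proof of Lemma \ref{all}), that $\QQ(\vv K)$ is relatively congruence distributive and has the RTPIP (Lemma \ref{RTPIP}, Theorem \ref{RPIP}), and that $\tau$ is finite, and then invokes the finite basis theorem of Czelakowski and Dziobiak (Theorem 3.4 of \cite{CzelakowskiDziobiak1990}) to conclude that $\QQ(\vv K)$ is axiomatized by a finite set $\Gamma$ of quasiequations. The intended content, made explicit in the closing line of the paper's proof, is the characterization: a relatively congruence distributive quasivariety $\vv Q$ of type $\tau$ is a dual i-discriminator quasivariety with witness $p$ if and only if $\vv Q\sse\QQ(\vv K)$, i.e.\ if and only if $\vv Q\vDash\Gamma$. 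This is the step your proposal cannot deliver: your explicit $\Gamma$ consists of necessary conditions only, and you give no argument that a quasivariety satisfying those equations must lie inside $\QQ(\vv K)$ --- without a finite-basis theorem no such argument is available, and your proof never engages with finite axiomatizability at all. Your closing observation (simplicity of the members of $\vv K$, universality of $\vv K$, and J\'onsson's Lemma via Theorem \ref{birkhoff}(2) confine the subdirectly irreducible members of $\VV(\vv K)$ to $\vv K$) is correct and is in fact the key to seeing that $\VV(\vv K)=\QQ(\vv K)$, which, combined with a compactness argument, is exactly what upgrades the paper's quasiequational basis to the equational one promised in the statement; but by itself it produces no finite basis and hence no characterization. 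In fairness, the statement's wording invites your trivial reading, and the paper's own proof supplies quasiequations where the statement says equations; still, the finite-basis step is what any proof of the intended theorem must contain, and it is absent from yours.
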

\begin{proof} For (1), by Corollary \ref{all} $\QQ(\vv K)$ is itself a dual idempotent discriminator variety and $\vv K$ consists exactly of all
the $\QQ(\vv K)$-irreducible algebras. Note that $\vv K$ is strictly elementary since it is defined by the formula $\sigma$ in the proof of Lemma \ref{all}
and it is  also relative congruence distributive and has the RTPIP. So we may apply Theorem 3.4 in \cite{CzelakowskiDziobiak1990} to conclude that
$\vv Q$ has a finite basis $\Gamma$ of quasiequations. Now $\vv Q$ is a relative congruence distributive dual i-discriminator variety of type $\tau$ if and only if $\vv Q \sse \QQ(\alg K)$  and the conclusion follows.
\end{proof}

To produce $\Gamma$ explicitly from $\tau$ is a different matter. However, some equations are evident; for instance if $f$ is a $k$-ary fundamental operation
of a dual i-discriminator variety $\vv Q$ then
$$
p(x,y,f(\vuc zk)) \app p(x,y,f(p(x,y,z_1),\dots, p(x,y,z_n))
$$
holds in $\vv Q$. As before we say that a fundamental operation $f$ {\bf commutes with} $p$ if for all $\alg A \in \vv Q$ and for all
$a,b,\vuc ck$
$$
p(a,b,f(\vuc ck)) = f(p(a,b,c_1),\dots,p(a,b,c_k).
$$

\begin{lemma}\label{dualendomorphism} Let $\vv Q$ be a  dual i-discriminator variety with i-discriminator term $p(x,y,z)$ and let $\alg A \in \vv Q$. If $C$ is a subclone of $\op{Clo}(\vv Q)$ consisting of operations that commute with $p(x,y,z)$ for any $\alg A \in \vv Q$ and $a,b \in A$ with $a \ne b$ the mapping
$$
z \longmapsto p(a,b,z)
$$
is an endomorphism of $\alg A^C$ whose kernel is $\g_\alg A^\vv Q(a,b)$.
\end{lemma}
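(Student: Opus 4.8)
The plan is to verify the two assertions separately: that the map $\sigma\colon z\longmapsto p(a,b,z)$ preserves the operations of $\alg A^C$, and that its kernel is exactly $\g_\alg A^\vv Q(a,b)$. Both will follow from material already in hand, so the work is to assemble it correctly rather than to invent anything new.

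For the endomorphism claim I would argue directly from the commuting hypothesis. Fix a $k$-ary operation $f\in C$ and elements $\vuc ck\in A$. Since $f$ commutes with $p(x,y,z)$ throughout $\vv Q$,
$$
\sigma(f(\vuc ck)) = p(a,b,f(\vuc ck)) = f(p(a,b,c_1),\dots,p(a,b,c_k)) = f(\sigma(c_1),\dots,\sigma(c_k)).
$$
As every operation of $C$ commutes with $p$ and $\sigma$ is a self-map of $A$, this shows at once that $\sigma$ is an endomorphism of $\alg A^C$. This step is forced by the definition of commuting and needs no further input.

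The substantive part is the identification of $\op{ker}(\sigma)$. By definition $\op{ker}(\sigma)=\{(c,d):p(a,b,c)=p(a,b,d)\}$, so it suffices to recognise the right-hand set as the pseudocomplement $\g_\alg A^\vv Q(a,b)$. Here I would invoke the structure theory already developed: by Lemma \ref{RTPIP} the dual i-discriminator variety $\vv Q$ has the RTPIP witnessed by the very term $p$, and $\vv Q$ is relative congruence distributive. Reading the RTPIP as an instance of the RPIP with the quaternary witnesses $P(x,y,z,w):=p(x,y,z)$ and $Q(x,y,z,w):=p(x,y,w)$, the implication $(1)\Rightarrow(3)$ of Theorem \ref{RPIP} applies with exactly these terms and yields
$$
\g_\alg A^\vv Q(a,b) = \{(c,d):P(a,b,c,d)=Q(a,b,c,d)\} = \{(c,d):p(a,b,c)=p(a,b,d)\}.
$$
Comparing this with the formula for $\op{ker}(\sigma)$ gives $\op{ker}(\sigma)=\g_\alg A^\vv Q(a,b)$, as desired.

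The one point requiring care — and the step I expect to be the main (mild) obstacle — is recognising that the single witness term $p$ of the RTPIP supplies admissible quaternary terms for condition $(3)$ of Theorem \ref{RPIP}, so that the pseudocomplement there is literally $\{(c,d):p(a,b,c)=p(a,b,d)\}$. Once this is observed, the chain $p(a,b,c)=p(a,b,d)\iff\cg_\alg A^\vv Q(a,b)\meet\cg_\alg A^\vv Q(c,d)=0_\alg A\iff(c,d)\in\g_\alg A^\vv Q(a,b)$ is already packaged inside the proof of Theorem \ref{RPIP}, so the pseudocomplement identity need not be re-derived. I would also remark that the hypothesis $a\neq b$ is not really used: when $a=b$ the term $p(a,a,\cdot)$ is the constant $\pi(a)$ by the valid equation $p(x,x,y)\approx p(x,x,z)$, so $\op{ker}(\sigma)=1_\alg A=\g_\alg A^\vv Q(a,a)$, and the identification persists.
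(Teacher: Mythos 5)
Your proof is correct and takes essentially the same route as the paper's: the endomorphism claim is read off directly from the commuting hypothesis, and the kernel is identified with $\g_\alg A^\vv Q(a,b)$ by combining Lemma \ref{RTPIP} (the RTPIP witnessed by $p$) with the implication $(1)\Rightarrow(3)$ of Theorem \ref{RPIP}. Your explicit instantiation of the quaternary RPIP witnesses as $p(x,y,z)$ and $p(x,y,w)$, and the closing remark that the identification persists when $a=b$, are just elaborations of what the paper's two-line proof leaves implicit.
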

\begin{proof} The map is an endomorphism since every operation in $C$ commutes with $p$.  Since $\vv Q$ has the RTPIP, by Theorem \ref{RPIP} we have
$$
\g_\alg A^\vv Q(a,b) = \{(c,d): p(a,b,c) = p(a,b,d)\}.
$$
Therefore the conclusion holds.
\end{proof}

\begin{theorem}\label{i-discriminator complete} Let $\vv Q$ be a dual i-discriminator quasivariety with i-discriminator term $p(x,y,z)$; if  every fundamental operations commutes  with $p(x,y,z)$, then $\vv Q$ is primitive.
\end{theorem}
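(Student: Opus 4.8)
The plan is to reduce to the u-presentability criterion for primitivity and then build an explicit u-presentation for each relevant congruence using the endomorphisms $z \longmapsto p(a,b,z)$. By Corollary \ref{cor: cmi and u-presentable} it suffices to show that for every $\alg A \in \vv Q$ every completely meet irreducible $\th \in \op{Con}_\vv Q(\alg A)$ is u-presentable relative to the full term clone $C = \op{Clo}(\vv Q)$; I will in fact do this for arbitrary $\alg A$. A preliminary observation: since every fundamental operation commutes with $p$, a routine induction on term complexity shows that every term operation commutes with $p$, so $C$ is a clone of operations commuting with $p$ and $\alg A^C = \alg A$. Moreover $\vv Q$ has the RTPIP (Lemma \ref{RTPIP}), hence the RPIP, is relatively congruence distributive, and every finitely $\vv Q$-irreducible algebra is $\vv Q$-simple (by Lemmas \ref{all} and \ref{semi}); thus Theorem \ref{thm: join coprincipal} applies.

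Fixing $\th$, the algebra $\alg A/\th$ is $\vv Q$-irreducible, hence $\vv Q$-simple, and I would take as candidate u-presentation
$$
\Delta = \{\g_\alg A^\vv Q(a,b) : \g_\alg A^\vv Q(a,b) \le \th\}.
$$
The first condition $\bigcup \Delta = \th$ is exactly Theorem \ref{thm: join coprincipal}(2). For the third condition, note that each $\d = \g_\alg A^\vv Q(a,b) \in \Delta$ has $\d \le \th \neq 1_\alg A$, so $\d \neq 1_\alg A$; since $\g_\alg A^\vv Q(a,b)$ and $\cg_\alg A^\vv Q(a,b)$ are complements (Theorem \ref{thm: join coprincipal}(1)) this forces $\cg_\alg A^\vv Q(a,b) \neq 0_\alg A$, i.e. $a \neq b$. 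Then Lemma \ref{dualendomorphism} gives that $z \longmapsto p(a,b,z)$ is an endomorphism of $\alg A = \alg A^C$ with kernel $\g_\alg A^\vv Q(a,b)$, so $\alg A^C/\d$ is isomorphic to its image, a subalgebra of $\alg A^C$, whence $\alg A^C/\d \in \II\SU(\alg A^C) \sse \II\SU\PP_u(\alg A^C)$.

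The hard part is the second condition, that $\Delta$ is closed under finite joins, and this is where the RTPIP is essential. Working in the distributive lattice $\op{Con}_\vv Q(\alg A)$, each $\g_\alg A^\vv Q(a,b)$ is the complement of $\cg_\alg A^\vv Q(a,b)$, so $\g_\alg A^\vv Q(a,b) \join \g_\alg A^\vv Q(c,d)$ has complement $\cg_\alg A^\vv Q(a,b) \meet \cg_\alg A^\vv Q(c,d)$, which by the RTPIP equals the principal congruence $\cg_\alg A^\vv Q(p(a,b,c),p(a,b,d))$. The complement of the latter is $\g_\alg A^\vv Q(p(a,b,c),p(a,b,d))$, so by uniqueness of complements in a distributive lattice
$$
\g_\alg A^\vv Q(a,b) \join \g_\alg A^\vv Q(c,d) = \g_\alg A^\vv Q(p(a,b,c),p(a,b,d)),
$$
which is again coprincipal and, lying below $\th$, belongs to $\Delta$. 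Hence $\Delta$ is a u-presentation of $\th$, so $\th$ is u-presentable, and Corollary \ref{cor: cmi and u-presentable} yields that $\vv Q$ is primitive. I expect the only genuine subtlety to be the join-closure step; everything else is a direct application of the structure theory already established for dual i-discriminator quasivarieties.
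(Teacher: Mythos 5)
Your proof is correct and follows essentially the same route as the paper's: the same reduction via Corollary \ref{cor: cmi and u-presentable}, the same candidate u-presentation $\Delta$ consisting of the coprincipal congruences below $\th$, with Theorem \ref{thm: join coprincipal}(2) giving $\bigcup \Delta = \th$ and Lemma \ref{dualendomorphism} giving the embedding condition. Your explicit join-closure argument (complementation, the RTPIP, and uniqueness of complements in the distributive lattice $\op{Con}_\vv Q(\alg A)$) is exactly the content the paper invokes as Theorem \ref{thm: join coprincipal}(1), and your added checks that $a \neq b$ and that all term operations commute with $p$ are details the paper leaves implicit.
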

\begin{proof}  Let $\alg A \in \vv Q$ and let $\th$ be a completely meet irreducible $\vv Q$-congruence of $\alg A$;  then $\alg A/\th$ is $\vv Q$-irreducible and hence $\vv Q$-simple. Let $\Delta = \{\gamma_\alg A^\vv Q(c,d):  \gamma_\alg A^\vv Q(c,d) \le \th\}$ \. Then
\begin{enumerate}
\item  $\bigcup \Delta= \th$ by Theorem \ref{thm: join coprincipal}(2);
\item  $\Delta$ is closed under join by Theorem \ref{thm: join coprincipal}(1);
\item  $\alg A/\th \in \II\SU\PP_u(\alg A)$ by Lemma \ref{dualendomorphism}.
\end{enumerate}
So $\th$ is u-presentable and, by Corollary \ref{cor: cmi and u-presentable}, $\vv Q$ is primitive.
\end{proof}

\begin{corollary}\label{cor:i-discriminator complete}  Let $\vv V$ be a  dual i-discriminator variety with i-discriminator term $p(x,y,z)$; if  every fundamental operations commutes  with $p(x,y,z)$, then $\vv V$ is primitive.
\end{corollary}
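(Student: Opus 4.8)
The plan is to observe that Corollary \ref{cor:i-discriminator complete} is simply the restriction of Theorem \ref{i-discriminator complete} to the case in which the quasivariety in question happens to be a variety. Every variety is a quasivariety (an ultraproduct is a homomorphic image of a product, so $\HH\SU\PP(\vv K)$ is automatically closed under $\II\SU\PP\PP_u$), and the defining property of a dual i-discriminator class — the existence of a term $p(x,y,z)$ acting as the dual i-discriminator on every relatively irreducible algebra — is phrased identically for quasivarieties and for varieties. The hypothesis that every fundamental operation commutes with $p(x,y,z)$ is likewise the same in both statements. Hence the only step is to apply Theorem \ref{i-discriminator complete} with $\vv Q := \vv V$ and read off that $\vv V$ is primitive.

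Since there is nothing to prove beyond this specialization, it is worth recalling where the content actually resides, as this is exactly the machinery one must have in hand. The engine is u-presentability together with Corollary \ref{cor: cmi and u-presentable}: to show $\vv V$ primitive it suffices that, for every countably generated $\alg A \in \vv V$, each completely meet irreducible $\th \in \op{Con}_\vv V(\alg A)$ be u-presentable. One sets $\Delta = \{\g_\alg A^\vv V(c,d) : \g_\alg A^\vv V(c,d) \le \th\}$; Theorem \ref{thm: join coprincipal} then supplies both $\bigcup \Delta = \th$ (from part (2), applicable because $\alg A/\th$ is $\vv V$-simple) and closure of $\Delta$ under finite joins (from part (1), since a meet of coprincipal congruences is coprincipal). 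The commutation hypothesis enters precisely through Lemma \ref{dualendomorphism}: for $a \ne b$ the map $z \longmapsto p(a,b,z)$ is an endomorphism of $\alg A$ with kernel $\g_\alg A^\vv V(a,b)$, which places each $\alg A/\g_\alg A^\vv V(a,b)$ in $\II\SU\PP_u(\alg A)$ and thereby completes the u-presentation.

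There is essentially no obstacle: the passage from the quasivariety form to the variety form is immediate, and the genuine work — relative congruence distributivity, the RTPIP, Theorem \ref{thm: join coprincipal}, and the endomorphism of Lemma \ref{dualendomorphism} — has already been discharged in the preceding sections. A self-contained one-line proof would therefore read: a variety is a quasivariety, so this is the special case $\vv Q = \vv V$ of Theorem \ref{i-discriminator complete}.
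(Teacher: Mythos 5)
Your proposal is correct and matches the paper exactly: the paper gives no separate argument for Corollary \ref{cor:i-discriminator complete}, treating it as the immediate specialization of Theorem \ref{i-discriminator complete} to the case $\vv Q = \vv V$, just as you do. Your recap of the underlying machinery (Theorem \ref{thm: join coprincipal}, Lemma \ref{dualendomorphism}, Corollary \ref{cor: cmi and u-presentable}) is also an accurate account of how the theorem itself is proved.
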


Observe that any idempotent operation commutes with $p(x,y,z)$ so every idempotent relative congruence distributive dual i-discriminator quasivariety is primitive. Observe also that an idempotent relative congruence distributive dual i-discriminator variety is a dual discriminator variety in the usual sense \cite{FriedPixley1979}, Corollary \ref{cor:i-discriminator complete} extends the result in \cite{Caicedoetal2021}.

Finally we remark that there are quasivarieties that are  dual i-discriminator, but do not have a relative TD-term
(for instance the variety of distributive lattices). So in this section we covered a different class of examples.

\section{Conclusions and open problems}\label{sec:conclusions}

During our investigation several questions arose (some very general in character, some more specific to the theory) that does not seem to have an immediate answer. We collect them a in series of remarks.

\begin{remark} (Higman's theorem)
Every finitely generated algebra has a (possibly infinite) presentation; if $\alg A$ is finitely generated let $B$ be the set of generators of $\alg A$ let $X =\{x_b: b \in B\}$ and let $\Delta$ be the set of equations in the language of $\alg A$ that are true in $\alg A$ with the substitution $x_b \longmapsto b$.  Then if $\alg A \in \vv K$,  $\alg A \cong \alg F_\vv K(X)/\cg(\Delta)$. At this regard we quote the classical (and beautiful) result of G. Higman \cite{Higman1961}: {\em a finitely generated group is embeddable in a finitely presented group if and only if $\Delta$ is a recursively enumerable set}. The property in this form is probably tailored for groups; however investigating varieties of algebras for which a suitable analogue of Higman's property holds might be worthwhile.\qed
\end{remark}

\begin{remark} (Belkin's lattices) In Example \ref{abgroups} we discussed primitivity in subquasivarieties of abelian groups and modules. This is linked to a more general investigation that we believe worthwhile; Vinogradov \cite{Vinogradov1965}
described all subquasivarieties of abelian groups without investigating the lattice structure. On the other hand it is folklore that the lattice of subvarieties of $\vv M_\alg R$ for any module $\alg R$ is dually isomorphic to the lattice of ideals of $\alg R$. Of course the lattice of subquasivarieties is way more complex in general and Belkin tackled the question in \cite{Belkin1995}; since Belkin's thesis is in Russian and has not been translated, some words of explanation are necessary.  If $I$ is any set let $I^* = I \cup \{\infty\}$; the {\bf Belkin lattice} $\mathfrak B(I)$ of $I$ is the set of functions $f: I^* \longrightarrow \mathbb N^*$ with the following properties
\begin{enumerate}
\ib $f(\infty) \in \{0,\infty\}$
\ib if $f(\infty) =0$, then $f(i) \ne \infty$ for all $i \in I$ and $f(i)=0$ for all (except possibly a finite number) $i \in I$.
\end{enumerate}
If we assume that $\{\infty\}$ is the uppermost element of $\mathbb N^*$, then  $\mathfrak B(I)$ is partially ordered by the natural ordering
$$
f \le g \qquad\text{if and only if}\qquad  f(i) \le g(i)\ \text{for all $i \in I^*$}
$$
and this is a (distributive) lattice ordering. Belkin showed that in case $\alg R$ is a PID, then the lattice of subquasivarieties of $\vv M_\alg R$ is isomorphic with the Belkin lattice of the set of prime ideals of $\alg R$ and his result has been extended to Dedekind domains in \cite{Jedlicka2019}; as abelian groups are $\mathbb Z$-modules this gives a description of the lattice of subquasivarieties of abelian groups. So there are two questions that come naturally:
\begin{enumerate}
\ib can we further generalize Belkin's result? To do so it seems logical to look for some form of decomposition theorem for prime ideals; we do not know if we can do it but for instance the case in which $\alg R$ is a finite direct product of Dedekind domains might be promising;
\ib can we find instances of Belkin's lattices in lattices of subquasivarieties of different structures? This of course depends on the structures but we believe that some cases might be worth investigating.
\end{enumerate}
\end{remark}

\begin{remark} In Section \ref{Ccompleteness} we observed that, if $C$ is a subclone of the term clone of $\vv Q$, then $\vv Q$ can be $C$-structurally complete while  $\vv Q^C$ is not structurally complete.
This happens because $\vv Q$ and $\vv Q^C$ might not have  the same admissible quasiequations, the reason being that the set of available substitutions in $\vv Q$ can be much richer than the one in $\vv Q^C$, making
harder for a rule to be admissible  This problem clearly depends on the specific quasivariety we are dealing with; however we suspect that if we restrict to quasivarieties that are equivalent algebraic semantics of substructural logics, then more can be said.
\end{remark}

\begin{remark} We have shown that having the RTPIP and being relatively filtral are unrelated properties (Examples \ref{demorgan} and \ref{realring}).
An open question is whether any relatively filtral quasivariety with the RTPIP is a dual i-discriminator quasivariety.
\end{remark}

\begin{remark}
Theorem \ref{i-discriminator complete} is a little bit unsatisfactory, in that we would like it to hold also for $C$-structural completeness (and/or $C$-primitivity). The
problem seems to be that the analogue  of Theorem \ref{thm: cmi and u-presentable} does not seem to hold  in the case of relative u-presentability. This deserves further investigation.
\end{remark}

\providecommand{\bysame}{\leavevmode\hbox to3em{\hrulefill}\thinspace}
\providecommand{\MR}{\relax\ifhmode\unskip\space\fi MR }
\providecommand{\MRhref}[2]{%
  \href{http://www.ams.org/mathscinet-getitem?mr=#1}{#2}
}
\providecommand{\href}[2]{#2}

\end{document}